\newcommand{\nc}{\newcommand}
\newcommand{\cA}{{\mathcal A}}
\newcommand{\cB}{{\mathcal B}}
\newcommand{\cC}{{\mathcal C}}
\newcommand{\cD}{{\mathcal D}}
\newcommand{\cH}{{\mathcal H}}
\newcommand{\cE}{{\mathcal E}}
\newcommand{\cI}{{\mathcal I}}
\newcommand{\cO}{{\mathcal O}}
\newcommand{\cL}{{\mathcal L}}
\newcommand{\cM}{{\mathcal M}}
\newcommand{\cF}{{\mathcal F}}
\newcommand{\cK}{{\mathcal K}}
\newcommand{\cP}{{\mathcal P}}
\newcommand{\cS}{{\mathcal S}}
\newcommand{\cT}{{\mathcal T}}
\newcommand{\cU}{{\mathcal U}}
\newcommand{\cV}{{\mathcal V}}
\newcommand{\cX}{{\mathcal X}}
\renewcommand{\AA}{{\mathbb A}}
\newcommand{\GG}{{\mathbb G}}
\newcommand{\ZZ}{{\mathbb Z}}
\newcommand{\QQ}{{\mathbb Q}}
\newcommand{\PP}{{\mathbb P}}
\newcommand{\OO}{{\mathbb O}}
\newcommand{\SSS}{{\mathbb S}}
\renewcommand{\gg}{\mathfrak{g}}  
\newcommand{\gp}{\mathfrak{p}}
\newcommand{\gq}{\mathfrak{q}}
\newcommand{\gu}{\mathfrak{u}}
\newcommand{\gt}{\mathfrak{t}}
\newcommand{\gr}{\mathfrak{r}}
\newcommand{\gs}{\mathfrak{s}}
\newcommand{\gU}{\mathfrak{U}}
\newcommand{\gB}{\mathfrak{B}}
\newcommand{\bF}{\mathbf{F}}
\newcommand{\bO}{{\mathbf O}}
\newcommand{\on}{\operatorname}
\newcommand{\Rep}{{\on{Rep}}}
\newcommand{\Qlb}{\mathbb{\bar Q}_\ell}
\newcommand{\Gm}{\mathbb{G}_m}
\newcommand{\A}{\mathbb{A}}
\newcommand{\toup}[1]{\stackrel{#1}{\to}}
\newcommand{\touplong}[1]{\stackrel{#1}{\longrightarrow}}
\newcommand{\hook}[1]{\stackrel{#1}{\hookrightarrow}}
\newcommand{\getsup}[1]{\stackrel{#1}{\gets}}
\newcommand{\getsuplong}[1]{\stackrel{#1}{\longleftarrow}}
\newcommand{\Sp}{\on{\mathbb{S}p}}
\newcommand{\Spin}{\on{\mathbb{S}pin}}
\newcommand{\IC}{\on{IC}}
\newcommand{\CT}{\on{CT}}
\newcommand{\Hom}{\on{Hom}}
\newcommand{\Mod}{\on{Mod}}
\newcommand{\Modt}{\wt\Mod}
\newcommand{\Ext}{\on{Ext}}
\newcommand{\Sym}{\on{Sym}}
\newcommand{\Aut}{\on{Aut}}
\newcommand{\RG}{\on{R\Gamma}}
\newcommand{\Pic}{\on{Pic}}
\newcommand{\Bun}{\on{Bun}}
\newcommand{\Bunb}{\on{\overline{Bun}} }
\newcommand{\Bunt}{\on{\widetilde\Bun}}
\newcommand{\Spec}{\on{Spec}}
\newcommand{\Gr}{\on{Gr}}
\newcommand{\PSL}{\on{PSL}}
\newcommand{\Fr}{{\on{Fr}}}
\newcommand{\Eis}{{\on{Eis}}}
\newcommand{\pr}{\on{pr}}
\newcommand{\id}{\on{id}}
\newcommand{\QED}{$\square$} 
\newcommand{\Fq}{\mathbb{F}_q}  
\newcommand{\iso}{{\widetilde\to}}
\newcommand{\comp}{\circ}
\newcommand{\Four}{\on{Four}}
\renewcommand{\H}{{\on{H}}}   
\newcommand{\IH}{{\on{IH}}}
\newcommand{\D}{\on{D}}       
\newcommand{\wt}{\widetilde}
\newcommand{\ov}[1]{\overline{#1}}
\newcommand{\select}[1]{{\it{#1}}}
\newcommand{\<}{\langle}
\renewcommand{\>}{\rangle}
\newcommand{\Conv}{\on{Conv}}
\newcommand{\Loc}{\on{Loc}}
\newcommand{\Sph}{\on{Sph}}
\newcommand{\Res}{\on{Res}}
\newcommand{\ttimes}{\tilde\times}
\newcommand{\dimrel}{\on{dim.rel}}
\newcommand{\Funct}{\on{Funct}}
\newcommand{\Cov}{\on{Cov}}
\newcommand{\SL}{\on{SL}}
\newcommand{\tboxtimes}{\,\tilde\boxtimes\,}
\newcommand{\ra}{\rightarrow}
\newcommand{\la}{\leftarrow}
\nc{\Perv}{\on{Perv}}
\newcommand{\cPic}{\on{{\cP}ic}}
\nc{\Gra}{\on{Gra}}
\nc{\PPerv}{\on{{\PP}erv}}
\nc{\oX}{\overset{\scriptscriptstyle\circ}{X}}
\nc{\oBun}{\overset{\scriptscriptstyle\circ}{\Bun}}
\nc{\owtBun}{\overset{\scriptscriptstyle\circ}{\Bunt}}
\nc{\gRes}{\on{gRes}}
\nc{\Sign}{\on{Sign}}
\nc{\Tr}{\on{Tr}}
\newtheorem{Lm}{Lemma}[section]
\newtheorem{Th}{Theorem}[section]
\newtheorem{Pp}{Proposition}[section]
\newtheorem{Cor}{Corollary}[section]
\newtheorem{Con}{Conjecture}[section]
\theoremstyle{remark}
\newtheorem{Rem}{Remark}[section]
\theoremstyle{definition}
\newtheorem{Def}{Definition}[section]
\newenvironment{Prf}{\par\noindent {\it Proof }}{\QED}
\newcommand{\Step}[1]{\par\noindent{\bf Step {#1}}.}
\begin{document}

\title{Geometric Eisenstein series: twisted setting}
\author{Sergey Lysenko}
\begin{abstract} 
Let $G$ be a simple simply-connected group over an algebraically closed field $k$, $X$ be a smooth connected projective curve over $k$. In this paper we develop the theory of geometric Eisenstein series on the moduli stack $\Bun_G$ of $G$-torsors on $X$ in the setting of the quantum geometric Langlands program (for \'etale $\Qlb$-sheaves) in analogy with \cite{BG}. We calculate the intersection cohomology sheaf on the version of Drinfeld compactification in our twisted setting. In the case $G=\SL_2$ we derive some results about the Fourier coefficients of our Eisenstein series. In the case of $G=SL_2$ and $X=\PP^1$ we also construct the corresponding theta-sheaves and prove their Hecke property.
\end{abstract}

\maketitle
\tableofcontents


\section{Introduction}

\subsubsection{} 
\label{Section_101}
In this paper we develop an analog of the theory of geometric Eisenstein series from \cite{BG} for the twisted geometric Langlands correspondence. Here `twisted' refers to the quantum Langlands correspondence (as outlined in \cite{Ga2, Ga3, Sch}) with the quantum parameter being a root of unity.  In the case of a split torus the corresponding geometric setting was proposed in \cite{L}. 

 The conjectural extension of the Langlands program for metaplectic groups was proposed by M. Weissman in (\cite{W1,W2}, see also \cite{GG, Gao}). In his approach the metaplectic group is a central extension of a reductive group by a finite cyclic group coming from Brylinski-Deligne theory \cite{BD}.
Our approach is a geometrization of this evolving program.

 For motivations, take $k=\Fq$. Let $X$ be a smooth projective curve over $k$, $G$ be a split reductive group. Let $\AA$ be the adeles ring of $F=k(X)$, $\cO\subset\AA$ be the integer adeles. Let $n\ge 1$, $n\mid q-1$. A Brylinski-Deligne extension of $G$ by $K_2$ gives rise to an extension $1\to \mu_n(k)\to \tilde G\to G(\AA)\to 1$ together with its splittings over $G(F)$ and $G(\cO)$. 
Pick an injective character $\bar\zeta: \mu_n(k)\to\Qlb^*$. The global nonramified Langlands program for $\tilde G$ aims to find the spectral decomposition of the space $\Funct_{\bar\zeta}(G(F)\backslash \tilde G/G(\cO))$ of $\Qlb$-valued functions that change by $\bar\zeta$ under the action of $\mu_n(k)$. 

 A fundamental tool for this program is the theory of Eisenstein series. Let $M\subset P\subset G$ be a Levi subgroup of a parabolic subgroup of $G$. By restriction the above yield the metaplectic extensions $\tilde M$ (resp., $\tilde P$) of $M(\AA)$ (resp., $P(\AA)$). One gets the diagram of projections
$$
M(F)\backslash\tilde M/M(\cO) \getsup{\gq} P(F)\backslash \tilde P/P(\cO)\toup{\gp} G(F)\backslash \tilde G/G(\cO)\, .
$$
For a compactly supported function $f\in \Funct_{\bar\zeta}(M(F)\backslash\tilde M/M(\cO))$ the associated Eisenstein series (up to a normalization factor) is $\gp_!\gq^*f$. 

We study a geometric analog of this construction.
We work with  \'etale $\Qlb$-sheaves to keep a close relation with the more classical Langlands program for the corresponding metaplectic groups.

\subsubsection{} Let $k$ be an algebraically closed field, $G$ be a simple, simply-connected group over $k$. In this case there is a canonical Brylinski-Deligne extension of $G$ by $K_2$ (the others are obtained from it up to isomorphism as its multiples). 

 Let $X$ be a smooth projective connected curve over $k$. Let $\Bun_G$ be the stack of $G$-torsors on $X$. Pick $n\ge 1$. We introduce some $\mu_N$-gerb $\Bunt_G\to\Bun_G$ with $N=2\check{h}n$, it comes from the canonical Brylinski-Deligne extension of $G$ by $K_2$. Here $\check{h}$ is the dual Coxeter number for $G$. We pick an injective character $\zeta: \mu_N(k)\to\Qlb^*$ and consider the derived category $\D_{\zeta}(\Bunt_G)$ of \'etale $\Qlb$-sheaves on $\Bunt_G$, on which $\mu_N(k)$ acts by $\zeta$. To these metaplectic data there corresponds a dual group $\check{G}_n$ defined in \cite{FL}. The category $\Rep(\check{G}_n)$ of finite-dimensional representations of $\check{G}_n$ acts on $\D_{\zeta}(\Bunt_G)$ by Hecke functors. The twisted geometric Langlands problem in this case is to construct Hecke eigen-sheaves in $\D_{\zeta}(\Bunt_G)$ (or even a spectral decomposition thereof). 
 
  Let $P\subset G$ be a parabolic subgroup, $M$ its Levi factor. We similarly get a $\mu_N$-gerb $\Bunt_M\to\Bun_M$ and the derived category $\D_{\zeta}(\Bunt_M)$. As in \cite{FL}, one has the corresponding Levi subgroup $\check{M}_n\subset \check{G}_n$, and $\Rep(\check{M}_n)$ acts on $\D_{\zeta}(\Bunt_M)$ by Hecke functors. 
  
  The Eisenstein series from Section~\ref{Section_101} admits an immediate geometrization 
$$
\Eis': \D_{\zeta}(\Bunt_M)\to\D_{\zeta}(\Bunt_G)
$$ 
However, $\Eis'$ does not commute with the Verdier duality and may be improved replacing $\Bunt_P$ by the relative Drinfeld's compactification $\Bunt_{\tilde P}$ along the fibres of the projection $\Bun_P\to\Bun_G$ as in \cite{BG}.
  
  We define the corresponding compactified Eisenstein series functor $\Eis: \D_{\zeta}(\Bunt_M)\to \D_{\zeta}(\Bunt_G)$ and study its properties. It is defined using a twisted version $\IC_{\zeta}$ of the $\IC$-sheaf of $\Bunt_P$. One of our main results is the description of $\IC_{\zeta}$ generalizing \cite{ICDC}. As in \cite{BG}, we show that $\Eis$ commutes with the Hecke functors with respect to the embedding $\check{M}_n\subset \check{G}_n$. 
  
  We formulate a conjectural functional equation of $\Eis$. We also show as in \cite{BG} that the formation of Eisenstein series is transitive for the diagram $T\subset M\subset G$, where $T$ is a maximal torus of $M$. 
    
  In the case of $G=\SL_2$ we get some partial description of the Fourier coefficients of $\Eis$, the answer is expressed in terms of a sheaf that appeared in the book \cite{BFS} on factorizable sheaves (and also in \cite{Ga2, L5}). The relation so obtained between these Fourier coefficients and quantum groups seems a promising phenomenon that has to be better understood. 
   
    As an application, we get an important formula for the first Whittaker coefficient of our Eisenstein series for metaplectic extensions of $\SL_2$ (cf. Corollary~\ref{Cor_6.2}). It turns out to be an $\ell$-adic analog of the space of conformal blocks in Wess-Zumino-Witten model studied in \cite{BFS}. It also could be seen as a generalization of the notion of central value of an abelian L-function (cf. Remark~\ref{Rem_Witten_conf_blocks}). 
    
  Among other results, we construct new automorphic sheaves on $\Bunt_G$ in the case of $G=\SL_2$ and $X=\PP^1$ corresponding to the trivial $\check{G}_n$-local system and a principal $\SL_2\to \check{G}_n$ of Arthur. We call them theta-sheaves as they generalize the theta-sheaves studied in \cite{L4}.

\section{Main results}
\label{Section_Main_results}

\subsubsection{Notations} 
\label{Section_Notations}
Work over an algebraically closed field $k$. Let $G$ be a simple algebraic group over $k$. Assume it simply-connected (hopefully, the non simply-connected case could also be done using \cite{S}). Let $T\subset B\subset G$ be a maximal torus and a Borel subgroup. Let $\gg$ be the Lie algebra of $G$.  Write $\Lambda$ for the coweights lattice of $T$, $\check{\Lambda}$ for the weight lattice of $T$. Let $\Lambda^+$ be the set of dominant coweights, $\check{\Lambda}^+$ the dominant weights. Write $\check{h}$ for the dual Coxeter number of $G$. Write $W$ for the Weyl group of $(G,T)$, let $w_0$ be the longest element in $W$. Let $\cI$ denote the set of vertices of the Dynkin diagram of $G$. For $i\in\cI$ write $\alpha_i$ (resp., $\check{\alpha}_i$) for the simple coroot (resp., simple root) of $G$ corresponding to $i$. 

 We ignore the Tate twists everywhere (they are easy to recover if necessary). 

 Let $X$ be a smooth projective connected curve. Let $\Bun_G$ be the stack of $G$-torsors on $X$. Let $F=k(X)$. For $x\in X$ we let $D_x$ denote the formal neighbourhood of $x$ in $X$, $D_x^*$ the punctured formal neighbourhood of $x\in X$. A trivial $G$-torsor on a base is denoted $\cF^0_G$. 
 
  Let $\iota: \Lambda\otimes\Lambda\to\ZZ$ be the unique symmetric bilinear $W$-invariant form such that $\iota(\alpha,\alpha)=2$ for a short coroot $\alpha$. The induced map $\iota: \Lambda\to \check{\Lambda}$ is also denoted by $\iota$. If $\alpha$ is a simple coroot then $\iota(\alpha)=\frac{\iota(\alpha,\alpha)}{2}\check{\alpha}$. Our convention is that a super line is a $\ZZ/2\ZZ$-graded line. 
  
  Recall the groupoid $\cE^s(T)$ defined in (\cite{L}, Section~3.2.1). Its objects are pairs: a symmetric bilinear form $\kappa: \Lambda\otimes\Lambda\to\ZZ$ and a central super extension $1\to k^*\to \tilde\Lambda^s\to\Lambda\to 1$ such that its commutator is $(\gamma_1, \gamma_2)_c=(-1)^{\kappa(\gamma_1,\gamma_2)}$. This means that for every $\gamma\in\Lambda$ we are given a super line $\epsilon^{\gamma}$, and for $\gamma_1,\gamma_2\in\Lambda$ a $\ZZ/2\ZZ$-graded isomorphism 
\begin{equation}
\label{iso_c_gamma_i_section_notations}
c^{\gamma_1,\gamma_2}: \epsilon^{\gamma_1}\otimes \epsilon^{\gamma_2}\,\iso\, \epsilon^{\gamma_1+\gamma_2}
\end{equation}
such that $c$ is associative, and one has $c^{\gamma_1,\gamma_2}=(-1)^{\kappa(\gamma_1,\gamma_2)}c^{\gamma_2,\gamma_1}\sigma$. Here $\sigma: \epsilon^{\gamma_1}\otimes \epsilon^{\gamma_2}\,\iso\, \epsilon^{\gamma_2}\otimes \epsilon^{\gamma_1}$ is the super commutativity constraint. Then $\cE^s(T)$ is a Picard groupoid with respect to the tensor product of central extensions. 
  
We have a canonical object $(\iota, \tilde\Lambda^{can})\in \cE^s(T)$ corresponding to a canonical extension of $G$ by $K_2$ in the sense of (\cite{BD}, Theorem~4.7). It is equipped with a $W$-equivariant structure. 
We pick once and for all a square root $\cE_X$ of $\Omega$. 

 Recall the Picard groupoid $\cP^{\theta}(X,\Lambda)$ of $\theta$-data from (\cite{L}, Section~4.2.1). Its objects are triples $\theta=(\kappa,\lambda, c)$, where $\kappa:\Lambda\otimes\Lambda\to\ZZ$ is a symmetric bilinear form, $\lambda$ is a rule that assigns to each $\gamma\in\Lambda$ a super line bundle $\lambda^{\gamma}$ on $X$, and $c$ is a rule that assigns to each pair $\gamma_1,\gamma_2\in\Lambda$ an isomorphism $c^{\gamma_1,\gamma_2}: \lambda^{\gamma_1}\otimes \lambda^{\gamma_2}\,\iso\, \lambda^{\gamma_1+\gamma_2}\otimes\Omega^{\kappa(\gamma_1,\gamma_2)}$ on $X$. They are subjects to the conditions from \select{loc.cit}. In particular, the parity of $\lambda^{\gamma}$ is $\kappa(\gamma,\gamma)\!\mod 2$. 

Denote by $\theta^{can}\in \cP^{\theta}(X,\Lambda)$ the image of $(\iota, \tilde\Lambda^{can})$ under the functor $\cE^s(T)\to \cP^{\theta}(X,\Lambda)$ of (\cite{L}, Lemma~4.1). That is, $\theta^{can}=(\iota, \lambda, {'c})$, where $\lambda^{\gamma}=\cE_X^{\otimes -\iota(\gamma,\gamma)}\otimes \epsilon^{\gamma}$, and 
$$
'c^{\gamma_1,\gamma_2}: \lambda^{\gamma_1}\otimes \lambda^{\gamma_2}\,\iso\, \lambda^{\gamma_1+\gamma_2}\otimes\Omega^{\iota(\gamma_1,\gamma_2)}
$$
is the evident product obtained from (\ref{iso_c_gamma_i_section_notations}). 

 For an algebraic stack $S$ locally of finite type write $\D(S)$ for the category introduced in (\cite{LO}, Remark~3.21) and denoted $\D_c(S,\Qlb)$ in \select{loc.cit.} It should be thought of as the unbounded derived category of constructible $\Qlb$-sheaves on $S$. 
  
  If $V\to S$ and $V^*\to S$ are dual rank $r$ vector bundles on a base stack $S$, we normalize the Fourier transform $\Four_{\psi}: \D^b(V)\to \D^b(V^*)$ by $\Four_{\psi}(K)=(p_{V*})_!(\xi^*\cL_{\psi}\otimes p_V^*K)[r]$, where $p_V,p_{V^*}$ are the projections, and $\xi: V\times V^*\to\A^1$ is the pairing. 
  
 
 If $S$ is a stack, $L$ is a super line bundle on $S$ purely of parity zero, we will use the stack of $n$-th roots of $L$. Its $T$-point is a map $T\to S$, a super line bundle $\cU$ on $T$ purely of parity zero, and a $\ZZ/2\ZZ$-graded isomorphism $\cU^n\,\iso\, L\mid_{T}$.   
 
\subsubsection{}  
\label{Section_202}
If $M\subset G$ is a Levi subgroup, denote by $\Bun_M$ the stack of $M$-torsors on $X$. For $\mu\in \pi_1(M)$, we denote by $\Bun_M^{\mu}$ the connected component of $\Bun_M$ classifying $M$-torsors on degree $-\mu$. This notation agrees with \cite{BG}, but does not agree with \cite{L}. Write $\cPic(\Bun_T)$ for the Picard groupoid of super line bundles on $\Bun_T$. If $\theta=(\kappa, \lambda, c)$ is an object of $\cP^{\theta}(X,\Lambda)$ we also denote by $\lambda$ the super line bundle on $\Bun_T$ obtained from $\theta$ via the functor $\cP^{\theta}(X,\Lambda)\to \cPic(\Bun_T)$ defined in (\cite{L}, Section~4.2.1, formula (18)). 

  The group $T$ acts on $\Bun_T$ by 2-automorphisms, so if $\cF\in\Bun_T$ then $T$ acts naturally on the fibre at $\cF$ of each line bundle on $\Bun_T$. According to our convention, for $\cF\in\Bun_T^{\mu}$, $\mu\in\Lambda$ the group $T$ acts on $\lambda_{\cF}$ by $-\kappa(\mu)$. 
 
\subsubsection{} 
\label{Section_0.2}
Let $\cL$ be the line bundle on $\Bun_G$ with fibre $\det\RG(X, \gg_{\cF})^{-1}\otimes \det\RG(X, \gg\otimes\cO)$ at $\cF\in \Bun_G$. This notation agrees with that of \cite{FL}. Pick $n$ invertible in $k$. Pick a line bundle $\cL_c$ on $\Bun_G$ equipped with $\cL_c^{2\check{h}}\,\iso\, \cL$, here $\cL_c$ is a generator of $\Pic(\Bun_G)\,\iso\, \ZZ$. 

 Let $\Bunt_{G, \cL_c}$ be the stack of $n$-th roots of $\cL_c$. Let $\bar\zeta: \mu_n(k)\to\Qlb^*$ be an injective character. We are interested in the derived category $\D_{\bar\zeta}(\Bunt_{G, \cL_c})$ of $\Qlb$-sheaves on $\Bunt_{G, \cL_c}$, on which $\mu_n(k)$ acts by $\bar\zeta$. 
 
 Assume that $N=2\check{h}n$ is invertible in $k$. Write $\Bunt_G$ for the gerb of $N$-th roots of $\cL$ over $\Bun_G$. Pick an injective character $\zeta: \mu_{N}(k)\to\Qlb^*$ such that $\zeta\mid_{\mu_n(k)}=\bar\zeta$. Denote by $\D_{\zeta}(\Bunt_G)$ the derived category of $\Qlb$-sheaves on $\Bunt_G$ on which $\mu_{N}(k)$ acts by $\zeta$. We have a natural map
$\alpha: \Bunt_{G, \cL_c}\to\Bunt_G$, and $\alpha^*: \D_{\zeta}(\Bunt_G)\to\D_{\bar\zeta}(\Bunt_{G, \cL_c})$ is an equivalence. 

Let $\check{G}_n$ be the $n$-th dual group of $G$ over $\Qlb$ defined in (\cite{FL}, Theorem~2.9). By construction, it is equipped with the Borel subgroup $\check{B}_n$ corresponding to $B\subset G$. 
 
 Let $\cL_T$ be the restriction of $\cL$ under the natural map $\Bun_T\to\Bun_G$. For $\check{\lambda}\in \check{\Lambda}$ and $\cF\in\Bun_T$  denote by $\cL^{\check{\lambda}}_{\cF}$ the line bundle on $X$ obtained from $\cF$ via the extensions of scalars $\check{\lambda}: T\to\Gm$. Given $\check{\lambda}\in \check{\Lambda}$ let $R^{\check{\lambda}}$ be the line bundle on $\Bun_T$ defined in (\cite{L}, 5.2.6, Example (2)). The fibre of $R^{\check{\lambda}}$ at $\cF\in\Bun_T$ is
$$
\det\RG(X, \cL^{\check{\lambda}}_{\cF})\otimes\det\RG(X, \cL^{-\check{\lambda}}_{\cF})\otimes\det\RG(X, \cO)^{-2}
$$
One has $\cL_T^{-1}\,\iso\,\otimes_{\check{\alpha}>0} R^{\check{\alpha}}$, the product being taken over the positive roots of $G$. Set
$
\kappa=-\sum_{\check{\alpha}>0} 2(\check{\alpha}\otimes \check{\alpha})
$, the sum over the positive roots of $G$. This is a symmetric bilinear form $\kappa: \Lambda\otimes\Lambda\to\ZZ$. By (\cite{FL}, Lemma~2.1), we have $\kappa=-2\check{h}\iota$. 

 Define $\theta^{Kil}=(\kappa,\lambda, c)$ by $\theta^{Kil}=(\theta^{can})^{-2\check{h}}\in \cP^{\theta}(X,\Lambda)$, here $Kil$ refers to the Killing form on $\Lambda$.
The corresponding line bundle $\lambda$ on $\Bun_T$ identifies canonically with $\cL_T$.
 
 Let $\Bunt_T$ be the gerb of $N$-th roots of $\cL_T$. Let $\D_{\zeta}(\Bunt_T)$ be the derived category of $\Qlb$-sheaves on $\Bunt_T$ on which $\mu_{N}(k)$ acts by $\zeta$, similarly for $G$. Let
$$
\Lambda^{\sharp}=\{\mu\in\Lambda\mid \kappa(\mu,\nu)\in N\ZZ\;\,\mbox{for all}\; \nu\in \Lambda\}=\{\mu\in\Lambda\mid \iota(\mu,\nu)\in n\ZZ\;\,\mbox{for all}\; \nu\in \Lambda\}
$$ 
Set $T^{\sharp}=\Gm\otimes\Lambda^{\sharp}$. 
 
  In Section~\ref{Section_3.1} we define a $\mu_N\times\mu_N$-gerb $\Bunb_{\tilde B}\to \Bunb_B$ together with a daigram
$$
\Bunt_T\getsup{\tilde\gq} \Bunb_{\tilde B}\toup{\tilde\gp}\Bunt_G
$$  
We also define the perverse sheaf $\IC_{\zeta}$ on $\Bunb_{\tilde B}$. It gives rise to the Eisenstein series functor $\Eis: \D_{\zeta}(\Bunt_T)\to \D_{\zeta}(\Bunt_G)$ given by
$$
\Eis(K)=\tilde\gp_!(\tilde\gq^*K\otimes \IC_{\zeta})[-\dim\Bun_T]
$$
The analog of (\cite{BG}, Theorem~2.1.2) in our setting is as follows.
\begin{Th}
\label{Th_Eis_commutes_with_Verdier}
i) The functor $\D_{\zeta}(\Bunt_T)\to \D(\Bunb_{\tilde B})$, $K\mapsto \tilde\gq^*K\otimes\IC_{\zeta}[-\dim\Bun_T]$ is exact for the perverse t-structures and commutes with the Verdier duality. \\
ii) The functor $\Eis$ commutes with the Verdier duality. 
\end{Th}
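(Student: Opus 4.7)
The plan is to follow the strategy of (\cite{BG}, Theorem~2.1.2), with the twisted description of $\IC_\zeta$ playing the role of the untwisted $\IC$ sheaf of $\Bunb_B$. Part (ii) will be deduced from part (i) together with properness of $\tilde\gp$. The projection $\bar\gp: \Bunb_B \to \Bun_G$ is proper by (\cite{BG}, Proposition~1.2.2), and both gerbe projections $\Bunb_{\tilde B}\to\Bunb_B$ and $\Bunt_G\to\Bun_G$ are proper (finite groups scheme gerbes); hence $\tilde\gp$ is proper, so $\tilde\gp_!=\tilde\gp_*$ commutes with Verdier duality. Combined with (i) this yields (ii).

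For part (i), let $j:\owtBun_B\hookrightarrow\Bunb_{\tilde B}$ be the open substack lying over $\Bun_B\subset\Bunb_B$, and set $d=\dim\Bun_B-\dim\Bun_T$. First I would check the statement on the open locus. The map $\gq:\Bun_B\to\Bun_T$ is smooth of relative dimension $d$ (its fibres being $H^1$-torsors for the unipotent radical), and smoothness is preserved on passing to the gerbes. By construction, $\IC_\zeta$ restricts on $\owtBun_B$ to the shifted constant perverse sheaf $\Qlb[\dim\owtBun_B]$ carrying the natural $\mu_N$-equivariance, so on $\owtBun_B$
$$
\bigl(\tilde\gq^*K\otimes\IC_\zeta\bigr)[-\dim\Bun_T]\;\simeq\;(\tilde\gq\circ j)^*K\,[d],
$$
which is perverse t-exact in $K$ and self-Verdier-dual by smoothness of $\gq$.

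It remains to extend the identification to all of $\Bunb_{\tilde B}$. Since by definition $\IC_\zeta=j_{!*}\bigl(\IC_\zeta\mid_{\owtBun_B}\bigr)$, the remaining content is to show that $\tilde\gq^*K$-tensoring commutes with Goresky--MacPherson extension, i.e.\
$$
\tilde\gq^*K\otimes\IC_\zeta[-\dim\Bun_T]\;\simeq\;j_{!*}\bigl((\tilde\gq\circ j)^*K\,[d]\bigr)
$$
for every perverse $K$ on $\Bunt_T$. I would verify this using the defect stratification of $\Bunb_B$ indexed by $\theta\in\Lambda^{pos}$ together with the factorization description of $\IC_\zeta$ on each boundary stratum (one of the main results of the paper): on a stratum of defect $\theta$, $\IC_\zeta$ factors, up to $\tboxtimes$, into the restriction of $\IC_\zeta$ from the open locus and a factorization sheaf pulled back from the symmetric power $X^\theta$, the latter being independent of the $\Bunt_T$-direction. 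Thus $\tilde\gq^*K$-tensoring commutes with this factorization, and the stalk/costalk bounds needed to certify the IC-extension identity reduce stratum by stratum to the corresponding untwisted statements from (\cite{BG}, Section~4).

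The main obstacle is the extension step above: one must propagate the factorization/stalk description of $\IC_\zeta$ across the $\tilde\gq^*K$-tensor and confirm that no spurious boundary contributions appear. In the untwisted case this is essentially (\cite{BG}, Theorem~4.1.1); in the twisted case it requires tracking the $\zeta$-equivariance through the factorization structure, which is precisely the input furnished by the description of $\IC_\zeta$ developed later in the paper. Once that description is in hand, the remaining perverse-extension estimates are formally parallel to the untwisted ones, so no new combinatorial input should be required.
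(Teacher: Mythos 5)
The paper does not prove this theorem by the stratum-by-stratum route you propose. Instead, it proves Proposition~\ref{Pp_ULA_property}: the perverse sheaf $\IC_\zeta$ is ULA with respect to $\tilde\gq: \Bunb_{\tilde B}\to\Bunt_T$, via a Hecke-modification argument in which a large modification stack $\cH^?_{\tilde G}$ is introduced, and the pullback of $\IC_\zeta$ along $'h^\ra_G$ is compared to a $\boxtimes$-product on a source that is smooth over $\Bunt_T$. The theorem then drops out by the general ULA machinery of (\cite{BG}, Section~5.1.2); your observation that properness of $\tilde\gp$ gives (ii) from (i) is consistent with that step.

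Your alternative route has a genuine gap. You reduce the theorem to the claim that
$\tilde\gq^*K\otimes\IC_\zeta[-\dim\Bun_T]\simeq j_{!*}\bigl((\tilde\gq\circ j)^*K[d]\bigr)$
for every perverse $K$, i.e.\ that tensoring with $\tilde\gq^*K$ commutes with intermediate extension. This is strictly stronger than what the theorem asserts, and it is not an automatic consequence of the ULA property or of perversity: already for $f=\id:\A^1\to\A^1$, $L=\Qlb_{\A^1}[1]$ (ULA and the IC sheaf of the total space) and $K=\delta_0$, the product $f^*K\otimes L[-1]=\delta_0$ is perverse and self-dual but is not the intermediate extension of its (zero) restriction to $\A^1\setminus\{0\}$. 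So the $j_{!*}$-identity needs a genuine argument, not a formal one. Moreover, the verification you sketch addresses only the $*$-restriction side: the factorization description of $\IC_\zeta$ (Theorem~\ref{Th_main_Section1}) gives $*$-stalks on the defect strata, but for the $j_{!*}$-identity you also need the $!$-restrictions of the tensor product to lie in strictly positive perverse degrees. Obtaining the $!$-restrictions by appealing to Verdier duality is circular, since commutation with duality is exactly the conclusion of the theorem; computing them independently is precisely the additional input that the ULA approach is designed to bypass. Finally, using Theorem~\ref{Th_main_Section1} here would invert the paper's logical order, as that description is established much later (Section~5) than the present theorem (Section~3.4). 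If you want to keep a more hands-on argument, the correct replacement for the $j_{!*}$-identity is the ULA criterion: establish that $\IC_\zeta$ is ULA over $\Bunt_T$ (which needs only the Hecke-modification trick, not the full $\IC$-description) and then invoke (\cite{BG}, Lemma~5.1.2) directly.
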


Write $\check{T}^{\sharp}$ for the Langlands dual to $T^{\sharp}$ over $\Qlb$. Recall that $\check{T}^{\sharp}\subset \check{B}_n\subset\check{G}_n$ is canonically included as a maximal torus. Set $\Lambda^{\sharp, +}=\Lambda^{\sharp}\cap\Lambda^+$, these are dominant weights of $\check{G}_n$. For $\nu\in \Lambda^{\sharp,+}$ denote by $V^{\nu}$ the irreducible representation of $\check{G}_n$ with highest weight $\nu$. For $\mu\in \Lambda^{\sharp}$ write $V^{\nu}(\mu)\subset V^{\nu}$ for the subspace on which $\check{T}^{\sharp}$ acts by $\mu$. 

 In Section~\ref{section_Hecke_functors_for_tilde_G} we define the action of the category of representations $\Rep(\check{G}_n)$ by the Hecke functors on $\D_{\zeta}(\Bunt_G)$. For $\nu\in\Lambda^{\sharp,+}$ we get the Hecke functor $\H^{\nu}_G: \D_{\zeta}(\Bunt_G)\to \D_{\zeta}(\Bunt_G\times X)$. 

 The action of $\Rep(\check{T}^{\sharp})$ on $\D_{\zeta}(\Bunt_T)$ by Hecke functors is defined in Section~\ref{section_Hecke_functors_for_T}. For $\nu\in\Lambda^{\sharp}$ we get the Hecke functor $\H^{\nu}_T: \D_{\zeta}(\Bunt_T)\to \D_{\zeta}(\Bunt_T\times X)$. The following is an analog of (\cite{BG}, Theorem~2.1.5) in our setting. 

\begin{Th}
\label{Th_1} For each $\nu\in\Lambda^{\sharp,+}$ and $K\in \D_{\zeta}(\Bunt_T)$ one has a functorial isomorphism
$$
\H^{\nu}_G\Eis(K)\,\iso\, \oplus_{\mu\in\Lambda^{\sharp}} (\Eis\boxtimes\id)\H^{\mu}_T(K)\otimes V^{\nu}(\mu),
$$
where $\Eis\boxtimes\id: \D_{\zeta}(\Bunt_T\times X)\to\D_{\zeta}(\Bunt_G\times X)$ is the corresponding functor. 
\end{Th}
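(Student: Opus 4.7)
The proof follows the strategy of \cite{BG}, Theorem~2.1.5, substituting the twisted geometric Satake formalism of \cite{FL} for its classical counterpart. The plan is to compute both sides by pulling back to a common Hecke-type correspondence over $\Bunb_{\tilde B}\times X$ and to compare the pushforwards stratum by stratum using the semi-infinite decomposition of the affine Grassmannian of $G$.

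First, I would form the fibered product
\[
\cY^{\nu}=\Hecke^{\nu}_G\times_{\Bunt_G}\Bunb_{\tilde B},
\]
where the map $\Bunb_{\tilde B}\to\Bunt_G$ is $\tilde\gp$ and $\Hecke^{\nu}_G$ is the twisted $G$-Hecke correspondence from Section~\ref{section_Hecke_functors_for_tilde_G}. Applying smooth and proper base change to $\tilde\gp$ and to the two legs of $\Hecke^{\nu}_G$, I identify $\H^{\nu}_G\Eis(K)$ with the pushforward to $\Bunt_G\times X$ of an object on $\cY^{\nu}$ built from $\tilde\gq^*K\otimes\IC_{\zeta}$ convolved with the twisted Satake sheaf on $\Gr^{\nu}_G$.

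Next, I would stratify $\cY^{\nu}$ by the relative position, with respect to the source $B$-reduction, of the $G$-bundle produced by the Hecke modification. Over the open locus of $\Bunb_{\tilde B}$ where the $B$-structure is genuine, the stratum indexed by $\mu\in\Lambda$ is isomorphic to the base change of the $T$-Hecke correspondence $\Hecke^{\mu}_T$ to $\Bunb_{\tilde B}\times X$, fibered by the intersection of $\Gr^{\nu}_G$ with the semi-infinite orbit $S^{\mu}\subset\Gr_G$. The twisted Mirković--Vilonen theorem of \cite{FL} now says that the $!$-restriction of the twisted Satake sheaf on $\Gr^{\nu}_G$ to $S^{\mu}$ has cohomology of dimension $\dim V^{\nu}(\mu)$ concentrated in a single degree, so that the contribution of this stratum to the pushforward is exactly $(\Eis\boxtimes\id)\H^{\mu}_T(K)\otimes V^{\nu}(\mu)$. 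The restriction to $\mu\in\Lambda^{\sharp}$ is automatic, since $V^{\nu}(\mu)=0$ for $\mu\notin\Lambda^{\sharp}$ by the very definition of $\check{G}_n$.

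The main obstacle is to show that the boundary strata of $\Bunb_{\tilde B}$ inside $\cY^{\nu}$, where the $B$-reduction degenerates, do not contribute any extra terms: the whole output must come from the open genuine-$B$ locus. In \cite{BG} this rests on the factorization of $\IC$ on the Drinfeld compactification together with its explicit description in \cite{ICDC}. In the present twisted setting one needs the twisted analog, namely the description of $\IC_{\zeta}$ established earlier in the paper, combined with its compatibility with Hecke modifications along the unipotent radical. Once this vanishing is in hand, the stratum-by-stratum contributions assemble into the direct sum of the theorem, and Theorem~\ref{Th_Eis_commutes_with_Verdier} guarantees that the intermediate object $\tilde\gq^*K\otimes\IC_{\zeta}$ behaves well enough under the base changes involved for the argument to go through cleanly.
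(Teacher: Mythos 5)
Your strategy is essentially the paper's strategy: form the basic Hecke-type correspondence (the paper's $\bar Z=\ov{\cH}^{\nu}_{\tilde G}\times_{\Bunt_G}\Bunb_{\tilde B}$, your $\cY^{\nu}$), stratify it by the defects of the $B$-reduction on both sides together with the semi-infinite orbits in the affine Grassmannian, and read off the top perverse cohomology from the twisted Mirkovi\'c--Vilonen statement of \cite{FL} (recalled as Proposition~\ref{Pp_description_weight_space}). So the skeleton of your argument matches.

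Where your account goes astray is in identifying what drives the boundary-stratum vanishing. You claim one needs ``the description of $\IC_{\zeta}$ established earlier in the paper,'' i.e.\ the twisted analogue of \cite{ICDC}. But that description is Theorem~\ref{Th_main_Section1}, which appears \emph{after} Theorem~\ref{Th_1} and is proved only in Section~\ref{Section_Zastava spaces} and beyond --- it is not available at this stage, and more to the point it is not needed. The vanishing in Proposition~\ref{Pp_K_three_superscripts} uses only three soft inputs: (i) the defining property of the intermediate extension, namely that $\IC_{\zeta}$ restricted to the boundary stratum $_{x,\mu'}\Bunb_{\tilde B}$ with $\mu'\neq 0$ lies in strictly negative perverse degree; (ii) the fact that $\cA^{-w_0(\nu)}_{\cE}$ restricted to a smaller orbit $\wt\Gr_G^{-w_0(\nu')}$ with $\nu'<\nu$ is constant in strictly negative perverse degree; and (iii) the fiber-dimension bounds from Lemma~\ref{Lm_some_fibrations_description} together with the general principle that $f_!$ of a perverse sheaf is in perverse degree $\le d$ when $f$ has fiber dimension $\le d$. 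These suffice, exactly as in \cite{BG}, Theorem~2.1.5, which is likewise proved without the \cite{ICDC} description. Replacing your appeal to the $\IC_{\zeta}$ description by the IC-property plus these dimension counts would make your argument self-contained and remove the implicit forward reference.
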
 

 One checks in addition that the isomorphism of Theorem~\ref{Th_1} is compatible with the convolution of Hecke functors. 
 
 Let $E$ be a $\check{T}^{\sharp}$-local system on $X$. For $\nu\in \Lambda^{\sharp}$ denote by $E^{\nu}$ the local system obtained from $E$ 
via the extension of scalars $\nu: \check{T}^{\sharp}\to\Gm$. Let $\cK_E$ be the eigen-sheaf on $\Bunt_T$ constructed in (\cite{L}, Proposition~2.2). It satisfies the isomorphisms $\H^{\nu}_T(\cK_E)\,\iso\, \cK_E\boxtimes E^{-\nu}[1]$ for $\nu\in\Lambda^{\sharp}$. That is, $\cK_E$ is a $E^*$-Hecke eigen-sheaf.

\begin{Cor} 
\label{Cor_very_first}
Let $E_{\check{G}_n}$ be the $\check{G}_n$-local system induced from $E^*$. Then $\Eis(\cK_E)$ is a $E_{\check{G}_n}$-Hecke eigen-sheaf in $\D_{\zeta}(\Bunt_G)$. 
\end{Cor}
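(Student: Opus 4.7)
The plan is to apply Theorem~\ref{Th_1} directly with $K=\cK_E$ and then substitute the eigenproperty of $\cK_E$ on the torus side. By Theorem~\ref{Th_1}, for $\nu\in\Lambda^{\sharp,+}$ we have
$$
\H^{\nu}_G\Eis(\cK_E)\,\iso\, \bigoplus_{\mu\in\Lambda^{\sharp}}(\Eis\boxtimes\id)\,\H^{\mu}_T(\cK_E)\otimes V^{\nu}(\mu).
$$
Using $\H^{\mu}_T(\cK_E)\,\iso\,\cK_E\boxtimes E^{-\mu}[1]$ and the fact that $\Eis\boxtimes\id$ commutes with exterior tensor product against a local system on $X$, this becomes
$$
\H^{\nu}_G\Eis(\cK_E)\,\iso\, \Eis(\cK_E)\boxtimes\Bigl(\bigoplus_{\mu\in\Lambda^{\sharp}} E^{-\mu}\otimes V^{\nu}(\mu)\Bigr)[1].
$$

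Next I would identify the right-hand local system. Since $E_{\check{G}_n}$ is the $\check{G}_n$-local system induced from $E^*$ via $\check{T}^{\sharp}\hookrightarrow\check{G}_n$, the local system on $X$ attached to $E_{\check{G}_n}$ via the representation $V^{\nu}$ is obtained by restricting $V^{\nu}$ to $\check{T}^{\sharp}$ and gluing along $E^*$; concretely its weight-$\mu$ piece is $V^{\nu}(\mu)\otimes (E^*)^{\mu}=V^{\nu}(\mu)\otimes E^{-\mu}$. Summing over $\mu\in\Lambda^{\sharp}$ gives exactly the parenthesized expression above. Thus
$$
\H^{\nu}_G\Eis(\cK_E)\,\iso\, \Eis(\cK_E)\boxtimes V^{\nu}_{E_{\check{G}_n}}[1],
$$
which is the eigen-property for $E_{\check{G}_n}$ with respect to the highest-weight Hecke functor $\H^{\nu}_G$.

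To finish one must check that these isomorphisms fit together into a genuine eigen-structure, i.e.\ that they are compatible with the tensor product of representations and with the action of morphisms in $\Rep(\check{G}_n)$. This reduces to the already-noted compatibility of the isomorphism in Theorem~\ref{Th_1} with the convolution/tensor structure (stated right after the theorem) together with the corresponding compatibility of the eigen-property of $\cK_E$ on the torus side, established in (\cite{L}, Proposition~2.2).

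The only nontrivial point I anticipate is this last bookkeeping: verifying that the natural transformations coming from morphisms $V^{\nu_1}\otimes V^{\nu_2}\to V^{\nu_1+\nu_2}$ (and associativity thereof) correspond under the isomorphism above to the obvious matching morphisms on the torus side, with all shifts, signs, and the identification between induction from $\check{T}^{\sharp}$ and weight decomposition handled coherently. Modulo this compatibility, which is formal from Theorem~\ref{Th_1} and the $\Rep(\check{T}^{\sharp})$-equivariance of $\cK_E$, the corollary is immediate.
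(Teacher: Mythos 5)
Your proof is correct and is exactly the intended deduction: the paper gives no separate proof of Corollary~\ref{Cor_very_first}, presenting it as an immediate consequence of Theorem~\ref{Th_1} combined with the eigen-property $\H^{\nu}_T(\cK_E)\iso\cK_E\boxtimes E^{-\nu}[1]$ and the compatibility-with-convolution statement that follows Theorem~\ref{Th_1}. Your identification $\bigoplus_{\mu}V^{\nu}(\mu)\otimes E^{-\mu}=\bigoplus_{\mu}V^{\nu}(\mu)\otimes(E^*)^{\mu}=V^{\nu}_{E_{\check{G}_n}}$ and the final remark about checking the tensor-compatibility are precisely the points the paper leaves to the reader.
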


\subsubsection{} Write $C^*(\check{G}_n)$ for the cocenter of $\check{G}_n$, the quotient of $\Lambda^{\sharp}$ by the roots lattice of $\check{G}_n$. The category $\Rep(\check{G}_n)$ is graded by $C^*(\check{G}_n)$ according to the action of the center of $\check{G}_n$. In Section~\ref{section_Some_gradings} we introduce the corresponding grading on the derived category $\D_{\zeta}(\Bunt_G)$. We show in Proposition~\ref{Lm_great_about_gradings} that the Hecke functors for $G$ are compatible with these gradings on $\Rep(\check{G}_n)$ and on $\D_{\zeta}(\Bunt_G)$. This generalizes (\cite{L2}, Lemma~1). We also describe the corresponding grading on the geometric Eisenstein series in Section~\ref{section_Some_gradings}. 

 Write $\rho_n$ for the half sum of positive roots of $\check{G}_n$. In Section~\ref{Section_Towards the functional equation} we define the twisted $W$-action on $\Bunt_T$ and formulate the conjectural functional equation for $\Eis$  (Conjecture~\ref{Con_functional_equation}). To this end, we also introduce the full triangluated subcategory $\D_{\zeta}(\Bunt_G)^{reg}\subset \D_{\zeta}(\Bunt_G)$ of regular complexes. The appearance of the shift by $\rho_n$ here is analogous to the shift by $\rho$ in the functional equation for the usual geometric Eisenstein series (\cite{BG}, Theorem~2.1.8). Our formulation of the functional equation is justified by the fact that it is compatible with our calculation of the constant terms of $\Eis$ for $G=\SL_2$ (see Proposition~\ref{Pp_CT_of_Eis}). Besides, it agrees with the results of \cite{L2}. In view of Theorem~\ref{Th_composing_Eis} below, the proof of the functional equation is reduced to the case of rank one. However, we don't know how to prove it for groups of rank one. 
 
 In Section~\ref{Section_action_Bun_Z(G)} we give a relation between the action of $\Bun_{Z(G)}$ on $\Bunt_G$ and Hecke functors (and also the action of $\Bun_{Z(G)}$ on the Eisenstein series). 
 
\subsubsection{Parabolic Eisenstein series} Let $P\subset G$ be a parabolic containing $B$, $M$ be its Levi factor. Write $\cI_M\subset\cI$ for the corresponding subset. Write $\Lambda_{G,P}$ for the quotient of $\Lambda$ by the span on $\alpha_i, i\in\cI_M$. Let $\cL_M$ denote the restriction of $\cL$ under $\Bun_M\to\Bun_G$. Let $\Bunt_M$ denote the gerb of $N$-th roots of $\cL_M$.  

 In Section~\ref{Section_4.1} we define a diagram of projections
$$ 
\Bunt_M\getsup{\tilde\gq} \Bunt_{\tilde P} \toup{\tilde\gp} \Bunt_G
$$
and a perverse sheaf $\IC_{\zeta}$ on $\Bunt_{\tilde P}$ generalizing our previous definition for $B$. 
It gives rise to the parabolic Eisenstein series functor
$\Eis: \D_{\zeta}(\Bunt_M)\to\D_{\zeta}(\Bunt_G)$ given by
$$
\Eis(K)=\tilde\gp_!(\tilde\gq^*K\otimes\IC_{\zeta})[-\dim\Bun_M]
$$
We write $\Eis_M^G=\Eis$ if we need to express the dependence on $M$. 
\begin{Th} 
\label{Th_Eis_P_commutes_with_Verdier}
i) The functor $\D_{\zeta}(\Bunt_M)\to \D(\Bunt_{\tilde P}), K\mapsto \tilde\gq^*K\otimes\IC_{\zeta}[-\dim\Bun_M]$ is exact for the perverse t-structures and commutes with the Verdier duality.\\
ii) The functor $\Eis: \D_{\zeta}(\Bunt_M)\to\D_{\zeta}(\Bunt_G)$ commutes with the Verdier duality.
\end{Th}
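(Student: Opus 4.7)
The strategy is to mimic the proof of Theorem~\ref{Th_Eis_commutes_with_Verdier} in the Borel case, which in turn follows the template of \cite{BG}. The core technical input is a universally locally acyclic (ULA) property of the perverse sheaf $\IC_{\zeta}$ with respect to the projection $\tilde\gq: \Bunt_{\tilde P}\to\Bunt_M$. Granting this, both (i) and (ii) are formal consequences, together with properness of $\tilde\gp$.

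For part (i), I would first verify the statement on the open substack $\Bun_P\hookrightarrow \Bunt_{\tilde P}$ (pulled back to the gerb): there $\gq$ is smooth of known relative dimension, and $\IC_{\zeta}$ restricts to a shifted twisted local system, so $\tilde\gq^*K\otimes \IC_{\zeta}[-\dim\Bun_M]$ is manifestly perverse and Verdier self-dual whenever $K$ is. To propagate the statement to the whole of $\Bunt_{\tilde P}$, I would invoke the ULA property of $\IC_{\zeta}$ along $\tilde\gq$: if $\IC_{\zeta}$ is ULA over $\Bunt_M$, then for any $K\in \D_{\zeta}(\Bunt_M)$ the complex $\tilde\gq^*K\otimes \IC_{\zeta}[-\dim\Bun_M]$ is perverse whenever $K$ is, and its Verdier dual is computed as $\tilde\gq^*\DD(K)\otimes \DD(\IC_{\zeta})[-\dim\Bun_M]$. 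Since $\IC_{\zeta}$ is (by construction) an IC-extension from the smooth open locus and hence Verdier self-dual, this yields both the t-exactness and the commutation with $\DD$ claimed in (i).

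For part (ii), the key additional ingredient is properness of $\tilde\gp: \Bunt_{\tilde P}\to\Bunt_G$, which is inherited from the properness of the relative Drinfeld compactification $\Bunb_P\to\Bun_G$ along the fibres of $\Bun_P\to\Bun_G$ (our $\Bunt_{\tilde P}$ is a gerb over this classical compactification). Properness forces $\tilde\gp_!\iso \tilde\gp_*$, so it commutes with $\DD$; combined with (i) we obtain
$$
\DD\Eis(K)\,\iso\, \tilde\gp_*\DD\bigl(\tilde\gq^*K\otimes\IC_{\zeta}[-\dim\Bun_M]\bigr)\,\iso\,\tilde\gp_!\bigl(\tilde\gq^*\DD(K)\otimes\IC_{\zeta}[-\dim\Bun_M]\bigr)\,\iso\,\Eis(\DD K),
$$
as required.

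The hard part is clearly the ULA property of $\IC_{\zeta}$ along $\tilde\gq$, which rests on the explicit description of $\IC_{\zeta}$ — the twisted analog of the main result of \cite{ICDC}, announced in the introduction as one of the central outputs of the paper. Once that description is in hand (it presents $\IC_{\zeta}$ in terms of factorizable data built from the open locus and the torus-type lines $\lambda^{\gamma}$), the ULA statement reduces to checking acyclicity on the strata of the defect stratification of $\Bunt_{\tilde P}$, stratum by stratum, using that each stratum is (up to the gerb structure) a product of $\Bun_M$ with a locally-defined boundary piece built from symmetric powers of $X$. Thus the conceptual proof occupies just a few lines; the work is concentrated entirely in establishing the description of $\IC_{\zeta}$, which is deferred to later sections.
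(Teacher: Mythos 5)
Your formal deduction of (i) and (ii) from the ULA property of $\IC_{\zeta}$ along $\tilde\gq$, combined with Verdier self-duality of $\IC_{\zeta}$ and properness of $\tilde\gp$ on each connected component of $\Bunt_{\tilde P}$, is correct and matches the paper, which invokes (\cite{BG}, Section~5.1.2) at this point. One small omission: the paper's Proposition~\ref{Pp_IC_zeta_is_ULA_for_P} also asserts that $(j_{\tilde P})_!j_{\tilde P}^*\IC_{\zeta}$ is ULA, and this second ULA statement also enters the deduction of the theorem.

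Where the proposal has a genuine gap is in the claim that the ULA property ``rests on the explicit description of $\IC_{\zeta}$'' (Theorem~\ref{Th_main_Section1}) and reduces to a stratum-by-stratum acyclicity check. That is neither how the paper proceeds nor a complete argument. The paper's Proposition~\ref{Pp_IC_zeta_is_ULA_for_P} makes no use of Theorem~\ref{Th_main_Section1} (which is only proved later, in Section~\ref{Section_Description of IC_zeta} and the sections following it). Instead it follows the symmetry/spreading-out scheme of (\cite{BG}, Theorem~5.1.5 and Proposition~5.3.4): define $\oBun_M\subset\Bun_M$ to be the largest open over which the ULA property holds, and show it is invariant under the Hecke equivalence relation $\prec$. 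The only twisted input (Proposition~\ref{Pp_property_of_oBun_M}) is the canonical trivialization of the line bundle with fibre $\cL_{\cF_G}\otimes\cL_{\cF'_M}\otimes\cL_{\cF_M}^{*}\otimes\cL_{\cF'_G}^{*}$ on the stack $Z$ of compatible pairs of modifications, which makes $(h^{\la})^*\IC_{\zeta}$ and $(h^{\ra})^*\IC_{\zeta}$ locally isomorphic in the smooth topology. This is elementary and does not require knowing the stalks of $\IC_{\zeta}$. Your proposed alternative is also insufficient as written: knowing the restrictions of $\IC_{\zeta}$ to the strata of the defect stratification does not by itself give universal local acyclicity, which is a statement about behaviour after arbitrary base change; and making the proof depend on Theorem~\ref{Th_main_Section1} would require careful reorganization of the logical order, since the paper deliberately establishes ULA before — and independently of — the description of the IC sheaf.
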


\begin{Rem} Let us explain at this point that the notations $\Bunb_B, \Bunt_P$ thoughout the paper are reserved for the corresponding Drinfeld compactifications (we assume $P\ne G$, so $\Bunt_P$ should not be confused with $\Bunt_G$). The gerbs over $\Bunb_B, \Bunt_P$ appearing in this paper such as $\Bunb_{\tilde B}, \Bunt_{\tilde P}$ (or $\Bunb_{B, \tilde G}, \Bunt_{P, \tilde G}$ below) are distinguished in our notation by some decoration above or next to the corresponding letter $B,P,G$.
\end{Rem}

 Set $\Lambda_{M,0}=\{\lambda\in\Lambda\mid\<\lambda\check{\alpha}_i\>=0\;\mbox{for all}\; i\in\cI_M\}$. Let $\check{\Lambda}_{M,0}$ denote the dual lattice. In Section~\ref{Section_4.1.1_some_vanishing} we associate to $\kappa$ a homomoprhism $\kappa_M: \Lambda_{G,P}\to \check{\Lambda}_{M,0}$ and prove the following generalization of (\cite{L}, Proposition~2.1). 

\begin{Pp} 
\label{Pp_vanishing_over_Bunt_M^theta}
Let $\theta\in \Lambda_{G,P}$ with $\kappa_M(\theta)\notin N\check{\Lambda}_{M,0}$. Then $\D_{\zeta}(\Bunt_M^{\theta})$ vanishes.
\end{Pp}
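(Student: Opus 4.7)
The plan is to generalize \cite{L}, Proposition~2.1, from the torus to an arbitrary Levi $M$ by exploiting the tautological action of the connected center $Z(M)^\circ\subset M$ by $2$-automorphisms on $\Bun_M^\theta$. The key first step is to identify, for $\cF\in\Bun_M^\theta$, the character through which $Z(M)^\circ\hookrightarrow \Aut(\cF)$ acts on the fibre $\cL_M|_\cF$: the claim is that this character is $-\kappa_M(\theta)|_{Z(M)^\circ}\in \check\Lambda_{M,0}$. I would prove this by reducing to the torus case, picking a lift $\mu\in\Lambda$ of $\theta$ and a $T$-torsor $\cF_T\in\Bun_T^\mu$ inducing $\cF$: by Section~\ref{Section_202}, $T$ acts on $\cL_T|_{\cF_T}\iso \cL_M|_\cF$ via $-\kappa(\mu)$, and restricting along $Z(M)^\circ\hookrightarrow T$ produces $-\kappa_M(\theta)|_{Z(M)^\circ}$ by the very construction of $\kappa_M$ from $\kappa$ given in Section~\ref{Section_4.1.1_some_vanishing}.

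Next, for the $N$-th root gerb $\Bunt_M^\theta\to \Bun_M^\theta$, the $2$-automorphism action of $Z(M)^\circ$ lifts to a central extension
$$
1\to \mu_N\to \widetilde{Z(M)^\circ}\to Z(M)^\circ\to 1
$$
obtained as the pullback of the Kummer sequence $1\to \mu_N\to \Gm\toup{N}\Gm\to 1$ along $-\kappa_M(\theta)|_{Z(M)^\circ}: Z(M)^\circ\to \Gm$. The hypothesis $\kappa_M(\theta)\notin N\check\Lambda_{M,0}$ is equivalent to saying that $-\kappa_M(\theta):Z(M)^\circ\to \Gm$ is not an $N$-th power in $\check\Lambda_{M,0}=X^*(Z(M)^\circ)$; equivalently, there exists $\lambda\in \Lambda_{M,0}=X_*(Z(M)^\circ)$ with $\<\lambda,\kappa_M(\theta)\>\notin N\ZZ$.

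Finally, I would use this $\lambda$ to reduce to \cite{L}, Proposition~2.1. Composing $\lambda:\Gm\to Z(M)^\circ$ with the tautological $2$-automorphism action yields a $\Gm$-action on $\Bunt_M^\theta$ that is trivial on the coarse space but acts on the tautological $N$-th root of $\cL_M$ through the pulled-back Kummer extension of class $\<\lambda,-\kappa_M(\theta)\>\bmod N\ne 0$. Following the torus argument verbatim, any $K\in\D_\zeta(\Bunt_M^\theta)$ must then be annihilated by the resulting rank-one obstruction, forcing $K=0$. The main obstacle is the rigorous reduction to the torus case: one must verify that the $\Gm$-action via $\lambda$ on the gerb $\Bunt_M^\theta$, together with its interplay with the band $\mu_N$, reproduces precisely the local picture from the torus case of \cite{L}, Proposition~2.1, which amounts to a careful analysis of the inertia of $\Bunt_M^\theta$ and its compatibility with the Kummer extension above.
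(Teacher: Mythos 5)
Your proposal is correct and follows essentially the same mechanism as the paper: after observing that $Z(M)^0$ acts on $(\cL_M)_{\cF}$ by $-\kappa_M(\theta)$, the paper pulls the gerbe back along $f:B(Z(M)^0)\to\Bun_M^\theta$ at a $k$-point $\cF$ and invokes \cite{L}, Lemma~5.3 directly, which handles the torus $Z(M)^0$ at once. Your further reduction via a cocharacter $\lambda\in\Lambda_{M,0}$ with $\<\lambda,\kappa_M(\theta)\>\notin N\ZZ$ is a legitimate unpacking of that lemma (only the $\mu_N$-extension $\wt\Gm$, not $\Gm$ itself, acts on $\Bunt_M^\theta$, but you clearly intend this), and the equivalence you state between $\kappa_M(\theta)\notin N\check\Lambda_{M,0}$ and the existence of such a $\lambda$ is correct since $\check\Lambda_{M,0}$ is the lattice dual to $\Lambda_{M,0}$.
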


Recall that $\cI$ is canonically in bijection with the set of simple roots of $\check{G}_n$. Let $\check{M}_n\subset \check{G}_n$ be the standard Levi subgroup corresponding to $\cI_M$. 

 In Section~\ref{Section_Hecke functors for M} we define the action of the category $\Rep(\check{M}_n)$ of $\check{M}_n$-representations on $\D_{\zeta}(\Bunt_M)$ by Hecke functors. Set $\Lambda^{\sharp, +}_M=\Lambda^+_M\cap \Lambda^{\sharp}$, these are dominant weights of $\check{M}_n$. For $\nu\in\Lambda^{\sharp, +}_M$ we get the Hecke functor $\H^{\nu}_M: \D_{\zeta}(\Bunt_M)\to\D_{\zeta}(\Bunt_M\times X)$. 
 
 For $\nu\in \Lambda^{\sharp, +}_M$ denote by $U^{\nu}$ the irreducible representation of $\check{M}_n$ of highest weight $\nu$. The following is an analog of (\cite{BG}, Theorem~2.3.7) in our setting.
 
\begin{Th} 
\label{Th_Hecke_property_of_Eis^G_M}
For $\lambda\in\Lambda^{\sharp,+}$ there is an isomorphism functorial in $K\in \D_{\zeta}(\Bunt_M)$
$$
\H^{\lambda}_G\Eis_M^G(K)\;\iso\,\mathop{\oplus}\limits_{\nu\in\Lambda^{\sharp, +}_M} (\Eis^G_M\boxtimes\id)\H^{\nu}_M(K)\otimes\Hom_{\check{M}_n}(U^{\nu}, V^{\lambda}),
$$
here $\Eis^G_M\boxtimes\id: \D_{\zeta}(\Bunt_M\times X)\to \D_{\zeta}(\Bunt_G\times X)$ is the corresponding functor. 
\end{Th}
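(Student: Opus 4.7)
The plan is to follow the strategy of \cite{BG}, Theorem~2.3.7, adapted to the metaplectic setting, exploiting ingredients already developed in the paper: the description of $\IC_\zeta$ on $\Bunt_{\tilde P}$ (the twisted analog of \cite{ICDC}), the Hecke action of $\Rep(\check{G}_n)$ on $\D_\zeta(\Bunt_G)$ from Section~\ref{section_Hecke_functors_for_tilde_G}, the Hecke action of $\Rep(\check{M}_n)$ on $\D_\zeta(\Bunt_M)$ from Section~\ref{Section_Hecke functors for M}, and the vanishing Proposition~\ref{Pp_vanishing_over_Bunt_M^theta}.

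First, I would form the enlarged Hecke stack $\Hecke_G^{\tilde P}$ classifying a point $x\in X$, a $\tilde P$-datum in $\Bunt_{\tilde P}$, a $G$-torsor $\cF'_G$ on $X$, an isomorphism of the induced $G$-bundles on $X\setminus\{x\}$, together with compatible $N$-th roots so that the stack lives naturally over the relevant gerbes. It comes with two projections, one to $\Bunt_{\tilde P}$ (forgetting $\cF'_G$) and one to $\Bunt_G\times X$ (remembering $\cF'_G$ and $x$). Base change along these projections rewrites $\H^\lambda_G\circ\Eis^G_M(K)$ as the pushforward from this stack of $\tilde\gq^*K\otimes\IC_\zeta$ convolved with the twisted $\IC$-sheaf on the closure of $\Gr_G^\lambda$.

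The heart of the argument is the local analysis at the point $x$. The restriction functor $\Rep(\check{G}_n)\to\Rep(\check{M}_n)$ admits, via the metaplectic geometric Satake equivalence, a geometric incarnation as hyperbolic restriction along the semi-infinite orbits $S^\nu_P\subset\Gr_G$, yielding
$$
V^\lambda\mid_{\check{M}_n}\;\iso\;\bigoplus_{\nu\in\Lambda^{\sharp, +}_M} U^\nu\otimes \Hom_{\check{M}_n}(U^\nu, V^\lambda).
$$
Combined with the Plücker/factorization description of $\IC_\zeta$ on $\Bunt_{\tilde P}$, this lets one identify a Hecke modification of coweight $\lambda$ passing through the compactified $P$-structure with a sum over $\nu$ of $M$-Hecke modifications of coweight $\nu$ of the corresponding $M$-bundle, the multiplicity spaces being $\Hom_{\check{M}_n}(U^\nu, V^\lambda)$. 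The summation is indexed by $\nu\in\Lambda^{\sharp,+}_M$ rather than all of $\Lambda^+_M$: contributions outside $\Lambda^\sharp$ land in components where the category $\D_\zeta$ vanishes by Proposition~\ref{Pp_vanishing_over_Bunt_M^theta}.

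The main obstacle, as usual in the twisted setting, is the coherent bookkeeping of the gerbes and super line bundles. One must verify that the metaplectic datum $\theta^{Kil}$ restricts compatibly from $G$ to $P$ and descends to the corresponding datum on $M$ used to define $\Bunt_M$; that a Hecke modification transforms the $N$-th root of $\cL$ in the prescribed way via the canonical trivializations coming from $\theta^{can}$ over the strata of $\Gr_G^\lambda$; and that the semi-infinite stratification of $\Gr_G$ interacts with the metaplectic twist so that the hyperbolic restriction computed upstairs matches the Hecke functor $\H^\nu_M$ defined downstairs. Once these compatibilities are nailed down, the identification of the constituents of the pushforward with $(\Eis^G_M\boxtimes\id)\H^\nu_M(K)\otimes\Hom_{\check{M}_n}(U^\nu,V^\lambda)$ is forced by the geometry.
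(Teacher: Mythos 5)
Your proposal follows essentially the same strategy as the paper: both adapt \cite{BG}, Theorem~2.3.7, by working on the enlarged Hecke stack over $\Bunt_{\tilde P}$ (the paper's $_{x,\infty}Z_{\tilde P,\tilde G}$) and using the geometric incarnation of $\Res:\Rep(\check{G}_n)\to\Rep(\check{M}_n)$ as hyperbolic restriction along the semi-infinite orbits $\wt\Gr_P^\nu\subset\wt\Gr_G$ (Proposition~\ref{Pp_gRes}, Corollary~\ref{Cor_2}). Two precisions, though. First, ``forced by the geometry'' hides the real work: the paper introduces the interpolating perverse sheaves $\IC_{\nu,\zeta}$ on the strata $_{x,\ge\nu}\Bunt_{\tilde P}$ and proves separately that $_x\H^\nu_{P,M}(\IC_\zeta)\,\iso\,\IC_{-w_0^M(\nu),\zeta}$ (Proposition~\ref{Pp_analog_413}) and that $_x\H^\lambda_{P,G}(\IC_\zeta)\,\iso\,\oplus_\nu\IC_{\nu,\zeta}\otimes\Hom_{\check{M}_n}(U^\nu,V^\lambda)$ (Proposition~\ref{Pp_analog_415}); it is the combination of these two facts that transports the local branching into the claimed Eisenstein-series identity, and the perverse-degree estimates a), b), c) in the proof of Proposition~\ref{Pp_analog_415} are exactly where the ``coherent bookkeeping'' lives. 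Second, the restriction of the sum to $\nu\in\Lambda^{\sharp,+}_M$ is enforced \emph{locally} by Corollary~\ref{Cor_2} — the top perverse cohomology of $(\tilde\gt^\nu_P)_!$ applied to $E^\mu_\cE$ vanishes unless $\nu\in\Lambda^{\sharp,+}_M$ — not by Proposition~\ref{Pp_vanishing_over_Bunt_M^theta}; the latter is a global statement constraining only $\kappa_M$ of the image of $\nu$ in $\Lambda_{G,P}$, which is strictly weaker than $\nu\in\Lambda^\sharp$.
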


 One checks moreover that the isomorphism of Theorem~\ref{Th_Hecke_property_of_Eis^G_M} is compatible with the convolution of Hecke functors.
 
\begin{Cor} Let $E$ be a $\check{M}_n$-local system on $X$, $\cK\in\D_{\zeta}(\Bunt_M)$ be a $E$-Hecke eigen-sheaf. Then $\Eis^G_M(K)\in\D_{\zeta}(\Bunt_G)$ is a $E_{\check{G}_n}$-Hecke eigen-sheaf. Here $E_{\check{G}_n}$ is the $\check{G}_n$-local system induced from $E$.
\end{Cor}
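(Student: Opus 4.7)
The plan is to deduce this corollary as an essentially formal consequence of Theorem~\ref{Th_Hecke_property_of_Eis^G_M} combined with the branching of $\check{G}_n$-representations to the Levi $\check{M}_n$. Being an $E$-Hecke eigen-sheaf means that for each $\nu\in\Lambda^{\sharp,+}_M$ there is a functorial isomorphism
$$
\H^{\nu}_M(\cK)\,\iso\,\cK\boxtimes E^{U^{\nu}}
$$
(with whatever shift/dual convention is in force, as in Corollary~\ref{Cor_very_first}), where $E^{U^{\nu}}$ is the local system on $X$ obtained from $E$ by extension of scalars along $U^{\nu}$, and these isomorphisms are compatible with the tensor product of $\check{M}_n$-representations under the convolution of Hecke functors.

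First, apply Theorem~\ref{Th_Hecke_property_of_Eis^G_M} with $K=\cK$ and substitute the eigenvalue property. Since $\Eis^G_M\boxtimes\id$ carries $\cK\boxtimes E^{U^{\nu}}$ to $\Eis^G_M(\cK)\boxtimes E^{U^{\nu}}$, this yields for each $\lambda\in\Lambda^{\sharp,+}$ an isomorphism
$$
\H^{\lambda}_G\Eis^G_M(\cK)\,\iso\,\Eis^G_M(\cK)\boxtimes\Big(\mathop{\oplus}\limits_{\nu\in\Lambda^{\sharp,+}_M} E^{U^{\nu}}\otimes\Hom_{\check{M}_n}(U^{\nu},V^{\lambda})\Big).
$$
Next, invoke the branching decomposition
$$
V^{\lambda}|_{\check{M}_n}\,\iso\,\mathop{\oplus}\limits_{\nu\in\Lambda^{\sharp,+}_M} U^{\nu}\otimes\Hom_{\check{M}_n}(U^{\nu},V^{\lambda})
$$
in $\Rep(\check{M}_n)$. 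Applying the exact tensor functor $V\mapsto E^{V}$ from $\Rep(\check{M}_n)$ to local systems on $X$, and using that by definition $E_{\check{G}_n}^{V^{\lambda}}=E^{V^{\lambda}|_{\check{M}_n}}$, the parenthesized direct sum is identified with $E_{\check{G}_n}^{V^{\lambda}}$. This is precisely the $E_{\check{G}_n}$-Hecke eigen-sheaf property for $\Eis^G_M(\cK)$.

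To finish, check that the resulting family of isomorphisms is compatible with the convolution of Hecke functors, i.e.\ with the tensor structure of $\Rep(\check{G}_n)$. This is immediate by combining the convolution-compatibility of the isomorphism in Theorem~\ref{Th_Hecke_property_of_Eis^G_M} (noted right after that theorem) with the tensor-compatibility of the branching isomorphism. No genuine obstacle arises here: all of the geometric content has been absorbed into Theorem~\ref{Th_Hecke_property_of_Eis^G_M}, and the remainder is purely representation-theoretic bookkeeping.
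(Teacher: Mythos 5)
Your proof is correct and follows what is the intended (implicit) derivation in the paper: the corollary is stated without proof precisely because it falls out of Theorem~\ref{Th_Hecke_property_of_Eis^G_M} exactly as you describe, by substituting the eigen-property of $\cK$ into the right-hand side, recognizing the resulting direct sum as $\Eis^G_M(\cK)\boxtimes E_{\check{G}_n}^{V^\lambda}$ via the branching decomposition $V^\lambda|_{\check{M}_n}\,\iso\,\oplus_\nu U^\nu\otimes\Hom_{\check{M}_n}(U^\nu,V^\lambda)$, and then appealing to the compatibility with convolution recorded after the theorem. The only caveat, which you flag yourself, is to track the dual/shift conventions (compare Corollary~\ref{Cor_very_first}, where $\cK_E$ is an $E^*$-eigen-sheaf with an extra shift $[1]$); this does not affect the structure of the argument.
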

  
 One of our main results is Theorem~\ref{Th_main_Section1} in Section~\ref{Section_Description of IC_zeta} generalizing the description of the $\IC$-sheaf of $\Bunt_P$ from \cite{ICDC} to our twisted setting. Write $\Lambda_{G,P}^{pos}$ for the $\ZZ_+$-span of $\{\alpha_i, i\in \cI-\cI_M\}$ in $\Lambda_{G,P}$. 
Pick $\theta\in\Lambda_{G,P}^{pos}$. Let $\gU(\theta)$ be a decomposition of $\theta$ as in (\cite{ICDC}, Section~1.4). Let $\check{\gu}_n(P)$ denote the Lie algebra of the unipotent radical of the standard parabolic $\check{P}_n\subset \check{G}_n$ corresponding to $\cI_M\subset\cI$.  One has a locally closed substack
$$
_{\gU(\theta)}\Bunt_P\,\iso\, \Bun_P\times_{\Bun_M} \cH^{+,\gU(\theta)}_M\hook{} \Bunt_P
$$ 
(see Section~\ref{Section_Description of IC_zeta} for these notations). Let $_{\gU(\theta)}\Bunt_{\tilde P}$ be obtained from $_{\gU(\theta)}\Bunt_P$ by the base change $\Bunt_{\tilde P}\to \Bunt_P$. Theorem~\ref{Th_main_Section1} describes the $*$-restriction of $\IC_{\zeta}$ to $_{\gU(\theta)}\Bunt_{\tilde P}$ in terms of the $\check{M}_n$-module $\check{\gu}_n(P)$ and
the twisted Satake equivalence $\Loc: \Rep(\check{M}_n)\,\iso\, \PPerv^{\natural}_{M,G,n}$ for $\check{M}_n$ (see Section~\ref{Section_Description of IC_zeta}). 
The proof actually establishes more (Theorem~\ref{Th_3} and Corollary~\ref{Cor_3} do not reduce to Theorem~\ref{Th_main_Section1}).

 In Section~\ref{section_composing} we prove the following result, which is an analog of (\cite{BG}, Theorem~2.3.10) in our setting.
\begin{Th} 
\label{Th_composing_Eis}
There is an isomorphism of functors $\D_{\zeta}(\Bunt_T)\to \D_{\zeta}(\Bunt_G)$
$$
\Eis_T^G\,\iso\, \Eis_M^G\comp \Eis_T^M
$$
\end{Th}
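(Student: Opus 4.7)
The strategy follows (\cite{BG}, Theorem~2.3.10): introduce an intermediate relative Drinfeld compactification interpolating between $\Eis^G_T$ and $\Eis^G_M\comp \Eis^M_T$, and reduce the statement to an identification of twisted $\IC$-sheaves which is handled using Theorem~\ref{Th_main_Section1}. Specifically, let $B_M = B\cap M$ be the Borel of $M$, and define the intermediate stack
\[
\Bunb_{\tilde B, \tilde P} \df \Bunt_{\tilde P} \times_{\Bunt_M} \Bunb^M_{\tilde B_M}
\]
with $\mu_N$-gerbe structure inherited compatibly from both factors. Denote by $p_1, p_2$ the two projections. One has a natural map $\pi: \Bunb_{\tilde B, \tilde P} \to \Bunb^G_{\tilde B}$ which assembles a compatible pair (generalized $P$-reduction of the $G$-bundle, generalized $B_M$-reduction of the induced $M$-bundle) into a single generalized $B$-reduction of the underlying $G$-bundle. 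By construction $\tilde\gq^G_B\comp \pi = \tilde\gq^M_B\comp p_2$ on $T$-parts and $\tilde\gp^G_B\comp \pi = \tilde\gp^G_P\comp p_1$ on $G$-bundles.

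Applying base change along the defining Cartesian square followed by the projection formula twice, one rewrites
\[
\Eis^G_M\comp\Eis^M_T(K) \,\iso\, (\tilde\gp^G_B)_!\Bigl((\tilde\gq^G_B)^*K \otimes \pi_!\bigl(p_1^*\IC^{G,\zeta}_P \otimes p_2^*\IC^{M,\zeta}_{B_M}\bigr)\Bigr)[-\dim\Bun_T - \dim\Bun_M]
\]
Comparing with $\Eis^G_T(K) = (\tilde\gp^G_B)_!((\tilde\gq^G_B)^*K \otimes \IC^{G,\zeta}_B)[-\dim\Bun_T]$, the theorem reduces to producing a canonical isomorphism of perverse sheaves on $\Bunb^G_{\tilde B}$
\[
\pi_!\bigl(p_1^*\IC^{G,\zeta}_P \otimes p_2^*\IC^{M,\zeta}_{B_M}\bigr)[-\dim\Bun_M] \,\iso\, \IC^{G,\zeta}_B
\]
functorial in $K$ via the $\tilde\gq$-pullback.

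To establish this identification I would analyze both sides stratum by stratum on $\Bunb^G_{\tilde B}$, using the stratification indexed by decompositions $\gU(\theta)$ of elements $\theta$ in the $\ZZ_+$-span of positive coroots. Applying Theorem~\ref{Th_main_Section1} to $\IC^{G,\zeta}_B$, $\IC^{G,\zeta}_P$, and $\IC^{M,\zeta}_{B_M}$ (the latter being the Borel version for $M$) respectively, each side acquires an explicit factorizable description in terms of the twisted Satake equivalence applied to symmetric powers of $\check\gu_n(B)$, $\check\gu_n(P)$, and $\check\gu_n(B_M)$ respectively. The $\check T^\sharp$-equivariant vector space decomposition $\check\gu_n(B) \,\iso\, \check\gu_n(P) \oplus \check\gu_n(B_M)$, together with compatibility of the twisted Satake equivalences for the chain $\check T^\sharp \subset \check M_n \subset \check G_n$ and the factorization property of all three sheaves, yields the required isomorphism after $\pi_!$-pushforward, with the shift $-\dim\Bun_M$ absorbed into $\pi$'s relative dimension on each stratum.

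The main obstacle is verifying that the $\mu_N$-gerbe on $\Bunb_{\tilde B, \tilde P}$ pulled back from $\Bunb^G_{\tilde B}$ via $\pi$ coincides with the tensor product of the pullbacks via $p_1$ and $p_2$ of the gerbes used to define $\IC^{G,\zeta}_P$ and $\IC^{M,\zeta}_{B_M}$. This reduces to a compatibility between the canonical Brylinski-Deligne $K_2$-extension of $G$ and its restrictions to $M$ and to $B$; equivalently, to the identification of $\theta^{Kil}_M$ with the restriction of $\theta^{Kil}_G$ to the sublattice $\Lambda_M\subset\Lambda$, together with the transversal contribution from the roots in $\gp/\gm$. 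Such a compatibility is forced by functoriality of the canonical extension, but propagating the $\zeta$-twist consistently through the factorization identifications on each boundary stratum is the genuinely new technical content here relative to \cite{BG}.
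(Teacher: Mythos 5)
Your skeleton matches the paper's: the same intermediate stack $\Bunt_{\tilde B, \tilde P}=\Bunt_{\tilde P}\times_{\Bunt_M}\Bunb_{\tilde B(M)}$, the same gluing map to $\Bunb_{\tilde B}$, and the same use of base change plus projection formula to reduce the theorem to an identification of twisted $\IC$-sheaves. But for that central identification your route diverges from the paper's, and this is exactly where the real work lies.

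The paper splits the key step into two pieces. First (Proposition~\ref{Pp_IC_over_Bun_BP}), it proves that the pullback of $\IC_{B,P,\zeta}$ from $\Bunt_{\tilde B,P}$ (which carries only the $T$- and $G$-gerbs) to $\Bunt_{\tilde B, \tilde P}$ (which also carries the $M$-gerb) equals the tensor product $(\tilde\gp'_M)^*\IC_{\zeta}\otimes(\tilde\gq'_P)^*\IC_{\zeta}[-\dim\Bun_M]$; the proof is via the ULA property (Proposition~\ref{Pp_IC_zeta_is_ULA_for_P}) following (\cite{BG}, 7.1.2) and (\cite{L5}, 4.8.5). Second (Theorem~\ref{Th_gr_P_direct_image}), it shows $\wt\gr_{P!}\IC_{B,P,\zeta}\iso\IC_{\zeta}$ by a perversity/dimension argument: one bounds the perverse degree of the $*$-restriction of $\IC_{B,P,\zeta}$ to each stratum $_{\ov{\mu,\nu},\lambda}\Bun_{\tilde B,P}$ and bounds the fibre dimension of $\gr_P$ on that stratum, checks the sum of the two bounds is $\le 0$ with equality only on the open part, and concludes that the proper pushforward is the irreducible intermediate extension because $\gr_P$ is an isomorphism over $\Bun_{\tilde B}$. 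Theorem~\ref{Th_main_Section1} enters only in a subsidiary role — to guarantee smoothness of the cohomology sheaves of $\IC_{B,P,\zeta}$ on strata, needed to run these estimates.

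Your proposal instead wants to compare the $*$-restrictions of $\pi_!(p_1^*\IC_P^{G,\zeta}\otimes p_2^*\IC_{B_M}^{M,\zeta})$ and of $\IC_B^{G,\zeta}$ stratum by stratum using the factorizable descriptions from Theorem~\ref{Th_main_Section1} and the $\check T^\sharp$-module decomposition $\check\gu_n(B)\iso\check\gu_n(P)\oplus\check\gu_n(B_M)$. This is the honest "bottom-up" comparison, and the identity on symmetric powers does encode the right combinatorics, but it is not just a matter of matching coefficients: $\pi$ is not a stratified fibration over $\Bunb_{\tilde B}$ in any simple sense, and its fibres over a given stratum of $\Bunb_{\tilde B}$ are themselves glued from several strata of $\Bunt_{\tilde B,P}$, so computing $\pi_!$ of the tensor product requires the same dimension bookkeeping and convolution control that Proposition~\ref{Pp_alongs_strata}-type estimates provide. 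Without that you cannot rule out extra perverse cohomology appearing in $\pi_!$ beyond what the naive decomposition of $\check\gu_n(B)$ predicts. The paper's perversity argument sidesteps precisely this computation. So your approach is not wrong in principle, but as written it has a gap at exactly the step it declares routine.

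One more point: you worry about the gerbe compatibility at the end. The paper handles this not by an independent check on Brylinski–Deligne data but by architectural choice — $\IC_{B,P,\zeta}$ is defined on $\Bunt_{\tilde B,P}$ (no $M$-gerbe), and Proposition~\ref{Pp_IC_over_Bun_BP} shows that after pulling back along the $B(\mu_N)$-gerbe $\Bunt_{\tilde B,\tilde P}\to\Bunt_{\tilde B,P}$ (forgetting the $M$-line $\cU$), it is the tensor product. The opposite $\mu_N$-equivariances of $\IC^{G,\zeta}_P$ and $\IC^{M,\zeta}_{B_M}$ on the shared $\cU$ are what make the tensor product descend. If you try to prove your final isomorphism directly on $\Bunb_{\tilde B}$, you first have to rewrite the left-hand side as a pushforward from $\Bunt_{\tilde B,P}$ anyway, which is effectively re-deriving Proposition~\ref{Pp_IC_over_Bun_BP}.

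In short: same overall architecture, but your replacement for Theorem~\ref{Th_gr_P_direct_image} by a direct stratum-by-stratum comparison is genuinely different, and as sketched it is incomplete where the paper's argument is completed by perversity and dimension estimates.
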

 
\subsubsection{} In Section~\ref{Section_The case of SL_2} we specialize to the case of $G=\SL_2$. As in Section~\ref{Section_Zastava spaces}, we have a $\mu_N$-gerb $\wt Z^{\theta}\to Z^{\theta}$ and a local version $\IC_{Z^{\theta},\zeta}$ of the perverse sheaf $\IC_{\zeta}$. Here $\IC_{Z^{\theta},\zeta}$ is a perverse sheaf on $\wt Z^{\theta}$ (see Sections~\ref{Section_Zastava spaces} and \ref{Section_The case of SL_2} for notations). For $G=\SL_2$ the Zastava space $Z^{\theta}$ is a vector bundle over $X^{\theta}$, and it is important to calculate the Fourier transform $\Four_{\psi}(\IC_{Z^{\theta},\zeta})$ over the dual vector bundle. This calculation at the classical level is a part of the theory of Weyl group multiple Dirichlet series (see \cite{BBF}, \cite{B} for a survey).

 The description of $\IC_{Z^{\theta},\zeta}$ is known (see Theorem~\ref{Th_3} and Corollary~\ref{Cor_3}). For $n=2$ the description of $\Four_{\psi}(\IC_{Z^{\theta},\zeta})$ is easily reduced to the description of $\IC_{Z^{\theta},\zeta}$ itself (see Section~\ref{Section_Generalization}, this was also used in \cite{L2}). For $n\ge 3$ we can not completely describe $\Four_{\psi}(\IC_{Z^{\theta},\zeta})$, and only establish Proposition~\ref{Lm_6.2_about_Fourier_of_IC_zeta}, which calculates the desired Fourier transform over the open substack $_{\Omega}\check{\tilde Z}^{\theta}_{max}\subset {_{\Omega}\check{\tilde Z}^{\theta}}$ (see Section~\ref{Section_Generalization} for notations). 
 
  The answer in Proposition~\ref{Lm_6.2_about_Fourier_of_IC_zeta} is given in terms of the perverse sheaf $\IC_{_{\Omega}\tilde X^{\theta}, \bar\zeta}$ that has been completely described in \cite{BFS} in terms of cohomologies of a (part of) the quantum $\mathfrak{sl}_2$ at a suitable root of unity. This is a manifestation of the phenomenon that cohomology of quantum groups appear in the quantum geometric Langlands program (the quantum groups were brought into the quantum geometric Langlands program in \cite{Ga2, L5}). 
  
  In Proposition~\ref{Pp_nondegenerate_Whit_coeff_of_Eis} we give a global application of Proposition~\ref{Lm_6.2_about_Fourier_of_IC_zeta}, it expresses the non-degenerate Whittaker coefficients of $\Eis(K)$, $K\in\D_{\zeta}(\Bunt_T)$ in terms of the perverse sheaf $\IC_{\check{Z}^{\theta}_c, \bar\zeta}$. This, in turn, yields a formula for the first Whittaker coefficient of $\Eis(K)$ (see Corollary~\ref{Cor_6.1} and \ref{Cor_6.2}). The complex appearing in Corollary~\ref{Cor_6.1} is an $\ell$-adic analog of the space of conformal blocks in Wess-Zumino-Witten model studied in \cite{BFS}.
  
  In Section~\ref{Section_Constant term of Eis} we calculate the constant terms of $\Eis(K), K\in \D_{\zeta}(\Bunt_T)$ in terms of \select{integral Hecke functors} for $\Bunt_T$. Here `integral' means that we apply Hecke functors at a collection of points and further integrate over this collection of points. The answer is given in Proposition~\ref{Pp_CT_of_Eis}, which (together with the results of \cite{L2}) explains our formulation of the functional equation. 
  
\subsubsection{Some special sheaves} Let $E$ be a $\check{T}^{\sharp}$-local system on $X$, $\cK_E\in\D_{\zeta}(\Bunt_T)$ be the $E$-Hecke eigensheaf as in Corollary~\ref{Cor_very_first}. This is a local system over the components of $\Bunt_T$ corresponding to $\Lambda^{\sharp}$. In Section~\ref{Section_Some special sheaves} we describe some irreducible perverse sheaves $\IC(E,d)\in\D_{\zeta}(\Bunt_G)$, $d>0$ that appear in $\Eis(\cK_E)$. 
  
  We then specialize to the case of genus $g=0$. In this case $E$ is trivial, we set $\IC_d=\IC(\Qlb, d)$, $d>0$ for brevity, and also define an irreducible perverse sheaf $\IC_0$ that appear in $\Eis(\cK_E)$. Then any irreducible perverse sheaf appearing in $\Eis(\cK_E)$ is isomorphic to some $\IC_d$, $d\ge 0$. Let $\cP_{\zeta,n}$ denote the category of pure complexes on $\Bunt_G$, which are the direct sums of  $\IC_d[r](\frac{r}{2})$, $d\ge 0$, $r\in\ZZ$. Then $\cP_{\zeta,n}$ is a module over $\Rep(\check{G}_n)$ acting by Hecke functors.
  
  We explicitely describe the action of Hecke functors on each of the objects $\IC_d$, $d\ge 0$ (see Lemma~\ref{Lm_6.5} and Theorem~\ref{Th_last}). We also describe all the $*$-fibres of each of the perverse sheaf $\IC_d$, $d\ge 0$ (see Lemmas~\ref{Lm_restricting_IC_d_case_d>0} and \ref{Lm_great_fibres_of_IC_0}). Here is an immediate consequence of our results.
Recall that $\check{G}_n\,\iso\, \SL_2$ for $n$ even (resp., $\check{G}_n\,\iso\, \PSL_2$ for $n$ odd).  
  
\begin{Cor} Let $m\ge 1$ with $n-m$ even. One has the equivalence $\cP_{\zeta, n}\,\iso\, \cP_{\zeta, m}$ sending $\IC_d[r](\frac{r}{2})$ to itself (and preserving direct sums). This equivalence commutes with the Hecke action (with respect to the evident isomorphism $\check{G}_n\,\iso\, \check{G}_m$).
\end{Cor}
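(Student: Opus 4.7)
The plan is in two steps: first define the equivalence $\Phi:\cP_{\zeta,n}\to\cP_{\zeta,m}$ as an additive equivalence, then verify that it intertwines the Hecke functors. For the first step, $\cP_{\zeta,n}$ is by definition the full subcategory of $\D_{\zeta}(\Bunt_G)$ consisting of direct sums of the pairwise non-isomorphic simples $\IC_d[r](\frac{r}{2})$ indexed by $(d,r)\in\ZZ_{\ge 0}\times\ZZ$; as pure complexes of weight zero on a stack over $k=\bar k$, they admit only the obvious morphisms, so $\cP_{\zeta,n}$ is a semisimple additive category whose set of simple isomorphism classes is intrinsically indexed by $\ZZ_{\ge 0}\times\ZZ$, independently of $n$. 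We take $\Phi$ to be the unique additive equivalence sending each simple generator to the correspondingly labelled generator of $\cP_{\zeta,m}$.

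Under the hypothesis $n-m$ even, the dual groups $\check{G}_n$ and $\check{G}_m$ are both $\SL_2$ (if $n,m$ are even) or both $\PSL_2$ (if $n,m$ are odd). The canonical isomorphism $\check{G}_n\iso\check{G}_m$ identifies the dominant weight monoids $\Lambda^{\sharp,+}$ together with their irreducible representations $V^\nu$. The statement that $\Phi$ intertwines the Hecke actions thereby reduces to the identity $\Phi(\H^\nu_G(\IC_d))\iso\H^\nu_G(\Phi(\IC_d))$ for each matched $\nu$ and each $d\ge 0$.

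I would extract this last identity from the explicit formulas of Lemma~\ref{Lm_6.5} and Theorem~\ref{Th_last}, which express $\H^\nu_G(\IC_d)$ as a direct sum of shifted twists $\IC_{d'}[r'](\frac{r'}{2})$ with multiplicities and shifts determined by the weight decomposition of $V^\nu$ as a representation of the abstract group $\check{G}_n\iso\check{G}_m$, together with combinatorial invariants of $\nu$ (such as dimensions of twisted Schubert cells) that coincide under this identification. The main obstacle is the bookkeeping: one must check that no residual dependence on $n$ survives on the right-hand side of these formulas, and, for consistency, that the $*$-fibre computations of Lemmas~\ref{Lm_restricting_IC_d_case_d>0} and~\ref{Lm_great_fibres_of_IC_0} feeding into the Hecke calculation are likewise independent of $n$ once $\check{G}_n$ is identified with $\check{G}_m$. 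Granted this, the two sides coincide term by term and $\Phi$ intertwines $\H^\nu_G$ for every $\nu$.
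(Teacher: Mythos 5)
Your approach matches what the paper intends---the corollary is announced as an immediate consequence of the Hecke formulas in Lemma~\ref{Lm_6.5} and Theorem~\ref{Th_last} together with the fibre computations of Lemmas~\ref{Lm_restricting_IC_d_case_d>0} and \ref{Lm_great_fibres_of_IC_0}---but the ``bookkeeping'' you defer is the entire content, and it contains a subtlety that your setup leaves unresolved. The nonzero simples of $\cP_{\zeta,n}$ are the $\IC_d$ with $d\in e\ZZ_{\ge 0}$, where $e=n$ for $n$ odd and $e=n/2$ for $n$ even; for example $\IC_1$ is a nonzero simple of $\cP_{\zeta,2}$ while $\IC_1=0$ in $\cP_{\zeta,4}$. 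Hence ``$\IC_d\mapsto\IC_d$'' for the same literal $d$ is not even a well-defined functor, and it could not intertwine Hecke anyway, because $_x\H^{e\alpha}_G\IC_d\iso\IC_{d+e}\oplus\IC_{d-e}$ depends explicitly on $e$. The equivalence has to be $\IC_{ek}[r](\frac{r}{2})\mapsto\IC_{e'k}[r](\frac{r}{2})$. Your remark that the simples are ``intrinsically indexed by $\ZZ_{\ge 0}\times\ZZ$, independently of $n$'' is pointing at exactly this reindexing by $k=d/e$, but you then take the corollary's ``to itself'' at face value, and your later worry about ``residual dependence on $n$'' is the same ambiguity surfacing rather than a side concern.

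Two more things are needed to close the argument. First, Lemma~\ref{Lm_6.5} and Theorem~\ref{Th_last} compute $_x\H^{\nu}_G$ only for $\nu=e\alpha$; you should observe that $V^{e\alpha}$ tensor-generates $\Rep(\check{G}_n)$ (the standard representation of $\SL_2$ for $n$ even, the adjoint of $\PSL_2$ for $n$ odd), so by the compatibility of the Hecke action with convolution the intertwining for $\nu=e\alpha$ forces it for all $\nu$. Second, the comparison should actually be written out: setting $P_k=\IC_{ek}$, for $n$ even one has $_x\H^{e\alpha}_GP_0\iso P_1[1]\oplus P_1[-1]$, $_x\H^{e\alpha}_GP_1\iso P_0[1]\oplus P_0[-1]$ and $_x\H^{e\alpha}_GP_k\iso P_{k+1}\oplus P_{k-1}$ for $k\ge 2$, while for $n$ odd one has $_x\H^{e\alpha}_GP_0\iso P_0[2]\oplus P_0\oplus P_0[-2]$, $_x\H^{e\alpha}_GP_1\iso P_0[1]\oplus P_0[-1]\oplus P_2$ and $_x\H^{e\alpha}_GP_k\iso P_{k+1}\oplus P_k\oplus P_{k-1}$ for $k\ge 2$. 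These expressions depend only on the parity of $n$, and it is here---not in any coincidence of Schubert-cell dimensions, which genuinely differ since $\dim\Gr^{e\alpha}_G=2e$---that the $n$-dependence cancels.
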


\begin{Def} 
\label{Def_theta-sheaves}
For $n$ odd set $\Aut=\IC_0$. For $n$ even set $\Aut=\IC_0\oplus \IC_1$. As in \cite{L4}, we call $\Aut$ the theta-sheaf on $\Bunt_G$.
\end{Def}

\begin{Cor} 
\label{Cor_theta_sheaves}
The perverse sheaf $\Aut$ is a Hecke eigen-sheaf corresponding to the trivial $\check{G}_n$-local system and the principal $\SL_2$ of Arthur homomorphism $\SL_2\to \check{G}_n$.
\end{Cor}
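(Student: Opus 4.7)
Fix $\lambda\in\Lambda^{\sharp,+}$ and write the decomposition of $V^\lambda$ under the principal Arthur homomorphism $\phi:\SL_2\to\check G_n$ as $V^\lambda|_\phi=\bigoplus_{j\ge 0} U_j^{\oplus m_j(\lambda)}$, where $U_j$ denotes the irreducible $\SL_2$-module of highest weight $j$. The statement we must prove amounts to producing, functorially in $\lambda$ and compatibly with the tensor structure on $\Rep(\check G_n)$, an isomorphism
$$
\H^\lambda_G(\Aut)\;\iso\;\Aut\boxtimes\bigoplus_{j\ge 0}\Qlb_X[-j]\!\left(-\tfrac{j}{2}\right)^{\oplus m_j(\lambda)}.
$$

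The plan is to verify this directly from the explicit descriptions of $\H^\lambda_G(\IC_d)$ furnished by Lemma~\ref{Lm_6.5} and Theorem~\ref{Th_last}. Those results express $\H^\lambda_G(\IC_d)$ as an explicit direct sum of terms of the form $\IC_{d'}[r](r/2)\boxtimes\Qlb_X$, with combinatorially described multiplicities. Applied to $d=0$, and for $n$ even also to $d=1$, they give a completely explicit decomposition of $\H^\lambda_G(\Aut)$ inside $\cP_{\zeta,n}$ tensored with the triangulated subcategory generated by $\Qlb_X$. For $n$ even one has $\check G_n\iso\SL_2$ and $C^*(\check G_n)=\ZZ/2\ZZ$, and Proposition~\ref{Lm_great_about_gradings} guarantees that the Hecke action permutes the two grading components $\IC_0$ and $\IC_1$ according to the central character of $V^\lambda$, so that the sum $\IC_0\oplus\IC_1$ is indeed stable under the action of all of $\Rep(\check G_n)$.

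The key remaining step is a purely representation-theoretic identification: one must match the multiplicities and cohomological shifts coming from Lemma~\ref{Lm_6.5} and Theorem~\ref{Th_last} with the branching numbers $m_j(\lambda)$ and the shifts $[-j](-j/2)$ dictated by the principal $\SL_2$. A practical way to organize this is to first check the case where $\lambda$ ranges over a set of tensor generators of $\Rep(\check G_n)$ (the standard representation for $\check G_n=\SL_2$, and the adjoint for $\check G_n=\PSL_2$), where both sides are transparent from Lemma~\ref{Lm_6.5}. The general case then follows from the compatibility of the Hecke action with the convolution tensor structure on $\Rep(\check G_n)$, which holds as in the compatibility statement recorded just after Theorem~\ref{Th_Hecke_property_of_Eis^G_M}.

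The main obstacle I expect is precisely this combinatorial identification: matching the explicit description of $\H^\lambda_G(\IC_d)$, which is ultimately rooted in the twisted geometric Satake equivalence and the Zastava-space analysis of Sections~\ref{Section_The case of SL_2} and \ref{Section_Some special sheaves}, with the principal branching decomposition of $V^\lambda|_\phi$. Once this match is established for the tensor generators, both the full Hecke eigenproperty and its compatibility with the tensor structure on $\Rep(\check G_n)$ follow formally, yielding the claimed description of $\Aut$ as an Arthur Hecke eigen-sheaf for the trivial $\check G_n$-local system and the principal $\SL_2$.
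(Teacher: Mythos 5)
Your approach --- verify the eigenvalue for the tensor generator $\nu=e\alpha$ using Theorem~\ref{Th_last} and Lemma~\ref{Lm_6.5}, then extend to all of $\Rep(\check G_n)$ via the compatibility of the Hecke action with convolution --- is exactly what the paper intends; no separate proof of the corollary appears in the text, and it is meant as an immediate consequence of those two results. Your handling of the $C^*(\check G_n)$-grading for $n$ even is also correct (cf.\ Remark~\ref{Rem_parities_of_IC_d_Section2.2}).

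One concrete error: the eigenvalue you write, $\bigoplus_{j\ge 0}\Qlb_X[-j]\left(-\frac{j}{2}\right)^{\oplus m_j(\lambda)}$, attaches a single cohomological shift to each irreducible summand $U_j$, but $U_j$ has dimension $j+1$ and must contribute its full weight spectrum $\bigoplus_{i=0}^{j}\Qlb_X[j-2i]\left(\frac{j-2i}{2}\right)$. As written your target has rank $\sum_j m_j(\lambda)$ instead of $\dim V^\lambda$. This is already visible at the generator: for $n$ even, $\nu=e\alpha$ corresponds to the standard representation $U_1$, and Theorem~\ref{Th_last} with Lemma~\ref{Lm_6.5} give $\H^{\nu}_G(\Aut)\iso\Aut[1]\oplus\Aut[-1]$ --- two shifts symmetric about zero, not the lone $\Qlb_X[-1]$; for $n$ odd, $\nu=n\alpha$ corresponds to the adjoint $U_2$ and Theorem~\ref{Th_last} gives the three shifts $\IC_0[2]\oplus\IC_0\oplus\IC_0[-2]$. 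Once you correct the target to the symmetric weight decomposition coming from the $\Gm\subset\SL_2$ torus in the Arthur homomorphism, the generator check is literally the content of those two results, and the propagation to general $\lambda$ via the tensor structure goes through as you describe.
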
  

\subsubsection{} Let us indicate some problems arising for future research:
\begin{itemize}
\item[1)] Is it true that the theta-sheaves $\Aut$ satisfying the Hecke property as in Corollary~\ref{Cor_theta_sheaves} exist for all $G$ and any curve $X$?
\item[2)] Is it true that $\Aut$ constructed in Corollary~\ref{Cor_theta_sheaves} is the geometric analog of a matrix coefficient of a suitable nonramified automorphic representation of the corresponding $\Fq^*/(\Fq^*)^n$-metaplectic cover of $G$? Construct the corresponding representations for a local and global field (according to \cite{Gao}, they should exist). 
\item[3)] Calculate the Fourier coefficients of the theta-sheaves given in Definition~\ref{Def_theta-sheaves}. 
\end{itemize}

\subsubsection{} In Appendix~\ref{Section_appendixA} we assume in addition that $k=\Fq$, and prove Theorem~\ref{Th_trace_of_Frob_Eis'} below. 

 A version of our results holds also over $\Fq$, in particular the construction of $\cK_E$ and the description of $\IC_{\zeta}$ given in Corollary~5.1. It is understood that in the corresponding description the Tate twists are recovered accordingly. Let $E$ be a $\check{T}^{\sharp}$-local system on $X$, $\cK_E$ be the eigen-sheaf on $\Bunt_T$ constructed in (\cite{L}, Proposition~2.2). For $\mu\in\Lambda^{\sharp}$ write $\cK^{\mu}_E$ for the restriction of $\cK_E$ to $\Bunt_T^{\mu}$. By (\cite{L}, Proposition~2.1), $\cK_E^{\mu}$ vanishes unless $\mu\in\Lambda^{\sharp}$. 
 
 For $\mu\in\Lambda^{\sharp}$ define $\Eis'(\cK_E^{\mu})$ as 
$$
\gp_!(\gq^*(\cK_E^{\mu})\otimes \IC_{\zeta}),
$$ 
where the maps $\gp,\gq$ are those of the diagram $\Bunt_T\getsup{\gq} \Bun_{\tilde B}\toup{\gp} \Bunt_G$. Here $\Bun_{\tilde B}$ is obtained from $\Bun_B$ via the base change $\Bunt_T\times\Bunt_G\to\Bun_T\times\Bun_G$. 

 Denote by $\Funct(\Eis(\cK^{\mu}_E))$ (resp., $\Funct(\Eis'(\cK^{\mu}_E))$) the function trace of Frobenius on the set $\Bunt_T(\Fq)$ corresponding to 
$\Eis(\cK^{\mu}_E)$ and $\Eis'(\cK^{\mu}_E)$ respectively. 

 For $B=P$ the set $J$ defined in Section~\ref{Section_Description of IC_zeta} identifies with the set of positive roots of $\check{G}_n$ for $\check{B}_n$. We denote by $\Lambda_{G,B}^{pos,pos}$ the free abelian semigroup with base $J$. Set $\Lambda^{\sharp, pos}=\Lambda^{\sharp}\cap\Lambda^{pos}$. Let $\bar c_P: \Lambda_{G,B}^{pos,pos}\to \Lambda^{\sharp, pos}$ be the morphism of semigroups given on $J$ by the natural inclusion $J\hook{} \Lambda^{\sharp, pos}$. We write the elements of $\Lambda_{G,B}^{pos,pos}$ as $\gB(\theta)=\sum_{\nu\in J} n_{\nu}\nu$ with $\theta=\bar c_P(\gB(\theta))$.
 
  The following is an analog of (\cite{BG}, Theorem~2.2.11) in our setting.

\begin{Th}
\label{Th_trace_of_Frob_Eis'} The function $\Funct(\Eis(\cK^{\mu}_E))$ vanishes unless $\mu\in \Lambda^{\sharp}$, and in the latter case it equals
$$
\sum_{\gB(\theta)\in \Lambda_{G,B}^{pos,pos}} \Funct(\Eis'(\cK_E^{\mu-\theta})) \prod_{\nu\in J}\Tr(\Fr, \RG(X^{(n_{\nu})}\otimes\ov{\Fq}, (E^{-\nu})^{(n_{\nu})})\otimes \Qlb(n_{\nu})),
$$
where $\gB(\theta)=\sum_{\nu\in J} n_{\nu}\nu$. Here $X^{(m)}$ denotes the $m$-th symmetric power of $X$, and for a local system $W$ on $X$, $W^{(m)}$ denotes its $m$-th symmetric power. 
\end{Th}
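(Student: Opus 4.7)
The plan is to expand $\Funct(\Eis(\cK_E^\mu))$ along the Drinfeld defect stratification of $\Bunb_{\tilde B}$, insert the description of $\IC_\zeta$ on each stratum supplied by Theorem~\ref{Th_main_Section1}, and pass to trace of Frobenius.

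First I would use the factorizable stratification
$$
\Bunb_B \;=\; \bigsqcup_{\theta\in \Lambda_{G,B}^{pos}}\; {}_\theta\Bunb_B,
$$
where ${}_\theta\Bunb_B$ fibers over $\Bun_B^{-\theta}\times X^{\theta}$, with $X^{\theta}$ recording the position and multiplicity of the defect along the unipotent radical $U(B)$. Pulling back via $\Bunb_{\tilde B}\to\Bunb_B$ produces strata ${}_\theta\Bunb_{\tilde B}$; by Proposition~\ref{Pp_vanishing_over_Bunt_M^theta} only the strata with $\theta\in\Lambda^{\sharp,pos}$ carry nonzero $\zeta$-twisted sheaves, which already restricts the eventual sum to $\Lambda^{\sharp,pos}$ as required. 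The same vanishing reduces us to the case $\mu\in\Lambda^{\sharp}$ (otherwise both sides vanish).

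Second I would plug in the $*$-restriction of $\IC_\zeta$ to ${}_\theta\Bunb_{\tilde B}$ from Theorem~\ref{Th_main_Section1}. On the piece labelled by a decomposition $\gB(\theta)=\sum_{\nu\in J} n_\nu\nu$ this restriction splits, up to Tate twist and cohomological shift, as an external product of $\IC$ on $\Bun_{\tilde B}^{-\theta}$ with the factorizable perverse sheaves on $\prod_\nu X^{(n_\nu)}$ attached to the weight-$\nu$ pieces of $\check\gu_n(B)$ under the twisted Satake equivalence. Combining this with the Hecke eigenproperty $\H^\nu_T\cK_E\cong \cK_E\boxtimes E^{-\nu}[1]$ from Corollary~\ref{Cor_very_first}, the pullback $\tilde\gq^*\cK_E^\mu$ on the stratum identifies with $\gq^*\cK_E^{\mu-\theta}$ exterior-tensored with $\boxtimes_{\nu\in J} (E^{-\nu})^{(n_\nu)}$ on $\prod_\nu X^{(n_\nu)}$. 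A proper base change and K\"unneth computation then rewrite $\tilde\gp_!$ of the stratum contribution as $\Eis'(\cK_E^{\mu-\theta})$ externally tensored with $\bigotimes_{\nu\in J}\RG(X^{(n_\nu)},(E^{-\nu})^{(n_\nu)})\otimes\Qlb(n_\nu)$.

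Finally, the additivity of the trace of Frobenius along a stratification of a constructible complex on an algebraic stack yields
$$
\Funct(\Eis(\cK_E^\mu))\;=\;\sum_{\gB(\theta)} \Funct(\Eis'(\cK_E^{\mu-\theta}))\prod_{\nu\in J}\Tr\Bigl(\Fr,\RG(X^{(n_\nu)}\otimes\ov{\Fq},(E^{-\nu})^{(n_\nu)})\otimes\Qlb(n_\nu)\Bigr),
$$
which is the claimed formula. I expect the main obstacle to be the bookkeeping of Tate twists and perverse shifts between the $\IC_\zeta$-normalization of $\Eis$ (carrying the shift $[-\dim\Bun_T]$) and the bare pullback--pushforward normalization of $\Eis'$: one must verify that the factorizable twist encoded in $\IC_\zeta$ on $\prod_\nu X^{(n_\nu)}$ becomes, at the function level, precisely $\Qlb(n_\nu)$ times the constant sheaf on each factor. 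This is where the local description of $\IC_\zeta$ on the Zastava spaces near the minimal diagonal strata (established in Sections~\ref{Section_Description of IC_zeta} and \ref{Section_Zastava spaces}) enters, converting the factorizable symmetric-power contribution of $\check\gu_n(B)$ into the scalar twist by $\Qlb(n_\nu)$ that appears in the statement.
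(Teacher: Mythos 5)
Your plan is in substance the paper's proof: stratify $\Bunb_{\tilde B}$ by the defect $\theta$, describe the $*$-restriction of $\IC_{\zeta}$ on each stratum as a factorizable object, feed in the Hecke eigenproperty $\H^{\nu}_T\cK_E\iso\cK_E\boxtimes E^{-\nu}[1]$, and read off the trace of Frobenius stratum by stratum. One correction is in order, however.

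The input you cite, Theorem~\ref{Th_main_Section1}, does not directly give the restriction of $\IC_{\zeta}$ to the stratum ${}_{\theta}\Bunb_{\tilde B}$ that your argument needs: it describes the restriction to the \emph{finer} strata ${}_{\gU(\theta)}\Bunt_{\tilde P}$ (labelled by decompositions $\gU(\theta)$ of $\theta$ into arbitrary elements of $\Lambda^{\sharp,pos}_{G,P}$), and the answer is packaged in terms of $\oplus_i\Sym^i(\check{\gu}_n(B))[2i]$, where the $\gB(\theta)$-grading has been summed away. The statement that actually decomposes the restriction to ${}_{\theta}\Bunb_{\tilde B}$ into irreducible summands indexed by $\gB(\theta)=\sum_{\nu\in J}n_{\nu}\nu$ is Corollary~\ref{Cor_3}, and that is what the appendix invokes (under the name ``Corollary~5.1''); the paper even notes in Section~\ref{Section_Main_results} that Theorem~\ref{Th_3} and Corollary~\ref{Cor_3} ``do not reduce to'' Theorem~\ref{Th_main_Section1}. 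Concretely, Corollary~\ref{Cor_3} hands you directly the sheaves $\IC^{\gB(\theta)}_{\Bun_T,\zeta}$ on $X^{\gB(\theta)}\times_{X^{\theta}}\Modt^{+,\theta}_{\Bunt_T}$; the factorization identity
$$
h^*\cK_E\otimes\IC^{\gB(\theta)}_{\Bun_T,\zeta}\;\iso\;f^*\Bigl(\bigl(\boxtimes_{\nu\in J}(E^{-\nu})^{(n_\nu)}\bigr)\boxtimes\cK_E\Bigr)\otimes\bigl(\Qlb[1](\tfrac12)\bigr)^{\dim\Bun_T+\mid\gB(\theta)\mid}
$$
then produces exactly the term $\Funct(\Eis'(\cK_E^{\mu-\theta}))\prod_{\nu}\Tr(\Fr,\RG(X^{(n_\nu)},(E^{-\nu})^{(n_\nu)})\otimes\Qlb(n_\nu))$, with the Tate twist and shift bookkeeping resolved explicitly. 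If you insist on starting from Theorem~\ref{Th_main_Section1}, you would additionally need the combinatorial identity from Section~\ref{Section_Description of IC_zeta}, namely $(\Sym^m\check{\gu}_n(B))_\theta=\sum_{\mid\gB(\theta)\mid=m}\bigotimes_{\nu}\Sym^{n_\nu}(\check{\gu}_n(B)_\nu)$, together with additivity over the $\gU(\theta)$-stratification of ${}_{\theta}\Bunt_{\tilde B}$, to recover the per-$\gB(\theta)$ expansion; Corollary~\ref{Cor_3} has already done this work, which is why the paper cites it.
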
 

 As in (\cite{BG}, Theorem~2.2.12), Theorem~\ref{Th_trace_of_Frob_Eis'} may be reformulated as follows in terms of generating series.
 
 Consider the group ring $\Qlb[\Lambda^{\sharp}]$, the ring of regular functions on the torus $\check{T}^{\sharp}$. For $\mu\in\Lambda^{\sharp}$ denote by $t^{\mu}$ the corresponding element of $\Qlb[\Lambda^{\sharp}]$. 
Form a completed ring $\widehat{\Qlb[\Lambda^{\sharp}]}$ by allowing infinite expressions of the form
$$
\sum_{\mu} a_{\mu} t^{\mu},
$$
where $\mu$ runs over a subset of $\Lambda^{\sharp}$  of the form $\mu\ge\mu'$, where $\mu'$ is some fixed element of $\Lambda$. 

 The classical Eisenstein series can be thought of as a $\widehat{\Qlb[\Lambda^{\sharp}]}$-valued function on $\Bunt_G(\Fq)$ equal to
$$
\Eis_{cl}(\cK_E)(t)=\sum_{\mu\in\Lambda^{\sharp}} \Funct(\Eis'(\cK_E^{\mu})) t^{\mu}\, .
$$
Consider the modified Eisenstein series defined as
$$
\Eis_{mod}(\cK_E)(t)=\sum_{\mu\in\Lambda^{\sharp}} \Funct(\Eis(\cK_E^{\mu})) t^{\mu},
$$
viewed as a function $\Bunt_G(\Fq)\to \widehat{\Qlb[\Lambda^{\sharp}]}$. 
For $\nu\in J$ consider the abelian $L$-series $L(E^*, \nu, t)\in \widehat{\Qlb[\Lambda^{\sharp}]}$ equal to
$$
\sum_{n\ge 0} \Tr(\Fr, \RG(X^{(n)}\otimes\ov{\Fq}, (E^{-\nu})^{(n)})\otimes \Qlb(n)) t^{n\nu}
$$
\begin{Th} For any $\check{T}^{\sharp}$-local system $E$ on $X$ one has
$$
\Eis_{mod}(\cK_E)(t)=\Eis_{cl}(\cK_E)(t) \prod_{\nu\in J} L(E^*, \nu, t)
$$
\end{Th}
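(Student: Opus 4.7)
The plan is to deduce the generating-series identity directly from Theorem~\ref{Th_trace_of_Frob_Eis'} by substitution and a rearrangement of summations, with only mild bookkeeping to check that the result is a well-defined element of the completed ring $\widehat{\Qlb[\Lambda^{\sharp}]}$.

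First I would write out the definition
$$
\Eis_{mod}(\cK_E)(t)=\sum_{\mu\in\Lambda^{\sharp}} \Funct(\Eis(\cK_E^{\mu}))\, t^{\mu}
$$
and plug in the formula of Theorem~\ref{Th_trace_of_Frob_Eis'}, which expresses $\Funct(\Eis(\cK_E^{\mu}))$ as a sum over $\gB(\theta)\in \Lambda_{G,B}^{pos,pos}$ of $\Funct(\Eis'(\cK_E^{\mu-\theta}))$ times the product $\prod_{\nu\in J}\Tr(\Fr, \RG(X^{(n_{\nu})}\otimes\ov{\Fq}, (E^{-\nu})^{(n_{\nu})})\otimes \Qlb(n_{\nu}))$, where $\theta = \bar c_P(\gB(\theta)) = \sum_{\nu\in J} n_{\nu}\nu$. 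Then I would interchange the order of summation over $\mu$ and $\gB(\theta)$, introducing the variable $\mu'=\mu-\theta\in\Lambda^{\sharp}$ and using $t^{\mu}=t^{\mu'}\prod_{\nu\in J}t^{n_{\nu}\nu}$.

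After this change of variables the expression factors as
$$
\Eis_{mod}(\cK_E)(t)= \Bigl(\sum_{\mu'\in\Lambda^{\sharp}} \Funct(\Eis'(\cK_E^{\mu'}))\,t^{\mu'}\Bigr)\cdot \sum_{\gB\in\Lambda_{G,B}^{pos,pos}} \prod_{\nu\in J} \Tr(\Fr, \RG(X^{(n_{\nu})}\otimes\ov{\Fq}, (E^{-\nu})^{(n_{\nu})})\otimes\Qlb(n_{\nu}))\, t^{n_{\nu}\nu}.
$$
Since $\Lambda_{G,B}^{pos,pos}$ is the free abelian semigroup on $J$, the second factor splits as a product over $\nu\in J$ of $\sum_{n\ge 0}\Tr(\Fr, \RG(X^{(n)}\otimes\ov{\Fq}, (E^{-\nu})^{(n)})\otimes\Qlb(n))\,t^{n\nu}$, which is exactly $L(E^*,\nu,t)$ by definition. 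The first factor is $\Eis_{cl}(\cK_E)(t)$ by definition, yielding the desired identity.

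The only genuine point to verify is that all the manipulations make sense in the completion $\widehat{\Qlb[\Lambda^{\sharp}]}$. Here one uses that $J\subset \Lambda^{\sharp, pos}$, so each monomial $t^{n_{\nu}\nu}$ contributes an element $\ge 0$ in $\Lambda^{\sharp}$, and that for fixed $\mu'$ only finitely many $\gB(\theta)$ contribute to a given $t^{\mu}$; the support of $\Eis_{cl}(\cK_E)(t)$ lies in some cone $\mu'\ge\mu'_0$ by Proposition~\ref{Pp_vanishing_over_Bunt_M^theta} applied to $B$, and then the total support of $\Eis_{mod}(\cK_E)(t)$ lies in the same cone. This controls the summations and shows the reindexing is legitimate. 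I expect no substantial obstacle: the whole content of the statement is repackaged in Theorem~\ref{Th_trace_of_Frob_Eis'}, and the remaining work is purely formal manipulation of power series.
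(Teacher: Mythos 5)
Your proposal is correct and is precisely what the paper does: the paper gives no separate proof, stating only that Theorem~\ref{Th_trace_of_Frob_Eis'} "may be reformulated as follows in terms of generating series," which is exactly the substitution, change of variables $\mu'=\mu-\theta$, and factorization over the free semigroup $\Lambda_{G,B}^{pos,pos}$ that you carry out. The convergence remark you add is sound and is the only point the paper leaves implicit.
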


   It is known that $\Eis_{cl}(\cK_E)(t)$ satisfies the functional equation (cf. \cite{MW}, \cite{Gao}). This is a strong argument supporting our geometric functional equation (Conjecture~\ref{Con_functional_equation}). 
   
\section{Principal geometric Eisenstein series}

\subsection{Definitions} 
\label{Section_3.1}
Keep notations of Section~\ref{Section_Main_results}. Denote by $\Bunb_B$ the Drinfeld compactification of $\Bun_B$ from \cite{BG}. We have the diagram for the Drinfeld compactification $\Bun_T\getsup{\bar\gq} \Bunb_B\toup{\bar\gp}\Bun_G$. Write $\Bunb_{B, \tilde G}=\Bunb_B\times_{\Bun_G}\Bunt_G$. 
Set 
$$
\Bunb_{\tilde B}=\Bunb_{B, \tilde G}\times_{\Bun_T}\Bunt_T
$$
A point of $\Bunb_{\tilde B}$ is given by $\cF_T\in\Bun_T, \cF\in\Bun_G$, a collection of inclusions
$$
\nu^{\check{\lambda}}: \cL^{\check{\lambda}}_{\cF_T} \hook{} \cV^{\check{\lambda}}_{\cF}
$$
for all dominant weights $\check{\lambda}$ of $T$ satisfying the Plucker relations, and $\ZZ/2\ZZ$-graded lines of parity zero $\cU,\cU_G$ equipped with 
$$
\cU^N\,\iso\, (\cL_T)_{\cF_T}, \;\;\; \cU_G^N\,\iso\, \cL_{\cF}
$$ 
Here $\cV^{\check{\lambda}}$ is the Weyl module corresponding to $\check{\lambda}$. Consider the open substack $\Bun_B\subset \Bunb_B$, let $\Bun_{\tilde B}$ be the restriction of $\Bunb_{\tilde B}$ to this open substack. For a point of $\Bun_B$ as above we have canonically $\cL_{\cF}\,\iso\, (\cL_T)_{\cF_T}$. 
 
  Let $a: \Spec k\to B(\mu_N)$ be the natural map, let $\cL_{\zeta}$ be the direct summand in $a_*\Qlb$ on which $\mu_N(k)$ acts by $\zeta$. 
Let 
$$
\Bun_{B, \tilde G}=\Bun_B\times_{\Bun_G}\Bunt_G,
$$
it classifies $\cF_B$, $\cU_G$ and an isomorphism $\cU_G^N\,\iso\, \cL_{\cF_B}$. We get an isomorphism 
\begin{equation}
\label{eq_one}
B(\mu_N)\times \Bun_{B,\tilde G}\,\iso\, \Bun_{\tilde B}
\end{equation} 
sending $(\cF_B, \cU_G, \cU_0\in B(\mu_N))$ with $\cU_0^N\,\iso\, k$ to 
$(\cF_B, \cU_G, \cU)$ with $\cU=\cU_G\otimes \cU_0^{-1}$. Write
\begin{equation}
\label{diag_def_of_Eis}
\Bunt_T\getsup{\tilde\gq} \Bunb_{\tilde B}\toup{\tilde\gp}\Bunt_G
\end{equation}
for the projections, so 
$$
\tilde\gq(\cF_T,\cF, \nu, \cU, \cU_G)=(\cF_T, \cU)\;\;\;\mbox{and}\;\;\;
\tilde\gp(\cF_T,\cF, \nu, \cU, \cU_G)=(\cF, \cU_G)
$$
View $\cL_{\zeta}\boxtimes\IC(\Bun_{B, \tilde G})$ as a perverse sheaf on $\Bun_{\tilde B}$ via (\ref{eq_one}). 
Let $\IC_{\zeta}$ be its intermediate extension to $\Bunb_{\tilde B}$. 
\begin{Def} For $K\in \D_{\zeta}(\Bunt_T)$ set
$$
\Eis(K)=\tilde\gp_!(\tilde\gq^*K\otimes \IC_{\zeta})[-\dim\Bun_T]
$$
\end{Def}

Let $\mu_N(k)\times\mu_N(k)$ act on $\Bunb_{\tilde B}$ by 2-automorphisms so that $(a,a_G)$ acts as $a$ on $\cU$, as $a_G$ on $\cU_G$ and trivially on $(\cF_T,\cF,\nu)$. Then $(a,a_G)$ acts on $\IC_{\zeta}$ by $\zeta(\frac{a_G}{a})$. If $K\in \D_{\zeta}(\Bunt_T)$ then $a\in\mu_N(k)\subset \Aut(U)$ acts on $K$ as $\zeta(a)$. Then $(a,a_G)$ acts on ${\tilde\gq}^*K\otimes \IC_{\zeta}$ as $\zeta(a_G)$. So, $a_G$ acts on $\Eis(K)$ by $\zeta(a_G)$.
 
\subsection{Hecke functors}
\label{section_Hecke_functors_for_tilde_G}
Let $\Lambda^{\sharp,+}=\Lambda^{\sharp}\cap\Lambda^+$. 
We use some notations from \cite{FL}. In particular, $\bO=k[[t]]\subset \bF=k((t))$, $\Gr_G=G(\bF)/G(\bO)$. By abuse of notations, $\cL$ will also denote the $\ZZ/2\ZZ$-graded line bundle on $\Gr_G$ whose fibre at $gG(\bO)$ is $\det(\gg(\bO):\gg(\bO)^g)$. Write $\Gra_G$ for the punctured total space of $\cL$. Let $\wt\Gr_G$ be the stack quotient of $\Gra_G$ by $\Gm$, where $z\in\Gm$ acts as the multiplication by $z^N$ with $N=2\check{h}n$. 
 
  Let $\Perv_{G,n}$ be the category of $G(\bO)$-equivariant perverse sheaves on $\Gra_G$ with $\Gm$-monodromy $\zeta$. Let 
$$
\PPerv_{G,n}=\Perv_{G,n}[-1]\subset \D(\Gra_G)
$$  
We view $\PPerv_{G,n}$ as the category of $G(\bO)$-equivariant perverse sheaves on $\wt\Gr_G$, on which $\mu_{N}(k)$ acts by $\zeta$. Namely, a $G(\bO)$-equivariant perverse sheaf $K$ on $\wt\Gr_G$, on which $\mu_{N}(k)$ acts by $\zeta$, is identified with $\pr^*K\in \PPerv_{G,n}$, where $\pr: \Gra_G\to \wt\Gr_G$ is the quotient map under the $\Gm$-action. 
 
 As in (\cite{FL}, Section~2.1), we pick a trivialization $\gg^{\check{\alpha}}\,\iso\, k$ of the root space for all the positive roots $\check{\alpha}$ and denote by $\Phi$ this collection of trivializations. 
 
 Let $\Omega(\bO)$ denote the completed module of relative differentials of $\bO$ over $k$. Write $\Omega(\bO)^{\frac{1}{2}}$ for the groupoid of square roots of $\Omega(\bO)$. We pick $\cE\in \Omega(\bO)^{\frac{1}{2}}$. As in (\cite{FL}, Section~2.1), for $\nu\in \Lambda^{\sharp,+}$ we define
the local system $E^{\nu}_{\cE}$ on $\Gra_G^{\nu}$ and $\cA^{\nu}_{\cE}\in\PPerv_{G,n}$. By abuse of notation, $E^{\nu}_{\cE}$ also denotes the corresponding local system on $\wt\Gr_G^{\nu}$.

 Write $\cH_G$ for the Hecke stack classifying $\cF,\cF'\in\Bun_G$, $x\in X$ and an isomorphism $\beta: \cF\mid_{X-x}\,\iso\, \cF'\mid_{X-x}$. We have a diagram
$$
\Bun_G\times X\;\getsup{h^{\la}_G\times\pi}\; \cH_G\;\toup{h^{\ra}_G}\;\Bun_G,
$$
where $h^{\la}_G$ (resp., $h^{\ra}_G$) sends the above point to $\cF$ (resp., to $\cF'$). Here $\pi(\cF,\cF',\beta,x)=x$. These notations agree with \cite{BG}. 

 For $\nu\in\Lambda^+$ we define $\ov{\cH}_G^{\nu}$ as in (\cite{BG}, Section~2.1.4). So, the closed substack $\ov{\cH}_G^{\nu}\subset \cH_G$ is given by the condition that for each $G$-module $\cV$ whose weights are $\le \check{\lambda}$ one has
$$
\cV_{\cF}(-\<\nu,\check{\lambda}\>x)\subset \cV_{\cF'}\subset \cV_{\cF}(-\<w_0(\nu),\check{\lambda}\>x)
$$
This is equivalent to requiring that $\cF'$ is in the position $\le\nu$ with respect to $\cF$ at $x$. Let $\cH^{\nu}_G\subset \ov{\cH}^{\nu}_G$ be the open substack given by the property that that $\cF'$ is in the position $\nu$ with respect to $\cF$ at $x$.

 Let $\Gr_{G,X}$ be the ind-scheme classifying $x\in X$, and a $G$-torsor $\cF$ with a trivialization $\beta: \cF\,\iso\, \cF^0_G\mid_{X-x}$. Let $G_X$ be the functor classifying $x\in X$, and an automorphism of $\cF^0_G$ over the formal neighbourhood of $x$. Write $\cL_X$ for the ($\ZZ/2\ZZ$-graded of parity zero) line bundle on $\Gr_{G,X}$ whose fibre at $(\cF,x,\beta)$ is $\det\RG(X, \gg\otimes\cO_X)\otimes \det\RG(X, \gg_{\cF})^{-1}$. Let $\wt\Gr_{G,X}$ be the gerb of $N$-th roots of $\cL_X$ over $\Gr_{G,X}$. Let $\Gra_{G,X}$ be the punctured total space of the line bundle $\cL_X$ on $\Gr_{G,X}$. 
 
 Write $\Bun_{G,X}$ for the stack classifying $(\cF\in\Bun_G, x\in X,\nu)$, where $\nu: \cF\,\iso\, \cF^0_G\mid_{D_x}$ is a trivialization over the formal neighbourhood $D_x$ of $x$. Note that $\Bun_{G,X}$ is a $G_X$-torsor over $\Bun_G\times X$. Set $\Bunt_{G,X}=\Bunt_G\times_{\Bun_G}\Bun_{G,X}$.
 
 Let $\gamma^{\la}$ (resp., $\gamma^{\ra}$) denote the isomorphism $\Bun_{G,X}\times_{G_X} \Gr_{G,X}\,\iso\, \cH_G$ such that the projection to the first term corresponds to $h^{\la}_G$ (resp., $h^{\ra}_G$). The line bundle $\cL\boxtimes \cL_X$ on $\Bun_{G,X}\times \Gr_{G,X}$ is naturally $G_X$-equivariant, we denote by $\cL\tboxtimes \cL_X$ its descent to 
$$
\Bun_{G,X}\times_{G_X} \Gr_{G,X}
$$  
Note that $\ov{\cH}^{\nu}_G$ identifies with $\Bun_{G,X}\times_{G_X} \ov{\Gr}_{G,X}^{\nu}$ under $\gamma^{\la}$. We have canonically 
\begin{equation}
\label{iso_factorization_of_detRG}
(\gamma^{\ra})^*(h^{\la}_G)^*\cL\,\iso\, \cL\tboxtimes \cL_X
\end{equation}
 
 Let $\cH_{\tilde G}$ be the stack obtained from $\Bunt_G\times\Bunt_G$ be the base change $h^{\la}\times h^{\ra}: \cH_G\to\Bun_G\times\Bun_G$. Denote by $\tilde h^{\la}_G$, $\tilde h^{\ra}_G$ the projections in the diagram
$$
\begin{array}{ccccc}
\Bunt_G & \getsup{\tilde h^{\la}_G} & \cH_{\tilde G} & \toup{\tilde h^{\ra}_G} & \Bunt_G\\
\downarrow && \downarrow &&\downarrow\\
\Bun_G& \getsup{h^{\la}_G}& \cH_G &\toup{h^{\ra}_G}&\Bun_G 
\end{array}
$$
The stack $\cH_{\tilde G}$ classifies $(\cF,\cF',\beta,x)\in\cH_G$ and lines $\cU, \cU'$ equipped with $\cU^N\,\iso\, \cL_{\cF}$, $\cU'^N\,\iso\, \cL_{\cF'}$. 

 The isomorphism (\ref{iso_factorization_of_detRG}) yields a $G_X$-torsor $\tilde\gamma^{\ra}: \Bunt_{G,X}\times_X \wt\Gr_{G,X}\to \cH_{\tilde G}$ extending the $G_X$-torsor 
$$
\Bun_{G,X}\times_X \Gr_{G,X}\to \Bun_{G,X}\times_{G_X} \Gr_{G,X}\toup{\gamma^{\ra}}\cH_G
$$
Namely, it sends $(x, \beta': \cF'\,\iso\, \cF^0_G\mid_{D_x}, \; \beta_1: \cF_1\,\iso\, \cF^0_G\mid_{X-x},\; \cU'^N\,\iso\, \cL_{\cF'}, \;\cU_1^N\,\iso\, (\cL_X)_{(\cF_1,\beta_1,x)})$ to 
$$
(\cF, \cF', \cU,\cU', \beta: \cF\mid_{X-x}\,\iso\, \cF'\mid_{X-x}),
$$
where $\cF$ is obtained as the gluing of $\cF'\mid_{X-x}$ with $\cF_1\mid_{D_x}$ via $\beta_1^{-1}\comp\beta': \cF'\,\iso\, \cF_1\mid_{D_x^*}$. We have canonically $\cL_{\cF'}\otimes (\cL_X)_{(\cF_1, \beta_1,x)}\,\iso\, \cL_{\cF}$, and $\cU=\cU'\otimes\cU_1$ is equipped with the induced isomorphism $\cU^N\,\iso\, \cL_{\cF}$. 

 Let $\Sph(\wt\Gr_{G,X})$ be the category of $G_X$-equivariant perverse sheaves on $\wt\Gr_{G,X}$. Now for $\cS\in \Sph(\wt\Gr_{G,X})$ and $\cT\in \D(\Bunt_G)$ we can form their twisted tensor product $(\cT\tboxtimes \cS)^r$, which is the descent via $\tilde\gamma^{\ra}$. Similarly, one may define $\tilde\gamma^{\la}$ and the complex $(\cT\tboxtimes \cS)^l$ on $\cH_{\tilde G}$ (as in \cite{BG}, Section~3.2.4).  
 
 We also denote the composition $\cH_{\tilde G}\to \cH_G\toup{\pi} X$ by $\pi$. 
Let $\cS\in \Sph(\wt\Gr_{G,X})$ and $\cT\in \D_{\zeta}(\Bunt_G)$. If $a_1\in \mu_N(k)\subset \Aut(\cU_1)$ acts on $\cS$ as $\zeta(a_1)$
then $(a,a')\in \mu_N(k)\times\mu_N(k)\subset \Aut(\cU)\times\Aut(\cU')$ acts on $(\cT\tboxtimes \cS)^r$ as $\zeta(a)$, so
$(\tilde h^{\la}_G\times\pi)_!((\cT\tboxtimes \cS)^r)\in \D_{\zeta}(\Bunt_G\times X)$. 

 As in \cite{FL} write $\PPerv_{G,n,X}$ for the category of compexes $K\in \D(\wt\Gr_{G,X})$ such that $K[1]$ is perverse, $G_X$-equivariant, and $\mu_N(k)$ acts on $K$ by $\zeta$. Our choice of $\cE_X$ (see Section~\ref{Section_Notations}) yields a fully faithful localization functor 
$$
\tau^0: \PPerv_{G,n}\to \PPerv_{G,n,X}
$$
defined in (\cite{FL}, Section~2.3). Now for $\nu\in \Lambda^{\sharp,+}$ we get $\cA^{\nu}:=\tau^0(\cA^{\nu}_{\cE})\in \PPerv_{G,n,X}$. Define
\begin{equation}
\label{functor_H^nu_G_def}
\H^{\nu}_G: \D_{\zeta}(\Bunt_G)\to \D_{\zeta}(\Bunt_G\times X)
\end{equation}
by
$$
\H^{\nu}_G(\cT)=(\tilde h^{\la}_G\times\pi)_!((\cT\tboxtimes \cA^{-w_0(\nu)})^r)
$$
We used that $\Lambda^{\sharp,+}$ is preserved by $-w_0$. This definition agrees with \cite{BG}. For $\nu\in \Lambda^{\sharp,+}$
write also $\IC^{\nu}=(\IC(\Bunt_{G,X})\tboxtimes \cA^{-w_0(\nu)})^r$. Let 
$$
\ov{\cH}^{\nu}_{\tilde G}=\cH_{\tilde G}\times_{\cH_G}\ov{\cH}^{\nu}_G
$$ 
Then $\IC^{\nu}$ is an irreducible perverse sheaf, the extension by zero from $\ov{\cH}^{\nu}_{\tilde G}$. 
For $\cT\in \D_{\zeta}(\Bunt_G)$ we may rewrite 
\begin{equation}
\label{formula_for_H^nu_G_Section04}
\H^{\nu}_G(\cT)=(\tilde h^{\la}_G\times\pi)_!((\tilde h^{\ra}_G)^*\cT\otimes \IC^{\nu}))[-\dim\Bun_G]
\end{equation}

 Recall the covariant functor $\star: \PPerv_{G,n,\zeta^{-1}}\to \PPerv_{G,n,\zeta}$ defined in (\cite{FL}, Remark~2.8), see also (\cite{L6}, Remark~2.2).
For $\nu\in\Lambda^{\sharp,+}$ it sends $\cA^{\nu}_{\cE}$ to $\cA^{-w_0(\nu)}_{\cE}$. More generally, for $\cS\in \PPerv_{G,n}, \cT\in \D_{\zeta}(\Bunt_G)$ set 
$$
\H^{\la}_G(\cS,\cT)=(\tilde h^{\la}_G\times\pi)_!((\cT\tboxtimes \tau^0(\star\cS))^r)\;\;\;\mbox{and}\;\;\;
\H^{\ra}_G(\cS,\cT)=(\tilde h^{\ra}_G\times\pi)_!(\cT\tboxtimes \tau^*(\cS))^l
$$
These are analogs of the corresponding functors from (\cite{BG}, Section~3.2.4), they satisfy similar properties. In particular, $\H^{\la}_G$ (resp., $\H^{\ra}_G$) defines a left (resp., right) action on $\D_{\zeta}(\Bunt_G)$. 

\subsubsection{Hecke functors for $T$} 
\label{section_Hecke_functors_for_T}
For $\nu\in \Lambda^{\sharp}$ define the Hecke functor  
$$
\H^{\nu}_T: \D_{\zeta}(\Bunt_T)\to \D_{\zeta}(\Bunt_T\times X)
$$
as follows. Our definition will be consistent with (\ref{functor_H^nu_G_def}) but will differ from those of \cite{L}. 

 Recall $\theta^{Kil}=(\kappa, \lambda, c)$ from Section~\ref{Section_0.2}. For $\nu\in \Lambda$ the line bundle $\lambda^{\nu}$ is the restriction of $\cL_T$ under $X\to \Bun_T$, $x\mapsto \cF^0_T(\nu x)$. Note that $\lambda^{\nu}\,\iso\,\Omega^{\check{h}\iota(\nu,\nu)}$ by (\cite{L}, Lemma~4.1). 

  Let $(\iota, \tilde\Lambda^{\sharp, can})$ denote the restriction of $(\iota, \tilde\Lambda^{can})$ to $\Lambda^{\sharp}$, its is equipped with a $W$-equivariant structure. We pick an object $(\frac{\iota}{n}, \tilde\Lambda^{\sharp})$ in $\cE^s(T^{\sharp})$ and a $W$-equivariant isomorphism 
$$
(\frac{\iota}{n}, \tilde\Lambda^{\sharp})^n\,\iso\, (\iota, \tilde\Lambda^{\sharp, can})
$$
in $\cE^s(T^{\sharp})$. Now using (\cite{L}, Lemma~4.1), the above object yields $(\tau, \frac{\kappa}{N}, c)\in \cP^{\theta}(X, \Lambda^{\sharp})$ and a $W$-equivariant isomorphism
\begin{equation}
\label{iso_root_of_theta_sharp}
(\tau, \frac{\kappa}{N}, c)^N\,\iso\, \theta^{Kil}\mid_{\Lambda^{\sharp}}
\end{equation}

Note that $\frac{\kappa}{N}=-\frac{\iota}{n}: \Lambda^{\sharp}\otimes\Lambda^{\sharp}\to\ZZ$ may take odd values, so $\tau$ is a super line bundle in general.

 We also write $\tau$ for the line bundle on $\Bun_{T^{\sharp}}$ obtained from (\ref{iso_root_of_theta_sharp}) applying the corresponding functor $\cP^{\theta}(X,\Lambda^{\sharp})\to \cPic(\Bun_{T^{\sharp}})$ as in (\cite{L}, Section~4.2.1).
It is equipped with a $W$-equivariant structure and a $W$-equivariant isomorphism $\tau^N\,\iso\, i_X^*\lambda$. Here $i_X: \Bun_{T^{\sharp}}\to\Bun_T$ is the natural map.

 For $\nu\in\Lambda^{\sharp}$ let $m^{\nu}: \Bunt_T\times X\to\Bunt_T$ be the map sending $x\in X$, $(\cF, \cU)\in\Bunt_T$ to $(\cF'=\cF(-\nu x), \cU')$, where 
\begin{equation}
\label{cU'_for_def_of_mnu}
\cU'=\cU\otimes (\cL^{-\frac{\kappa(\nu)}{N}}_{\cF})_x\otimes \tau_{\cO(-\nu x)}
\end{equation}
equipped with the induced isomorphism $\cU'^N\,\iso\, (\cL_T)_{\cF'}$. It coincides with the map denoted $m_{-\nu}$ in (\cite{L}, Section~5.2.3). The Hecke functor 
$$
\H^{\nu}_T: \D_{\zeta}(\Bunt_T)\to \D_{\zeta}(\Bunt_T\times X)
$$ 
is defined by $\H^{\nu}_T(K)=(m^{\nu})^*K[1]$. 

\subsection{Proof of Theorem~\ref{Th_1}} We will use the following result. Let $\Gr_T$, $\Gr_B$ be the affine grassmanians for $T,B$. Let $\wt\Gr_T\to \wt\Gr_B\to \wt\Gr_G$ be obtained from $\Gr_T\to \Gr_B\to\Gr_G$ by the base change $\wt\Gr_G\to\Gr_G$. As in (\cite{FL}, Section~4.1) for $\nu\in\Lambda^{\sharp,+}$ and $\mu\in \Lambda$ one has the diagram of ind-stacks
$$
\wt\Gr_T^{\mu}\getsuplong{\gt_B^{\mu}} \wt\Gr_B^{\mu} \touplong{\gs_B^{\mu}} \wt\Gr_G
$$
The connected component $\wt\Gr_T^{\mu}$ is the one containing $t^{\mu}T(\bO)$, similarly for $\wt\Gr_B^{\mu}$. 

 If $\mu\in\Lambda^{\sharp}$ as in (\cite{FL}, Section~4.2) we denote by $a_{\mu}: \cE_{\bar c}^{\iota(\mu,\mu)/n}-\{0\}\to \Omega_{\bar c}^{\check{h}\iota(\mu,\mu)}-\{0\}$ the map $z\mapsto z^{2\check{h}n}$. 
 
 \begin{Pp}[\cite{FL}]
\label{Pp_description_weight_space}
 Let $\nu\in\Lambda^{\sharp,+}$, $\mu\in \Lambda$. The complex $a_{\mu}^*(\gt^{\mu}_B)_!(\gs_B^{\mu})^*\cA^{\nu}_{\cE}$ vanishes unless $\mu\in \Lambda^{\sharp}$. In the latter case this complex is constant and identifies canonically with $V^{\nu}(\mu)[-\<\mu, 2\check{\rho}\>]$.
\end{Pp}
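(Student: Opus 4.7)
The plan is to follow the strategy of the classical Mirkovi\'c--Vilonen theorem, adapted to the twisted metaplectic setting via the twisted Satake equivalence of \cite{FL}. The diagram $\wt\Gr_T^\mu \getsup{\gt^\mu_B} \wt\Gr_B^\mu \toup{\gs^\mu_B} \wt\Gr_G$ is the twisted analog of the standard semi-infinite orbit diagram used to compute the weight functor, and the pullback along $a_\mu$ is exactly the $N$-fold cover needed to trivialize the residual gerb structure on $\wt\Gr_T^\mu$ so that the answer becomes a \emph{constant} complex on its source.

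First I would establish the vanishing for $\mu \notin \Lambda^{\sharp}$. The restriction of $\cL$ to the component $\Gr_B^\mu$ carries a $\Gm$-scaling weight controlled by the character $\nu' \mapsto \kappa(\mu, \nu')$ of the maximal torus $T \subset G(\bO)$. For $(\gs^\mu_B)^*\cA^\nu_\cE$ to carry the prescribed $\mu_N$-monodromy $\zeta$ compatibly, this character must take values in $N\ZZ$ on all of $\Lambda$, which is exactly the defining condition of $\Lambda^\sharp$. When it fails, the $(\gt^\mu_B)_!$-pushforward vanishes for character-theoretic reasons: the fibers of $\gt^\mu_B$ carry a nontrivial $\mu_N$-action on which the sheaf transforms by a character distinct from $\zeta$, and integration kills it.

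Second, for $\mu \in \Lambda^\sharp$ the pullback $a_\mu^*$ trivializes the gerb, and the computation reduces to a classical one. Choose a regular dominant coweight to produce a contracting $\Gm$-action on $\Gr_B^\mu$ retracting onto $t^\mu T(\bO)$; by the hyperbolic-restriction/contraction principle, $a_\mu^* (\gt^\mu_B)_!(\gs^\mu_B)^* \cA^\nu_\cE$ identifies with $\RG_c(\Gr_B^\mu \cap \ov\Gr_G^\nu, \cA^\nu_\cE|_{\cdot})$. The Mirkovi\'c--Vilonen dimension estimate says this intersection is equidimensional of dimension $\<\mu + \nu, \check\rho\>$, and its top irreducible components (the MV cycles) yield a canonical basis of the weight space $V^\nu(\mu)$. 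The cohomological shift $[-\<\mu, 2\check\rho\>]$ emerges from combining the perverse normalization of $\cA^\nu_\cE$ on $\ov\Gr_G^\nu$ (shifted by $[\<\nu, 2\check\rho\>]$) with the dimension count of the cycles and the relative dimension of $\gt^\mu_B$.

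The main obstacle is bookkeeping the metaplectic twists: one must verify that the character of the stabilizer matches $\zeta$ precisely when $\mu \in \Lambda^\sharp$, and --- most importantly --- that the resulting multiplicity space is the $\mu$-weight space of the irreducible representation of the metaplectic dual $\check{G}_n$ rather than of $\check{G}$. This last identification is not a byproduct of the geometry alone; it is exactly the content of the twisted Satake equivalence of \cite{FL}, which must be invoked to interpret the geometric stalk in terms of $\Rep(\check{G}_n)$. Tracking the role of $\cE$ (which is what makes $\cA^\nu_\cE$ well-defined as a twisted perverse sheaf, not just up to a $\mu_2$-choice) and of the factor $N = 2\check{h}n$ appearing in the definition of $a_\mu$ is the delicate part; once these are aligned, the identification is canonical as claimed.
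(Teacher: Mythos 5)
This proposition is not proved in the present paper: it is stated with a citation to \cite{FL} and serves as an imported input, so there is no in-text argument to compare against. Your outline is consistent with how the result is actually established in \cite{FL}: the vanishing for $\mu\notin\Lambda^{\sharp}$ is a $T(\bO)$-equivariance argument (the determinant line over $\Gr_T^{\mu}$ carries $T$-character $-\kappa(\mu)$, and compatibility with the $\mu_N$-monodromy $\zeta$ forces $\kappa(\mu,\Lambda)\subset N\ZZ$), and for $\mu\in\Lambda^{\sharp}$ one runs the twisted Mirkovi\'c--Vilonen argument via hyperbolic localization and the MV dimension estimate, with the final identification of the multiplicity space coming from the twisted Satake equivalence itself, as you say.

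One point to sharpen, since you only gesture at it: pulling back by $a_{\mu}$ trivializes the gerb over the point $\Gr_T^{\mu}$, but the resulting computation is still not literally classical, because $\cA^{\nu}_{\cE}$ is the IC-extension of a nontrivial rank-one local system $E^{\nu}_{\cE}$ on the open stratum of $\ov{\Gr}_G^{\nu}$, not of the constant sheaf. Over each top-dimensional MV cycle $C\subset\Gr_B^{\mu}\cap\ov{\Gr}_G^{\nu}$ (of dimension $\<\mu+\nu,\check\rho\>$) one must still check that $E^{\nu}_{\cE}$ becomes constant along $C$, i.e.\ that it descends under $\gt^{\mu}_B$; only the cycles passing this test contribute to the weight space. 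This descent criterion is precisely the phenomenon isolated in Corollary~\ref{Cor_2} of the present paper, and it is where the dual group $\check G_n$ (rather than $\check G$) is detected geometrically. You flag this as ``the delicate part'' but leave it unexamined; it is the substantive content of the statement, not bookkeeping, and a complete proof would need to establish it.
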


\subsubsection{} Pick $\nu\in\Lambda^{\sharp,+}$. Consider a version of the basic diagram from (\cite{BG}, Section~3.1.1). Set $\bar Z=\ov{\cH}^{\nu}_{\tilde G}\times_{\Bunt_G}\Bunb_{\tilde B}$, where we used the map $\tilde h^{\ra}_G: \ov{\cH}^{\nu}_{\tilde G}\to \Bunt_G$ to define the fibred product. 

\begin{Lm} 
\label{Lm_def_of_phi}
There is a morphism of stacks $\phi: \bar Z\to \Bunb_{\tilde B}\times X$ that fits into a commutative diagram
\begin{equation}
\label{diag_basic_diagram}
\begin{array}{ccccc}
\Bunb_{\tilde B}\times X & \getsuplong{\phi} & \bar Z & \touplong{'h^{\ra}_G} & \Bunb_{\tilde B}\\
\downarrow\lefteqn{\scriptstyle \tilde\gp\times \id} && \downarrow\lefteqn{\scriptstyle '\tilde\gp} && \downarrow\lefteqn{\scriptstyle \tilde\gp}\\
\Bunt_G\times X & \getsuplong{\tilde h^{\la}_G\times\pi} &  \ov{\cH}^{\nu}_{\tilde G} & \touplong{\tilde h^{\ra}_G} & \Bunt_G
\end{array}
\end{equation}
\end{Lm}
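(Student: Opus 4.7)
The plan is to first define $\phi$ on the underlying classical stacks, recovering the construction of \cite{BG}, and then enhance it with the metaplectic lines. A point of $\bar Z$ consists of a Hecke modification $\beta:\cF\mid_{X-x}\iso\cF'\mid_{X-x}$ in position $\le\nu$ at $x$, lines $\cU,\cU'$ with $\cU^N\iso\cL_\cF$ and $\cU'^N\iso\cL_{\cF'}$, a $T$-bundle $\cM_T$, Pl\"ucker inclusions $\cL^{\check\lambda}_{\cM_T}\hookrightarrow\cV^{\check\lambda}_{\cF'}$ for $\check\lambda\in\check\Lambda^+$, and a line $\cV_T$ with $\cV_T^N\iso(\cL_T)_{\cM_T}$. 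The classical component of $\phi$ transports the Pl\"ucker inclusions via $\beta$: the position bound $\cV^{\check\lambda}_{\cF'}\subset\cV^{\check\lambda}_{\cF}(-\<w_0\nu,\check\lambda\>x)$ for dominant $\check\lambda$ yields inclusions $\cL^{\check\lambda}_{\cM_T'}\hookrightarrow\cV^{\check\lambda}_{\cF}$, where $\cM_T':=\cM_T(w_0(\nu)\cdot x)$, and Pl\"ucker relations are preserved.

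The metaplectic enhancement requires specifying lines $\cV_G^{\sharp}$ and $\cV_T^{\sharp}$ for the image in $\Bunb_{\tilde B}$ satisfying $(\cV_G^{\sharp})^N\iso\cL_\cF$ and $(\cV_T^{\sharp})^N\iso(\cL_T)_{\cM_T'}$. Commutativity of the left square of (\ref{diag_basic_diagram}) forces $\cV_G^{\sharp}=\cU$. For $\cV_T^{\sharp}$, I use the factorization properties of $\cL$ and $\cL_T$ under modifications at $x$. The modification $\beta$ provides a canonical isomorphism $\cL_\cF\otimes\cL_{\cF'}^{-1}\iso\ell_x$ for a specific super line $\ell_x$ determined by the local relative position at $x$, computed from the short exact sequence relating $\gg_\cF$ and $\gg_{\cF'}$. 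Analogously, the $T$-modification $\cM_T\to\cM_T'$ gives a local identification $(\cL_T)_{\cM_T'}\otimes(\cL_T)_{\cM_T}^{-1}\iso\ell'_x$ via $\theta^{Kil}$. The relation $\kappa=-2\check{h}\iota$ and the equality $\theta^{Kil}=(\theta^{can})^{-2\check{h}}$ make these two local factors canonically identified. Using the chosen $N$-th root $\tau$ of $\theta^{Kil}\mid_{\Lambda^\sharp}$ from (\ref{iso_root_of_theta_sharp}) (note $w_0(\nu)\in\Lambda^\sharp$ since $\nu\in\Lambda^{\sharp,+}$), one sets
$$
\cV_T^{\sharp}\;:=\;\cV_T\otimes\cU\otimes(\cU')^{-1}\otimes\tau^{w_0(\nu)}_x,
$$
modelled on the formula (\ref{cU'_for_def_of_mnu}) defining $m^\nu$. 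Taking $N$-th powers and invoking (\ref{iso_root_of_theta_sharp}) yields the required $(\cV_T^{\sharp})^N\iso(\cL_T)_{\cM_T'}$.

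Commutativity of the right square of (\ref{diag_basic_diagram}) is immediate from the fibered-product definition of $\bar Z$, and the left square holds by design. The main technical obstacle is the metaplectic bookkeeping: ensuring that the factorization isomorphisms for $\cL$ (coming from the canonical $K_2$-extension) and for $\cL_T$ (coming from $\theta^{Kil}$) are coherently identified via the relation $\theta^{Kil}=(\theta^{can})^{-2\check{h}}$, and that the construction is independent of auxiliary choices. Once these identifications are set up on points, the functoriality in families is formal, yielding $\phi$ as a morphism of stacks and verifying commutativity of both squares of (\ref{diag_basic_diagram}).
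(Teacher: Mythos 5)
Your plan — carry the Pl\"ucker data across $\beta$ on the classical side and then adjust the $T$-line by a local factor at $x$ — is the right one, and the paper does exactly that. But your concrete formula for $\cV_T^{\sharp}$ is incorrect, and the reason you give for it would be circular even if it were right.

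You set $\cV_T^{\sharp}=\cV_T\otimes\cU\otimes(\cU')^{-1}\otimes\tau^{w_0(\nu)}_x$. Taking $N$-th powers this yields $(\cV_T^{\sharp})^N\iso(\cL_T)_{\cM_T}\otimes\cL_{\cF}\otimes\cL_{\cF'}^{-1}\otimes\lambda^{w_0(\nu)}_x$, and for this to be $(\cL_T)_{\cM_T'}$ you need a canonical identification of the $G$-side local factor $\cL_{\cF}\otimes\cL_{\cF'}^{-1}$ with the $T$-side local factor of $\cL_T$ coming from the modification $\cM_T\to\cM_T(w_0(\nu)x)$. No such identification exists: the line $\cL_{\cF}\otimes\cL_{\cF'}^{-1}$ varies over $\ov{\cH}^{\nu}_{\tilde G}$ with the actual relative position $\mu\le\nu$ of $\cF'$ with respect to $\cF$ at $x$ (already when $\cF=\cF'$, which lies in $\ov{\cH}^{\nu}_{\tilde G}$, it is canonically trivial), whereas the $T$-side factor depends only on $\cM_T$, $w_0(\nu)$, and $x$ and is nontrivial whenever $\nu\ne 0$. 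The relation $\kappa=-2\check h\iota$ or $\theta^{Kil}=(\theta^{can})^{-2\check h}$ is an identity internal to the $T$-data; it does not produce a trivialization of $\cL_{\cF}\otimes\cL_{\cF'}^{-1}$ in terms of $\cM_T$. Said differently, the $T$-line in the image of $\phi$ must be manufactured using only the $T$-torsor $\cM_T$ (what the paper calls $\cF'_T$), the fixed coweight $w_0(\nu)$, and $x$; it cannot refer to the lines $\cU,\cU'$ attached to the varying $G$-modification.

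The paper's proof avoids this by applying the map $m^{-w_0(\nu)}$ of formula (\ref{cU'_for_def_of_mnu}) to $(\cF'_T,\cU',x)$: the new $T$-line is $\cU'\otimes(\cL^{\kappa(w_0(\nu))/N}_{\cF'_T})_x\otimes\tau_{\cO(w_0(\nu)x)}$, with the factor $(\cL^{\kappa(w_0(\nu))/N}_{\cF'_T})_x$ living purely on the $T$-side, so the required $N$-th power isomorphism is built into the definition of $m^{-w_0(\nu)}$. If you replace your $\cU\otimes(\cU')^{-1}$ by this factor, your construction matches the paper's and the left and right squares of (\ref{diag_basic_diagram}) commute by inspection; as written, the left square still commutes (you keep $\cU$ and $\cF$ on the $G$-side, as you correctly observe), but $\phi$ does not land in $\Bunb_{\tilde B}\times X$ because the $T$-line does not satisfy the required $N$-th power condition.
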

\begin{Prf}
A point of $\bar Z$ is given by 
\begin{equation}
\label{point_of_barZ}
(\cF'_T, \cF', \nu', \cU'^N\,\iso\, (\cL_T)_{\cF'_T}, \cU'^N_G\,\iso\, \cL_{\cF'})\in \Bunb_{\tilde B}
\end{equation} 
and $(x, \cF, \cF',\beta, \cU'^N_G\,\iso\, \cL_{\cF'}, \cU_G^N\,\iso\, \cL_{\cF})\in \ov{\cH}^{\nu}_{\tilde G}$. For this point we let $\cF_T=\cF'_T(w_0(\nu)x)$ with the system of induced inclusions 
$$
\nu^{\check{\lambda}}: \cL^{\check{\lambda}}_{\cF'_T}(\<w_0(\nu), \check{\lambda}\>x)\hook{} \cV^{\check{\lambda}}_{\cF_G}
$$
for all $\check{\lambda}\in \check{\Lambda}^+$. The map $\phi$ sends the above point to 
$$
(\cF_T, \cF, \nu, \cU^N_G\,\iso\, \cL_{\cF}, \cU^N\,\iso\, (\cL_T)_{\cF_T}),
$$ 
where $(\cF_T, \cU, \cU^N\,\iso\, (\cL_T)_{\cF_T})$ is the image of $(\cF'_T, \cU'^N\,\iso\, (\cL_T)_{\cF'_T}, x)$ under $m^{-w_0(\nu)}$. 
\end{Prf}

\medskip

 Set 
$$
\IC(\bar Z)_{\zeta}=('\tilde\gp)^*\IC^{\nu}\otimes ('h_G^{\ra})^*\IC_{\zeta}[-\dim\Bun_G]
$$ 
Since $\tilde h^{\ra}_G$ in (\ref{diag_basic_diagram}) is a locally trivial fibration in smooth topology, $\IC(\bar Z)_{\zeta}$ is an irreducible perverse sheaf on $\bar Z$. For a point (\ref{point_of_barZ}) let 
$$
(a, a',b')\in \mu_N(k)\times\mu_N(k)\times\mu_N(k)\subset \Aut(\cU_G)\times\Aut(\cU'_G)\times\Aut(\cU')
$$ 
acting trivially on $(\cF'_T,\cF', \cF)$. This 2-automorphism acts on $\IC(\bar Z)_{\zeta}$ as $\zeta(\frac{a}{b'})$. 

 For each $\mu\in\Lambda^{pos}_G$ one has the closed embedding $i_{\mu}: \Bunb_B\times X\hook{} \Bunb_B\times X$ defined in (\cite{BG}, Section~3.1.3). 
For $\mu\in \Lambda^{pos}_G\cap\Lambda^{\sharp}$ we lift it to a map
$$
\tilde i_{\mu}: \Bunb_{\tilde B}\times X\hook{} \Bunb_{\tilde  B}\times X
$$
sending $(x, \nu, \cF, \cF_T, \cU^N\,\iso\, (\cL_T)_{\cF_T}, \cU_G^N\,\iso\, \cL_{\cF})$ to $(x, \cF, \cF_T(-\mu x), \bar \cU, \cU_G)$, where $(\cF_T(-\mu x), \bar \cU)=m^{\mu}(\cF_T, \cU)$ is equipped with the induced inclusions
$$
\cL^{\check{\lambda}}_{\cF_T(-\mu x)}\hook{} \cV^{\check{\lambda}}_{\cF}
$$
Set for brevity $_{\mu}\IC_{\zeta}=\tilde i_{\mu !}(\IC_{\zeta}\boxtimes \IC(X))$. 
 
\begin{Pp} 
\label{Pp_direct_image_by_phi}
One has canonically 
$$
\phi_!\IC(\bar Z)_{\zeta}\,\iso\, \mathop{\oplus}\limits_{\mu\in \Lambda^{pos}_G\cap\Lambda^{\sharp}}\;\, {_{\mu}\IC_{\zeta}}\otimes V^{\nu}(\mu+w_0(\nu))
$$
\end{Pp}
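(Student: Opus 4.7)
The plan is to follow the strategy of (\cite{BG}, Proposition~3.3.4), transposed to our twisted setting. The key mechanism is to stratify $\bar Z$ according to the defect of the modification at $x$, and then to use the factorization of $\wt\Gr_G$ over $X$ to reduce each stratum's contribution to the local weight-space computation provided by Proposition~\ref{Pp_description_weight_space}.

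First I would introduce, for each $\mu\in \Lambda^{pos}_G$, a locally closed substack $_\mu\bar Z\subset \bar Z$ singled out by the requirement that the Pl\"ucker inclusions $\nu^{\check\lambda}$ on $\cF'$, when compared at $x$ with their saturations as induced from the modification $\beta$, have defect exactly $\mu\cdot x$. By the formula for $\phi$ in Lemma~\ref{Lm_def_of_phi} (and in particular the convention $\cF_T=\cF'_T(w_0(\nu)x)$), the image $\phi(_\mu\bar Z)$ lands inside $\tilde i_\mu(\Bunb_{\tilde B}\times X)$. Using the factorization of $\wt\Gr_{G,X}$ over $X$ together with the diagram (\ref{diag_basic_diagram}), the fiber of $\phi$ over a generic point of $\tilde i_\mu(\Bunb_{\tilde B}\times X)$ factors through the locally closed subvariety $\wt\Gr_B^{\mu+w_0(\nu)}\cap \ov{\wt\Gr}{}^\nu_G$ of the twisted affine Grassmannian appearing in Proposition~\ref{Pp_description_weight_space}. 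The gerbe/super-line data on both sides match: the factorization $\cU=\cU'\otimes\cU_1$ coming from $\tilde\gamma^{\ra}$ combined with (\ref{iso_factorization_of_detRG}) and the prescription (\ref{cU'_for_def_of_mnu}) for $m^{-w_0(\nu)}$ identifies the twist on the fiber with the local twist $\tau^0(\cA^{-w_0(\nu)}_{\cE})$ used to define $\IC^\nu$.

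Next I would apply Proposition~\ref{Pp_description_weight_space} stratum-by-stratum. It asserts that the complex computing the fiber of $\phi_!\IC(\bar Z)_\zeta$ over $\tilde i_\mu(\Bunb_{\tilde B}\times X)$ vanishes unless $\mu+w_0(\nu)\in \Lambda^\sharp$, which, since $\Lambda^\sharp$ is $W$-stable, is equivalent to $\mu\in\Lambda^\sharp$. In the non-vanishing case it is a constant complex with value $V^{-w_0(\nu)}(\mu+w_0(\nu))$, placed in the cohomological degree that, combined with the shifts built into $\IC(\bar Z)_\zeta$ and into $_\mu\IC_\zeta$, accounts precisely for the perverse normalization of $_\mu\IC_\zeta$. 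The canonical isomorphism $V^{-w_0(\nu)}(\lambda)\iso V^\nu(-w_0(\lambda))$, applied to $\lambda=\mu+w_0(\nu)$ and composed with the action of $w_0$ sending $-w_0(\mu+w_0(\nu))$ back to $\mu+w_0(\nu)$, produces the multiplicity space $V^\nu(\mu+w_0(\nu))$ asserted in the statement.

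The main obstacle is upgrading this stratum-by-stratum calculation to an honest direct-sum decomposition. The morphism $\phi$ is proper, so the decomposition theorem applies, and what needs to be ruled out is nontrivial gluing between the contributions indexed by distinct $\mu,\mu'\in\Lambda^{pos}_G\cap\Lambda^\sharp$. As in (\cite{BG}, Section~3.3), this follows from a parity argument: by Proposition~\ref{Pp_description_weight_space} each fiber of $\phi$ is cohomologically concentrated in a single degree of the correct parity, so the relevant groups $\Ext^1(_\mu\IC_\zeta,\, _{\mu'}\IC_\zeta[\text{shift}])$ vanish in the range where they could contribute. The remaining subtlety is purely bookkeeping: one must track the super line data $\cU,\cU_G,\cU',\cU'_G,\cU_1$ through the identifications (\ref{iso_factorization_of_detRG}), (\ref{cU'_for_def_of_mnu}) and the chosen root (\ref{iso_root_of_theta_sharp}), and verify that the induced $(\mu_N\times\mu_N)$-monodromy on $\phi_!\IC(\bar Z)_\zeta$ agrees with the one carried by $_\mu\IC_\zeta$ on each stratum.
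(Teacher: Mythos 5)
Your plan correctly identifies the shape of the argument — stratify $\bar Z$, reduce to the local Satake-type calculation (Proposition~\ref{Pp_description_weight_space}), and then assemble the direct sum — and this is indeed the route the paper takes, following (\cite{BG}, Section~3.3). But the proposal glosses over the two steps that carry the actual weight of the proof, and one of the two shortcuts you propose in their place would not go through.

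First, the stratification by a single defect parameter $\mu$ is too coarse. The paper (like \cite{BG}) fixes $x$ and then stratifies $_x\bar Z$ by \emph{three} indices $(\mu,\mu',\nu')$ (preimage of $_{x,\mu}\Bunb_{\tilde B}$ under $\phi$, preimage of $_{x,\mu'}\Bunb_{\tilde B}$ under $'h^{\ra}_G$, and position $\nu'$ of the Hecke modification). The reason is that the fibre of $\phi$ over a point of $_{x,\mu}\Bunb_{\tilde B}$ is a union of the pieces $\Gr_{\tilde G}^{\nu'}\cap S_{\tilde G}^{\mu-\mu'+w_0(\nu)}$ for varying $(\mu',\nu')$ (Lemma~\ref{Lm_some_fibrations_description}), and these pieces contribute in \emph{different} cohomological degrees. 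The crux of the proof is the dimension estimate $\dim(\Gr^{\nu'}_{G}\cap S^{\lambda}_{G})\le\<\nu'+\lambda,\check\rho\>$ and the stratumwise restriction estimates on $\IC_\zeta$, which together give: $K^{\mu,\mu',\nu'}:=\phi_!(\IC(_x\bar Z)_\zeta\mid_{Z^{\mu,\mu',\nu'}})$ is in perverse degrees $\le 0$ with strict inequality except for $(\mu',\nu')=(0,\nu)$ over the open part $_{x,\mu}\Bun_{\tilde B}$. Your proposal does not establish anything like this, and Proposition~\ref{Pp_description_weight_space} alone does not do it for you: it computes $a_\mu^*(\gt_B^\mu)_!(\gs_B^\mu)^*\cA^\nu_\cE$, which is the content of the identification on the special stratum $(\mu',\nu')=(0,\nu)$, but says nothing about the off-diagonal strata.

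Second, the claim \emph{``each fiber of $\phi$ is cohomologically concentrated in a single degree of the correct parity, so the relevant $\Ext^1$ groups vanish''} is not the mechanism that rules out gluing, and it is not even true: the fibre of $\phi$ over a stratum point consists of several of the pieces above, whose contributions sit in different degrees. What actually forces the direct-sum decomposition is the combination of (a) properness of $\phi$ and self-duality of $\IC(\bar Z)_\zeta$, giving self-duality of $\phi_!\IC(\bar Z)_\zeta$, with (b) the strict perversity bounds just described on the $*$-restrictions to strata (hence, by duality, the dual bounds on $!$-restrictions). Once you have both inequalities the decomposition is forced; no separate $\Ext^1$ parity vanishing is available or needed. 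Quoting ``the decomposition theorem'' is in fact a distraction here: it gives semisimplicity abstractly, but identifying the summands as $_\mu\IC_\zeta\otimes V^\nu(\mu+w_0(\nu))$, with no extra summands supported on smaller strata, is exactly the content of the perversity estimates you skipped. So the proposal as written leaves the real difficulty — Proposition~\ref{Pp_K_three_superscripts} and the geometry behind Lemma~\ref{Lm_some_fibrations_description} — untouched.
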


\subsubsection{Proof of Theorem~\ref{Th_1}} 
\begin{Lm} 
\label{Lm_for_proof_of_Th1}
1) The maps $\tilde\gq('h^{\ra}_G)$ and $m^{w_0(\nu)}(\tilde\gq\times\id)\phi$ from $\bar Z$ to $\Bunt_T$ coincide.\\
2) For any $\mu\in \Lambda^{\sharp}\cap\Lambda_G^{pos}$ the diagram is canonically 2-commutative
$$
\begin{array}{ccc}
\Bunb_{\tilde B}\times X & \toup{\tilde\gq\times\id} & \Bunt_T\times X\\
\downarrow\lefteqn{\scriptstyle \tilde i_{\mu}} && \downarrow\lefteqn{\scriptstyle m^{\mu}\times\id}\\
\Bunb_{\tilde B}\times X & \toup{\tilde\gq\times\id} & \Bunt_T\times X
\end{array}
$$
\end{Lm}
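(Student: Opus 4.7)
Part (2) is essentially tautological from the definition of $\tilde i_{\mu}$. By construction, $\tilde i_{\mu}$ preserves $(\cF, \nu, \cU_G, x)$ and acts on the $(\cF_T, \cU)$-part by $m^{\mu}(-,-,x)$; the Plücker inclusions extend to $\cL^{\check{\lambda}}_{\cF_T(-\mu x)}\hook{}\cV^{\check{\lambda}}_{\cF}$ because the assumption $\mu\in\Lambda_G^{pos}$ forces $\langle\mu,\check{\lambda}\rangle\ge 0$ for every dominant weight $\check{\lambda}$. Applying $\tilde\gq\times\id$ to either composition simply extracts $(\cF_T, \cU, x)$, so the 2-commutativity is immediate once the definitions are unwound.

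For Part (1), unwinding the definition of $\phi$ in Lemma~\ref{Lm_def_of_phi} shows that $\tilde\gq\comp{'h_G^{\ra}}$ sends a point (\ref{point_of_barZ}) to $(\cF'_T,\cU')$, while $m^{w_0(\nu)}\comp(\tilde\gq\times\id)\comp\phi$ sends it to $m^{w_0(\nu)}\bigl(m^{-w_0(\nu)}(\cF'_T,\cU',x),x\bigr)$. It therefore suffices to establish the general identity
$$
m^{\eta}\bigl(m^{-\eta}(\cF_T,\cU,x),x\bigr)=(\cF_T,\cU)
$$
for all $\eta\in\Lambda^{\sharp}$, $x\in X$, and $(\cF_T,\cU)\in\Bunt_T$. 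The $\Bun_T$-component is obvious, so the question is to produce a canonical trivialization of the $N$-th root component.

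Spelling out (\ref{cU'_for_def_of_mnu}) twice, and setting $\check{\eta}=\kappa(\eta)/N\in\check{\Lambda}$ (note $\kappa(\eta,\eta)/N\in\ZZ$ by the definition of $\Lambda^{\sharp}$), the required isomorphism becomes
$$
(\cL^{\check{\eta}}_{\cF_T})_x\otimes (\cL^{-\check{\eta}}_{\cF_T(\eta x)})_x\otimes \tau_{\cO(\eta x)}\otimes\tau_{\cO(-\eta x)}\,\iso\, k.
$$
This is where the substantive content of the lemma sits, and it is obtained by combining three canonical isomorphisms: (a) the torus variation formula $\cL^{\check{\eta}}_{\cF_T(\eta x)}\,\iso\,\cL^{\check{\eta}}_{\cF_T}\otimes\cO(\langle\eta,\check{\eta}\rangle x)$, which at the fibre over $x$ contributes $\cO(\langle\eta,\check{\eta}\rangle x)_x$; (b) the residue isomorphism $\cO(mx)_x\,\iso\,\Omega_x^{-m}$ for $m\in\ZZ$; and (c) the $\theta$-datum commutativity constraint $c^{\eta,-\eta}$ for $\tau\in\cP^{\theta}(X,\Lambda^{\sharp})$, which yields $\tau^{\eta}_x\otimes\tau^{-\eta}_x\,\iso\,\Omega_x^{\kappa(\eta,-\eta)/N}=\Omega_x^{-\kappa(\eta,\eta)/N}$. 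Since $\langle\eta,\check{\eta}\rangle=\kappa(\eta,\eta)/N$, the three contributions of $\Omega_x^{\pm\kappa(\eta,\eta)/N}$ cancel to give the trivial line. This is precisely where the compatibility of the definition of $m^{\nu}$ with the chosen $N$-th root (\ref{iso_root_of_theta_sharp}) of $\theta^{Kil}\mid_{\Lambda^{\sharp}}$ enters; no further input is needed, so the only real obstacle is bookkeeping of the super-line structures to make sure no sign is lost when identifying the three $\Omega_x$-factors.
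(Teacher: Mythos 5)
Your proposal is correct and fills in the details the paper treats as immediate (the lemma is stated in the paper without proof). For part (2) you rightly observe that it is definitional: $\tilde i_{\mu}$ is defined so that its $(\cF_T,\cU)$-component is exactly $m^{\mu}(\cF_T,\cU,x)$ while the $(\cF,\nu,\cU_G,x)$-components are unchanged, and $\tilde\gq\times\id$ only reads off the $(\cF_T,\cU,x)$-components. For part (1) you correctly reduce everything to the identity $m^{\eta}(m^{-\eta}(\cF_T,\cU,x),x)=(\cF_T,\cU)$ for $\eta=w_0(\nu)$, and then verify it directly from (\ref{cU'_for_def_of_mnu}) using (a) the variation of $\cL^{\check{\eta}}$ under the Hecke twist, (b) the residue isomorphism, and (c) the constraint $c^{\eta,-\eta}$ of the $\theta$-datum $\tau$, with the two $\Omega_x^{\pm\kappa(\eta,\eta)/N}$-factors cancelling (you call them three, but that is only a counting slip; the content is right). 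The paper's implicit route is shorter: $m^{\nu}$ is by construction the map $m_{-\nu}$ of (\cite{L}, 5.2.3), i.e.\ the action of $\cO(-\nu x)\in\Bun_{T^{\sharp}}$ under the group action $a:\Bun_{T^{\sharp}}\times\Bunt_T\to\Bunt_T$, so $m^{\eta}\comp m^{-\eta}$ is the action of $\cO(-\eta x)\otimes\cO(\eta x)=\cO$ and hence the identity — the associativity and cocycle conditions baked into the $\theta$-datum already encode the cancellation you verify by hand. Your direct computation buys self-containedness and makes the use of (\ref{iso_root_of_theta_sharp}) transparent, at the cost of the super-sign bookkeeping you acknowledge; both routes are valid.
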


Let $K\in \D_{\zeta}(\Bunt_T)$. The complex
$(\tilde h^{\ra}_G)^*\Eis(K)\otimes \IC^{\nu}$ over $\ov{\cH}^{\nu}_{\tilde G}$ identifies with
\begin{multline*}
\IC^{\nu}\otimes ('\tilde\gp)_!(('h^{\ra}_G)^*\tilde\gq^*K\otimes('h^{\ra}_G)^*\IC_{\zeta})[-\dim\Bun_T]\,\iso\, \\
('\tilde\gp)_!(('h^{\ra}_G)^*\tilde\gq^*K\otimes \IC(\bar Z)_{\zeta})[\dim\Bun_G-\dim\Bun_T]
\end{multline*}
So, 
\begin{multline}
\label{complex_to_calculate_proof_Th1}
\H^{\nu}_G\Eis(K)\,\iso\, (\tilde\gp\times\id)_!\phi_!(('h^{\ra}_G)^*\tilde\gq^*K\otimes \IC(\bar Z)_{\zeta})[-\dim\Bun_T]\,\iso\\ 
(\tilde\gp\times\id)_!((\tilde\gq\times\id)^*(m^{w_0(\nu)})^*K\otimes\phi_!\IC(\bar Z)_{\zeta})[-\dim\Bun_T]
\end{multline}
By Proposition~\ref{Pp_direct_image_by_phi}, this identifies with the direct sum over $\mu\in \Lambda^{pos}_G\cap\Lambda^{\sharp}$ of
\begin{multline*}
(\tilde\gp\times\id)_!((\tilde\gq\times\id)^*(m^{w_0(\nu)})^*K\otimes \tilde i_{\mu !}(\IC_{\zeta}\boxtimes\IC(X)))\otimes V^{\nu}(\mu+w_0(\nu))[-\dim\Bun_T]\,\iso\\
(\tilde\gp\times\id)_!\tilde i_{\mu !}(\tilde i_{\mu}^*(\tilde\gq\times\id)^*(m^{w_0(\nu)})^*K\otimes (\IC_{\zeta}\boxtimes\IC(X)))\otimes V^{\nu}(\mu+w_0(\nu))[-\dim\Bun_T]
\end{multline*}

 By Lemma~\ref{Lm_for_proof_of_Th1}, we get
$$
\tilde i_{\mu}^*(\tilde\gq\times\id)^*(m^{w_0(\nu)})^*K\,\iso\, (\tilde\gq\times\id)^*(m^{\mu}\times\id)^*(m^{w_0(\nu)})^*K\,\iso\,  (\tilde\gq\times\id)^*\H^{\mu+w_0(\nu)}_T(K)[-1],
$$
because $m^{w_0(\nu)}(m^{\mu}\times\id)=m^{\mu+w_0(\nu)}$. So, (\ref{complex_to_calculate_proof_Th1}) identifies with the sum over $\mu\in \Lambda^{pos}_G\cap\Lambda^{\sharp}$ of
\begin{multline*}
(\tilde\gp\times\id)_!((\tilde\gq\times\id)^*\H^{\mu+w_0(\nu)}_T(K)\otimes (\IC_{\zeta}\boxtimes\Qlb))\otimes V^{\nu}(\mu+w_0(\nu))[-\dim\Bun_T]\,
\iso\\ 
(\Eis\boxtimes\id)\H^{\mu+w_0(\nu)}_T(K)\otimes V^{\nu}(\mu+w_0(\nu))
\end{multline*}
Indeed, by definition, 
$$
(\Eis\boxtimes\id)(\cS)=(\tilde\gp\times\id)_!((\tilde\gq\times\id)^*\cS\otimes (\IC_{\zeta}\boxtimes\Qlb))[-\dim\Bun_T]
$$ 
Theorem~\ref{Th_1} is reduced to Proposition~\ref{Pp_direct_image_by_phi}.

\subsubsection{Proof of Proposition~\ref{Pp_direct_image_by_phi}} As in (\cite{BG}, Section~3.3.1), we fix $x\in X$, let $_x\bar Z$ be obtained from $\bar Z$ by the base change $\Spec k\toup{x} X$. We make this base change in the basic diagram and get
\begin{equation}
\label{diag_basic_diagram_x}
\begin{array}{ccccc}
\Bunb_{\tilde B} & \getsuplong{\phi} & _x\bar Z & \touplong{'h^{\ra}_G} & \Bunb_{\tilde B}\\
\downarrow\lefteqn{\scriptstyle \tilde\gp} && \downarrow\lefteqn{\scriptstyle '\tilde\gp} && \downarrow\lefteqn{\scriptstyle \tilde\gp}\\
\Bunt_G & \getsuplong{\tilde h^{\la}_G} &  _x\ov{\cH}^{\nu}_{\tilde G} & \touplong{\tilde h^{\ra}_G} & \Bunt_G
\end{array}
\end{equation}
Let $\IC(_x\bar Z)_{\zeta}=\IC(\bar Z)_{\zeta}\mid_{_x\bar Z}[-1]$. We will prove a version of Proposition~\ref{Pp_direct_image_by_phi} with $x$ fixed.
The maps are denoted by the same letters as for $x$ varying.

For $\mu\in\Lambda_G^{pos}$ we have the stacks $_{x, \ge\mu}\Bunb_B$, $_{x, \mu}\Bunb_B$, $_{x, \mu}\Bun_B$ defined as in (\cite{BG}, Section~3.3.2). We write 
$$
_{x, \ge\mu}\Bunb_{\tilde B}, \;\;{_{x, \mu}\Bunb_{\tilde B}}
$$
and so on for the stacks obtained by the base change $\Bunb_{\tilde B}\to \Bunb_B$ from the previous ones. Let $j_{\mu}: {_{x, \mu}\Bunb_{\tilde B}}\hook{} \Bunb_{\tilde B}$ be the natural embedding.

\begin{Pp} 
\label{Pp_3}
The complex $j_{\mu}^*\phi_!\IC(_x\bar Z)_{\zeta}$ satisfies the following.\\
a) It lives in perverse degrees $\le 0$. \\
b) The $0$-th perverse cohomology sheaf of $j_{\mu}^*\phi_!\IC(_x\bar Z)_{\zeta}$ identifies with 
$$
(_{\mu}\IC_{\zeta})\otimes V^{\nu}(\mu+w_0(\nu))
$$
\end{Pp}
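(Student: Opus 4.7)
\begin{Prf}[Proof proposal for Proposition~\ref{Pp_3}]
The plan is to mimic the strategy of (\cite{BG}, Proposition~3.3.4), upgrading it to the metaplectic setting by carefully tracking the super lines $\cU,\cU',\cU_G,\cU'_G$ and invoking Proposition~\ref{Pp_description_weight_space} in place of the classical geometric Satake computation. First I would analyze the fibre of $\phi: {_x\bar Z}\to \Bunb_{\tilde B}$ over a point of the stratum ${_{x,\mu}\Bunb_{\tilde B}}$. By the construction of $\phi$ in Lemma~\ref{Lm_def_of_phi}, specifying a point of this fibre over $(\cF_T,\cF,\nu^{\check\lambda},\cU,\cU_G)\in{_{x,\mu}\Bunb_{\tilde B}}$ amounts to specifying a modification $\cF'$ of $\cF$ at $x$ of position $\le\nu$ together with the induced Plucker inclusions for $\cF'_T=\cF_T(-w_0(\nu)x)$ and a compatible line $\cU'_G$. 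Locally near $x$, this fibre is isomorphic to the intersection $\ov{\Gr}_G^{-w_0(\nu)}\cap \wt S^{\mu+w_0(\nu)}$ inside $\wt\Gr_G$ at $x$, where $\wt S^{\bullet}$ denotes the (twisted) semi-infinite orbit image of $\wt\Gr_B^{\bullet}\to\wt\Gr_G$.

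Next I would match the restriction of $\IC(_x\bar Z)_{\zeta}$ to this fibre with $(\gs_B^{\mu+w_0(\nu)})^*\cA^{-w_0(\nu)}_{\cE}$ tensored with a constant complex coming from the base point of $_{x,\mu}\Bunb_{\tilde B}$. Here the factorization isomorphism (\ref{iso_factorization_of_detRG}) trivializes, up to the appropriate power of $\cE_X$, the gerb restriction along the semi-infinite orbit, and the choice of $\tau^0$ together with the formula (\ref{cU'_for_def_of_mnu}) for $m^{\mu}$ ensures that the $\mu_N(k)$-equivariance structures on both sides agree with $\zeta$. This matching is, in my view, the main obstacle: one has to verify that the super line identifications coming from $\theta^{Kil}$ and from the localization functor $\tau^0$ combine to give exactly the twist in the definition of $_{\mu}\IC_{\zeta}$ via $\tilde i_{\mu}$; the compatibility (\ref{iso_root_of_theta_sharp}) together with the $W$-equivariance of $\tau$ is the crucial input making this work.

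Once this identification is in place, part (a) follows because $j_\mu^*\phi_!\IC(_x\bar Z)_{\zeta}$ is computed, stratum by stratum, as $\IC(_{x,\mu}\Bunb_{\tilde B})$ tensored with the constant complex $a_{\mu+w_0(\nu)}^*(\gt^{\mu+w_0(\nu)}_B)_!(\gs^{\mu+w_0(\nu)}_B)^*\cA^{-w_0(\nu)}_{\cE}$, shifted by the codimension of the stratum; Proposition~\ref{Pp_description_weight_space} tells us this lives in cohomological degree $-\<\mu+w_0(\nu),2\check\rho\>$, which after accounting for the shift $[-\dim\Bun_G]$ in the definition of $\IC(_x\bar Z)_{\zeta}$ and for codimensions places it in perverse degrees $\le 0$, with vanishing unless $\mu+w_0(\nu)\in\Lambda^{\sharp}$, equivalently $\mu\in\Lambda^{\sharp}$.

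Finally, part (b) is obtained by reading off the top perverse cohomology: on the stratum $_{x,\mu}\Bunb_{\tilde B}$ the top piece contributes the constant sheaf $V^{-w_0(\nu)}(\mu+w_0(\nu))=V^{\nu}(\mu+w_0(\nu))$ (equality by Weyl-invariance of weight multiplicities under $-w_0$), and the intermediate extension to $\Bunb_{\tilde B}$ yields precisely $_\mu\IC_{\zeta}$ by the very definition of $\tilde i_\mu$ and $\IC_{\zeta}$. The global statement over $\Bunb_{\tilde B}\times X$ (Proposition~\ref{Pp_direct_image_by_phi}) will then follow from the $x$-fixed case by the factorization of $\IC_{\zeta}$ with respect to the map $\pi$, together with the fact that all strata are disjoint after imposing $\mu\in\Lambda^{pos}_G\cap\Lambda^{\sharp}$.
\end{Prf}
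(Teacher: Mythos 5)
The key move in the paper that your proposal skips is a \emph{double} stratification, and this gap is not cosmetic. You analyze the fibre of $\phi$ over a point of $_{x,\mu}\Bunb_{\tilde B}$ and claim that, along this fibre, $\IC(_x\bar Z)_\zeta$ is the pull-back of a spherical sheaf ``tensored with a constant complex coming from the base point.'' This is false. Recall $\IC(_x\bar Z)_\zeta = ('\tilde\gp)^*\IC^{\nu}\otimes ('h^{\ra}_G)^*\IC_\zeta[-\dim\Bun_G]$; the first factor is indeed a spherical pullback, but as you move along the $\phi$-fibre the $'h^{\ra}_G$-image in $\Bunb_{\tilde B}$ sweeps through the various strata $_{x,\mu'}\Bunb_{\tilde B}$ with $0\le\mu'\le\mu$, so $\IC_\zeta$ is wildly non-constant along that fibre. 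Because $\IC_\zeta|_{_{x,\mu'}\Bunb_{\tilde B}}$ is a complicated (and, crucially, strictly negatively perverse for $\mu'>0$) object, it cannot be absorbed into a constant factor. Your argument for part (a) therefore reduces only the contribution of the single stratum $(\mu'=0,\nu'=\nu)$ and says nothing about the others; you have no control over the deeper strata and hence no estimate.

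What the paper does instead is stratify the preimage $Z^{\mu,?}=\phi^{-1}({_{x,\mu}\Bunb_{\tilde B}})$ into pieces $Z^{\mu,\mu',\nu'}$, where $\mu'$ records which stratum of $\Bunb_{\tilde B}$ the $'h^{\ra}_G$-image lies on and $\nu'\le\nu$ records the Hecke position. On $Z^{\mu,\mu',\nu'}$ the restriction of $\IC(_x\bar Z)_\zeta$ is a twisted external product of $\cA^{-w_0(\nu)}_\cE|_{\Gr_{\tilde G}^{-w_0(\nu')}\cap S_{\tilde G}^{-w_0(\nu)-\mu+\mu'}}$ and $\IC_\zeta|_{_{x,\mu'}\Bunb_{\tilde B}}$. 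Then (a) is obtained by combining: (i) $\IC_\zeta|_{_{x,\mu'}\Bunb_{\tilde B}}$ is in perverse degrees $\le 0$, strictly $<0$ unless $\mu'=0$; (ii) $\cA^{-w_0(\nu)}_\cE|_{\Gr_{\tilde G}^{-w_0(\nu')}}$ is constant in strictly negative perverse degrees unless $\nu'=\nu$; (iii) the fibre dimension bound for the $\phi$-fibration of $Z^{\mu,\mu',\nu'}$, namely $\Gr^{\nu'}_{\tilde G}\cap S^{\mu-\mu'+w_0(\nu)}_{\tilde G}$, together with the elementary estimate for $f_!$ of a perverse sheaf. Only $(\mu'=0,\nu'=\nu)$, and over $_{x,\mu}\Bun_{\tilde B}$, contributes to the top perverse cohomology, and there Proposition~\ref{Pp_description_weight_space} is invoked exactly as you suggest. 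Also note a parameter mix-up: the fibre of $\phi$ over $_{x,\mu}\Bunb_{\tilde B}$ is contained in $\ov{\Gr}^{\nu}$ (not $\ov{\Gr}^{-w_0(\nu)}$, which is the typical fibre of $'h^{\ra}_G$); you swapped the two fibration directions. This confusion is part of why the single-fibre picture cannot carry the estimate, and the stratified twisted-external-product description is the mechanism that replaces it.
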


For $\mu\in\Lambda_G^{pos}$ let $Z^{?, \mu}$ (resp., $Z^{\mu, ?}$) denote the preimage in $_x\bar Z$ of $_{x,\mu}\Bunb_{\tilde B}$ under $'h^{\ra}_G$ (resp., $\phi$). For $\mu,\mu'\in \Lambda_G^{pos}$ let $Z^{\mu,\mu'}=Z^{\mu, ?}\cap Z^{?,\mu'}$. Recall that $Z^{\mu,\mu'}$ is empty unless $\mu\ge \mu'$. 
 
  If $\nu'\in \Lambda^+$ with $\nu'\le\nu$ then we write 
$$
Z^{\mu,\mu', \nu'}=Z^{\mu,\mu'}\cap {'\tilde\gp^{-1}}(_x\cH^{\nu'}_{\tilde G})
$$

\begin{Pp} 
\label{Pp_K_three_superscripts}
For $\mu,\mu'\in\Lambda_G^{pos}$ and $\nu'\in\Lambda^+$ with $\nu'\le\nu$  let $K^{\mu,\mu',\nu'}\in \D(_{x,\mu}\Bunb_{\tilde B})$ be defined as
$$
\phi_!(\IC(_x\bar Z)_{\zeta}\mid_{Z^{\mu,\mu', \nu'}})
$$ 
a) The complex $K^{\mu,\mu',\nu'}$ is placed in perverse degrees $\le 0$, the equality is strict unless $\mu'=0$ and $\nu'=\nu$.\\
b) The $*$-restriction of $K^{\mu, 0, \nu}$ to $_{x,\mu}\Bunb_{\tilde B}-{_{x,\mu}\Bun_{\tilde B}}$ is placed in perverse degrees $<0$. \\
c) The $0$-th perverse cohomology sheaf of $K^{\mu, 0, \nu}$ over $_{x,\mu}\Bun_{\tilde B}$ vanishes unless $\mu\in\Lambda^{\sharp}\cap\Lambda_G^{pos}$. In the latter case it identifies with 
$$
(_{\mu}\IC_{\zeta})\mid_{_{x,\mu}\Bun_{\tilde B}}\otimes V^{\nu}(\mu+w_0(\nu))
$$
\end{Pp}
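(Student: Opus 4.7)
The plan is to follow (\cite{BG}, Proposition~3.3.4) adapted to the twisted setting via Proposition~\ref{Pp_description_weight_space} for the local weight-space computation.

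First I would unpack $Z^{\mu,\mu',\nu'}$ in local terms at $x$. A point over $(\cF',\cF'_T,\nu',\cU',\cU'_G)\in\Bunb_{\tilde B}$ consists of a Hecke modification $\beta$ at $x$ of type $\le\nu$ together with $\cU_G$; its image under $\phi$ is obtained by setting $\cF_T=\cF'_T(w_0(\nu)x)$, transporting the Plucker inclusions, and producing the line $\cU$ via the twist prescribed by $m^{-w_0(\nu)}$ in (\ref{cU'_for_def_of_mnu}). The three stratum conditions (left defect $\mu$, right defect $\mu'$, Hecke type exactly $\nu'$) then cut out a locally closed substack of $_x\bar Z$ which, near $x$, is a fibration over $_{x,\mu}\Bunb_{\tilde B}$ whose fiber is a locally closed piece of the twisted orbit $\wt\Gr_G^{\nu'}$ sitting over the semi-infinite cell $\wt\Gr_B^{\mu-\mu'-w_0(\nu')}$; the coweight is dictated by balancing the three Plucker defects on both sides of $\beta$.

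Next, using the factorization of $\IC_{\zeta}$ along $_{x,\mu}\Bunb_B$ together with the analogous factorization of $\IC^{\nu}$ along the Hecke side, the complex $K^{\mu,\mu',\nu'}$ becomes, up to shift, an external product of a global factor on $_{x,\mu}\Bunb_{\tilde B}$ --- which over the open substack $_{x,\mu}\Bun_{\tilde B}$ recovers the pullback $\tilde i_{\mu}^{\ast}(\IC_{\zeta}\boxtimes\IC(X))$ --- with a local factor equal to the $\ast$-pullback of $(\gt^{\mu-\mu'-w_0(\nu')}_B)_!(\gs^{\mu-\mu'-w_0(\nu')}_B)^{\ast}\cA^{\nu'}_{\cE}$ along the appropriate inclusion. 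For the main stratum $\mu'=0,\ \nu'=\nu$ the local factor is computed by Proposition~\ref{Pp_description_weight_space}: it is the constant complex of value $V^{\nu}(\mu+w_0(\nu))$ placed in exactly the perverse degree needed for the sought identification, and $W$-invariance of $\Lambda^{\sharp}$ makes the vanishing condition $\mu+w_0(\nu)\in\Lambda^{\sharp}$ equivalent to $\mu\in\Lambda^{\sharp}$; this gives part (c).

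For parts (a) and (b) the same factorization reduces the estimates to amplitude bounds for $(\gt^{\theta}_B)_!(\gs^{\theta}_B)^{\ast}\cA^{\nu'}_{\cE}$ restricted to the appropriate locus (coming from the twisted Satake setting of \cite{FL}) combined with a codimension count in $_x\ov{\cH}^{\nu}_{\tilde G}$: deeper Hecke strata $\nu'<\nu$ or nontrivial right defect $\mu'>0$ contribute from strictly smaller-dimensional pieces, while for (b) the $\ast$-restriction of $\IC_{\zeta}$ to $_{x,\mu}\Bunb_B-{_{x,\mu}\Bun_B}$ already lies in strictly negative perverse degrees by the intermediate extension property. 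The main obstacle will be the systematic bookkeeping of the metaplectic lines: one must verify that the combination of the given $\cU'$, the contribution from $\cL_X$ along the Hecke modification of type $\nu'$, and the shift by $\tau_{\cO(-w_0(\nu)x)}$ assembles on the defect-$\mu$ locus into exactly the line prescribed by $\tilde i_{\mu}$ via (\ref{cU'_for_def_of_mnu}), so that the $\mu_N$-equivariance types match on the nose and the isomorphism in (c) is canonical rather than merely up to a character.
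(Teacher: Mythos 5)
Your overall strategy coincides with the paper's: factor the complex as a twisted external product along the fibrations of the basic diagram, reduce the amplitude estimates in parts (a) and (b) to fibre-dimension bounds, and derive (c) by applying Proposition~\ref{Pp_description_weight_space} to the local factor supported on the twisted orbit. This is exactly what the paper does via Lemma~\ref{Lm_some_fibrations_description} and the twisted-external-product identification $\cA^{-w_0(\nu)}_{\cE}\tboxtimes \IC_{\zeta}$.

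However, there is a concrete slip in your formula for the semi-infinite cell indexing the local fibre of $\phi$. You wrote $\wt\Gr_B^{\mu-\mu'-w_0(\nu')}$, but the correct index (Lemma~\ref{Lm_some_fibrations_description}(c)) is $\mu-\mu'+w_0(\nu)$: the sign on $w_0$ is $+$, and the coweight is the fixed $\nu$, not the stratum parameter $\nu'$. This matters because your own final claim in part (c) --- that the local factor equals $V^{\nu}(\mu+w_0(\nu))$ placed in the right degree --- would not follow from your cell index at $\mu'=0$, $\nu'=\nu$, which gives $\mu-w_0(\nu)$. As stated, your proposal therefore has an internal contradiction between the fibre identification and the weight-space you extract from it; if you replace your index by $\mu-\mu'+w_0(\nu)$, the argument goes through and agrees with the paper.

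Two smaller remarks. First, your stated local factor $(\gt^{\lambda}_B)_!(\gs^{\lambda}_B)^{\ast}\cA^{\nu'}_{\cE}$ is off on secondary strata: the relevant object is the restriction of $\cA^{-w_0(\nu)}_{\cE}$ to the smaller twisted orbit $\wt\Gr_G^{-w_0(\nu')}$, which is not $\cA^{-w_0(\nu')}_{\cE}$ in general; this matters for (a) because the constancy-in-negative-degrees of this restriction (unless $\nu'=\nu$) is what forces strict inequality. Second, for the vanishing in (c) you invoke $W$-invariance of $\Lambda^{\sharp}$ to translate $\mu+w_0(\nu)\in\Lambda^{\sharp}$ into $\mu\in\Lambda^{\sharp}$; the paper instead establishes the vanishing directly by the $2$-automorphism argument (as in \cite{L}, Lemma~11) before ever invoking Proposition~\ref{Pp_description_weight_space}. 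Both routes are fine, but the paper's is logically upstream of the weight-space identification and so avoids any worry about whether Proposition~\ref{Pp_description_weight_space} itself already presupposed $\mu\in\Lambda^{\sharp}$.
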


 A version of (\cite{BG}, Lemma~3.3.6) holds with obvious changes. For $\nu\in\Lambda^+$ write $\ov{\Gr}^{\nu}_{\tilde G}=\ov{\Gr}^{\nu}_G\times_{\Gr_G}\wt\Gr_G$. For $\mu\in\Lambda$ the scheme $S^{\mu}_G$ is defined in (\cite{BG}, Section~3.2.5). For $\mu\in\Lambda$ write 
$$
S^{\mu}_{\tilde G}=S^{\mu}_G\times_{\Gr_G}\wt\Gr_G
$$  
\begin{Lm} 
\label{Lm_some_fibrations_description}
(a) The map $'h^{\ra}_G: Z^{?, \mu'}\to {_{x,\mu'}\Bunb_{\tilde B}}$ is a locally trivial fibration with typical fibre $\ov{\Gr}^{-w_0(\nu)}_{\tilde G}$. \\
(b) The morphism $'h^{\ra}_G: Z^{\mu,\mu',\nu'}\to {_{x,\mu'}\Bunb_{\tilde B}}$ identifies using the notations of (a) with a subfibration with typical fibre 
$$
\Gr_{\tilde G}^{-w_0(\nu')}\cap S_{\tilde G}^{-w_0(\nu)-\mu+\mu'}
$$
(c) The map $\phi: Z^{\mu,\mu',\nu'}\to {_{x,\mu}\Bunb_{\tilde B}}$ is a locally trivial fibration with fibre 
$$
\Gr_{\tilde G}^{\nu'}\cap S_{\tilde G}^{\mu-\mu'+w_0(\nu)}
$$
\end{Lm}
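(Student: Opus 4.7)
The lemma is the $N$-th root gerb analog of (\cite{BG}, Lemma~3.3.6), and the plan is to reduce to that untwisted statement by showing that the additional line data on $\Bunt_G$ factors compatibly along the Hecke modifications. The key input is the canonical factorization isomorphism (\ref{iso_factorization_of_detRG}), which is exactly what was used to build $\tilde\gamma^{\ra}$ in Section~\ref{section_Hecke_functors_for_tilde_G}, and hence allows us to split $\cU_G$ on the left of a modification as $\cU'_G\otimes\cU_1$, with $\cU_1$ an $N$-th root of the local factor $(\cL_X)_{(\cF_1,\beta_1,x)}$.

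\textbf{Part (a).} Unpacking a point of $Z^{?,\mu'}$: it is a point $(\cF',\cF'_T,\nu',\cU',\cU'_G)$ of $_{x,\mu'}\Bunb_{\tilde B}$ together with a point of $_x\ov{\cH}^{\nu}_{\tilde G}$ whose right $G$-bundle is identified with $\cF'$. Fixing the first datum, the remaining data is $(\cF,\beta,\cU_G)$, where $\beta:\cF|_{X-x}\iso\cF'|_{X-x}$ has relative position $\le\nu$ at $x$, and $\cU_G^N\iso\cL_{\cF}$. Via (\ref{iso_factorization_of_detRG}), the datum $\cU_G$ is equivalent to the pair $(\cU'_G,\cU_1)$ with $\cU_1^N\iso(\cL_X)_{(\cF_1,\beta_1,x)}$, where $\cF_1$ encodes the modification. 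This exhibits the fiber of $'h^{\ra}_G$ as $\ov{\Gr}^{-w_0(\nu)}_{\tilde G}$ (the sign $-w_0$ comes from reversing the direction of the modification), and local triviality in the smooth topology is inherited from the untwisted fibration $\ov{\cH}^{\nu}_G\to\Bun_G$.

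\textbf{Parts (b) and (c).} For (b) I restrict the fibration in (a) in two ways. Requiring the modification to have exact position $\nu'$ carves out $\Gr^{-w_0(\nu')}_{\tilde G}$ inside $\ov{\Gr}^{-w_0(\nu)}_{\tilde G}$. The additional condition that $\phi$ of the point lies in $_{x,\mu}\Bunb_{\tilde B}$ translates, via the description of $\phi$ in Lemma~\ref{Lm_def_of_phi} (namely $\cF_T=\cF'_T(w_0(\nu)x)$ together with the induced Pl\"ucker inclusions $\cL^{\check\lambda}_{\cF'_T}(\langle w_0(\nu),\check\lambda\rangle x)\hook{}\cV^{\check\lambda}_{\cF}$), into a condition on the relative position of the modification inside $\Gr_G$. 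Using the standard Pl\"ucker characterization of the semi-infinite orbit $S^{-}_G$, this condition is exactly membership in $S_{\tilde G}^{-w_0(\nu)-\mu+\mu'}$, where the offset $-w_0(\nu)-\mu+\mu'$ is the $T$-component of the total modification measured against the two boundary strata. Part (c) proceeds by the analogous reading of the same diagram from the other side: fixing $\phi$ of the point in $_{x,\mu}\Bunb_{\tilde B}$ and asking that the $\nu'$-modification land in $_{x,\mu'}\Bunb_{\tilde B}$ imposes exactly $\Gr^{\nu'}_{\tilde G}\cap S_{\tilde G}^{\mu-\mu'+w_0(\nu)}$; the gerb data again factor through (\ref{iso_factorization_of_detRG}).

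\textbf{Main obstacle.} The fiber identifications at the level of $G$-torsors are routine translations of (\cite{BG}, Lemma~3.3.6); the genuinely new content is the compatibility of the $N$-th root gerbs along the Hecke modification. The delicate point is that the decomposition $\cU_G\cong\cU'_G\otimes\cU_1$ must be functorial in families (not merely a bijection on $k$-points) and must match the Pl\"ucker-theoretic description of $\phi$ on the $T$-component so that the Zastava-type strata $S_{\tilde G}^{\bullet}$ actually appear as fibers. Once (\ref{iso_factorization_of_detRG}) is invoked globally over $_{x,\mu'}\Bunb_{\tilde B}$, respectively over $_{x,\mu}\Bunb_{\tilde B}$, local triviality reduces to the untwisted case and the three assertions follow.
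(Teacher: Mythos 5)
The paper gives no proof at all — it simply asserts that "a version of (\cite{BG}, Lemma~3.3.6) holds with obvious changes" and states the result. Your proposal makes those "obvious changes" explicit, and the route you take — pulling the underlying fibrations from BG and showing that the gerb data factors along Hecke modifications via (\ref{iso_factorization_of_detRG}), i.e.\ $\cU_G\cong\cU'_G\otimes\cU_1$ — is precisely the reduction the paper has in mind, so this is the same approach spelled out in detail.
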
 
 
\begin{Prf}\select{of Proposition~\ref{Pp_K_three_superscripts}} The complex
$\IC(_x\bar Z)_{\zeta}\mid_{Z^{?,\mu'}}$ is a twisted external product
$$
\cA^{-w_0(\nu)}_{\cE}\tboxtimes \IC_{\zeta}\mid_{_{x,\mu'}\Bunb_{\tilde B}}
$$
with the notations of Lemma~\ref{Lm_some_fibrations_description}(a). So, $\IC(_x\bar Z)_{\zeta}\mid_{Z^{\mu,\mu', \nu'}}$ is a twisted external product
$$
\cA^{-w_0(\nu)}_{\cE}\mid_{\Gr_{\tilde G}^{-w_0(\nu')}\cap S_{\tilde G}^{-w_0(\nu)-\mu+\mu'}}\tboxtimes \IC_{\zeta}\mid_{_{x,\mu'}\Bunb_{\tilde B}}
$$
The complex $\IC_{\zeta}\mid_{_{x,\mu'}\Bunb_{\tilde B}}$ is placed in perverse degrees $\le 0$, and the inequality is strict unless $\mu'=0$. 
Further, 
$$
\cA^{-w_0(\nu)}_{\cE}\mid_{\Gr_{\tilde G}^{-w_0(\nu')}}
$$ 
is a constant complex placed in perverse degrees $<0$ unless $\nu'=\nu$. Now exactly as in (\cite{BG}, Section~3.3.7) one proves a). This only uses the following. For a morphism $f: Y_1\to Y_2$ such that the maximal dimension of the fibres of $f$ is $\le d$ and a perverse sheaf $F$ on $Y_1$ the complex $f_!F$ is placed in perverse degrees $\le d$. The point b) is proved similarly (as in \cite{BG}, Section~3.3.7). 

\smallskip\noindent
c) Let $^0Z^{\mu,0,\nu}$ be the preimage of $\Bun_{\tilde B}\subset {_{x,0}\Bunb_{\tilde B}}$ under $'h^{\ra}_G$. One has 
\begin{equation}
\label{complex_K_mu_0_nu}
K^{\mu,0,\nu}\mid_{_{x,\mu}\Bun_{\tilde B}}=\phi_!(\IC(_x\bar Z)_{\zeta}\mid_{^0Z^{\mu,0,\nu}})
\end{equation}
A point of $_{x,\mu}\Bun_{\tilde B}$ is given by $(\cF_T,\cF, \cU^N\,\iso\, (\cL_T)_{\cF_T}, \cU_G^N\,\iso\, \cL_{\cF})$. Let 
$$
(a,b)\in \mu_N(k)\times\mu_N(k)\subset \Aut(\cU_G)\times\Aut(\cU)
$$ 
acting trivially on $\cF_T,\cF$ then $(a,b)$ acts on (\ref{complex_K_mu_0_nu}) as $\zeta(\frac{a}{b})$. One shows as in (\cite{L}, Lemma~11) that any bounded complex in $\D(_{x,\mu}\Bun_{\tilde B})$, on which $(a,b)$ acts as $\zeta(\frac{a}{b})$, vanishes unless $\mu\in\Lambda^{\sharp}$. Assume $\mu\in\Lambda^{\sharp}$.

 The complex $\IC(_x\bar Z)_{\zeta}\mid_{^0Z^{\mu,0,\nu}}$ is the twisted external product with the notations of Lemma~\ref{Lm_some_fibrations_description}(a)
\begin{equation}
\label{complex_about_mu_0_nu}
\cA_{\cE}^{-w_0(\nu)}\mid_{\Gr_{\tilde G}^{-w_0(\nu)}\cap S_{\tilde G}^{-w_0(\nu)-\mu}}\tboxtimes \IC_{\zeta}\mid_{\Bun_{\tilde B}}
\end{equation}
and  $\cA_{\cE}^{-w_0(\nu)}\mid_{\Gr_{\tilde G}^{-w_0(\nu)}}\,\iso\, E_{\cE}^{-w_0(\nu)}[\<\nu, 2\check{\rho}\>]$. 

 Now in the notation of Lemma~\ref{Lm_some_fibrations_description}(c), the map $\phi: {^0Z^{\mu,0,\nu}}\to {_{x,\mu}\Bun_{\tilde B}}$ and (\ref{complex_about_mu_0_nu}) becomes a locally trivial fibration 
$$
(\Gr_{\tilde G}^{\nu}\cap S_{\tilde G}^{\mu+w_0(\nu)})\ttimes {_{x,\mu}\Bun_{\tilde B}}\to {_{x,\mu}\Bun_{\tilde B}}
$$ 
and the complex $(E_{\cE}^{\nu})^*\tboxtimes (_{\mu}\IC_{\zeta}\mid_{_{x,\mu}\Bun_{\tilde B}})[2\<\mu, \check{\rho}\>]$ on the source.

 Applying now Proposition~\ref{Pp_description_weight_space} (with the character $\zeta$ replaced by $\zeta^{-1}$), we see that the $0$-th perverse cohomology sheaf of $K^{\mu,0,\nu}\mid_{_{x,\mu}\Bun_{\tilde B}}$ identifies with $(_{\mu}\IC_{\zeta})\mid_{_{x,\mu}\Bun_{\tilde B}}\otimes V^{\nu}(\mu+w_0(\nu))$. 
\end{Prf}

\medskip

 So, Proposition~\ref{Pp_3} is also proved. This concludes the proof of Proposition~\ref{Pp_direct_image_by_phi} (and hence of Theorem~\ref{Th_1}).

\subsection{Proof of Theorem~\ref{Th_Eis_commutes_with_Verdier}}  An analog of (\cite{BG}, Theorem~5.1.5) holds in our situation.

\begin{Pp} 
\label{Pp_ULA_property}
The perverse sheaf $\IC_{\zeta}$ is ULA with respect to $\tilde\gq: \Bunb_{\tilde B}\to \Bunt_T$.
\end{Pp}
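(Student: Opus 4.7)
The plan is to adapt the proof of (\cite{BG}, Theorem~5.1.5) to the twisted setting. Recall that ULA with respect to $\tilde\gq$ is a local condition on the target $\Bunt_T$, is stable under (twisted) external product with complexes constructible on the base, is stable under proper direct image of the source, and \emph{any} complex is ULA along a smooth morphism. My strategy is therefore to stratify $\Bunb_{\tilde B}$ so that on each stratum the $*$-restriction of $\IC_{\zeta}$ decomposes as a twisted exterior product of an obviously-ULA piece with a factor whose support is proper over a symmetric power of $X$, and then to assemble these restrictions by the standard devissage on the closure order.

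First I would stratify $\Bunb_{\tilde B}$ by the defect of the generalized $B$-structure: for each $\theta \in \Lambda^{pos}_G$ there is a locally closed substack
$$
_\theta\Bunb_{\tilde B} \,\iso\, \Bun_{\tilde B}\times_{\Bun_T} \cH_T^{+,\theta} \;\hookrightarrow\; \Bunb_{\tilde B},
$$
and these cover $\Bunb_{\tilde B}$ as $\theta$ varies. Arguing as in (\cite{L}, Proposition~2.1) with the $\mu_N$-gerb action on each stratum (this is the same mechanism used in the proof of Proposition~\ref{Pp_K_three_superscripts}(c) above), the $*$-restriction $j_\theta^*\IC_{\zeta}$ vanishes unless $\theta \in \Lambda^{\sharp}\cap\Lambda^{pos}_G$. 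For such $\theta$, Theorem~\ref{Th_main_Section1} (proved separately, independently of the present proposition) describes $j_\theta^*\IC_{\zeta}$ as a twisted exterior product of $\IC_{\zeta}\mid_{\Bun_{\tilde B}}$ with a factorizable perverse sheaf on $\cH_T^{+,\theta}$ supported properly over $X^\theta$.

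Along this stratum the map $\tilde\gq$ factors as
$$
\Bun_{\tilde B}\times_{\Bun_T}\cH_T^{+,\theta} \;\longrightarrow\; \Bunt_T\times X^\theta \;\longrightarrow\; \Bunt_T,
$$
where the first arrow combines the smooth map $\Bun_B \to \Bun_T$ with the Hecke modification, and the second is the projection forgetting the divisor. It therefore suffices to verify that (i) the open-stratum piece $\IC_{\zeta}\mid_{\Bun_{\tilde B}}$ is ULA over $\Bunt_T$, which is clear because $\Bun_B\to\Bun_T$ is smooth and $\IC_{\zeta}\mid_{\Bun_{\tilde B}}$ is (a shift of) the pullback of the Kummer-type local system $\cL_\zeta$ along the $\mu_N$-gerb; and (ii) the factorizable factor over $X^\theta$ has proper support, so that pushing it forward fibrewise against the projection $\Bunt_T\times X^\theta \to \Bunt_T$ preserves the ULA property (combined with (i) via the stability of ULA under twisted exterior products).

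The main obstacle I anticipate is ensuring that the gerb-theoretic twistings at each defect stratum really do factor in the way sketched above: concretely, one must check that the $N$-th root data on $\cL_T$ and $\cL$ restrict, along $_\theta\Bunb_{\tilde B}$, to data which are pulled back separately from $\Bunt_T$ and from a local $\mu_N$-gerb on $\cH_T^{+,\theta}$, compatibly with the Plücker inclusions. This compatibility is precisely what Theorem~\ref{Th_main_Section1} provides, so once that description is available the ULA statement follows from the stability properties recalled at the outset.
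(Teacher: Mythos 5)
The paper's proof takes an entirely different route from yours: it does not stratify $\Bunb_{\tilde B}$ at all. Instead it constructs a large Hecke stack $\cH^?_{\tilde G}$ over $X^{mr}-\vartriangle$ (with $\lambda_1,\dots,\lambda_r$ a rational basis of $\Lambda^\sharp$ and $m\ge 2g-1$), pulls back to a Zastava-type correspondence $\bar Z$, and isolates an open substack $Z$ on which the map $'h^{\ra}_G\colon Z\to\Bunb_{\tilde B}$ is \emph{smooth surjective}. Over $Z$ the pullback of $\IC_{\zeta}$ is (up to a finite rank-one twist) a smooth pullback of $\IC_{\zeta}\boxtimes\IC(\Bun_{T^\sharp})$ along a map $b$, and the composite $Z\to\Bunt_T$ factors through the $\Bun_{T^\sharp}$-action on $\Bunt_T$. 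The decisive point is the ``shear'' argument of Remark~\ref{Rem_ULA}: a twisted external product $K\boxtimes\Qlb$ is automatically ULA along an action map $m_f$, because $m_f$ factors as a smooth map $\delta$ followed by the second projection, and any complex is ULA along a smooth projection. None of these ingredients — the smooth Hecke cover, the reduction via smooth pullback on the source, the action-map shear — appear in your proposal.

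There are two genuine gaps in what you propose. First, the ``standard devissage on the closure order'' is not valid for the ULA property. Stratum-by-stratum, the triangle you would need is $j_!j^*\IC_{\zeta}\to\IC_{\zeta}\to i_*i^*\IC_{\zeta}$; to conclude that $\IC_{\zeta}$ is ULA you need $j_!j^*\IC_{\zeta}$ to be ULA, and this is \emph{strictly stronger} than $j^*\IC_{\zeta}$ being ULA over the open stratum — extension by zero across a boundary generically destroys local acyclicity even when the restriction to the open part is perfectly well behaved. (Closed strata are fine because closed immersions are proper; the problem is entirely on the open side.) Knowing the $*$-restriction of $\IC_{\zeta}$ on each locally closed stratum tells you nothing, by itself, about nearby cycles across the closure relations.

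Second, step (ii) of the per-stratum argument does not go through even on a fixed stratum. You want to use that $\tilde\gq|_{_\theta\Bunb_{\tilde B}}$ factors through $\Bunt_T\times X^\theta\to\Bunt_T$, and that $X^\theta$ is smooth and proper. But ULA over $\Bunt_T\times X^\theta$ does \emph{not} imply ULA over $\Bunt_T$: the relevant stability (\cite{BG}, 5.1.2(2)) is for smooth pullbacks on the \emph{source}, not for postcomposing by a smooth/proper map on the \emph{target}. A complex can be trivially ULA for a product projection and wildly non-ULA after further projecting to one factor (already $j_!\Qlb$ of the complement of the diagonal in $\AA^1\times\AA^1$ is ULA for $\mathrm{id}$ but not for either coordinate projection). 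Likewise, ``the factor has proper support, so pushing forward preserves ULA'' is a statement about a \emph{pushforward} of the sheaf, not about the sheaf itself, and you never actually push anything forward. The reliance on Theorem~\ref{Th_main_Section1} is less of a worry — its proof proceeds via Zastava spaces and does not invoke the ULA statement — but even granting that input, the assembly simply does not close. The correct mechanism is the one the paper uses: reduce to a transparent situation by pulling back along a smooth surjective Hecke cover, where the ULA can be read off from the action-map factorization.
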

\begin{Prf} 
Pick $\lambda_1,\ldots, \lambda_r\in \Lambda^{\sharp,+}$ such that they form a base in $\Lambda^{\sharp}\otimes_{\ZZ}\QQ$. Let $m\ge 2g-1$, let $\vartriangle\hook{}X^{mr}$ denote the divisor of diagonals. 

 Let $\cH^?_{\tilde G}$ be the stack classifying 
$$
\{x_{1,1},\ldots, x_{1,m}, x_{2,1},\ldots, x_{r,1},\ldots, x_{r,m}\}\in X^{mr}-\vartriangle, \;(\cF,\cU_G), (\cF',\cU'_G)\in\Bunt_G
$$ 
an isomorphism $\beta: \cF\,\iso\, \cF'\mid_{X-\{x_{1,1},\ldots, x_{r,m}\}}$ such that for all $i,j$, $\cF$ is in the position $\lambda_i$ with respect to $\cF'$ at $x_{i,j}$.

  Let $h^{\la}_G, h^{\ra}_G: \cH^?_{\tilde G}\to \Bunt_G$ denote the projections sending the above point to $(\cF,\cU_G)$ and $(\cF',\cU'_G)$ respectively. Let $\pi: \cH^?_{\tilde G}\to X^{mr}-\vartriangle$ be the projection. 
  
  Let $\bar Z=\cH^?_{\tilde G}\times_{\Bunt_G} \Bunb_{\tilde B}$, where we used $h^{\ra}_G$ to define the fibred product. Let $\phi: \bar Z\to \Bunb_{\tilde B}\times (X^{mr}-\vartriangle)$ be the map defined as in Lemma~\ref{Lm_def_of_phi}, we get a commutative diagram
$$
\begin{array}{ccccc}
\Bunb_{\tilde B}\times (X^{mr}-\vartriangle) &\getsuplong{\phi} & \bar Z & \touplong{'h^{\ra}_G} &\Bunb_{\tilde B}\\
\downarrow\lefteqn{\scriptstyle \tilde\gp\times\id} && \downarrow && \downarrow\lefteqn{\scriptstyle \tilde\gp}\\
\Bunt_G\times (X^{mr}-\vartriangle) &\getsuplong{h^{\la}_G\times\pi} & \cH^?_{\tilde G} & \touplong{h^{\ra}_G} & \Bunt_G
\end{array}
$$
Let $AJ: X^{mr}-\vartriangle\to \Bun_{T^{\sharp}}$ be the map sending $(x_{i,j})$ to 
$$
\cF^0_{T^{\sharp}}(\sum_{i,j} \lambda_i x_{i,j}),
$$  
it is smooth. The composition $\bar Z\toup{'h^{\ra}_G} \Bunb_{\tilde B}\toup{\tilde\gq}\Bunt_T$ equals the composition
$$
\bar Z\;\toup{\phi}\;\Bunb_{\tilde B}\times (X^{mr}-\vartriangle) \;\touplong{\id\times AJ}\; \Bunb_{\tilde B}\times \Bun_{T^{\sharp}}\touplong{m_{\tilde\gq}} \Bunt_T
$$
Here $m_{\tilde\gq}$ denotes the composition $\Bunb_{\tilde B}\times \Bun_{T^{\sharp}}\toup{\tilde\gq\times\id} \Bunt_T\times \Bun_{T^{\sharp}}\toup{a} \Bunt_T$, where $a$ is the action map defined in (\cite{L}, Section~5.2.3). 

 Define $Z\subset\bar Z$ as the open substack classifying $(x_{i,j}, \cF, \cU_G, \cF',\cU'_G, \cF'_T, \cU', \nu')\in\bar Z$ such that for all $\check{\lambda}\in\check{\Lambda}^+$ the following holds:
\begin{itemize} 
\item[a)] The map $\nu'^{\check{\lambda}}: \cL^{\check{\lambda}}_{\cF'_T}\hook{}\cV^{\check{\lambda}}_{\cF'_G}$ has no zero at $x_{i,j}$.
\item[b)] The map $\nu^{\check{\lambda}}: \cL^{\check{\lambda}}_{\cF'_T}(-\sum_{i,j}\<\lambda_i, \check{\lambda}\>x_{i,j})\hook{} \cV^{\check{\lambda}}_{\cF_G}$
has no zero at each $x_{i,j}$.
\end{itemize}
As in (\cite{BG}, Theorem~5.1.5), the map $'h^{\ra}_G: Z\to\Bunb_{\tilde B}$ is smooth and surjective, $\phi: Z\to \Bunb_{\tilde B}\times (X^{mr}-\vartriangle)$ is smooth. We get a diagram
$$
\begin{array}{ccc}
Z & \touplong{'h^{\ra}_G} & \Bunb_{\tilde B}\\
\downarrow\lefteqn{\scriptstyle  b} && \downarrow\lefteqn{\scriptstyle \tilde\gq}\\
\Bunb_{\tilde B}\times \Bun_{T^{\sharp}} & \touplong{m_{\tilde\gq}} &\Bunt_T,
\end{array}
$$
where $b=(\id\times AJ)\comp\phi$, so $b$ is smooth. It is easy to see that there is a rank one local system $\cE$ on $Z$ with $\cE^N\,\iso\, \Qlb$ and an isomorphism 
$$
('h^{\ra}_G)^*\IC_{\zeta}\,\iso\, \cE\otimes b^*(\IC_{\zeta}\boxtimes \IC(\Bun_{T^{\sharp}}))
$$ 
So, it suffices to show that $\IC_{\zeta}\boxtimes \IC(\Bun_{T^{\sharp}})$ is ULA with respect to $m_{\tilde\gq}$. Note that $m_{\tilde\gq}$ is the composition
$$
\Bunb_{\tilde B}\times \Bun_{T^{\sharp}}\toup{\delta} \Bunb_{\tilde B}\times\Bunt_T \toup{\pr_2}\Bunt_T,
$$ 
where $\delta$ composed with the projection to $\Bunb_{\tilde B}$ (resp., to $\Bunt_T$) is $\pr_1$ (resp., $m_{\tilde\gq}$). Since $\Bunt_T$ is smooth $\IC_{\zeta}\boxtimes \IC(\Bunt_T)$ is ULA over $\Bunt_T$. Since $\delta$ is smooth, our claim follows from (\cite{BG}, 5.1.2 (2)). See Remark~\ref{Rem_ULA} below.
\end{Prf}   

\begin{Rem} 
\label{Rem_ULA}
Let $f: Y\to Z$ be a morphism of schemes with $Z$ smooth. Let $H$ be a smooth group scheme acting on $Z$, assume the stabilizor 
in $H$ of any point of $Z$ is smooth. Assume that for any $k$-point $z\in Z$ the map $H\to Z, h\mapsto hz$ is smooth. Let $K\in \D(Y)$. Let $m_f$ denote the composition $H\times Y\toup{\id\times f} H\times Z\toup{a}Z$, where $a$ is the action map. Then $\Qlb\boxtimes K$ is ULA with respect to $m_f$. Indeed, $m_f$ is written as the composition $H\times Y\toup{\delta} Z\times Y\toup{\pr_1} Z$, where $\delta$ composed with the projection to $Y$ is $\pr_2$. The map $\delta$ is smooth, because it is obtained by base change from the map $H\times Z\to Z\times Z, (h,z)\mapsto (hz, z)$.
\end{Rem}

 Theorem~\ref{Th_Eis_commutes_with_Verdier} follows from Proposition~\ref{Pp_ULA_property} by applying (\cite{BG}, Section~5.1.2). 
 
\subsection{Some gradings} 
\label{section_Some_gradings}

The center $Z(G)$ acts on $\Bun_G$ by 2-automorphisms, namely $z\in Z(G)$ yields an automorphism $\cF\to\cF$, $f\mapsto fz$ of $\cF\in \Bun_G$. This automorphism acts trivially on $\gg_{\cF}$. We let $Z(G)$ act on $\Bunt_G$ by  2-automorphisms so that for $(\cF, \cU)\in\Bunt_G$, $z\in Z(G)$ acts naturally on $\cF$ and trivially on $\cU$. For a character $\chi: Z(G)\to\Qlb^*$ we get the full triangulated subcategory 
$\D_{\zeta, \chi}(\Bunt_G)\subset \D_{\zeta}(\Bunt_G)$ of objects on which $Z(G)$ acts by $\chi$. 

 Write $C^*(\check{G}_n)$ for the cocenter of $\check{G}_n$, the quotient of $\Lambda^{\sharp}$ by the roots lattice of $\check{G}_n$. One has canonically $\Hom(Z(\check{G}_n), \Qlb^*)\,\iso\, C^*(\check{G}_n)$. If $\mu\in\Lambda^{\sharp,+}$ then $Z(\check{G}_n)$ acts on $V^{\mu}$ by the character, which is the image of $\mu$ in $C^*(\check{G}_n)$.
 
  There is a natural map $\xi: C^*(\check{G}_n)\to \Hom(Z(G), \mu_n(k))$ sending $\nu\in\Lambda^{\sharp}$ to the character $\frac{\iota(\nu)}{n}$. The latter sends $z\in Z(G)$ to $\frac{\iota(\nu)}{n}(z)$. If $\alpha$ is a simple coroot of $G$ then let $\delta$ denote the denominator of $\frac{\iota(\alpha,\alpha)}{2n}$. Recall that $\delta\alpha$ is the corresponding simple root of $\check{G}_n$ (\cite{FL}, Theorem~2.9). Since $\frac{\iota(\delta\alpha)}{n}$ lies in the roots lattice of $G$, $\frac{\iota(\delta\alpha)}{n}(z)=1$ for $z\in Z(G)$ by Remark~\ref{Rem_about_roots_lattice} below. Thus, $\xi$ is correctly defined.
Write $C^*(G)_n$ for the $n$-torsion subgroup of $C^*(G)$. 
  
\begin{Lm} The map $C^*(\check{G}_n)\to C^*(G)$ sending $\nu\in \Lambda^{\sharp}$ to $\frac{\iota(\nu)}{n}$ is injective and takes values in $C^*(G)_n$. The above map $C^*(\check{G}_n)\to C^*(G)_n$ is not always surjective.\footnote{The only case when it is not surjective is indicated in part 3) of the proof.}
\end{Lm}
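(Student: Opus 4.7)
The plan is to verify the three assertions in sequence, with the main work concentrated in the last step (the exhibition of a non-surjective case).

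I would first establish that the image lies in $C^*(G)_n$. For $\nu\in\Lambda^{\sharp}$ one has $n\cdot(\iota(\nu)/n)=\iota(\nu)$, and since $\iota(\alpha_i)=d_i\check{\alpha}_i$ lies in the root lattice of $G$ while $\Lambda=\bigoplus_i\ZZ\alpha_i$ (using simple-connectedness), the whole image $\iota(\Lambda)$ is contained in this root lattice. Hence $\iota(\nu)/n$ projects to an $n$-torsion class in $C^*(G)$. Together with the well-definedness of $\xi$ on $C^*(\check{G}_n)$ already established in the paragraph preceding the lemma (which uses $\iota(\delta_i\alpha_i)/n\in\check{\Lambda}_r$), this yields the map $\xi:C^*(\check{G}_n)\to C^*(G)_n$.

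For injectivity the argument is a short coordinate computation. Writing $\nu=\sum_ic_i\alpha_i\in\Lambda^{\sharp}$ one has $\iota(\nu)/n=\sum_i(c_id_i/n)\check{\alpha}_i$; if this element lies in the root lattice of $G$, which has $\{\check{\alpha}_i\}$ as a $\ZZ$-basis, then each $c_id_i/n$ must be an integer. By the formula $\delta_i=n/\gcd(n,d_i)$ this is equivalent to $\delta_i\mid c_i$ for every $i$, so $\nu$ belongs to $\sum_i\delta_i\ZZ\alpha_i$, the root lattice of $\check{G}_n$, and $\bar\nu=0$ in $C^*(\check{G}_n)$.

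For the failure of surjectivity I would exhibit the explicit family $G=\Spin(2m+1)$ of type $B_m$ with $m\geq 3$ odd and $n\equiv 2\pmod 4$. Realize $\Lambda\subset\ZZ^m$ as the coroot lattice $\{a:\sum a_i\in 2\ZZ\}$ with $\iota$ the standard inner product on $\ZZ^m$. Using the simple coroots $\alpha_i=e_i-e_{i+1}$ for $i<m$ and $\alpha_m=2e_m$, a direct calculation shows that the conditions defining $\Lambda^{\sharp}$ collapse to $\mu\in n\ZZ^m$. Since $\delta_i=n$ for $i<m$ and $\delta_m\alpha_m=(n/2)(2e_m)=ne_m$, the root lattice of $\check{G}_n$ also equals $n\ZZ^m$, forcing $C^*(\check{G}_n)=0$; on the other hand $C^*(G)_n=Z(G)^{*}=\ZZ/2$, so $\xi$ cannot be surjective. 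The main obstacle, required in order to justify the footnote's claim that this is the \emph{only} family of failures, is a uniform type-by-type inspection verifying surjectivity for every other simple $G$ and every $n$, based on matching the explicit description of $\iota(\Lambda^{\sharp})$ against the known structure of $\check{\Lambda}/\check{\Lambda}_r$ in each Dynkin type.
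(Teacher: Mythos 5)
Your argument is correct, and it takes a genuinely different route from the paper for the first two assertions. The paper proves the Lemma by an exhaustive case-by-case verification over the Dynkin types (simply-laced, $C_m$, $B_m$, $G_2$, $F_4$), simultaneously checking injectivity, the landing in $C^*(G)_n$, and surjectivity/non-surjectivity for each type; your proof instead establishes injectivity and the $n$-torsion claim by a single uniform coordinate computation in the basis $\{\alpha_i\}$ of $\Lambda=\bigoplus_i\ZZ\alpha_i$ (which is where simple-connectedness is used), combined with the arithmetic identity $\delta_i=n/\gcd(n,d_i)$, so that $n\mid c_id_i\Leftrightarrow\delta_i\mid c_i$ reduces membership of $\iota(\nu)/n$ in $\check Q$ to membership of $\nu$ in the root lattice of $\check G_n$. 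This is cleaner and explains structurally \emph{why} the kernel is trivial, whereas the paper's case-by-case approach buys the precise footnote information (the unique failure locus) essentially for free. Your counterexample is the same family the paper finds — $B_m$ with $m$ odd, $n\equiv 2\pmod 4$ (equivalently $n$ even and $nm/2$ odd), where $\Lambda^{\sharp}$ and the root lattice of $\check G_n$ both collapse to $n\ZZ^m$ so $C^*(\check G_n)=0$ while $C^*(G)_n=\ZZ/2$ — and your restriction $m\geq 3$ odd is correct. You are right that your argument alone does not establish the footnote's uniqueness claim, but the footnote is commentary on the proof rather than part of the Lemma, and you correctly flag what would be needed.
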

\begin{Prf}
Let us check this case by case for all simple simply-connected groups.\\
1) If $G$ is simply-laced then for each simple coroot $\alpha$ of $G$ the corresponding root of $\check{G}_n$ is $n\alpha$. Besides, $\iota(\alpha)=\check{\alpha}$ for each simple coroot $\alpha$ of $G$. So, if $\lambda\in\Lambda^{\sharp}$ and $\iota(\lambda)$ lies in $n\check{Q}$, where $\check{Q}$ is the roots lattice of $G$ then $\lambda\in n\Lambda$. Our claim follows in this case. For $G$ simply-laced the map $C^*(\check{G}_n)\to C^*(G)$ identifies $C^*(\check{G}_n)$ with the $n$-torsion subgroup in $C^*(G)$. 

\smallskip\noindent
2) If $G=\Sp_{2m}$ then the nontrivial case is $n$ even. In this case let $\alpha_i$ be the standard simple coroots, so $\iota(\alpha_i)=2\check{\alpha}_i$ for $i<m$ and $\iota(\alpha_m)=\check{\alpha}_m$. In this case $\Lambda^{\sharp}=\frac{n}{2}\Lambda$, the simple roots of $\check{G}_n$ are $\{\frac{n}{2}\alpha_i, i<m$ and $n\alpha_m\}$. So, $\frac{n}{2}\alpha_m\in C^*(\check{G}_n)\,\iso\, \ZZ/2\ZZ$ is a generator. Since $\frac{\check{\alpha}_n}{2}$ is not in the roots lattice of $G$, our claim follows, the map under consideration is actually an isomorphism.

\smallskip\noindent
3) If $G=\Spin_{2m+1}$ with $m\ge 2$ then the only nontrivial case is $n$ even and $nm/2$ even. In this case $C^*(\check{G}_n)\,\iso\, \ZZ/2\ZZ$. Let $\alpha_i$ denote the standard simple coroots. Then $\iota(\alpha_i)=\check{\alpha}_i$ for $i<m$, and $\iota(\alpha_m)=2\check{\alpha}_m$. The simple roots of $\check{G}_n$ are $n\alpha_i$, $i<m$ and $\frac{n}{2}\alpha_m$. The roots lattice of $\check{G}_n$ is $n\Lambda$, where $\Lambda=\{(a_1,\ldots, a_m)\mid \sum a_i=0\mod 2\}$. So, $(\frac{n}{2}, \ldots, \frac{n}{2})\in\Lambda^{\sharp}$ generates $\Lambda^{\sharp}/n\Lambda$. The roots lattice of $G$ is $\ZZ^n$, and $\frac{\iota(\nu)}{n}$ sends the above generator to $(\frac{1}{2}, \ldots, \frac{1}{2})$, which is not in $\ZZ^n$. The map under consideration is an isomorphism in this case.

 However, if $n$ is even and $nm/2$ is odd then the map under consideration $C^*(\check{G}_n)\to C^*(G)$ is not surjective!
 
\smallskip\noindent
4) For $G_2$ and $F_4$ the claim is trivial, as the center is trivial.
\end{Prf} 

\begin{Pp} 
\label{Lm_great_about_gradings}
Let $\nu\in \Lambda^{\sharp,+}$ and $K\in \D_{\zeta}(\Bunt_G)$. Assume that $Z(G)$ acts on $K$ (by functoriality from the above 2-action on $\Bunt_G$) by a character $\chi: Z(G)\to \Qlb^*$. Then $z\in Z(G)$ acts on $\H^{\nu}_G(K)$ as $\chi(z)\zeta(\frac{i(\nu)}{n}(z))$. 
\end{Pp}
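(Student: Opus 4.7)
\begin{Prf}[Sketch]
The plan is to track the $Z(G)$-action through each factor in the definition
$$
\H^{\nu}_G(K)=(\tilde h^{\la}_G\times\pi)_!((\tilde h^{\ra}_G)^*K\otimes \IC^{\nu})[-\dim\Bun_G].
$$
First I would analyze the $Z(G)$-action on the Hecke stack $\cH_{\tilde G}$. For a point $(\cF,\cF',\beta,x,\cU_G,\cU_G')$, since $\beta$ is required to be $G$-equivariant and $Z(G)$ is central, only the \emph{diagonal} copy of $Z(G)\subset Z(G)\times Z(G)$ can act as 2-automorphisms, with $z$ sending $f\mapsto fz$ on both $\cF$ and $\cF'$. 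Because $\Ad(z)=\id$ on $\gg$, this diagonal action is trivial on the lines $\cL_{\cF}$ and $\cL_{\cF'}$, hence also on $\cU_G,\cU'_G$. Consequently, the action of $z$ on $(\tilde h^{\ra}_G)^*K$ is given by $\chi(z)$ (by hypothesis on $K$), and the proposition reduces to computing the diagonal $Z(G)$-character on the kernel $\IC^{\nu}$.

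Next I would reduce the kernel computation to a local one. Since $\IC^{\nu}=(\IC(\Bunt_{G,X})\tboxtimes\cA^{-w_0(\nu)})^r$ is a twisted external product along $\tilde\gamma^{\ra}$, and since the diagonal $Z(G)$ acts trivially on $\IC(\Bunt_{G,X})$ (again because $\Ad(z)=\id$ implies $z$ acts trivially on $\cL$ and $\cL_X$), the character on $\IC^{\nu}$ is exactly the character by which $Z(G)\subset T\subset G(\bO)\subset G_X$ acts on $\cA^{-w_0(\nu)}_{\cE}$ via the $G(\bO)$-equivariant structure.

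The core of the argument is thus a local computation of the central character of $\cA^{-w_0(\nu)}_{\cE}\in\PPerv_{G,n}$. I would evaluate it at a fixed point of $T$ in $\wt\Gr_G^{-w_0(\nu)}$, for instance the image of $t^{-w_0(\nu)}$, where the stalk is one-dimensional. The $T$-equivariant character on the fiber of $\cL$ over $t^{\mu}G(\bO)$ is (up to a sign one fixes by convention) $\kappa(\mu)$, and passing to the gerb $\wt\Gr_G$ of $N$-th roots on which $\mu_N$ acts through $\zeta$ divides this character by $N$. Substituting $\kappa=-2\check h\,\iota$ and $N=2\check h n$ yields that $Z(G)$ acts on the stalk by $\zeta\bigl(\tfrac{\iota(\mu)}{n}(z)\bigr)$ (up to sign). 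Setting $\mu=-w_0(\nu)$ and using that $Z(G)$ is $W$-invariant together with the $W$-equivariance of $\iota$, one gets the desired character $\zeta\bigl(\tfrac{\iota(\nu)}{n}(z)\bigr)$; the lemma preceding the proposition is precisely what guarantees that this character takes values in $\mu_n(k)\subset\mu_N(k)$, so the formula makes sense.

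The principal obstacle is the local character computation in the third step: one has to correctly compare the $T$-equivariant structure on $\cL$ with the $\mu_N$-gerb structure on $\wt\Gr_G$ and keep the sign conventions consistent with those adopted for $\Bun_T$ (where $T$ acts on $\lambda_{\cF}$ by $-\kappa(\mu)$, see Section~\ref{Section_202}). Once this is set up, the identification of the $Z(G)$-action on the Hecke stack with the diagonal, the reduction via factorization, and the final evaluation on the central fiber are all direct.
\end{Prf}
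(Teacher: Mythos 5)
Your proposal follows the same high-level strategy as the paper: identify the diagonal $Z(G)\subset Z(G)\times Z(G)$ as the group of $2$-automorphisms acting on $\ov\cH^{\nu}_{\tilde G}$, note that it acts trivially on $\cL_{\cF},\cL_{\cF'}$ (hence on $\cU,\cU'$), reduce to computing the $Z(G)$-character on $\IC^{\nu}$, and combine with the given character $\chi$ on $K$. Where you diverge is in how the character on $\IC^{\nu}$ is extracted. The paper works globally on $\cH^{\nu}_G$: it introduces the line bundle $\cB$ with $\cB^N\cong\cL_{\cF}\otimes\cL_{\cF'}^{-1}$, whose existence and uniqueness rest on $\Pic(\cH^{\nu}_G)$ being torsion-free, and uses it to build the isomorphism $\eta\colon B(\mu_N)\times(\cH^{\nu}_G\times_{\Bun_G}\Bunt_G)\iso\cH^{\nu}_{\tilde G}$, in terms of which $\IC^{\nu}$ is the intermediate extension of $\eta_*(\cL_{\zeta}\boxtimes\IC(\cdot))$. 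The $Z(G)$-character then drops out of the formula $\cU=\cB\otimes\cU_0\otimes\cU'$ with no free sign. You instead propose a local computation on $\wt\Gr_G$, reading the central character off the $G(\bO)$-equivariant structure of $\cA^{-w_0(\nu)}_{\cE}$ at the $T$-fixed point $t^{-w_0(\nu)}$. That route is plausible and more explicitly ``local,'' but it requires two checks that your sketch leaves open: (a) that the $Z(G)$-2-automorphism of $\cH_{\tilde G}$ lifts along $\tilde\gamma^{\ra}$ to exactly the $Z(G)\subset G(\bO)\subset G_X$ action on $\wt\Gr_{G,X}$, which depends on the precise gluing data defining $\tilde\gamma^{\ra}$; and (b) the sign bookkeeping when you normalize the $T$-equivariant structure on $\cL$ over $\Gr_G$, pass to the $\mu_N$-gerb, and then use $\iota(-w_0\nu)(z)=\iota(\nu)(z)^{-1}$ for $z\in Z(G)$ — your sketch keeps saying ``up to sign,'' but the proposition requires the exact character, and the paper's argument with $\cB$ pins this down automatically. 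Also, the statement that $\tfrac{\iota(\nu)}{n}(z)\in\mu_n(k)$ is guaranteed by Remark~\ref{Rem_about_roots_lattice}, not by the lemma preceding the proposition; you should cite the remark.
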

\begin{Prf}
Recall that a point of $\ov{\cH}^{\nu}_{\tilde G}$ is given by $(\cF,\cF',\beta: \cF\,\iso\, \cF'\mid_{X-x})\in \ov{\cH}^{\nu}_G$ and  $\cU,\cU'$. Let $Z(G)$ act on $\ov{\cH}^{\nu}_{\tilde G}$ by 2-automorphisms so that it acts naturally on $\cF,\cF'$ and trivially on $\cU,\cU'$. Let us show that $z\in Z(G)$ acts on $\IC^{\nu}$ as $\zeta(\frac{i(\nu)}{n}(z))$.

 Consider the open substack $\cH^{\nu}_{\tilde G}\subset \ov{\cH}^{\nu}_{\tilde G}$. There is a line bundle, say $\cB$, on $\cH^{\nu}_G$ such that $\cB^N$ is canonically the line bundle with fibre $\cL_{\cF}\otimes\cL_{\cF'}^{-1}$ at $(\cF,\cF',\beta)\in \cH^{\nu}_G$. The line bundle is uniquely defined, as the Picard group is torsion free. For $z\in Z(G)$ consider the 2-automorphism of $\cH^{\nu}_G$ acting as $z$ on $\cF,\cF'$. Then $z$ acts on $\cB$ as $\frac{i(\nu)}{n}(z^{-1})$.  Actually, $\frac{i(\nu)}{n}(z^{-1})\in \mu_n(k)$, because of Remark~\ref{Rem_about_roots_lattice} below. We have an isomorphism
$$
\eta: B(\mu_N)\times(\cH^{\nu}_G\times_{\Bun_G}\Bunt_G)\,\iso\, \cH^{\nu}_{\tilde G}
$$
where we used $h^{\ra}_G$ to define the fibred product in parentheses. It sends a collection $(\beta: \cF\,\iso\,\cF'\mid_{X-x}, \;\cU'^N\,\iso\, \cL_{\cF'}, \,\cU_0^N\,\iso\, k)$ to $(\beta, \cF,\cF', \cU,\cU')$, where $\cU=\cB_{(\cF,\cF',\beta)}\otimes \cU_0\otimes \cU'$ with the induced isomorphism $\cU^N\,\iso\, \cL_{\cF}$. The perverse sheaf $\IC^{\nu}$ is the intermediate extension of 
$$
\eta_*(\cL_{\zeta}\boxtimes \IC(\cH^{\nu}_G\times_{\Bun_G}\Bunt_G))
$$ 
So, our 2-automorphism $z\in Z(G)$ of $\ov{\cH}^{\nu}_{\tilde G}$ acts on $\IC^{\nu}$ as $\zeta(\frac{i(\nu)}{n}(z))$.
The group $Z(G)$ acts by the above 2-automorphisms on the diagram
$$
\Bunt_G\times X\getsup{\tilde h^{\la}_G\times\pi} \ov{\cH}^{\nu}_{\tilde G} \toup{\tilde h^{\ra}_G} \Bunt_G,
$$
our assertion follows. 
\end{Prf} 

\begin{Rem} 
\label{Rem_about_roots_lattice}
i) For any $\nu\in \Lambda$ the element $\iota(\nu)$ lies in the root lattice of $G$. So, if $z\in Z(G)$ then $\iota(\nu)(z)=1$.\\
ii) Recall that $G$ is simple, simply-connected. Write $Z(G)_n$ for the $n$-torsion subgroup of $Z(G)$. If $G$ is simply-laced then $\check{G}_n$ is isomorphic to the Langlands dual to $G/Z(G)_n$. This was also observed by Savin in \cite{Sa}. 
\end{Rem}

\subsubsection{} The group $T$ acts naturally on $\Bun_T$ by 2-automorphisms, under this action $t\in T$ acts on $\cL_T\mid_{\Bun_T^{\mu}}$ by the character $-\kappa(\mu)(t)$. In partucular, $Z(G)\subset T$ acts on $\Bun_T$ by 2-automorphisms, and acts trivially on $\cL_T$. We let $Z(G)$ act on $\Bunt_T$ by 2-automorphisms, so that $z\in Z$ acts on $(\cF_T,\cU)$ as $z$ on $\cF_T$ and trivially on $\cU$. We also let $Z(G)$ act on $\Bunb_{\tilde B}$ by 2-automorphisms so that $z\in Z$ acts on $(\cF_T,\cF, \cU^N\,\iso\, (\cL_T)_{\cF_T}, \cU_G^N\,\iso\, \cL_{\cF})$ as $z$ on $\cF_T, \cF_G$ and trivially on $\cU, \cU_G$. The diagram (\ref{diag_def_of_Eis}) is equivariant with respect to this 2-action. The group $Z(G)$ acts trivially on $\IC_{\zeta}$. So, given $\cS\in \D_{\zeta}(\Bunt_T)$, if $Z(G)$ acts on $\cS$ by a character $\chi: Z(G)\to\Qlb^*$ then $Z(G)$ acts on $\Eis(\cS)$ also by $\chi$. 

\begin{Lm} 
\label{Lm_2-action_for_Bunt_T}
Let $\nu\in \Lambda^{\sharp}, K\in \D_{\zeta}(\Bunt_T^{\nu})$. Then $z\in Z(G)$ acts on $K$ as $\zeta(\frac{\iota(\nu)}{n}(z^{-1}))$.
\end{Lm}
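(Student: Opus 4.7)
My plan is to analyze the inertia group of a point of $\Bunt_T^{\nu}$ and exploit the splitting coming from the hypothesis $\nu \in \Lambda^{\sharp}$ to pin down the $Z(G)$-action.

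First I would describe the 2-automorphism group of a point $(\cF, \cU) \in \Bunt_T^{\nu}$ as
$$
I = \{(t, s) \in T \times \Gm \,:\, s^N = -\kappa(\nu)(t)\},
$$
in which $t \in T$ is the natural inertia of $\cF$ (so acting on $(\cL_T)_{\cF}$ by $-\kappa(\nu)(t)$ per Section~\ref{Section_202}), $s \in \Gm$ scales $\cU$, and the relation ensures the isomorphism $\cU^N \iso (\cL_T)_{\cF}$ is preserved. The gerby $\mu_N$ is the kernel of $I \to T$; on any $K \in \D_{\zeta}(\Bunt_T^{\nu})$ it acts by $\zeta$ by definition. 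The 2-action of $Z(G)$ defined in the paper sends $z \mapsto (z, 1) \in I$, which is well-defined because $-\kappa(\nu)(z) = 2\check{h}\iota(\nu)(z) = 1$ for $z \in Z(G)$ by Remark~\ref{Rem_about_roots_lattice}.

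Next, since $\nu \in \Lambda^{\sharp}$, the character $\chi := \iota(\nu)/n : T \to \Gm$ is well-defined and satisfies $\chi^N = -\kappa(\nu)$ (using $\kappa = -2\check{h}\iota$ and $N = 2\check{h}n$). Thus $t \mapsto (t, \chi(t))$ splits the extension $1 \to \mu_N \to I \to T \to 1$, yielding an isomorphism
$$
\phi : I \;\iso\; T \times \mu_N, \qquad (t, s) \mapsto (t,\, s\,\chi(t)^{-1}),
$$
under which $(z, 1)$ corresponds to $(z, \chi(z^{-1}))$. The technical heart of the argument will be to globalize this pointwise splitting: the character $\chi$ provides a trivialization of the $\mu_N$-gerbe $\Bunt_T^{\nu} \to \Bun_T^{\nu}$ along the $T$-equivariant direction, since $\chi^N$ is exactly the $T$-weight $-\kappa(\nu)$ of $\cL_T|_{\Bun_T^{\nu}}$. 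This is parallel to the role played by $\tau$ in Section~\ref{section_Hecke_functors_for_T}, where $\tau$ provides an $N$-th root of $i_X^*\lambda$ on $\Bun_{T^{\sharp}}$. With this trivialization the gerbe effectively descends to the scheme underlying $\Bun_T^{\nu}$, so every $K \in \D_{\zeta}(\Bunt_T^{\nu})$ is pulled back from that scheme; in particular the image of $T$ under the splitting acts trivially on $K$.

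Putting the pieces together, the action of $z \in Z(G)$ on $K$ equals the action of $\phi(z, 1) = (z, \chi(z^{-1})) \in T \times \mu_N$: the $T$-factor acts trivially by the preceding step and the $\mu_N$-factor acts by $\zeta$, so $z$ acts by the scalar $\zeta(\chi(z^{-1})) = \zeta(\frac{\iota(\nu)}{n}(z^{-1}))$, as claimed.
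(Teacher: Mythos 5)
Your proof is correct in substance and takes a genuinely different route from the paper. The paper's own argument is a two-step transport: it first notes (for $\nu=0$) that $T$ acts trivially on $\cL_T|_{\Bun_T^0}$, so by connectedness of $T$ any 2-automorphism of $\Bunt_T^0$ coming from $T$ acts trivially on sheaves; it then uses the Hecke modification $_xm^{\nu}:\Bunt_T\to\Bunt_T$ and the explicit formula $(\ref{cU'_for_def_of_mnu})$ for $\cU'$ to see that pulling back by $_xm^{\nu}$ twists the $Z(G)$-eigenvalue by precisely $\zeta(\frac{\iota(\nu)}{n}(z))$, transporting the base case to the general component. Your approach is instead "static": you analyze the inertia extension $1\to\mu_N\to I\to T\to 1$ on the fixed component $\Bunt_T^{\nu}$, observe that $\nu\in\Lambda^{\sharp}$ makes $\chi=\iota(\nu)/n$ a genuine character of $T$ satisfying $\chi^N=-\kappa(\nu)$, and use the resulting algebraic splitting $t\mapsto(t,\chi(t))$ to split $I\cong T\times\mu_N$ and read off the action of $(z,1)$. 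This is arguably more transparent (it localizes the use of the hypothesis $\nu\in\Lambda^{\sharp}$ to the existence of $\chi$) and gives the same answer.

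One step is phrased more strongly than needed and, taken literally, is a bit off: "the gerbe effectively descends to the scheme underlying $\Bun_T^{\nu}$, so every $K$ is pulled back from that scheme." $\Bun_T^{\nu}$ is not a scheme and the gerbe $\Bunt_T^{\nu}\to\Bun_T^{\nu}$ is not trivialized by $\chi$; the character only trivializes the extension of automorphism groups, not the gerbe class. What you actually need — and what your argument really uses — is the standard fact that the image of a \emph{connected} algebraic subgroup of the inertia acts trivially on the stalks of any constructible complex, because the inertia action factors through $\pi_0$ of the inertia group. Since your splitting $\{(t,\chi(t)):t\in T\}\subset I$ is a connected (algebraic) subgroup, it acts trivially on $K$, and then the computation $(z,1)=(z,\chi(z))\cdot(1,\chi(z)^{-1})$ gives $\zeta(\chi(z^{-1}))=\zeta(\frac{\iota(\nu)}{n}(z^{-1}))$, as you conclude. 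Note this is exactly the same connectedness fact that the paper uses in its Step 1; your proof simply invokes it directly on $\Bunt_T^{\nu}$ instead of on $\Bunt_T^0$ followed by a Hecke transport.
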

\begin{Prf}  
\Step 1 The group $T$ acts trivially on $\cL_T\mid_{\Bun_T^0}$. So, $T$ acts by 2-automorphisms of $\Bunt_T^0$, namely, $t\in T$ acts as $t$ on $\cF$ and trivially on $\cU$. For any $K\in \D_{\zeta}(\Bunt_T^0)$, $T$ acts trivially on $K$, so $Z(G)$ also acts trivially on $K$. 

\Step 2 Pick $x\in X$, let $_xm^{\nu}: \Bunt_T\to\Bunt_T$ be the restriction of $m^{\nu}$ to $x$. Then $_xm^{\nu}$ is an isomoprhism. From (\ref{cU'_for_def_of_mnu}) we see if $z\in Z(G)$ acts on some $\cS\in\D_{\zeta}(\Bunt_T)$ as $\chi(z)$ then $z$ acts on $(_xm^{\nu})^*\cS$ as $\chi(z)\zeta(\frac{\iota(\nu)}{n}(z))$.
\end{Prf}

\subsection{Towards the functional equation} 
\label{Section_Towards the functional equation}
The $W$-action on $\Lambda$ perserves $\Lambda^{\sharp}$, so $W$ acts on the left on $T^{\sharp}$ naturally. For $w\in W$ we denote also by $w: \Bun_{T^{\sharp}}\to \Bun_{T^{\sharp}}$, $\cF\mapsto {^w\cF}$ the extension of scalars map with respect to the left action map $w: T^{\sharp}\to T^{\sharp}$. We also let $W$ act on the left on $\check{\Lambda}$, so $w\check{\lambda}=\check{\lambda}\comp w^{-1}\in \check{\Lambda}$. For $\check{\lambda}\in\check{\Lambda}$, $\cF\in\Bun_T$ we get $\cL^{\check{\lambda}}_{(^w\cF)}\,\iso\, \cL^{w^{-1}\check{\lambda}}_{\cF}$ canonically. For $w_i\in W$, $\cF\in\Bun_{T^{\sharp}}$ one has $^{w_2}(^{w_1}\cF)\,\iso\, {^{w_2w_1}\cF}$ naturally. 
 
 We let also $W$ act on the left on $\check{T}^{\sharp}$. For a $\check{T}^{\sharp}$-local system $E$ on $X$ we denote by $^wE$ the extension of scalars of $E$ with respect to $w: \check{T}^{\sharp}\to \check{T}^{\sharp}$. So, for $\mu\in\Lambda^{\sharp}$ we get $(^wE)^{\mu}\,\iso\, (E)^{w^{-1}\mu}$ as local systems on $X$. For $w\in W$ and the map $w^{-1}: \Bun_{T^{\sharp}}\to \Bun_{T^{\sharp}}$ we then get $(w^{-1})^* AE\,\iso\, A(^wE)$ canonically. 
 
  Let $W$ act naturally on the left on $T$. For $w\in W$ write $^w\cF$ for the extension of scalars of $\cF\in \Bun_T$ under $w: T\to T$. Write $w: \Bun_T\to\Bun_T$ for the map $\cF\mapsto {^w\cF}$,  this is a left action of $W$ on $\Bun_T$. The line bundle $\cL_T$ is naturally $W$-equivariant. Write also $w: \Bunt_T\to \Bunt_T$ for the map $(\cF, \cU, \cU^N\,\iso\, (\cL_T)_{\cF})\mapsto (^w\cF, \cU)$ with the induced isomorphism $\cU^N\,\iso\, (\cL_T)_{^w\cF}$. This defines a left action of $W$ on $\Bunt_T$.  

 For a $\check{T}^{\sharp}$-local system $E$ on $X$ let $\cK_E\in \D_{\zeta}(\Bunt_T)$ be the $E$-Hecke eigensheaf constructed in (\cite{L}, Proposition~2.2), this is a local system over the components $\Bunt_T^{\mu}$, $\mu\in\Lambda^{\sharp}$. 
 
\begin{Pp} 
\label{Pp_action_of_W_on_cK_E}
For $w\in W$ there is an isomorphism $(w^{-1})^*\cK_E\,\iso\, \cK_{(^wE)}$. 
\end{Pp}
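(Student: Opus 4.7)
The plan is to reduce the statement to the Hecke eigen-property by verifying a $W$-equivariance of the Hecke maps $m^\nu$ and then invoking the uniqueness of Hecke eigen-sheaves from (\cite{L}, Proposition~2.2). First I would establish the key intertwiner: the diagram
$$
\begin{array}{ccc}
\Bunt_T\times X & \toup{w\times\id} & \Bunt_T\times X\\
\downarrow\lefteqn{\scriptstyle m^{\nu}} && \downarrow\lefteqn{\scriptstyle m^{w\nu}}\\
\Bunt_T & \toup{w} & \Bunt_T
\end{array}
$$
2-commutes canonically for every $\nu\in\Lambda^{\sharp}$ and $w\in W$. Unwinding (\ref{cU'_for_def_of_mnu}), this amounts to identifying the line $\cU'=\cU\otimes(\cL^{-\kappa(\nu)/N}_{\cF})_x\otimes\tau_{\cO(-\nu x)}$ attached to $(\cF,\cU,x)$ with the line attached to $(^w\cF,\cU,x)$ by $m^{w\nu}$. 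The three ingredients are: $W$-invariance of $\kappa$ (giving $w^{-1}\kappa(w\nu)=\kappa(\nu)$), the canonical isomorphism $\cL^{\check\lambda}_{^w\cF}\,\iso\,\cL^{w^{-1}\check\lambda}_{\cF}$, and the $W$-equivariance of $(\tau,\kappa/N,c)\in\cP^{\theta}(X,\Lambda^{\sharp})$ that we built into the construction in Section~\ref{section_Hecke_functors_for_T}, which yields $\tau_{\cO(-w\nu x)}\,\iso\,\tau_{^w\cO(-\nu x)}\,\iso\,\tau_{\cO(-\nu x)}$.

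Substituting $w$ by $w^{-1}$ and passing to $*$-pullbacks gives the identity
$$
(m^{\nu})^*(w^{-1})^*\;=\;(w^{-1}\times\id)^*(m^{w^{-1}\nu})^*.
$$
Applying this to $\cK_E$ and using its Hecke property $\H^{\mu}_T(\cK_E)\,\iso\,\cK_E\boxtimes E^{-\mu}[1]$ for $\mu\in\Lambda^{\sharp}$, together with the formula $(^wE)^{-\nu}\,\iso\, E^{-w^{-1}\nu}$ recalled just before the statement, I obtain
$$
\H^{\nu}_T\bigl((w^{-1})^*\cK_E\bigr)\,\iso\,(w^{-1})^*\cK_E\boxtimes E^{-w^{-1}\nu}[1]\,\iso\,(w^{-1})^*\cK_E\boxtimes (^wE)^{-\nu}[1].
$$
Thus $(w^{-1})^*\cK_E$ is a $^wE$-Hecke eigen-sheaf on $\Bunt_T$, and since $w$ preserves $\Lambda^{\sharp}$, it is supported on the same components as $\cK_{^wE}$.

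Finally, I would invoke the uniqueness/rigidity part of (\cite{L}, Proposition~2.2): any $\zeta$-twisted Hecke eigen-sheaf on $\Bunt_T$ with prescribed eigenvalue and support on $\sqcup_{\mu\in\Lambda^{\sharp}}\Bunt_T^\mu$ is determined up to a canonical isomorphism once normalized on $\Bunt_T^0$. Both $(w^{-1})^*\cK_E$ and $\cK_{^wE}$ have eigenvalue $^wE$, and the $W$-action fixes $\Bunt_T^0$ setwise and acts trivially on its restriction to the trivial bundle, so the canonical normalization of $\cK_E$ transports to one for $(w^{-1})^*\cK_E$ matching that of $\cK_{^wE}$. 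This yields the desired isomorphism.

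The main bookkeeping obstacle will be ensuring that the $W$-equivariance of the chosen $(\tau,\kappa/N,c)$ and of the isomorphism $\tau^N\,\iso\, i_X^*\lambda$ descends consistently to the $N$-th roots gerbe $\Bunt_T$, so that no spurious $\mu_N$-twist appears in the intertwiner square above. Given that these $W$-equivariances were postulated once and for all in Section~\ref{section_Hecke_functors_for_T}, the rest of the argument is formal.
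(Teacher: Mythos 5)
Your Hecke intertwiner $m^{w\nu}\circ(w\times\id)=w\circ m^{\nu}$ is a correct and useful observation, and the deduction that $(w^{-1})^*\cK_E$ is an $\,{}^wE$-Hecke eigen-sheaf follows cleanly from it. But the final step has a genuine gap, and it is exactly where the proposition's real content lives.

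The Hecke property over $\Bunt_T$ only propagates a rank-one local system from $\Bunt_T^0$ to the other $\Lambda^{\sharp}$-components; it says nothing about what the local system looks like \emph{on} $\Bunt_T^0$. So "uniqueness once normalized on $\Bunt_T^0$" means: one must supply an isomorphism of rank-one local systems $(w^{-1})^*\bigl(\cK_E\mid_{\Bunt_T^0}\bigr)\,\iso\,\cK_{\,^wE}\mid_{\Bunt_T^0}$. This is not a normalization at a single point: $\Bunt_T^0$ has a nontrivial fundamental group (it is essentially $\Pic^0$ together with gerbe factors), and $\cK_E\mid_{\Bunt_T^0}$ has genuine monodromy coming from $AE$ via the pushforward $\pi_{H!}(AE_H)$. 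Your observation that "$W$ acts trivially on the restriction to the trivial bundle" only pins down the fiber at one point and does not identify the two local systems. So the argument circles back to having to prove exactly the $\Bunt_T^0$-statement it was supposed to dispense with.

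The paper closes this gap by working directly with the construction: it chooses a $W$-\emph{invariant} maximal isotropic subgroup $H=H_0\otimes(\Lambda/\Lambda^{\sharp})\subset\H^1(X,K)$, observes that the diagram $'\Bun_{T^{\sharp}}\to\Bun_{T^{\sharp},H}\to\Bun_T$ and its gerbe lift are $W$-equivariant along with the line bundle $'\tau$, and concludes that the defining pushforward $\cK_E=\pi_{H!}(AE_H)$ intertwines $w$ and $E\mapsto{}^wE$. That $W$-invariant choice of $H$ is the key device you are missing; without it (or some replacement), the uniqueness-plus-normalization strategy does not actually deliver the isomorphism on $\Bunt_T^0$.
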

\begin{Prf}
In this proof we use the notations of (\cite{L}, Section~5.2.4). Recall that $K$ is the kernel of the natural map $T^{\sharp}\to T$, so $K\,\iso\, (\Lambda/\Lambda^{\sharp})\otimes\mu_n$. The group $\H^1(X, K)\,\iso\, \H^1(X, \mu_n)\otimes (\Lambda/\Lambda^{\sharp})$ is equipped with the skew-symmetric non-degenerate pairing 
$$
(\cdot, \cdot)_c: \H^1(X, K)\times \H^1(X, K)\to \mu_n(k)
$$ 
described in (\cite{L}, Proposition~5.1). Let $H_0\subset \H^1(X, \mu_n)$ be a maximal isotropic subgroup with respect to the natural pairing $\H^1(X, \mu_n)\times \H^1(X, \mu_n)\to\H^2(X, \mu_n^{\otimes 2})\,\iso\,\mu_n$. Set $H=H_0\otimes (\Lambda/\Lambda^{\sharp})$. So, $H\subset \H^1(X, K)$ is a $W$-invariant maximal isotropic subgroup with respect to $(\cdot, \cdot)_c$.

 Recall the stacks $'\Bun_{T^{\sharp}}$, $\Bun_{T^{\sharp}, H}$ from (\cite{L}, Section~5.2.4). The group $W$ acts naturally on $'\Bun_{T^{\sharp}}$, $\Bun_T$, and $'i_X: {'\Bun_{T^{\sharp}}}\to \Bun_T$ is $W$-equivariant. The line bundle $'\tau$ on $'\Bun_{T^{\sharp}}$ is naturally $W$-equivariant. The $W$-action on $'\Bun_{T^{\sharp}}$ induces a $W$-action on $\Bun_{T^{\sharp}, H}$ so that the diagram $'\Bun_{T^{\sharp}}\to \Bun_{T^{\sharp}, H}\to \Bun_T$ is $W$-equivariant. The $W$-actions on $\Bun_{T^{\sharp}, H}$ and on $\Bun_T$ naturally extend to $W$-actions on $\Bunt_{T^{\sharp}, H}$ and $\Bunt_T$. The map $\pi_H: \Bunt_{T^{\sharp}, H}\to\Bunt_T$ is $W$-equivariant. 
 
 Pick a local system $AE_H$ on $\Bunt_{T^{\sharp}, H}$, whose restriction to $'\Bun_{T^{\sharp}}$ is identified with $AE$, and such that $\mu_N(k)$ acts on it by $\zeta$. Recall that $\cK_E$ is defined as $\pi_{H !}(AE_H)$. Since the map $'\Bun_{T^{\sharp}}\to \Bunt_{T^{\sharp}, H}$ is $W$-equivariant, our claim follows.
\end{Prf} 

\subsubsection{}  Recall that $\rho_n$ denotes the half sum of positive roots of $\check{G}_n$. If $w\in W$ then $w(\rho_n)-\rho_n\in\Lambda^{\sharp}$. (For $w$ a simple reflection this is clear, the general case is obtained by induction on the length of the decomposition into simple reflections).  
  
  Define the twisted $W$-action on $\Bunt_T$ as follows. For $\mu\in \Lambda^{\sharp}$ write $\Omega^{\mu}$ for the $T^{\sharp}$-torsor induced from $\Omega$ via $\mu: \Gm\to T^{\sharp}$. By abuse of notation, the corresponding $T$-torsor is also denoted $\Omega^{\mu}$. Denote by $a: \Bun_{T^{\sharp}}\times\Bunt_T\to \Bunt_T$ the action of $\Bun_{T^{\sharp}}$ on $\Bunt_T$ defined in (\cite{L}, Section~5.2.3). 
 
For $(\cF,\cU, \cU^N\,\iso\,\cL_{\cF})\in \Bunt_T$ we set 
\begin{equation}
\label{def_twisted_W_action_on_Bunt_T}
w\ast (\cF,\cU)=a(\Omega^{w(\rho_n)-\rho_n}, (^w\cF, \cU))=(^w\cF\otimes \Omega^{w(\rho_n)-\rho_n}, \cU'),
\end{equation}
where $a$ is the above action map, $\cU'$ is the corresponding 1-dimensional space. 
\begin{Lm} The maps (\ref{def_twisted_W_action_on_Bunt_T})
define a left $W$-action on $\Bunt_T$. 
\end{Lm}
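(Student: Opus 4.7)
\begin{Prf}
The plan is to verify the two axioms of a left action: that the identity acts trivially, and that $(w_2 w_1)\ast x = w_2\ast(w_1\ast x)$ for all $w_1,w_2\in W$ and $x\in\Bunt_T$. The identity case is immediate since $\rho_n-\rho_n=0$, so $\Omega^{w(\rho_n)-\rho_n}$ is trivial for $w=1$ and $a(\Omega^0,-)=\id$.

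For associativity, the key input is the cocycle identity in $\Lambda^{\sharp}$
\[
w_2w_1(\rho_n)-\rho_n \;=\; w_2\bigl(w_1(\rho_n)-\rho_n\bigr)\;+\;\bigl(w_2(\rho_n)-\rho_n\bigr),
\]
together with three structural facts. First, the untwisted $W$-action on $\Bunt_T$ is strictly associative: $w_2(w_1(\cF,\cU))=((w_2w_1)\cF,\cU)$ canonically, which follows from naturality of extension of scalars along $W\to\Aut(T)$ and the fact that this $W$-action is trivial on the line $\cU$. Second, the action map $a:\Bun_{T^{\sharp}}\times\Bunt_T\to\Bunt_T$ from (\cite{L}, Section~5.2.3) is a genuine Picard action, i.e. $a(\cF_1\otimes\cF_2,y)\,\iso\,a(\cF_1,a(\cF_2,y))$ compatibly with associators. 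Third, the $W$-equivariant structure on $(\tau,\frac{\kappa}{N},c)\in\cP^{\theta}(X,\Lambda^{\sharp})$ supplied by (\ref{iso_root_of_theta_sharp}) implies that the action map $a$ intertwines the natural $W$-action on $\Bun_{T^{\sharp}}$ (extension of scalars along $w:T^{\sharp}\to T^{\sharp}$) with the untwisted $W$-action on $\Bunt_T$; that is, there is a canonical isomorphism
\[
{}^{w}\!\bigl(a(\cF^{\sharp},y)\bigr)\;\iso\;a\bigl({}^{w}\!\cF^{\sharp},{}^{w}\!y\bigr)
\]
for $\cF^{\sharp}\in\Bun_{T^{\sharp}}$ and $y\in\Bunt_T$, of course respecting the induced isomorphisms between the lines.

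Applying these in sequence, starting from $y=(\cF,\cU)\in\Bunt_T$:
\begin{align*}
w_2\ast(w_1\ast y)
&=a\bigl(\Omega^{w_2(\rho_n)-\rho_n},\,{}^{w_2}a(\Omega^{w_1(\rho_n)-\rho_n},{}^{w_1}\!y)\bigr)\\
&\iso a\bigl(\Omega^{w_2(\rho_n)-\rho_n},\,a({}^{w_2}\!\Omega^{w_1(\rho_n)-\rho_n},{}^{w_2w_1}\!y)\bigr)\\
&\iso a\bigl(\Omega^{w_2(\rho_n)-\rho_n}\otimes\Omega^{w_2(w_1(\rho_n)-\rho_n)},\,{}^{w_2w_1}\!y\bigr)\\
&\iso a\bigl(\Omega^{w_2w_1(\rho_n)-\rho_n},\,{}^{w_2w_1}\!y\bigr)\;=\;(w_2w_1)\ast y,
\end{align*}
where in the second line we used $W$-equivariance of $a$, in the third we used the Picard property of $a$ together with ${}^{w_2}\!\Omega^{\mu}\,\iso\,\Omega^{w_2\mu}$ (a special case of the extension-of-scalars identity, for $\mu=w_1(\rho_n)-\rho_n\in\Lambda^{\sharp}$), and in the fourth the cocycle identity above.

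The main subtle point, and the only place where something could fail, is the bookkeeping of the super lines $\cU$ under all three identifications. On the underlying $T$-torsors the computation reduces to ${}^{w_2w_1}\!\cF\otimes\Omega^{w_2w_1(\rho_n)-\rho_n}$, which is manifest. On the lines, the Picard structure of $a$ (provided by the $W$-equivariant isomorphism (\ref{iso_root_of_theta_sharp}) and the construction of $a$ via $\tau$) guarantees that the composed twists on $\cU$ assemble into a single canonical isomorphism, and the associator of the Picard groupoid controls any potential sign or line-bundle ambiguity. This yields the required coherent associativity, hence a left $W$-action.
\end{Prf}
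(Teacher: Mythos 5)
Your proof is correct and follows essentially the same route as the paper's: the paper's terse argument just asserts the $W$-equivariance of the action map $a$ (with respect to the natural $W$-actions on $\Bun_{T^{\sharp}}$ and $\Bunt_T$), and your proof unpacks exactly this, making explicit the cocycle identity $w_2w_1(\rho_n)-\rho_n = w_2(w_1(\rho_n)-\rho_n)+(w_2(\rho_n)-\rho_n)$ and the chain of isomorphisms that the paper leaves implicit. Your careful remark about the super-line bookkeeping being controlled by the Picard structure of $a$ is a welcome addition that the original proof glosses over.
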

\begin{Prf}
Recall that the line bundles $\cL_T$ on $\Bun_T$ and $\tau$ on $\Bun_{T^{\sharp}}$ are $W$-equivariant. 

Let $W$ act as above on $\Bunt_T$, and 
on $\Bun_{T^{\sharp}}\times\Bunt_T$ as the product of the $W$-actions on $\Bun_{T^{\sharp}}$ and on $\Bunt_T$. Then $a$ is $W$-equivariant. 
\end{Prf}

\medskip
  
   For $w\in W$ denote by $K\mapsto w\ast K$ the direct image functor $\D_{\zeta}(\Bunt_T)\to \D_{\zeta}(\Bunt_T)$ for the new action map $w\ast : \Bunt_T\to\Bunt_T$. 
   
   For each simple root $\bar\alpha: \Gm\to T^{\sharp}$ of $\check{G}_n$ let $a_{\bar\alpha}: \Bun_1\times\Bunt_T\to \Bunt_T$ denote the restriction of $a$ under the push-out map $\Bun_1\toup{\bar\alpha} \Bun_{T^{\sharp}}$. Call $\cS\in \D_{\zeta}(\Bunt_T)$ regular if for each simple root $\bar\alpha$ of $\check{G}_n$ one has
\begin{equation}
\label{reg_complex_def}
(a_{\bar\alpha})_!\pr_2^*\cS=0
\end{equation}
This defines the full triangulated subcategory $\D_{\zeta}(\Bunt_T)^{reg}\subset \D_{\zeta}(\Bunt_T)$ of regular complexes. Equivalently, instead of (\ref{reg_complex_def}) one can require the property $\pr_{2 !}a_{\bar\alpha}^*\cS=0$ to define the regularity.

\begin{Con} 
\label{Con_functional_equation}
For $w\in W$ and $\cS\in \D_{\zeta}(\Bunt_T)^{reg}$ there is an  isomorphism
$$
\Eis(w\ast \cS)\,\iso\, \Eis(\cS)
$$
functorial in $\cS\in \D_{\zeta}(\Bunt_T)^{reg}$.
\end{Con}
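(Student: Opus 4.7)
My plan is to follow the classical two-step reduction used in \cite{BG}: first reduce the statement to simple reflections in $W$, then reduce further via transitivity of Eisenstein series to the case where $G$ has semisimple rank one. Since $w \mapsto (w\ast)$ is a left $W$-action on $\Bunt_T$, a decomposition $w = s_{i_1}\cdots s_{i_k}$ into simple reflections reduces Conjecture~\ref{Con_functional_equation} to the case of a single simple reflection $s = s_i$. Let $M = M_i$ be the rank one Levi of $G$ corresponding to $s_i$. By Theorem~\ref{Th_composing_Eis} we have $\Eis_T^G \,\iso\, \Eis_M^G \circ \Eis_T^M$. The twisted action $s\ast$ on $\Bunt_T$ is defined using the shift $s(\rho_n) - \rho_n \in \Lambda^\sharp$, which depends only on the root subsystem of $\check{M}_n \subset \check{G}_n$; likewise regularity for $\check{G}_n$ implies regularity with respect to the unique simple coroot of $\check{M}_n$. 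Thus it is enough to prove
\[
\Eis_T^M(s\ast \cS)\,\iso\, \Eis_T^M(\cS)
\]
for $\cS \in \D_\zeta(\Bunt_T)^{reg}$, which is the rank one case of the conjecture.

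\textbf{The rank one case.} Here $G$ may be taken to be $\SL_2$ (or its quotient), and $W = \{1, s\}$. My approach in this case is to exploit the explicit results of Section~\ref{Section_The case of SL_2}: the constant terms of $\Eis(\cS)$ are computed in Proposition~\ref{Pp_CT_of_Eis} in terms of integral Hecke functors on $\Bunt_T$, and the calculation of the Fourier coefficients of $\IC_{Z^\theta,\zeta}$ provides enough local information over the generic locus via Proposition~\ref{Lm_6.2_about_Fourier_of_IC_zeta}. The idea is to form both complexes $\Eis(\cS)$ and $\Eis(s\ast \cS)$, compute their constant terms along $\Bunt_T \to \Bunt_G$ (a.k.a.\ their geometric Whittaker/Fourier decompositions) and observe that the two constant term expressions differ by contributions supported on loci that vanish precisely under the regularity condition~(\ref{reg_complex_def}). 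One then invokes the ULA property of $\IC_\zeta$ with respect to $\tilde\gq$ (Proposition~\ref{Pp_ULA_property}) to promote an equality of constant terms on a suitable dense open into a global isomorphism: ULA-ness implies that $\Eis$ is well-behaved under pullback and that its formation commutes with Verdier duality (Theorem~\ref{Th_Eis_commutes_with_Verdier}), so a morphism of functors can be constructed on the generic stratum and extended.

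\textbf{Construction of the morphism.} To produce an actual map (not merely an abstract isomorphism), I would build an intermediate compactification $\Bunb_{\tilde B}^{\pm}$ parametrizing generalized $B$-structures degenerating into either $B$ or its opposite $B^-$ along divisors, together with its twisted $\IC$-sheaf $\IC_\zeta^{\pm}$ extending $\IC_\zeta$. The two open substacks corresponding to $B$ and $s(B) = B^-$ give, after pushforward to $\Bunt_G$, the functors $\Eis$ and $\Eis \circ (s\ast)$ respectively. The complement is stratified by loci indexed by divisors $D \in X^{(d)}$ where the reduction degenerates; the contribution of each such stratum factors through $(a_{\bar\alpha})_!\pr_2^*\cS$ for the unique simple root $\bar\alpha$, and hence vanishes on $\D_\zeta(\Bunt_T)^{reg}$. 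Pushing forward $\IC_\zeta^{\pm}$ to $\Bunt_G$ and combining with $\tilde\gq^*\cS$ then yields the desired isomorphism.

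\textbf{Main obstacle.} The hard part is precisely the rank one step: constructing the compactification $\Bunb_{\tilde B}^{\pm}$ (or an equivalent device) together with a well-defined twisted $\IC_\zeta^{\pm}$ sheaf, and identifying the middle strata contributions with the functor $(a_{\bar\alpha})_! \pr_2^*$ killed by regularity. In the untwisted setting of \cite{BG} this goes through because the twisted line bundle on the compactification trivializes over intermediate strata; in our metaplectic setting the gerbe structure obstructs such a direct construction, and one must produce a genuine $\mu_N$-equivariant extension of $\IC_\zeta$. As the authors candidly state, this remains unresolved, which is why Conjecture~\ref{Con_functional_equation} is left open. A complementary approach would be to establish the functional equation for the function-theoretic shadow over $\Fq$ via Theorem~\ref{Th_trace_of_Frob_Eis'} and the classical Mœglin--Waldspurger--Gao results, and then to categorify; but this only provides evidence rather than a proof at the sheaf level.
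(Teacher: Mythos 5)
This is a \emph{Conjecture}, not a theorem: the paper offers no proof. What the paper does give is (i) the reduction to the case of semisimple rank one via Theorem~\ref{Th_composing_Eis}, stated explicitly in Section~\ref{Section_Towards the functional equation}; (ii) evidence from the constant term calculation (Proposition~\ref{Pp_CT_of_Eis}), the identity $(AE)_{\Omega^{-n\alpha}}\,\iso\,\det\RG(X,E^{n\alpha})$ and Proposition~\ref{Pp_action_of_W_on_cK_E}, which make the conjectured isomorphism consistent for $G=\SL_2$; and (iii) the function-theoretic shadow: Theorem~\ref{Th_trace_of_Frob_Eis'} together with the known functional equation for $\Eis_{cl}(\cK_E)(t)$ over $\Fq$ (\cite{MW, Gao}). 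The paper is explicit that ``we don't know how to prove it for groups of rank one.''

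Your write-up is accordingly not a proof but a strategy, and you say so. The reduction to simple reflections and then to the rank one Levi via $\Eis_T^G\,\iso\,\Eis_M^G\circ\Eis_T^M$ is exactly the reduction the paper indicates, and your observation that for a simple reflection $s_i$ the shift $s_i(\rho_n)-\rho_n$ equals $-\delta_i\alpha_i$ (and so coincides with the corresponding shift for $\check M_n$), and that the regularity condition for $\check G_n$ implies regularity for the rank one Levi, is the right sanity check for that reduction to make sense. Your sketch for the remaining rank one case — an interpolating compactification $\Bunb_{\tilde B}^{\pm}$ carrying a twisted $\IC$-extension, with the extra strata contributing through $(a_{\bar\alpha})_!\pr_2^*$ and hence vanishing on regular complexes — is a reasonable transposition of the untwisted \cite{BG} mechanism, and you correctly single out the actual obstruction: producing a $\mu_N$-equivariant twisted $\IC$-sheaf on such a compactification in the presence of the gerbe. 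Since that step is not carried out, this remains a plan rather than a proof, which is consistent with the status of the statement in the paper.

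One small caution if you push this further: in the reduction step you should be slightly more careful than ``depends only on the root subsystem of $\check M_n$.'' The twisted $W$-action uses the $W$-equivariant structure on $\cL_T$ and on $\tau$ together with the action map $a$; for the reduction you need that the twisted action of $s_i$ defined using $\rho_n$ agrees with the one defined intrinsically for $M$ using $\rho^M_n$, which holds because $s_i(\rho_n)-\rho_n=s_i(\rho^M_n)-\rho^M_n$ for a simple reflection, but this does require the $W$-equivariant structures on $\tau$ to be compatible with restriction to $M$. That compatibility is implicit in the paper's setup but worth stating if you want to make the reduction fully rigorous.
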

\begin{Rem} Let $E$ be a $\check{T}^{\sharp}$-local system on $X$, $\cK_E\in\D_{\zeta}(\Bunt_T)$ be the Hecke eigen-sheaf associated to $E$ in (\cite{L}, Proposition~2.2). Then $\cK_E$ is regular if and only if $E^{\bar\alpha}$ is not trivial for each simple root $\bar\alpha$ of $\check{G}_n$.
\end{Rem}

\subsection{Action of $\Bun_{Z(G)}$} 
\label{Section_action_Bun_Z(G)}
The stack $\Bun_{Z(G)}$ is a group stack acting naturally on $\Bun_G$ by tensor product. For $\cT\in\Bun_{Z(F)}, \cF\in\Bun_G$ there is a canonical $\ZZ/2\ZZ$-graded isomorphism
$$
\cL_{\cF\otimes\cT}\,\iso\, \cL_{\cF}
$$
In particular, $\cL$ is canonically trivialized over $\Bun_{Z(G)}$. So, $\Bun_{Z(G)}$ acts on $\Bunt_G$, namely $\cT\in\Bun_{Z(G)}$ sends $(\cF, \cU_G)\in\Bunt_G$ to $(\cF\otimes\cT, \cU)$. 

 Set $(\Lambda^{\sharp})^{\check{}}=\Hom(\Lambda^{\sharp}, \ZZ)$. The map $\frac{\iota}{n}:\Lambda\otimes\Lambda^{\sharp}\to\ZZ$ yields a map 
$\frac{\iota}{n}:\Lambda\to (\Lambda^{\sharp})^{\check{}}$.
Consider for a moment $\check{T}^{\sharp}=\Gm\otimes(\Lambda^{\sharp})^{\check{}}$ as a torus over $\Spec k$. The map $\eta: T\to \check{T}^{\sharp}$ induced by $\frac{\iota}{n}$ gives the push-out map $\eta_X: \Bun_T\to \Bun_{\check{T}^{\sharp}}$. 
 
  Write $Z(\check{G}_n)_n$ for the $n$-torsor subgroup of $Z(\check{G}_n)$.
  
\begin{Lm} 
The map $\eta: T\to \check{T}^{\sharp}$ sends $Z(G)$ to $Z(\check{G}_n)_n$. 
\end{Lm}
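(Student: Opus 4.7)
\begin{Prf}[Plan]
The plan is to unwind all the lattices and check the two conditions defining $Z(\check{G}_n)_n$ directly on simple roots and on the $n$-torsion. Recall $T(k)=\Hom(\check{\Lambda}, k^*)$ and $\check{T}^{\sharp}(k)=\Hom(\Lambda^{\sharp}, k^*)$. The map $\frac{\iota}{n}:\Lambda\to(\Lambda^{\sharp})^{\check{}}$ is well-defined precisely because $\iota(\lambda,\mu)\in n\ZZ$ for all $\lambda\in\Lambda, \mu\in\Lambda^{\sharp}$, by definition of $\Lambda^{\sharp}$. Dualizing, the induced map $\Lambda^{\sharp}\to\check{\Lambda}$, $\mu\mapsto \frac{\iota(\mu)}{n}$, is also well-defined, and for $t\in T(k)$ one has
$$
\eta(t)(\mu)=t\bigl(\tfrac{\iota(\mu)}{n}\bigr),\qquad \mu\in\Lambda^{\sharp}.
$$

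The first step is to show $\eta(Z(G))\subset Z(\check{G}_n)$. By (\cite{FL}, Theorem~2.9) recalled in the paragraph preceding the lemma, the simple roots of $\check{G}_n$ (viewed in $\Lambda^{\sharp}$) are the elements $\delta\alpha$, where $\alpha$ runs over the simple coroots of $G$ and $\delta$ is the denominator of $\tfrac{\iota(\alpha,\alpha)}{2n}$. Using the identity $\iota(\alpha)=\tfrac{\iota(\alpha,\alpha)}{2}\check{\alpha}$, I compute
$$
\eta(t)(\delta\alpha)=t\bigl(\tfrac{\iota(\delta\alpha)}{n}\bigr)=t\bigl(\check{\alpha}\bigr)^{\delta\iota(\alpha,\alpha)/(2n)}.
$$
By choice of $\delta$ the exponent is an integer, and for $t\in Z(G)$ we have $t(\check{\alpha})=1$, so $\eta(t)$ is trivial on every simple root of $\check{G}_n$, hence on all of its roots.

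The second step is to check $\eta(t)^n=1$ for $t\in Z(G)$. For $\mu\in\Lambda^{\sharp}$,
$$
\eta(t)^n(\mu)=\eta(t)(n\mu)=t\bigl(\tfrac{\iota(n\mu)}{n}\bigr)=t(\iota(\mu)).
$$
By Remark~\ref{Rem_about_roots_lattice}(i), $\iota(\mu)$ lies in the root lattice $\check{Q}$ of $G$, and $Z(G)$ acts trivially on $\check{Q}$. Hence $\eta(t)^n(\mu)=1$ for every $\mu\in\Lambda^{\sharp}$, so $\eta(t)\in Z(\check{G}_n)_n$.

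There is no real obstacle here: the content is entirely in tracking the dualities $\Lambda\leftrightarrow\check{\Lambda}$, $\Lambda^{\sharp}\leftrightarrow (\Lambda^{\sharp})^{\check{}}$ correctly, and in invoking the explicit description of the simple roots of $\check{G}_n$ together with the fact that $\iota$ lands in the root lattice of $G$; both facts are quoted in the paper.
\end{Prf}
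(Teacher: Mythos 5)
Your proof is correct and follows essentially the same route as the paper: to show $\eta(z)$ is central you use that $\iota(\delta\alpha)/n$ is an integer multiple of $\check\alpha$ (hence in the root lattice of $G$, on which $Z(G)$ acts trivially), and to show $n$-torsion you use that $\iota(\mu)$ lies in the root lattice of $G$ for $\mu\in\Lambda^{\sharp}$. The only difference is that you make the membership of $\iota(\delta\alpha)/n$ in the root lattice explicit by the computation $\iota(\delta\alpha)/n=\bigl(\delta\iota(\alpha,\alpha)/(2n)\bigr)\check\alpha$, whereas the paper just asserts it.
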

\begin{Prf}
For $\bar\alpha\in \Lambda^{\sharp}$ one has $\bar\alpha\comp\eta=\frac{\iota(\bar\alpha)}{n}$. If $\bar\alpha$ is a simple root of $\check{G}_n$ then $\frac{\iota(\bar\alpha)}{n}$ lies in the roots lattice of $G$. So, for $z\in Z(G)$ we get $\frac{\iota(\bar\alpha)}{n}(z)=1$, and $\eta(z)\in Z(\check{G}_n)$. To see that $\eta(z)\in Z(\check{G}_n)_n$, note that if $\nu\in \Lambda^{\sharp}$ then $\iota(\nu)$ lies in the roots lattice of $G$. 
\end{Prf} 
 
\medskip

 For $\cT\in \Bun_{Z(G)}$ let $\cT_{\eta}:=\eta_X(\cT)$ denote the corresponding $Z(\check{G}_n)_n$-torsor on $X$. For $\nu\in\Lambda^{\sharp}$ denote by $\cT_{\eta, \bar\zeta}^{\nu}$ the $\Qlb$-local system on $X$ obtained from $\cT_{\eta}$ via the push-out by 
$$
Z(\check{G}_n)_n\toup{\nu} \mu_n(k)\toup{\bar\zeta} \Qlb^*
$$  
For $\cT\in\Bun_{Z(G)}$ denote by $\sigma_{\cT}: \Bunt_G\to\Bunt_G$ the automorphism $(\cF, \cU)\mapsto (\cF\otimes\cT, \cU)$.

\begin{Pp} Let $\nu\in\Lambda^{\sharp,+}$, $\cT\in\Bun_{Z(G)}$. The functors $\D_{\zeta}(\Bunt_G)\to \D_{\zeta}(X\times\Bunt_G)$ given by
$$
K\mapsto \pr_1^*\cT_{\eta, \bar\zeta}^{\nu}\otimes \H^{\nu}_G(\sigma_{\cT}^*K) \;\;\;\;\;\mbox{and by}\;\;\;\;\;
K\mapsto (\id\times \sigma_{\cT})^*\H^{\nu}_G(K)
$$
are naturally isomorphic.  
\end{Pp}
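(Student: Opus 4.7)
\begin{Prf}
The plan is to lift $\sigma_{\cT}$ to an automorphism $\tilde\sigma_{\cT}$ of $\cH_{\tilde G}$ and control the action of $\tilde\sigma_{\cT}^*$ on $\IC^{\nu}$. Define
$\tilde\sigma_{\cT}: \cH_{\tilde G}\to \cH_{\tilde G}$
by $(\cF,\cF',\beta,\cU,\cU')\mapsto (\cF\otimes\cT,\cF'\otimes\cT,\beta,\cU,\cU')$, using the canonical $\ZZ/2\ZZ$-graded isomorphisms $\cL_{\cF\otimes\cT}\,\iso\,\cL_{\cF}$ and $\cL_{\cF'\otimes\cT}\,\iso\,\cL_{\cF'}$ to reinterpret $\cU,\cU'$ over the tensored torsors. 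Then $\tilde h^{\la}_G\comp\tilde\sigma_{\cT}=\sigma_{\cT}\comp \tilde h^{\la}_G$, the analogous identity holds for $\tilde h^{\ra}_G$, and $\pi\comp\tilde\sigma_{\cT}=\pi$. Granting the key isomorphism
$$
\tilde\sigma_{\cT}^*\IC^{\nu}\,\iso\, \pi^*\cT_{\eta,\bar\zeta}^{\nu}\otimes\IC^{\nu}\quad (*),
$$
the required identification $(\id\times\sigma_{\cT})^*\H^{\nu}_G(K)\,\iso\, \pr_1^*\cT_{\eta,\bar\zeta}^{\nu}\otimes \H^{\nu}_G(\sigma_{\cT}^*K)$ follows from the presentation (\ref{formula_for_H^nu_G_Section04}) of $\H^{\nu}_G$ by base change along $\sigma_{\cT}$, the $\tilde\sigma_{\cT}$-compatibility of $\tilde h^{\ra}_G$, and the projection formula.

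To prove $(*)$, I would first establish it on the open locus $\cH^{\nu}_{\tilde G}\subset \ov{\cH}^{\nu}_{\tilde G}$ and then extend by the uniqueness of intermediate extension, noting that both sides are irreducible perverse sheaves on $\ov{\cH}^{\nu}_{\tilde G}$ (the right-hand side differs from $\IC^{\nu}$ by the tensor product with a smooth rank one local system pulled back from $X$, which does not affect irreducibility). On $\cH^{\nu}_{\tilde G}$ the trivialization
$$
\eta: B(\mu_N)\times(\cH^{\nu}_G\times_{\Bun_G}\Bunt_G)\,\iso\,\cH^{\nu}_{\tilde G}
$$
used in the proof of Proposition~\ref{Lm_great_about_gradings} reduces $(*)$ to comparing $\tilde\sigma_{\cT}^*\cB$ with $\cB$, where $\cB$ is the line bundle on $\cH^{\nu}_G$ with $\cB^N$ canonically isomorphic to the line bundle with fibre $\cL_{\cF}\otimes \cL_{\cF'}^{-1}$. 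Since the canonical identifications $\cL_{\cF\otimes\cT}\,\iso\,\cL_{\cF}$ make $\tilde\sigma_{\cT}^*\cB^N$ canonically isomorphic to $\cB^N$, the ratio $\tilde\sigma_{\cT}^*\cB\otimes \cB^{-1}$ is a $\mu_N$-torsor on $\cH^{\nu}_G$, and $(*)$ becomes the assertion that this $\mu_N$-torsor coincides with the pullback along $\pi$ of a $\mu_n$-torsor on $X$ which, after push-out by $\bar\zeta$, yields $\cT^{\nu}_{\eta,\bar\zeta}$.

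The main obstacle is carrying out this last explicit identification, which is a family version over $\Bun_{Z(G)}$ of the pointwise computation in Proposition~\ref{Lm_great_about_gradings} (where a constant $z\in Z(G)$ was shown to act on $\cB$ by the scalar $\frac{\iota(\nu)}{n}(z^{-1})$). I would approach it by upgrading $\tilde\sigma_{\cT}$ to a universal morphism $\Bun_{Z(G)}\times \cH_{\tilde G}\to \cH_{\tilde G}$, trivializing $\cT$ locally in a neighbourhood of the point of modification $x\in X$, and reading off the variation through the factorization presentation of $\cA^{\nu}_{\cE}$ from (\cite{FL}, Section~2.1). Because $\eta: Z(G)\to \check T^{\sharp}$ lands in $Z(\check G_n)_n$ and $\nu\in \Lambda^{\sharp,+}$ is a character of $\check T^{\sharp}$, the variation of $\cB$ in this family is exactly captured by the push-out of $\cT$ to $\Bun_{Z(\check G_n)_n}$ followed by the character $\nu: Z(\check G_n)_n\to\mu_n(k)$, which is visibly $\cT_{\eta}^{\nu}$ viewed as a $\mu_n$-torsor on $X$. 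Composing with $\bar\zeta$ then delivers $(*)$, and the opening paragraph completes the proof.
\end{Prf}
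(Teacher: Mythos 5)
Your proposal follows essentially the same route as the paper: lift $\sigma_{\cT}$ to the Hecke stack, reduce the key isomorphism $\sigma_{\cT}^*\IC^{\nu}\,\iso\,\IC^{\nu}\otimes\pi^*\cT^{\nu}_{\eta,\bar\zeta}$ to the open locus $\cH^{\nu}_{\tilde G}$ via irreducibility of the intermediate extension, and then track the effect on the line bundle $\cB$ (with $\cB^N\,\iso\,\cL_{\cF}\otimes\cL_{\cF'}^{-1}$) using the trivialization $\eta$ from Proposition~\ref{Lm_great_about_gradings}. The paper carries out the last step slightly more directly — it views $\cT$ as a $T$-torsor on $X$ and writes down the isomorphism $\sigma_{\cT}^*\cB\,\iso\,\cB\otimes\pi^*\cL^{-\iota(\nu)/n}_{\cT}$ outright, rather than appealing to local trivialization and the factorization structure of $\cA^{\nu}_{\cE}$ — but this is a presentational difference, not a substantive one.
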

\begin{Prf}
Recall from Section~\ref{section_Hecke_functors_for_tilde_G} that $\ov{\cH}^{\nu}_{\tilde G}$ classifies $(\cF, \cF'\in\Bun_G, x\in X, \beta: \cF\,\iso\, \cF'\mid_{X-x}, \cU,\cU')$ such that $\cF'$ is in the position $\le \nu$ with respect to $\cF$ at $x$, $\cU^N\,\iso\, \cL_{\cF}$, $\cU'^N\,\iso\, \cL_{\cF'}$. 

 Let $\Bun_{Z(G)}$ act on $\ov{\cH}^{\nu}_{\tilde G}$ so that $\cT\in\Bun_{Z(G)}$ sends the above point to the collection 
$$
(\cF\otimes \cT, \cF'\otimes\cT, x, \beta, \cU, \cU')
$$ 
with the induced isomorphisms $\cU^N\,\iso\, \cL_{\cF\otimes\cT}$, $\cU'^N\,\iso \cL_{\cF'\otimes\cT}$. Write $\sigma_{\cT}: \ov{\cH}^{\nu}_{\tilde G}\to \ov{\cH}^{\nu}_{\tilde G}$ for this map for a given $\cT$. We get a commutative diagram
\begin{equation}
\label{diag_for_sigma_cT}
\begin{array}{ccccc}
X\times \Bunt_G & \getsup{\pi\times \tilde h^{\la}_G} &  \ov{\cH}^{\nu}_{\tilde G} & \toup{\tilde h^{\ra}_G} & \Bunt_G\\
\downarrow\lefteqn{\scriptstyle \id\times \sigma_{\cT}} && \downarrow\lefteqn{\scriptstyle  \sigma_{\cT}} && \downarrow\lefteqn{\scriptstyle  \sigma_{\cT}}\\
X\times \Bunt_G & \getsup{\pi\times \tilde h^{\la}_G} &  \ov{\cH}^{\nu}_{\tilde G} & \toup{\tilde h^{\ra}_G} & \Bunt_G
\end{array}
\end{equation}
Note that $\cH^{\nu}_{\tilde G}$ is preserved by $\sigma_{\cT}$. The Hecke functor $\H^{\nu}_G$ is defined by formula 
(\ref{formula_for_H^nu_G_Section04}). 

 Let us establish an isomorphism 
\begin{equation} 
\label{iso_related_to_sigma_cT}
 \sigma_{\cT}^*\IC^{\nu}\,\iso\, \IC^{\nu}\otimes \pi^*\cT_{\eta, \bar\zeta}^{\nu}
\end{equation} 
over $\ov{\cH}^{\nu}_{\tilde G}$. Since both are irreducible perverse sheaves, it suffices to establish it over $\cH^{\nu}_{\tilde G}$.

As in the proof of Proposition~\ref{Lm_great_about_gradings}, there is a line bundle $\cB$ on $\cH^{\nu}_G$ such that $\cB^N$ is canonically the line bundle with fibre $\cL_{\cF}\otimes\cL^{-1}_{\cF'}$ at $(\cF,\cF',\beta,x)\in \cH^{\nu}_G$. The line bundle is uniquely defined, as the Picard group is torsion free. We get an isomorphism
$$
\eta: B(\mu_N)\times(\cH^{\nu}_G\times_{\Bun_G}\Bunt_G)\,\iso\, \cH^{\nu}_{\tilde G}
$$
where we used $h^{\ra}_G$ to define the fibred product in parentheses. It sends a collection $(\beta: \cF\,\iso\,\cF'\mid_{X-x}, \;\cU'^N\,\iso\, \cL_{\cF'}, \,\cU_0^N\,\iso\, k)$ to $(\beta, \cF,\cF', \cU,\cU')$, where $\cU=\cB_{(\cF,\cF',\beta)}\otimes \cU_0\otimes \cU'$ with the induced isomorphism $\cU^N\,\iso\, \cL_{\cF}$. The perverse sheaf $\IC^{\nu}$ is the intermediate extension of 
$$
\eta_*(\cL_{\zeta}\boxtimes \IC(\cH^{\nu}_G\times_{\Bun_G}\Bunt_G))
$$ 
Viewing $\cT$ as a $T$-torsor on $X$, for $\check{\mu}\in\check{\Lambda}$ we get the line bundle $\cL^{\check{\mu}}_{\cT}$ on $X$. Over $\cH^{\nu}_G$ one has an isomorphism 
$$
\sigma_{\cT}^*\cB\,\iso\, \cB\otimes \pi^*\cL^{\frac{-\iota(\nu)}{n}}_{\cT},
$$ 
where $\pi: \cH^{\nu}_G\to X$ sends $(\cF,\cF',\beta, x)$ to $x$. Note that $\cL^{\frac{\iota(\nu)}{n}}_{\cT}$ is a $\mu_n$-torsor on $X$ that we see as a map $X\to B(\mu_n)$. The restriction of $\cL_{\bar\zeta}$ under the latter map identifies with $\cT_{\eta, \bar\zeta}^{\nu}$, because the composition $Z(G)\toup{\eta} Z(\check{G}_n)_n\toup{\nu} \mu_n$ equals $\frac{\iota(\nu)}{n}$. The isomorphism (\ref{iso_related_to_sigma_cT}) follows. Our claim follows now from the diagram (\ref{diag_for_sigma_cT}).
\end{Prf}

\medskip

This applies to Eisenstein series as follows. Recall that a point of $\Bunb_{\tilde B}$ is a collection $(\cF_T,\cF, \nu, \cU, \cU_G)$, where $\nu^{\check{\lambda}}: \cL^{\check{\lambda}}_{\cF_T}\hook{} \cV^{\check{\lambda}}_{\cF}$ are inclusions of coherent sheaves for each dominant weight $\check{\lambda}$. 

Let $\Bun_{Z(G)}$ act on $\Bunb_{\tilde B}$ so that $\cT\in\Bun_{Z(G)}$ sends $(\cF_T,\cF, \nu, \cU, \cU_G)$ to 
$(\cF_T\otimes\cT,\cF\otimes\cT, \nu, \cU, \cU_G)$. This action preserves the open substack $\Bun_{\tilde B}$. Let $\Bun_{Z(G)}$ act on $\Bunt_T$ so that $\cT$ sends $(\cF_T, \cU)$ to $(\cF_T\otimes\cT, \cU)$. The diagram
$$
\Bunt_T\getsup{\tilde\gq} \Bunb_{\tilde B} \toup{\tilde\gp} \Bunt_G
$$
is $\Bun_{Z(G)}$-equivariant. 

\begin{Lm} Let $K\in \D_{\zeta}(\Bunt_T)$, $\cT\in\Bun_{Z(G)}$. One has an isomorphism 
$$
\sigma_{\cT}^*\Eis(K)\,\iso\, \Eis(\sigma_{\cT}^*K)
$$ 
functorial in $K\in \D_{\zeta}(\Bunt_T)$.
\end{Lm}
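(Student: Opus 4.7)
\begin{Prf}[Proof plan]
The plan is to reduce the lemma to two ingredients: the $\Bun_{Z(G)}$-equivariance of the Eisenstein diagram (already recorded just before the statement) and the $\Bun_{Z(G)}$-invariance of the perverse sheaf $\IC_{\zeta}$ on $\Bunb_{\tilde B}$. Granting these, the proof is a formal manipulation via smooth base change: writing out
$$
\sigma_{\cT}^*\Eis(K)=\sigma_{\cT}^*\tilde\gp_!(\tilde\gq^*K\otimes\IC_{\zeta})[-\dim\Bun_T]
$$
and using that $\sigma_{\cT}$ on $\Bunt_G$ is an isomorphism (with inverse $\sigma_{\cT^{-1}}$), we apply proper base change to the square
$$
\begin{array}{ccc}
\Bunb_{\tilde B} & \toup{\tilde\gp} & \Bunt_G\\
\downarrow\lefteqn{\scriptstyle\sigma_{\cT}} && \downarrow\lefteqn{\scriptstyle\sigma_{\cT}}\\
\Bunb_{\tilde B} & \toup{\tilde\gp} & \Bunt_G
\end{array}
$$
to move $\sigma_{\cT}^*$ past $\tilde\gp_!$, then use $\sigma_{\cT}^*\tilde\gq^*K\,\iso\,\tilde\gq^*\sigma_{\cT}^*K$ from the equivariance of $\tilde\gq$, and finally substitute $\sigma_{\cT}^*\IC_{\zeta}\iso \IC_{\zeta}$.

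The only genuine content is therefore the invariance $\sigma_{\cT}^*\IC_{\zeta}\iso\IC_{\zeta}$. For this I would argue in two stages. First, over the open substack $\Bun_{\tilde B}\subset\Bunb_{\tilde B}$, note that the trivialization (\ref{eq_one}) identifies $\Bun_{\tilde B}\,\iso\, B(\mu_N)\times \Bun_{B,\tilde G}$ and is compatible with the $\Bun_{Z(G)}$-action: since $\sigma_{\cT}$ acts trivially on the line $\cU$ (and on $\cU_G$) but naturally on $\cF_B$, it acts trivially on the $B(\mu_N)$-factor and as the analogously defined $\sigma_{\cT}$ on $\Bun_{B,\tilde G}$. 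Hence $\sigma_{\cT}^*(\cL_{\zeta}\boxtimes \IC(\Bun_{B,\tilde G}))\,\iso\,\cL_{\zeta}\boxtimes \sigma_{\cT}^*\IC(\Bun_{B,\tilde G})$, and because $\sigma_{\cT}$ is an isomorphism of stacks it preserves the intersection cohomology sheaf of $\Bun_{B,\tilde G}$. Second, since $\sigma_{\cT}:\Bunb_{\tilde B}\to\Bunb_{\tilde B}$ is an isomorphism restricting to an isomorphism on the open $\Bun_{\tilde B}$, the functor $\sigma_{\cT}^*$ commutes with intermediate extension; so the invariance propagates from the open substack to all of $\Bunb_{\tilde B}$.

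The main (and essentially only) obstacle is the verification in the previous paragraph that the $\Bun_{Z(G)}$-action is compatible with the splitting (\ref{eq_one}) and hence preserves $\IC_{\zeta}$; once this bookkeeping on the lines $\cU$ and $\cU_G$ is done, the rest is proper/smooth base change and the projection formula. After that, assembling the chain
$$
\sigma_{\cT}^*\Eis(K)\,\iso\,\tilde\gp_!\sigma_{\cT}^*(\tilde\gq^*K\otimes\IC_{\zeta})[-\dim\Bun_T]\,\iso\,\tilde\gp_!(\tilde\gq^*\sigma_{\cT}^*K\otimes\IC_{\zeta})[-\dim\Bun_T]\,\iso\,\Eis(\sigma_{\cT}^*K)
$$
is functorial in $K$, which gives the claimed isomorphism.
\end{Prf}
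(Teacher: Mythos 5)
Your proposal is correct and takes essentially the same approach as the paper: the paper simply records the equivariance of the diagram and asserts the canonical isomorphism $\sigma_{\cT}^*\IC_{\zeta}\,\iso\,\IC_{\zeta}$, from which the claim follows by base change; you spell out the verification of that isomorphism (compatibility of $\sigma_{\cT}$ with the splitting (\ref{eq_one}) on the open stratum, then commutation of $\sigma_{\cT}^*$ with intermediate extension), which is the same idea carried out in more detail.
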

\begin{Prf}
Write also $\sigma_{\cT}: \Bunb_{\tilde B}\to \Bunb_{\tilde B}$ for the above action map by $\cT$. One has $\sigma_{\cT}^*\IC_{\zeta}\,\iso\,\IC_{\zeta}$ canonically. Our claim follows.
\end{Prf}

\section{Parabolic geometric Eisenstein series}
\label{Section_Parabolic geometric Eisenstein series}

\subsection{Definitions} 
\label{Section_4.1}
Let $P\subset G$ be a parabolic containing $B$, let $M$ be its Levi factor, write $\cI_M\subset\cI$ for the corresponding subset. Let $\cL_M$ denote the restriction of $\cL$ under $\Bun_M\to\Bun_G$. Let $\Bunt_M$ denote the gerb of $N$-th roots of $\cL_M$. 

The notations $\Bunt_P, \Bunb_P$ are those of \cite{BG}. Let $\Bunt_{P,\tilde G}$, $\Bunb_{P,\tilde G}$ be obtained from $\Bunt_P$, $\Bunb_P$ by the base change $\Bunt_G\to\Bun_G$. Let $\Bunt_{\tilde P}$, $\Bunb_{\tilde P}$ be obtained from $\Bunt_{P,\tilde G}$, $\Bunb_{P,\tilde G}$ by the base change $\Bunt_M\to\Bun_M$. 

 The Eisenstein series functor $\Eis_M: \D_{\zeta}(\Bunt_M)\to\D_{\zeta}(\Bunt_G)$ is defined as follows. By abuse of notations, the diagram of projections is denoted 
\begin{equation} 
\label{projections_from_Bunt_tildeP}
\Bunt_M\getsup{\tilde\gq} \Bunt_{\tilde P} \toup{\tilde\gp} \Bunt_G
\end{equation}
A point of $\Bunt_{\tilde P}$ is given by a point $(\cF_M,\cF,\nu)\in\Bunt_P$, where 
$$
\nu^{\cV}: \cV^{U(P)}_{\cF_M}\hook{} \cV_{\cF}
$$ 
is a morphism of coherent sheaves for each representation $\cV$ of $G$; $\cU,\cU_G$ are $\ZZ/2\ZZ$-graded lines of parity zero equipped with $\cU^N\,\iso\, (\cL_M)_{\cF_M}$, $\cU_G^N\,\iso\, \cL_{\cF}$. The map $\tilde\gq$ sends the above point to $(\cF_M, \cU)$, and $\tilde\gp$ sends the above point to $(\cF, \cU_G)$.

Let $\Bun_{\tilde P}\subset \Bunt_{\tilde P}$ be the preimage of $\Bun_P$ in $\Bunt_{\tilde P}$. For a point of $\Bun_P$ we have canonically $(\cL_M)_{\cF_M}\,\iso\, \cL_{\cF}$. One defines $\Bun_{P,\tilde G}$ similarly. We get an isomorphism
\begin{equation}
\label{iso_for_Bun_P_tildeG}
B(\mu_N)\times\Bun_{P, \tilde G}\,\iso\, \Bun_{\tilde P}
\end{equation}
sending $(\cF_P, \cU_G, \cU_0\in B(\mu_N))$ with $\cU_0^N\,\iso\, k$ to $(\cF_P, \cU_G, \cU)$ with $\cU=\cU_G\otimes\cU_0^{-1}$.   
By definition,
$$
\tilde\gq(\cF_M,\cF,\nu, \cU,\cU_G)=(\cF_M, \cU)\;\;\mbox{and}\;\;\; \tilde\gp(\cF_M,\cF,\nu, \cU,\cU_G)=(\cF,\cU_G)
$$ 
View $\cL_{\zeta}\boxtimes \IC(\Bun_{P,\tilde G})$ as a perverse sheaf on $\Bun_{\tilde P}$ via (\ref{iso_for_Bun_P_tildeG}). We still denote by $\IC_{\zeta}$ the intermediate extension of this perverse sheaf to $\Bunt_{\tilde P}$. Write $j_{\tilde P}: \Bun_{\tilde P}\hook{} \Bunt_{\tilde P}$ for the natural open immersion.

\begin{Def} For $K\in \D_{\zeta}(\Bunt_M)$ set
$$
\Eis_M(K)=\tilde\gp_!(\tilde\gq^*K\otimes\IC_{\zeta})[-\dim\Bun_M]
$$  
This gives a functor $\D_{\zeta}(\Bunt_M)\to\D_{\zeta}(\Bunt_G)$.     
\end{Def}

\subsubsection{} 
\label{Section_4.1.1_some_vanishing}
Set $\Lambda_{M,0}=\{\lambda\in\Lambda\mid \<\lambda, \check{\alpha}_i\>=0\,\mbox{for all}\; i\in \cI_M\}$. Let $Z(M)^0=\Gm\otimes \Lambda_{M,0}$, this is the connected component of unity of the center of $M$. Denote by $\check{\Lambda}_{M,0}$ the lattice dual to $\Lambda_{M,0}$. To $\mu\in \Lambda_{G,P}$ we associate the character $\Lambda_{M,0}\to\ZZ, \lambda\mapsto \kappa(\lambda,\mu)$ denoted $\kappa_M(\mu)$. This is well-defined, because $\kappa(\alpha_i)\in \ZZ\check{\alpha}_i$, and gives a homomorphism $\kappa_M: \Lambda_{G,P}\to \check{\Lambda}_{M,0}$.

The group $Z(M)^0$ acts on $\Bun_M$ by 2-automorphisms naturally. As in Section~\ref{Section_202} for $T$ one checks the following. If $\theta\in\Lambda_{G,P}$, $\cF\in\Bun_M^{\theta}$ then $Z(M)^0$ acts on $(\cL_M)_{\cF}$ by the character $-\kappa_M(\theta)$. The following is a generalization of (\cite{L}, Proposition~2.1). 

\begin{proof}{Proof of Proposition~\ref{Pp_vanishing_over_Bunt_M^theta}} A $k$-point $\cF\in\Bun_M^{\theta}$ defines a map $f: B(Z(M)^0)\to \Bun_M^{\theta}$. Let $\wt B(Z(M)^0)$ be the restriction of the gerb $\Bunt_M^{\theta}\to\Bun_M^{\theta}$ under this map. As above, we get the category $\D_{\zeta}(\wt B(Z(M)^0))$. By (\cite{L}, Lemma~5.3), $\D_{\zeta}(\wt B(Z(M)^0))=0$ unless $\kappa_M(\theta)\in N\check{\Lambda}_{M,0}$. Our claim follows.
\end{proof}

\begin{Pp} 
\label{Pp_IC_zeta_is_ULA_for_P}
The complexes $\IC_{\zeta}$ and $(j_{\tilde P})_!j_{\tilde P}^*\IC_{\zeta}$
are ULA with respect to $\tilde\gq: \Bunt_{\tilde P}\to \Bunt_M$. 
\end{Pp}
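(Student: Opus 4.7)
The plan is to adapt the proof of Proposition~\ref{Pp_ULA_property} to the parabolic setting by using Hecke modifications on $G$ with cocharacters that are \emph{central} in $M$. I choose $\lambda_1, \ldots, \lambda_r \in \Lambda^{\sharp,+} \cap \Lambda_{M,0}$ forming a $\QQ$-basis of $\Lambda_{M,0} \otimes_{\ZZ} \QQ$ --- such $\lambda_i$ exist by taking sufficient integer multiples of the fundamental coweights $\omega_j$ for $j \in \cI - \cI_M$ (which are dominant for $G$ and orthogonal to the simple roots of $M$) --- and fix $m \geq 2g-1$. Construct the Hecke stack $\cH^?_{\tilde G}$ classifying distinct points $\{x_{i,j}\}_{1\le i\le r,\, 1\le j\le m} \in X^{mr} - \Delta$, two points of $\Bunt_G$, and an isomorphism $\beta$ between the underlying $G$-bundles placing them in position $\lambda_i$ at each $x_{i,j}$. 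Set $\bar Z = \cH^?_{\tilde G} \times_{\Bunt_G} \Bunt_{\tilde P}$ (via $h^\ra_G$) and construct $\phi: \bar Z \to \Bunt_{\tilde P} \times (X^{mr} - \Delta)$ as in Lemma~\ref{Lm_def_of_phi}.

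Because each $\lambda_i$ is central in $M$, the Hecke modification at $x_{i,j}$ preserves $P$-reductions, so the composition $\tilde\gq \circ {'h^{\ra}_G}: \bar Z \to \Bunt_M$ factors through the action of a Picard stack $\Bun_{T_M^\sharp}$ --- the $N$-th-root analog of $T^\sharp$ associated to the central torus $Z(M)^0$ of $M$ --- via the Abel-Jacobi map $AJ(x_{i,j}) = \cF^0_{T_M^\sharp}(\sum_{i,j} \lambda_i x_{i,j})$ and an action morphism $m_{\tilde\gq}$. Define the open substack $Z \subset \bar Z$ by imposing Pl\"ucker-type nondegeneracy conditions at each $x_{i,j}$ for the reductions on both $\cF'$ and the shifted bundle $\cF$, exactly as in the principal case. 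Then ${'h^{\ra}_G}: Z \to \Bunt_{\tilde P}$ is smooth surjective, $\phi|_Z$ is smooth, and the resulting smooth map $b = (\id \times AJ) \circ \phi: Z \to \Bunt_{\tilde P} \times \Bun_{T_M^\sharp}$ satisfies
\begin{equation*}
({'h^{\ra}_G})^*\IC_\zeta \,\iso\, \cE \otimes b^*\!\bigl(\IC_\zeta \boxtimes \IC(\Bun_{T_M^\sharp})\bigr)
\end{equation*}
for some rank-one local system $\cE$ on $Z$ with $\cE^N \,\iso\, \Qlb$. The ULA of $\IC_\zeta$ thus reduces to ULA of $\IC_\zeta \boxtimes \IC(\Bun_{T_M^\sharp})$ along $m_{\tilde\gq}$. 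Factoring $m_{\tilde\gq}$ as a smooth map $\delta: \Bunt_{\tilde P} \times \Bun_{T_M^\sharp} \to \Bunt_{\tilde P} \times \Bunt_M$ followed by $\pr_2$, and invoking the smoothness of $\Bunt_M$, we conclude via Remark~\ref{Rem_ULA}. The same diagram, with $\IC_\zeta$ replaced by $(j_{\tilde P})_! j_{\tilde P}^* \IC_\zeta$, handles the second assertion: on $\Bun_{\tilde P}$ this sheaf is (up to shift) $\cL_\zeta$ pulled back via~\eqref{iso_for_Bun_P_tildeG}, and its extension by zero is compatible with the smooth base change $\delta$ above.

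The main obstacle is verifying the factorization in the second paragraph --- that central Hecke modifications by $\lambda_i \in \Lambda_{M,0}$ extend from $\Bun_P$ to the full Drinfeld compactification $\Bunt_{\tilde P}$, preserving the Pl\"ucker data $\nu^{\cV}: \cV^{U(P)}_{\cF_M} \hookrightarrow \cV_{\cF}$, and genuinely match the $\Bun_{T_M^\sharp}$-action on $\Bunt_M$. A secondary technical point is the construction of $\cE$, which requires tracking the compatibility among the $N$-th root line bundles $\cL, \cL_M$, and the central twisting bundle on $\Bun_{T_M^\sharp}$ along the Hecke correspondence --- a twisted analog of the factorization isomorphism~\eqref{iso_factorization_of_detRG} used in the principal case.
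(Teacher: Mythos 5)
Your approach diverges from the paper's. The paper does not directly adapt the proof of Proposition~\ref{Pp_ULA_property} to the parabolic case; instead it follows (\cite{BG}, Proposition~5.3.4): it introduces the largest open substack $\oBun_M\subset\Bun_M$ over whose preimage the ULA property holds, proves (Proposition~\ref{Pp_property_of_oBun_M}) that $\oBun_M$ is invariant under the Hecke-equivalence relation $\prec$ generated by \emph{arbitrary} modifications of $M$-bundles by $\lambda\in\Lambda^+$ (via the Hecke stacks $\cH^{\lambda}_M$, not merely central ones), and then deduces $\oBun_M=\Bun_M$ from nonemptiness and connectivity under $\prec$, exactly as in (\cite{BG}, Theorem~5.1.5). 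The paper switches method precisely at the point where your argument breaks down.

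The gap is in the final reduction: you want to apply Remark~\ref{Rem_ULA} to the action of $H=\Bun_{T_M^\sharp}$ (a torsor stack for a torus isogenous to $Z(M)^0$) on $Z=\Bunt_M$. Remark~\ref{Rem_ULA} hinges on the hypothesis that the orbit maps $H\to Z$, $h\mapsto hz$, are smooth. In the Borel case this holds because for $M=T$ the push-out $i_X\colon\Bun_{T^\sharp}\to\Bun_T$ along the \'etale isogeny $T^\sharp\to T$ (recall $n$ is invertible in $k$) is \'etale, so tensoring by a fixed torsor gives a smooth orbit map $\Bun_{T^\sharp}\to\Bunt_T$, hence the map $\delta$ is smooth. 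For $P\neq B$, however, $Z(M)^0$ is a proper subtorus of a maximal torus of $M$, so $\dim Z(M)^0 < \dim M$; the orbit map $\Bun_{Z(M)^0}\to\Bun_M$, $\cT\mapsto\cF_M\otimes\cT$, has relative dimension $(g-1)(\dim Z(M)^0-\dim M)<0$ and its differential, induced on $\H^1$ by the inclusion of the centre of $\Lie M$ into $\Lie M$, is not surjective. Thus the orbit maps of $\Bun_{T_M^\sharp}$ on $\Bunt_M$ are not smooth, the map $\delta$ you factor $m_{\tilde\gq}$ through is not smooth, and the reduction to $\pr_2$ via Remark~\ref{Rem_ULA} fails. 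The underlying obstruction is that central $Z(M)^0$-twists simply cannot move transitively (in the required smooth sense) in $\Bun_M$; this is why arbitrary $M$-Hecke modifications --- which form an equivalence relation rather than a group action --- and the spreading-out argument via $\oBun_M$ are needed. The two difficulties you list as ``main'' and ``secondary'' are real but secondary; this one is fatal to the proposed route.
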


\subsubsection{Proof of Proposition~\ref{Pp_IC_zeta_is_ULA_for_P}}
The argument from (\cite{BG}, Proposition~5.1.5) applies in our setting. For $\cF_M, \cF'_M\in\Bun_M(k)$ write $\cF_M\prec \cF'_M$ if there is $\lambda\in\Lambda^+$ such that $(\cF_M,\cF'_M)$ is the image of 
$$
\cH^{\lambda}_M\toup{h^{\la}_M\times h^{\ra}_M} \Bun_M\times\Bun_M
$$ 
of some $k$-point. Let $\sim$ be the equivalence relation on $\Bun_M(k)$ generated by $\prec$. Write $\Lambda^{\sharp}_{G,P}$ for the image of the natural map $\Lambda^{\sharp}\to\Lambda_{G,P}$.

 Let $\Lambda_{G,P}^{pos}$ be the $\ZZ_+$-span of $\alpha_i, i\in \cI-\cI_M$ in $\Lambda_{G,P}$. For $\theta\in\Lambda_{G,P}^{pos}$ we have the open substack $\Bunt_P^{\le\theta}$ defined in (\cite{BG}, Section~5.3.1). Recall that $\cup_{\theta} \Bunt_P^{\le\theta}=\Bunt_P$, so it is sufficient to show that $\IC_{\zeta}$ is ULA with respect to $\Bunt^{\le\theta}_{\tilde P}\to \Bunt_M$ for any $\theta\in\Lambda_{G,P}^{\theta}$. 

 Let $\oBun_M$ denote the biggest open substack of $\Bun_M$ such that for its preimage $\owtBun_M$ in $\Bunt_M$ the perverse sheaf $\IC_{\zeta}$ is ULA with respect to $\Bunt_{\tilde P}^{\le\theta}\times_{\Bun_M} \oBun_M\to \owtBun_M$.
 
\begin{Pp}
\label{Pp_property_of_oBun_M}
 If $\cF_M\prec\cF_M$ are $k$-points of $\Bun_M$ then $\cF'_M\in \oBun_M$ if and only if $\cF_M\in \oBun_M$.
\end{Pp}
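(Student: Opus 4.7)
The plan is to transport the ULA property along the Hecke correspondence realizing $\prec$. By the symmetry of $\prec$ (if $(\cF_M,\cF'_M,\beta,x)\in\cH^\lambda_M$ then $(\cF'_M,\cF_M,\beta^{-1},x)\in\cH^{-w_0^M(\lambda)}_M$, where $w_0^M$ is the longest element in the Weyl group of $M$), it suffices to prove one implication. So I assume $\cF_M\in\oBun_M$ and $(\cF_M,\cF'_M,\beta,x)\in\cH^\lambda_M$ for some dominant $\lambda$ of $M$, and aim to show $\cF'_M\in\oBun_M$.

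First, I would lift the $M$-Hecke correspondence to a correspondence on $\Bunt_{\tilde P}^{\le\theta}$. Let $\cH^\lambda_P\subset\cH_P$ be the preimage of $\cH^\lambda_M$ under the map $\cH_P\to\cH_M$ induced by $P\to M$, and let $\cH^\lambda_{\tilde P,\le\theta}$ denote the metaplectic version obtained by base change along $\Bunt_{\tilde P}^{\le\theta}\to\Bun_P$ on each leg. We get a diagram
$$
\Bunt_{\tilde P}^{\le\theta}\;\getsuplong{'h^\la}\;\cH^\lambda_{\tilde P,\le\theta}\;\touplong{'h^\ra}\;\Bunt_{\tilde P}^{\le\theta}
$$
compatible via $\tilde\gq$ with $\Bunt_M\gets\cH^\lambda_{\tilde M}\to\Bunt_M$. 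On an étale neighborhood of $(\cF_M,\cF'_M,\beta,x)$ where both $M$-leg projections $h^\la_M$ and $h^\ra_M$ are smooth (true in $\cH^\lambda_M$ around any such point by the standard smoothness of $G(\bO)$-orbit projections on the affine Grassmannian), both $'h^\la$ and $'h^\ra$ are smooth morphisms of equal relative dimension, since extension of structure group from $M$ to $P$ is a smooth operation.

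Second, I would verify that $('h^\la)^*\IC_\zeta$ and $('h^\ra)^*\IC_\zeta$ coincide up to tensor with a rank-one local system pulled back from $\cH^\lambda_M$ via the natural forgetful map. On the open stratum $\Bun_{\tilde P}\subset\Bunt_{\tilde P}$ this is a direct determinant-line calculation: $\cF_P$ and $\cF'_P$ differ only at $x$, so $(\cL_M)_{\cF_M}\otimes(\cL_M)_{\cF'_M}^{-1}$ (and similarly for $\cL$) is an $N$-torsion line that depends only on $(\cF_M,\cF'_M,\beta,x)$, as in the proof of Proposition~\ref{Lm_great_about_gradings}. Granting this compatibility, the ULA property (being local in the smooth topology and stable under tensoring with a local system and under smooth base change, cf.\ \cite{BG}, Section~5.1.2) transfers from the preimage of $\cF_M$ in $\Bunt_{\tilde P}^{\le\theta}$ to the preimage of $\cF'_M$.

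The main obstacle is extending the Hecke-compatibility of $\IC_\zeta$ from the open stratum $\Bun_{\tilde P}$ to the full compactification $\Bunt_{\tilde P}$. On $\Bun_{\tilde P}$ the identification is built into the construction of $\IC_\zeta$ as the intermediate extension of $\cL_\zeta\boxtimes\IC(\Bun_{P,\tilde G})$, but to propagate the identification across the boundary one must show that both pullbacks $('h^\la)^*\IC_\zeta$ and $('h^\ra)^*\IC_\zeta$ remain irreducible perverse sheaves realized as intermediate extensions from the open preimage in $\cH^\lambda_{\tilde P,\le\theta}$. This reduces to a codimension estimate for the boundary of $\cH^\lambda_{\tilde P,\le\theta}$ combined with the smoothness of the two projections, both standard for the Drinfeld compactification and expected to survive in the metaplectic setting by the same argument as in \cite{BG}, Section~5.
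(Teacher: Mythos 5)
Your proof takes a genuinely different route from the paper's, but the route has a gap at the place you yourself flag as the ``main obstacle'', and the fix is not a routine codimension count.

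The paper does \emph{not} use a $P$-Hecke correspondence. It defines a stack $Z$ classifying quadruples $(\cF_G,\cF_M,\kappa,\ldots)\in\Bunt_{\tilde P}$ and $(\cF'_G,\cF'_M,\kappa',\ldots)\in\Bunt_{\tilde P}$, a point $x$, and isomorphisms $\beta:\cF_G\iso\cF'_G|_{X-x}$, $\beta_M:\cF_M\iso\cF'_M|_{X-x}$ such that $\cF'_G$ is in $G$-position $\lambda$ at $x$, $\cF'_M$ is in $M$-position $\lambda$ at $x$, and --- crucially --- the Drinfeld maps $\kappa$ and $\kappa'$ have no zero at $x$. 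The ``no zero at $x$'' condition is the whole point: it forces the Drinfeld structures to be generic (i.e.\ honest $P$-reductions) in a neighbourhood of $x$, so that near $x$ the stack looks like a product of the smooth correspondences $\cH^\lambda_G$ and $\cH^\lambda_M$, and away from $x$ nothing is modified. That is why both legs $h^\la,h^\ra:Z\to\Bunt_{\tilde P}$ are smooth of the same relative dimension, and why the determinant-line trivialization propagates from the open stratum to all of $Z$.

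Your proposal replaces this $Z$ by the $P$-Hecke correspondence $\cH^\lambda_P$, defined as the preimage of $\cH^\lambda_M$ under $\cH_P\to\cH_M$. This is a different and more problematic object. First, fixing only the $M$-position $\lambda$ leaves the modification along $U(P)$ completely unconstrained, so $\cH^\lambda_P$ has ind-infinite-dimensional fibers over $\Bun_P$; the claim that the two projections are ``smooth morphisms of equal relative dimension, since extension of structure group from $M$ to $P$ is a smooth operation'' does not hold. Second, the phrase ``base change along $\Bunt_{\tilde P}^{\le\theta}\to\Bun_P$'' is in the wrong direction: $\Bun_P$ sits as an open substack inside $\Bunt_P$ and there is no morphism $\Bunt_P\to\Bun_P$ to base-change along. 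So the correspondence you want on $\Bunt_{\tilde P}$ does not come from $\cH^\lambda_P$ by any base change; it must be constructed directly over the compactification. This is exactly the issue your last paragraph notices but does not solve, and the solution in the paper (and in \cite{BG}, Proposition~5.3.4) is the ``no zero at $x$'' constraint built into $Z$, not a codimension estimate for a $P$-Hecke boundary.

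Your observation about the determinant-line trivialization (the line $\cL_{\cF_G}\otimes\cL_{\cF'_M}\otimes\cL_{\cF_M}^{-1}\otimes\cL_{\cF'_G}^{-1}$ being canonically trivial over the correspondence) is the correct metaplectic ingredient and does match the paper. But as written, your argument does not produce a finite-type smooth correspondence on $\Bunt_{\tilde P}$, so the ULA transport step is unsupported.
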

\begin{proof} 
The argument from (\cite{BG}, Proposition~5.3.4) applies. One only needs to check the following. Pick $\lambda\in\Lambda^+$. Consider the stack $Z$ classifying 
$$
(x\in X, \cF_G, \cF_M, \cF'_G, \cF'_M, \kappa, \kappa', \beta, \beta_M, 
\cU, \cU',\cU_G,\cU'_G)
$$
where $\cU, \cU',\cU_G,\cU'_G$ are lines equipped with isomorphisms
$$
\cU^N\,\iso\, \cL_{\cF_M}, \cU'^N\,\iso\, \cL_{\cF'_M}, \cU^N_G\,\iso\, \cL_{\cF_G}, {\cU'}^N_G\,\iso\, \cL_{\cF'_G},
$$
$(\cF_G, \cF_M, \kappa, \cU,\cU_G)\in\Bunt_{\tilde P}$, $(\cF'_G, \cF'_M, \kappa', \cU',\cU'_G)\in\Bunt_{\tilde P}$, and 
$$
\beta: \cF_G\,\iso\, \cF'_G\mid_{X-x}, \; \beta_M: \cF_M\,\iso\, \cF'_M\mid_{X-x}
$$ 
such that
\begin{itemize}
\item $\cF'_G$ is in the $G$-position $\lambda$ with respect to $\cF_G$ at $x$;
\item $\cF'_M$ is in the $M$-position $\lambda$ with respect to $\cF_M$ at $x$;
\item the maps $\kappa: V^{U(P)}_{\cF_M}\to V_{\cF_G}$ have no zero at $x$ for any $V\in\Rep(G)$;
\item the maps $\kappa': V^{U(P)}_{\cF'_M}\to V_{\cF'_G}$ have no zero at $x$ for any $V\in\Rep(G)$.
\end{itemize}

We have two smooth projections $\Bunt_{\tilde P}\getsup{h^{\la}} Z\toup{h^{\ra}}\Bunt_{\tilde P}$, where $h^{\la}$ and $h^{\ra}$ sends the above point to 
$$
(\cF_G, \cF_M, \kappa, \cU,\cU_G)\;\;\;\;\mbox{and}\;\;\;\; (\cF'_G, \cF'_M, \kappa', \cU',\cU'_G)
$$ 
respectively. Then the line bundle on $Z$ with fibre $\cL_{\cF_G}\otimes\cL_{\cF'_M}\otimes\cL_{\cF_M}^*\otimes\cL_{\cF'_G}^*$ is canonically trivialized. So,
 the perverse sheaves $(h^{\ra})^*\IC_{\zeta}[\dimrel(h^{\ra})]$ and $(h^{\la})^*\IC_{\zeta}[\dimrel(h^{\la})]$ are locally isomorphic in the smooth topology on $Z$. The rest of the argument is exactly as in (\cite{BG}, Proposition~5.3.4). 
\end{proof}

 One finishes the proof of Proposition~\ref{Pp_IC_zeta_is_ULA_for_P} now 
as in (\cite{BG}, Theorem~5.1.5). Theorem~\ref{Th_Eis_P_commutes_with_Verdier} also follows from Proposition~\ref{Pp_IC_zeta_is_ULA_for_P} as in the case $M=T$. 

\subsection{Hecke functors for $M$} 
\label{Section_Hecke functors for M}
Let $\cL$ also denote the restriction of $\cL$ under $\Gr_M\to \Gr_G$. Let $\wt\Gr_M\to \wt\Gr_P$ be obtained from $\Gr_M\to \Gr_P$ by the base change $\wt\Gr_G\to \Gr_G$. Let $\PPerv_{M,G,n}$ be the category of $M(\bO)$-equivariant perverse sheaves  
on $\wt\Gr_M$, on which $\mu_N(k)$ acts by $\zeta$.   

 Let $\Lambda^+_M\subset\Lambda$ be the coweights of $T$ dominant for $M$. Set $\Lambda^{\sharp,+}_M=\Lambda^{\sharp}\cap \Lambda^+_M$. As in (\cite{FL}, Section~4.1.1) for $\nu\in \Lambda^{\sharp,+}_M$ 
 we get the perverse sheaf $\cA^{\nu}_{M,\cE}\in \PPerv_{M,G,n}$ on $\wt\Gr_M$. Here $\cE$ is the square root of $\Omega(\bO)$ that we picked in Section~\ref{section_Hecke_functors_for_tilde_G}. 
 
  Recall that $\check{M}_n\subset \check{G}_n$ is the standard Levi subgroup corresponding to $\cI_M$. Note that $\Lambda^{\sharp,+}_M$ are exactly the dominant weights of $\check{M}_n$. In (\cite{FL}, Section~4.2) we introduced a tensor category $\PPerv_{M,G,n}^{\natural}$ (obtained from $\PPerv_{M,G,n}$ by changing the commutativity constraint) and established an equivalence $\PPerv_{M,G,n}^{\natural}\,\iso\, \Rep(\check{M}_n)$. For $V\in \Rep(\check{M}_n)$ we denote by $\Loc(V)\in \PPerv_{M,G,n}$ the corresponding perverse sheaf on $\wt\Gr_M$. If $V$ is an irreducible $\check{M}$-module with highest weight $\nu$ then $\Loc(V)=\cA^{\nu}_{M,\cE}$. We write $\Loc_{\zeta}(V)$ if we need to express the dependence on $\zeta$. 
  
  The Hecke stack $\cH_M$, the diagram
$$
\begin{array}{ccccc}
\Bunt_M\times X & \getsuplong{\tilde h^{\la}_M\times\pi} & \cH_{\tilde M} &\touplong{\tilde h^{\ra}_M} & \Bunt_M\\
\downarrow && \downarrow &&\downarrow\\
\Bun_M\times X & \getsuplong{h^{\la}_M\times\pi} & \cH_M &\touplong{h^{\ra}_M} &\Bun_M
\end{array}
$$
and $\Gr_{M,X}$ are defined as for $G$. The stack $\cH_{\tilde M}$ classifies $M$-torsors $\cF_M, \cF'_M$ on $X$, $x\in X, \beta_M: \cF_M\,\iso\, \cF_M'\mid_{X-x}$ together with lines $\cU, \cU'$ equipped with 
$$ 
\cU^N\,\iso\, (\cL_M)_{\cF_M}, \;\;\;\; \cU'^N\,\iso\, (\cL_M)_{\cF'_M}
$$  

 The line bundle $\cL_X$ on $\Gr_{M,X}$ is the restriction of $\cL_X$ under $\Gr_{M,X}\to \Gr_{G,X}$. 
We similarly have the isomorphisms $\gamma^{\la}$ (resp., $\gamma^{\ra}$) 
$$
\Bun_{M,X}\times_{M_X} \Gr_{M,X}\,\iso\, \cH_M
$$ 
such that the projection to the first term corresponds to $h^{\la}_M$ (resp., $h^{\ra}_M$). Over the stack $\Bun_{M,X}\times_{M_X} \Gr_{M,X}$ we have canonically the isomorphism
$$
(\gamma^{\ra})^*(h^{\la}_M)^*\cL_M\,\iso\, \cL_M\tboxtimes \cL_X
$$

 Let $\PPerv_{M,G, n,X}$ be the category of $K\in \D(\wt\Gr_{M,X})$ such that $K[1]$ is perverse, $M_X$-equivariant, and $\mu_N(k)$ acts on $K$ by $\zeta$. We have the fully faithful functor defined in (\cite{FL}, Section~4.1.2)
$$
\tau^0: \PPerv_{M,G,n}\to\PPerv_{M,G,n,X}
$$
For $\nu\in \Lambda^{\sharp,+}_M$ set $\cA^{\nu}_M=\tau^0(\cA^{\nu}_{M,\cE})\in \PPerv_{M,G,n,X}$. 

Consider the $M_X$-torsor 
$$
\tilde\gamma^{\ra}: \Bunt_{M,X}\times_X \wt\Gr_{M,X}\to \cH_{\tilde M}
$$
For a $M_X$-equivariant perverse sheaf $\cS$ on $\wt\Gr_{M,X}$ and $\cT\in \D(\Bunt_M)$ one defines their twisted exterior product $(\cT\tboxtimes \cS)^r$ on $\cH_{\tilde M}$. This is the descent via $\tilde\gamma^{\ra}$, it is normalized to be perverse for $\cT,\cS$ perverse. Similarly, one gets $(\cT\tboxtimes \cS)^l$ on $\cH_{\tilde M}$.
  
 Now for $\nu\in \Lambda^{\sharp,+}_M$ let
$$
\H^{\nu}_M: \D_{\zeta}(\Bunt_M)\to \D_{\zeta}(\Bunt_M\times X)
$$
be given by 
$$
\H^{\nu}_M(\cT)=(\tilde h^{\la}_M\times\pi)_!((\cT\tboxtimes \cA_M^{-w_0^M(\nu)})^r)
$$ 

 As for $G$, one has the covariant functor $\star: \PPerv_{M,G,n, \zeta^{-1}}\to \PPerv_{M,G,n, \zeta}$. For $\nu\in\Lambda^{\sharp, +}_M$ it sends $\cA^{\nu}_{M,\cE}$ to $\cA^{-w_0^M(\nu)}_{M,\cE}$. For $\cS\in\PPerv_{M,G,n}$, $\cT\in\D_{\zeta}(\Bunt_M)$ we define
$$
\H^{\la}_M(\cS, \cT)=(\tilde h^{\la}_M\times\pi)_!(\cT\tboxtimes \tau^0(\star \cS))^r\;\;\;\mbox{and}\;\;\; \H^{\ra}_M(\cS,\cT)=(\tilde h^{\ra}_M\times\pi)_!(\cF\tboxtimes \tau^0(\cS))^l
$$

\subsection{Geometric restriction functors} 
\label{Section_Geometric restriction functors}
Write $\Lambda_{G,P}=\Lambda/\{\alpha_i, i\in \cI_M\}$ for the lattice of cocharacters of $M/[M,M]$. Let $\check{\Lambda}_{G,P}$ denote the dual lattice. For $\theta\in\Lambda_{G,P}$ the connected component $\Gr_M^{\theta}$ is the one containing $t^{\nu}M(\bO)$ for any $\nu\in \Lambda$ over $\theta$. For $\theta\in\Lambda_{G,P}$ denote by $\Gr_P^{\theta}$ the ind-scheme classifying $(\cF_G,\beta: \cF_G\,\iso\, \cF^0_G\mid_{D_x^*})\Gr_G$ such that for any $\check{\lambda}\in\check{\Lambda}^+\cap \check{\Lambda}_{G,P}$ the corresponding map
$$
\kappa^{\check{\lambda}}: \cL^{\check{\lambda}}_{\cF^0_T}(-\<\theta, \check{\lambda}\>x)\to \cV^{\check{\lambda}}_{\cF_G}
$$
is regular and has no zero. This ind-scheme was denoted $S^{\theta}_P$ in (\cite{BG}, Section~4.3.1). 

Let $\wt\Gr_M^{\theta}\to \wt\Gr_P^{\theta}$ be obtained from $\Gr_M^{\theta}\to\Gr_P^{\theta}$ by the base change $\wt\Gr_G\to\Gr_G$.
For $\theta\in\Lambda_{G,P}$ we have the diagram
$$
\wt\Gr_M^{\theta}\;\getsuplong{\tilde\gt^{\theta}_P}\; \wt\Gr_P^{\theta}\; \touplong{\tilde\gs^{\theta}_P} \;\wt\Gr_G
$$
The next result follows from \cite{FL}. 

\begin{Pp} 
\label{Pp_gRes}
There is a functor $\gRes: \PPerv^{\natural}_{G,n}\to\PPerv^{\natural}_{M,G,n}$ given by 
\begin{equation}
\label{def_functor_gRes_M_G_n}
K\mapsto \mathop{\oplus}\limits_{\theta\in\Lambda_{G,P}} (\tilde\gt_P^{\theta})_!(\tilde\gs^{\theta}_P)^* K[\<\theta, 2\check{\rho}-2\check{\rho}_M\>],
\end{equation}
and the following diagram of categories naturally commutes
$$
\begin{array}{ccc}
 \PPerv^{\natural}_{G,n} & \iso & \Rep(\check{G}_n)\\
 \downarrow\lefteqn{\scriptstyle \gRes} &&  \downarrow\lefteqn{\scriptstyle \Res}\\
\PPerv^{\natural}_{M,G,n} & \iso & \Rep(\check{M}_n)
\end{array}
$$
Here the horizontal equivalences are those constructed in \cite{FL}, and $\Res$ is the restriction functor for $\check{M}_n\hook{}\check{G}_n$. 
\end{Pp}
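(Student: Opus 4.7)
The plan is to deduce this from the twisted Satake equivalences of \cite{FL} together with a hyperbolic restriction argument in the spirit of Mirkovic--Vilonen and Beilinson--Drinfeld, adapted to the gerby setting. The underlying geometric input is the contraction principle for the $Z(M)^0$-action on $\wt\Gr_G$: the attracting (resp.\ fixed-point) locus is $\wt\Gr_P$ (resp.\ $\wt\Gr_M$), so the formula (\ref{def_functor_gRes_M_G_n}) is a hyperbolic restriction up to shift.

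First I would check that for $K \in \PPerv_{G,n}$ the complex $(\tilde\gt_P^{\theta})_!(\tilde\gs^{\theta}_P)^* K[\<\theta, 2\check{\rho}-2\check{\rho}_M\>]$ is in $\PPerv_{M,G,n}$. The $M(\bO)$-equivariance and the $\mu_N(k)$-monodromy by $\zeta$ are automatic, since all morphisms in the diagram $\wt\Gr_M^{\theta}\gets \wt\Gr_P^{\theta}\to \wt\Gr_G$ are $M(\bO)$-equivariant and come by base change from $\Gr_G$, so the gerb data and character action are preserved. The perversity is the main geometric point: as in the untwisted case (\cite{BG}, Proposition~4.3.3), it follows from the fact that $\tilde\gs^{\theta}_P$ is a locally closed embedding whose source has the correct dimension relative to $\wt\Gr_M^{\theta}$, combined with the semismallness/parity estimates of Mirkovic--Vilonen; alternatively, one invokes Braden's hyperbolic stalk theorem to identify $(\tilde\gt^\theta_P)_!(\tilde\gs^\theta_P)^*$ with $(\tilde\gt^\theta_{P^-})_*(\tilde\gs^\theta_{P^-})^!$ and thereby see the required $t$-exactness.

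Second, I would compute the fiber functor on $\gRes(K)$ and match it with the fiber functor on $\Res$ of the corresponding representation. For $K = \cA^{\nu}_{\cE}$ with $\nu \in \Lambda^{\sharp,+}$, iterating the Borel-type geometric restriction inside $M$ and applying Proposition~\ref{Pp_description_weight_space} yields, for $\mu \in \Lambda^{\sharp}$ lying over $\theta \in \Lambda_{G,P}$,
$$
a_\mu^*(\tilde\gt^\mu_{B\cap M})_!(\tilde\gs^\mu_{B\cap M})^*\bigl((\tilde\gt^\theta_P)_!(\tilde\gs^\theta_P)^*\cA^\nu_{\cE}[\<\theta, 2\check\rho - 2\check\rho_M\>]\bigr) \;\iso\; V^\nu(\mu)[-\<\mu, 2\check\rho_M\>],
$$
by base change and transitivity of the stratifications $\Gr_B^\mu \subset \Gr_P^\theta$ and $\Gr_{B\cap M}^\mu \subset \Gr_M^\theta$; the shift $\<\theta, 2\check\rho - 2\check\rho_M\>$ is precisely what is needed to convert the $G$-normalization to the $M$-normalization of the MV fibers. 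Summing over $\mu$ lying over $\theta$, this identifies the $\check M_n$-weight spaces of $\gRes(\cA^\nu_{\cE})$ under the Satake equivalence of \cite{FL} for $\check M_n$ with the $\check T^\sharp$-weight spaces of $V^\nu$ viewed as a $\check M_n$-representation, which is the assertion at the level of underlying vector spaces.

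Finally, I would upgrade this identification to one of tensor functors against the modified commutativity constraints used in \cite{FL} to define $\PPerv^\natural_{?,n}$. The argument is the standard factorization one: both $\gRes$ and $\Res$ are compatible with the Beilinson--Drinfeld fusion product on $\wt\Gr_{G,X^n}$, $\wt\Gr_{M,X^n}$, and the compatibility of the fusion structures with $\gRes$ follows from the behavior of $\Gr_P$ over the diagonal locus in $X^n$, exactly as in (\cite{BG}, Section~4.3). The constraints are then determined by the underlying functor on fiber functors, so the diagram in the statement commutes. I expect the principal technical obstacle to lie in carefully tracking the $\mu_N$-gerb data and the signs in the modified commutativity constraint through the fusion argument; once one fixes conventions consistently with the choice of $\cE$ and with the definition of $\PPerv^\natural_{M,G,n}$ from \cite{FL}, this is a bookkeeping exercise rather than a new geometric difficulty, and no step of the proof uses anything beyond what is already established in \cite{FL} and in the above hyperbolic restriction analysis.
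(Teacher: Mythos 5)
The paper offers no proof of Proposition~\ref{Pp_gRes}: the text immediately preceding it reads ``The next result follows from \cite{FL},'' and the statement is then given without argument. So there is nothing in this paper for your proof to diverge from; any comparison is really with the proof in \cite{FL}.

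Your reconstruction is sound. Step (i), well-definedness of $\gRes$ via Braden's hyperbolic stalk theorem or via the Mirkovic--Vilonen dimension estimate as in (\cite{BG}, Prop.~4.3.3), is the standard way to see $t$-exactness, and your remark that $M(\bO)$-equivariance and $\mu_N(k)$-monodromy are preserved because the diagram is base-changed from $\Gr_G$ is correct. Step (ii) is also right, and your shift bookkeeping checks: since $2\check\rho-2\check\rho_M$ annihilates coroots of $M$, $\<\theta,2\check\rho-2\check\rho_M\>=\<\mu,2\check\rho-2\check\rho_M\>$ for any lift $\mu$ of $\theta$, so transitivity of hyperbolic restriction together with Proposition~\ref{Pp_description_weight_space} gives exactly $V^\nu(\mu)[-\<\mu,2\check\rho_M\>]$ as needed to match the $T$-weight functor for $\check M_n$. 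The one place your sketch undersells the work is step (iii): the modified commutativity constraint that defines $\PPerv^\natural_{?,n}$ in \cite{FL} involves a genuine sign twist (tied to the parity of the determinant line), and verifying that $\gRes$ intertwines these twisted fusion structures for $G$ and $M$ is a real computation rather than bookkeeping---it is precisely the point where twisted Satake departs from ordinary Mirkovic--Vilonen/Beilinson--Drinfeld, and where the substance of \cite{FL} lies. Since \cite{FL} does carry that out, this is a matter of emphasis rather than a gap in the argument.
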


Let $\Lambda_{G,P}^{\sharp}$ denote the image of the map $\Lambda^{\sharp}\to \Lambda_{G,P}$ given by $\mu\mapsto\mu$. Note that $\Lambda_{G,P}^{\sharp}$ is a subgroup of finite index in the free abelian group $\Lambda_{G,P}$, so $\Lambda_{G,P}^{\sharp}$ is also free. In the formula (\ref{def_functor_gRes_M_G_n}) for $\gRes$ one may replace $\Lambda_{G,P}$ by $\Lambda_{G,P}^{\sharp}$. 
 
 The center $Z(\check{M}_n)$ is not connected in general under our assumptions. Write $C^*(\check{M}_n)$ for the cocenter of $\check{M}_n$, that is, the quotient of $\Lambda^{\sharp}$ by the roots lattice of $\check{M}_n$. 
We have canonically $\Hom(Z(\check{M}_n),\Gm)\,\iso\, C^*(\check{M}_n)$ by (\cite{Sp}, 2.15(b)). 
 The natural map $c_P: C^*(\check{M}_n)\to \Lambda^{\sharp}_{G,P}$ is surjective, but not injective in general. Its kernel is finite and coincides with the torsion subgroup in $C^*(\check{M}_n)$. Indeed, if $\sum_i a_i\alpha_i\in\Lambda$ vanishes in $\Lambda_{G,P}$ then it is of the form $\sum_{i\in \cI_M} a_i\alpha_i$, and a multiple of this element lies in the roots lattice $\oplus_{i\in \cI_M} \delta_i\alpha_i\ZZ$ of $\check{M}_n$. 
 
 Recall that for $\nu\in \Lambda^{\sharp,+}_M$ we denote by $U^{\nu}$ the irreducible representation of $\check{M}_n$ with highest weight $\nu$. 
 
 For $\nu\in\Lambda^+_M$ lying over $\theta\in\Lambda_{G,P}$ let $\tilde\gt^{\nu}_P: \wt\Gr_P^{\nu}\to \wt\Gr_M^{\nu}$ be the map obtained from
$\tilde\gt^{\theta}_P$ by the base change $\wt\Gr_M^{\nu}\to \wt\Gr_M^{\theta}$. For $\mu\in \Lambda^{\sharp,+}$ recall the local system $E^{\mu}_{\cE}$ over $\wt\Gr^{\mu}_G$. From Proposition~\ref{Pp_gRes} one gets the following.
 
\begin{Cor} 
\label{Cor_2}
Let $\nu\in\Lambda^+_M$ lying over $\theta\in\Lambda_{G,P}$. Let $\mu\in \Lambda^{\sharp,+}$. The complex
$$
(\tilde\gt^{\nu}_P)_!(E^{\mu}_{\cE})\mid_{\wt\Gr_P^{\nu}\cap\wt\Gr_G^{\mu}}[\<\mu, 2\check{\rho}\>+\<\nu, 2\check{\rho}-2\check{\rho}_M\>]
$$
is placed in perverse degrees $\le 0$, its 0-th perverse cohomology sheaf vanishes unless $\nu\in \Lambda^{\sharp,+}_M$, and in the latter case identifies canonically with $\cA^{\nu}_{M,\cE}\otimes \Hom_{\check{M}_n}(U^{\nu}, V^{\mu})$. The space $\Hom_{\check{M}_n}(U^{\nu}, V^{\mu})$ admits a base formed by those connected components $C$ of $\wt\Gr_P^{\nu}\cap \wt\Gr_G^{\mu}$ of dimension $\<\mu+\nu, \check{\rho}\>$ for which $E^{\mu}_{\cE}$ descends under the map $\tilde\gt^{\nu}_P: C\to \wt\Gr_M^{\nu}$. 
\end{Cor}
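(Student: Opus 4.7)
The plan is to derive Corollary~\ref{Cor_2} from Proposition~\ref{Pp_gRes} by specializing at $K = \cA^{\mu}_{\cE}$, the object corresponding under the twisted Satake equivalence to $V^{\mu} \in \Rep(\check{G}_n)$. The commutative square in Proposition~\ref{Pp_gRes} will then give
$$
\gRes(\cA^{\mu}_{\cE}) \,\iso\, \bigoplus_{\lambda \in \Lambda^{\sharp,+}_M} \cA^{\lambda}_{M,\cE} \otimes \Hom_{\check{M}_n}(U^{\lambda}, V^{\mu})
$$
in $\PPerv^{\natural}_{M,G,n}$. Setting $\theta = [\nu] \in \Lambda_{G,P}$, I would then restrict both sides to the $M(\bO)$-stratum $\wt\Gr^{\nu}_M \subset \wt\Gr^{\theta}_M$ and extract the $0$-th perverse cohomology.

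On the left, proper base change identifies this restriction with $(\tilde\gt^{\nu}_P)_!\bigl((\tilde\gs^{\theta}_P)^{*}\cA^{\mu}_{\cE}|_{\wt\Gr^{\nu}_P}\bigr)[\<\theta, 2\check{\rho}-2\check{\rho}_M\>]$. Since $\cA^{\mu}_{\cE}$ is the twisted intermediate extension from $\wt\Gr^{\mu}_G$, on the open piece $\wt\Gr^{\nu}_P \cap \wt\Gr^{\mu}_G$ its pullback is the shifted local system $E^{\mu}_{\cE}[\<\mu, 2\check{\rho}\>]$, while the $*$-restriction to $\wt\Gr^{<\mu}_G$ lies in strictly smaller perverse degrees by IC truncation. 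Together with the dimension bound $\dim(\wt\Gr^{\nu}_P \cap \wt\Gr^{\mu}_G) \le \<\mu+\nu, \check{\rho}\>$, inherited from the classical Mirković-Vilonen estimate by pullback along the $\Gm$-gerbe $\wt\Gr_G \to \Gr_G$, the shifted pushforward $(\tilde\gt^{\nu}_P)_!(E^{\mu}_{\cE}|_{\wt\Gr^{\nu}_P \cap \wt\Gr^{\mu}_G})[\<\mu, 2\check{\rho}\>+\<\nu, 2\check{\rho}-2\check{\rho}_M\>]$ will lie in perverse degrees $\le 0$ and account for the full $0$-th perverse cohomology of the left-hand side, giving part (a). On the right, IC truncation places $\cA^{\lambda}_{M,\cE}|_{\wt\Gr^{\nu}_M}$ in strictly smaller perverse degrees for $\lambda \ne \nu$ and identifies it with the shifted $E^{\nu}_{M,\cE}$ in degree $0$ when $\lambda = \nu \in \Lambda^{\sharp,+}_M$. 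Matching the $0$-th perverse cohomologies on the two sides yields (b): the cohomology vanishes unless $\nu \in \Lambda^{\sharp,+}_M$, and otherwise is canonically $\cA^{\nu}_{M,\cE} \otimes \Hom_{\check{M}_n}(U^{\nu}, V^{\mu})$.

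The hard part will be the basis claim. Dimension-counting restricts contributions to the top perverse cohomology to those irreducible components $C$ of $\wt\Gr^{\nu}_P \cap \wt\Gr^{\mu}_G$ of maximal dimension $\<\mu+\nu, \check{\rho}\>$, each of which necessarily maps generically finitely onto a dense open subset of $\wt\Gr^{\nu}_M$. Such a $C$ contributes to the $0$-th perverse cohomology of $(\tilde\gt^{\nu}_P)_!(E^{\mu}_{\cE}|_C)$ exactly when $E^{\mu}_{\cE}|_C$ descends along $\tilde\gt^{\nu}_P|_C$; the descended local system on $\wt\Gr^{\nu}_M$ is then forced by the metaplectic $M(\bO)$-equivariance to coincide with $E^{\nu}_{M,\cE}$, producing a one-dimensional summand of $\Hom_{\check{M}_n}(U^{\nu}, V^{\mu})$, while components whose monodromy obstructs descent contribute nothing. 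This selection rule — trivial in the classical untwisted setting, where every top-dimensional component contributes — is the twist-specific subtlety that must be verified to produce the canonical basis asserted by the corollary.
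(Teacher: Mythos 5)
Your overall approach matches the paper's: the paper simply says ``From Proposition~\ref{Pp_gRes} one gets the following'' and leaves the details to the reader, and your outline correctly fills in the expected steps --- specialize $\gRes$ at $\cA^{\mu}_{\cE}$, use proper base change along $\wt\Gr^{\nu}_M\hookrightarrow\wt\Gr^{\theta}_M$, stratify $\wt\Gr^{\nu}_P\cap\ov{\wt\Gr}{}^{\mu}_G$ by the $\wt\Gr^{\mu'}_G$ and invoke IC truncation together with the MV dimension bound (which indeed transfers across the gerb), and match $0$-th perverse cohomologies against the right-hand side. The recognition that the descent condition is the twist-specific content is also correct.

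However, one step is wrong, and it is not harmless: you assert that every top-dimensional component $C$ of $\wt\Gr^{\nu}_P\cap\wt\Gr^{\mu}_G$ ``maps generically finitely onto a dense open subset of $\wt\Gr^{\nu}_M$.'' This is false in general. The target $\wt\Gr^{\nu}_M$ has dimension $\<\nu, 2\check{\rho}_M\>$, while $\dim C=\<\mu+\nu,\check{\rho}\>$, so the generic fiber of $\tilde\gt^{\nu}_P|_C$ has dimension $\<\mu,\check{\rho}\>+\<\nu,\check{\rho}-2\check{\rho}_M\>$, which is positive as soon as $\mu\ne 0$; in the extreme case $P=B$ the target is a single point and the fiber is all of $C$. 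Moreover, if the map really were generically finite the descent condition would trivialize (a local system always ``descends'' along a generically finite dominant map after shrinking the base), which would contradict exactly the phenomenon you go on to emphasize. The correct mechanism for the basis claim is: the $0$-th perverse cohomology is controlled by the top compactly-supported cohomology $\H^{2d}_c$ of the (positive-dimensional) fibers with coefficients in $E^{\mu}_{\cE}$, where $d$ is the generic fiber dimension; this top cohomology is nonzero precisely when $E^{\mu}_{\cE}$ is trivial on (the top-dimensional irreducible components of) the generic fiber, i.e.\ when it descends along $\tilde\gt^{\nu}_P|_C$ --- and as the paper notes, this can be checked at the generic point of $C$. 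Once nonzero, the $M(\bO)$-equivariance indeed forces the descended local system to be $E^{\nu}_{M,\cE}$, as you say.
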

 
  Note that the descent property in Corollary~\ref{Cor_2} can be checked at the generic point of the component $C$. 
  
\subsection{Proof of Theorem~\ref{Th_Hecke_property_of_Eis^G_M}} The proof of (\cite{BG}, Theorem~2.3.7) essentially applies in our setting with some minor changes that we explain. To simplify notations, we prove a version of Theorem~\ref{Th_Hecke_property_of_Eis^G_M} with $x\in X$ fixed.

\subsubsection{} We have the stack $_{x,\infty}\Bunt_P$ defined in (\cite{BG}, Section~4.1.1). For $\nu\in\Lambda^+_M$ one also has the diagram $_{x,\nu}\Bunt_P\hook{}{_{x,\ge\nu}\Bunt_P}\hook{} {_{x,\infty}\Bunt_P}$ defined in (\cite{BG}, Sections~4.1.1 and 4.2.1), the first map is an open immersion, the second one is a closed immersion. Let $_{x,\nu}\Bunt_{\tilde P}\hook{}{_{x,\ge\nu}\Bunt_{\tilde P}}\hook{} {_{x,\infty}\Bunt_{\tilde P}}$ be obtained from the above by the base change $\Bunt_M\times \Bunt_G\to \Bun_M\times\Bun_G$. 
  
  Recall the stacks $_{x,\infty}Z_{P,M}$ and $_{x,\infty}Z_{P,G}$ defined in (\cite{BG}, Section~4.1.2 and 4.1.4). We similarly define the stacks $_{x,\infty}Z_{\tilde P, \tilde M}$ and $_{x,\infty}Z_{\tilde P,\tilde G}$ included into the diagrams
$$
\begin{array}{ccccc}
_{x,\infty}\Bunt_{\tilde P} & \getsup{'\tilde h^{\la}_M} & {_{x,\infty}Z_{\tilde P,\tilde M}} & \toup{'\tilde h^{\ra}_M} & _{x,\infty}\Bunt_{\tilde P}\\
\downarrow\lefteqn{\scriptstyle \tilde\gq_P} && \downarrow\lefteqn{\scriptstyle '\tilde\gq_P}&& \downarrow\lefteqn{\scriptstyle \tilde\gq_P}\\
\Bunt_M & \getsup{\tilde h^{\la}_M} & {_x\cH_{\tilde M}} & \toup{\tilde h^{\ra}_M} & \Bunt_M
\end{array}
$$
and
$$
\begin{array}{ccccc}
_{x,\infty}\Bunt_{\tilde P} & \getsup{'\tilde h^{\la}_G} & {_{x,\infty}Z_{\tilde P,\tilde G}} & \toup{'\tilde h^{\ra}_G} & _{x,\infty}\Bunt_{\tilde P}\\
\downarrow\lefteqn{\scriptstyle \tilde\gp_P} && \downarrow\lefteqn{\scriptstyle '\tilde\gp_P}&& \downarrow\lefteqn{\scriptstyle \tilde\gp_P}\\
\Bunt_G & \getsup{\tilde h^{\la}_G} & {_x\cH_{\tilde G}} & \toup{\tilde h^{\ra}_G} & \Bunt_G
\end{array}
$$
Both squares in each of the above diagrams are cartesian.

 A point of $_{x,\infty}\Bunt_{\tilde P}$ is $(\cF_M,\cF_G,\kappa)\in {_{x,\infty}\Bunt_P}$ and $(\cF_M,\cU)\in\Bunt_M, (\cF_G,\cU_G)\in\Bunt_G$. Let $\mu_N(k)\times\mu_N(k)$ act on $_{x,\infty}\Bunt_{\tilde P}$ by 2-automorphisms so that $(a,a_G)$ acts as $a$ on $\cU$, as $a_G$ on $\cU_G$ and trivially on $(\cF_M, \cF_G,\kappa)$. Denote by 
$$
\D_{\zeta}(_{x,\infty}\Bunt_{\tilde P})\subset \D(_{x,\infty}\Bunt_{\tilde P})
$$ 
the full subcategory of objects on which any $(a,a_G)\in \mu_N(k)\times\mu_N(k)$ acts by $\zeta(\frac{a_G}{a})$. 
  
  Now for $\cS\in \PPerv_{M,G,n}$, $\cT\in \D_{\zeta}(_{x,\infty}\Bunt_{\tilde P})$ one defines the functors 
$$
_x\H^{\la}_{P,M}, {_x\H^{\ra}_{P,M}}: \PPerv_{M,G,n}\times \D_{\zeta}(_{x,\infty}\Bunt_{\tilde P})\to \D_{\zeta}(_{x,\infty}\Bunt_{\tilde P})
$$ 
and 
$$
_x\H^{\la}_{P,G}, {_x\H^{\ra}_{P,G}}: \PPerv_{G,n}\times \D_{\zeta}(_{x,\infty}\Bunt_{\tilde P})\to \D_{\zeta}(_{x,\infty}\Bunt_{\tilde P})
$$   
as in (\cite{BG}, Sections~4.1.2-4.1.4). In particular, for $\nu\in\Lambda^{\sharp, +}_M$ we get 
$$
_x\H^{\nu}_{P,M}: \D_{\zeta}(_{x,\infty}\Bunt_{\tilde P})\to \D_{\zeta}(_{x,\infty}\Bunt_{\tilde P})
$$ 
given by
$
_x\H^{\nu}_{P,M}(\cT)={_x\H^{\la}_{P,M}}(\cA^{\nu}_{\cE}, \cT)
$. For $\lambda\in\Lambda^{\sharp, +}$ we get 
$$
_x\H^{\lambda}_{P,G}: \D_{\zeta}(_{x,\infty}\Bunt_{\tilde P})\to \D_{\zeta}(_{x,\infty}\Bunt_{\tilde P})
$$ 
given by $_x\H^{\lambda}_{P,G}(\cT)={_x\H^{\la}_{P,G}}(\cA^{\lambda}_{\cE}, \cT)$. 

 For $\nu\in \Lambda^{\sharp, +}_M$ we define the perverse sheaf $\IC_{\nu, \zeta}\in \D_{\zeta}(_{x,\ge\nu}\Bunt_{\tilde P})$ as follows. As in (\cite{BG}, Section~4.2.3) we denote by 
$$
j_{\nu}: {_{x,\nu}\Bunt_{\tilde P}}\hook{} {_{x,\ge\nu}\Bunt_{\tilde P}}
$$ 
the corresponding open immersion. The stack $_{x,\nu}\Bunt_{\tilde P}$ classifies $(\cF^1_M, \cF_G,\kappa)\in {_{x,0}\Bunt_P}$, $\cF_M\in\Bun_M$, $\beta_M: \cF_M\,\iso\cF^1_M\mid_{X-x}$ such that $\cF_M$ is in the position $\nu$ with respect to $\cF^1_M$ at $x$, $(\cF_M, \cU)\in\Bunt_M$, $(\cF_G, \cU_G)\in\Bunt_G$.  

 The projection $_{x,\nu}\Bunt_{\tilde P}\to {_{x,0}\Bun_P\times_{\Bun_G}\Bunt_G}$ sending the above point to $(\cF^1_M, \cF_G,\kappa, \cU_G)$ is a locally trivial fibration (in smooth topology) with typical fibre $\wt\Gr_{M,x}^{\nu}$. As in Section~\ref{Section_Hecke functors for M}, one gets the twisted exterior product 
$$
\IC(_{x,0}\Bun_P\times_{\Bun_G}\Bunt_G)\tboxtimes \cA^{\nu}_{M,\cE}
$$ 
on $_{x,\nu}\Bunt_{\tilde P}$. Then $\IC_{\nu, \zeta}$ is defined as its intermediate extension under $j_{\nu}$. In particular, $\IC_{\zeta}=\IC_{0,\zeta}$ on $_{x,\ge 0}\Bunt_{\tilde P}=\Bunt_{\tilde P}$. 
 
  The following are analogs of (\cite{BG}, Theorem~4.1.3 and 4.1.5). 
  
\begin{Pp} 
\label{Pp_analog_413}
For $\nu\in\Lambda^{\sharp,+}_M$ one has canonically $_x\H^{\nu}_{P,M}(\IC_{\zeta})\,\iso\, \IC_{-w_0^M(\nu), \zeta}$.
\end{Pp}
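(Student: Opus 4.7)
The plan is to follow the strategy of (\cite{BG}, Theorem~4.1.3) essentially verbatim, with the additional bookkeeping needed to track the $\mu_N$-gerbes and the character $\zeta$. First I would unwind the definition of ${_x\H^{\nu}_{P,M}}(\IC_{\zeta}) = {_x\H^{\la}_{P,M}}(\cA^{\nu}_{M,\cE}, \IC_{\zeta})$ in terms of the Hecke correspondence
$$
\Bunt_{\tilde P}\,\getsup{{'\tilde h^{\la}_M}}\, {_{x}Z_{\tilde P,\tilde M}}\,\toup{{'\tilde h^{\ra}_M}}\,\Bunt_{\tilde P},
$$
obtained by base change from $_{x,\infty}Z_{\tilde P,\tilde M}$. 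Here $'\tilde h^{\ra}_M$ is a locally trivial fibration (in smooth topology) with typical fibre $\wt\Gr^{\nu}_{M,x}$, so on this correspondence $(\cT\tboxtimes \cA^{\nu}_{M,\cE})^r$ makes sense as a twisted external product, and ${_x\H^{\nu}_{P,M}}(\IC_{\zeta}) = ({'\tilde h^{\la}_M})_!(\IC_{\zeta}\tboxtimes \cA^{\nu}_{M,\cE})^r$ up to the appropriate shift.

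The key input is Proposition~\ref{Pp_IC_zeta_is_ULA_for_P}: since $\IC_{\zeta}$ is ULA with respect to $\tilde\gq_P$, and the Hecke action at $x$ factors through the $\tilde\gq_P$-direction, the complex $({'\tilde h^{\la}_M})_!(\IC_{\zeta}\tboxtimes \cA^{\nu}_{M,\cE})^r$ is perverse. More precisely, ULA guarantees that convolving a perverse sheaf along the $M$-Hecke correspondence with the perverse sheaf $\cA^{\nu}_{M,\cE}$ preserves perversity and intermediate extension. I would then check that $'\tilde h^{\la}_M$ maps $_xZ_{\tilde P,\tilde M}$ into $_{x,\ge -w_0^M(\nu)}\Bunt_{\tilde P}$ by analysing the position of the resulting $P$-reduction: the generic fibre of $'\tilde h^{\la}_M$ over the open stratum $_{x,-w_0^M(\nu)}\Bunt_{\tilde P}$ is isomorphic to an open piece of $\wt\Gr^{\nu}_{M,x}$, while over deeper strata the fibres acquire positive-dimensional excess giving strict inequalities.

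Next I would compute the $*$-restriction of ${_x\H^{\nu}_{P,M}}(\IC_{\zeta})$ to the open stratum $_{x,-w_0^M(\nu)}\Bunt_{\tilde P}$. By construction of the latter as a locally trivial fibration over $_{x,0}\Bun_P\times_{\Bun_G}\Bunt_G$ with typical fibre $\wt\Gr^{-w_0^M(\nu)}_{M,x}$, and by the definition of $\star$ in $\PPerv_{M,G,n}$ (which sends $\cA^{\nu}_{M,\cE}$ to $\cA^{-w_0^M(\nu)}_{M,\cE}$), this restriction identifies canonically with
$$
\IC({_{x,0}\Bun_P\times_{\Bun_G}\Bunt_G})\tboxtimes \cA^{-w_0^M(\nu)}_{M,\cE},
$$
which is precisely the perverse sheaf whose intermediate extension defines $\IC_{-w_0^M(\nu),\zeta}$. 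Combined with the perversity and the support statement from the previous step, this yields the desired isomorphism ${_x\H^{\nu}_{P,M}}(\IC_{\zeta})\,\iso\, \IC_{-w_0^M(\nu),\zeta}$, since both are intermediate extensions of the same perverse sheaf from the same open stratum.

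The main obstacle I expect is keeping the gerbal bookkeeping consistent in the twisted external product and the identification of canonical trivialisations. Concretely, on $_xZ_{\tilde P,\tilde M}$ one needs a canonical $\ZZ/2\ZZ$-graded isomorphism among $\cL_{\cF_G}$, $(\cL_M)_{\cF_M}$, $(\cL_M)_{\cF'_M}$ and the local factor $(\cL_X)_{(\cF_M^1,\beta_M,x)}$ that matches the $\mu_N$-actions, so that the two sides of the claimed isomorphism transform by the same character $\zeta$ under the $\mu_N\times\mu_N$-action on $_{x,\infty}\Bunt_{\tilde P}$. This compatibility is analogous to the isomorphism (\ref{iso_factorization_of_detRG}) used for $G$, and once it is recorded the rest of the argument proceeds as in (\cite{BG}, Theorem~4.1.3).
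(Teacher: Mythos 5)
Your proposal is correct and matches the paper's approach, which simply cites (\cite{BG}, Theorem~4.1.3) as the model without further detail; your reconstruction correctly identifies the ULA property (Proposition~\ref{Pp_IC_zeta_is_ULA_for_P}) and the gerbe bookkeeping in the twisted external product as the only new ingredients needed. Note only that in your first display the convolution kernel should be $\tau^0(\star\cA^{\nu}_{M,\cE})=\cA^{-w_0^M(\nu)}_{M,\cE}$ rather than $\cA^{\nu}_{M,\cE}$, as dictated by the definition of ${_x\H^{\la}_{P,M}}$; you implicitly correct this in the open-stratum computation, so the slip is harmless.
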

  
\begin{Pp} 
\label{Pp_analog_415}
For $\lambda\in\Lambda^{\sharp, +}$ there is a canonical isomorphism
$$
_x\H^{\lambda}_{P,G}(\IC_{\zeta})\,\iso\, \mathop{\oplus}\limits_{\nu\in\Lambda^{\sharp, +}_M} \IC_{\nu,\zeta}\otimes\Hom_{\check{M}_n}(U^{\nu}, V^{\lambda})\, .
$$
\end{Pp}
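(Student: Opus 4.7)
My plan is to adapt the argument of (\cite{BG}, Theorem~4.1.5) to the twisted setting, with the decomposition pattern dictated by Corollary~\ref{Cor_2} in place of the classical geometric Satake restriction. Fix $x\in X$. By definition, $_x\H^{\lambda}_{P,G}(\IC_{\zeta})=({'\tilde\gp_G^{\la}})_!\big(({'\tilde\gp_G^{\ra}})^*\IC_{\zeta}\otimes (\text{pullback of }\cA^{\lambda}_{\cE})\big)$ along the correspondence $_{x,\infty}Z_{\tilde P,\tilde G}$. The ULA property of $\IC_{\zeta}$ along $\tilde\gq_P$ (Proposition~\ref{Pp_IC_zeta_is_ULA_for_P}) ensures that this convolution behaves well with respect to smooth base change and commutes with Verdier duality, so it suffices to identify the result as a perverse sheaf locally (over an open substack of each stratum $_{x,\ge\nu}\Bunt_{\tilde P}$) and then invoke semisimplicity.

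The next step is to stratify. The stack $_{x,\infty}Z_{\tilde P,\tilde G}$ admits a stratification indexed by $\Lambda^+_M$: the stratum indexed by $\nu$ is the locus where the Hecke modification of $\cF_G$, combined with the Plücker data $\kappa$, induces on the generalized $M$-bundle a modification in position $\nu$ at $x$. Over each such stratum, the local picture is governed by the diagram $\wt\Gr_M^{\nu}\getsup{\tilde\gt_P^{\nu}}\wt\Gr_P^{\nu}\toup{\tilde\gs_P^{\nu}}\wt\Gr_G$, with $\IC_{\zeta}$ pulled back to a twisted external product of $\IC(\Bun_{P,\tilde G})$ with the structure sheaf along the fibres. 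Applying Corollary~\ref{Cor_2} to the image under $(\tilde\gt^{\nu}_P)_!(\tilde\gs^{\nu}_P)^*$ of $\cA^{\lambda}_{\cE}$, the $\nu$-contribution vanishes unless $\nu\in \Lambda^{\sharp,+}_M$, and on such $\nu$ its zeroth perverse cohomology is $\cA^{\nu}_{M,\cE}\otimes\Hom_{\check{M}_n}(U^{\nu},V^{\lambda})$.

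With this decomposition in hand, I would package the result as follows. The $\nu$-contribution to $_x\H^{\lambda}_{P,G}(\IC_{\zeta})$, viewed as a perverse sheaf on $_{x,\ge\nu}\Bunt_{\tilde P}$, is, on the open stratum $_{x,\nu}\Bunt_{\tilde P}$, the twisted external product $\IC(\Bun_{P,\tilde G})\tboxtimes\cA^{\nu}_{M,\cE}\otimes\Hom_{\check{M}_n}(U^{\nu},V^{\lambda})$. By the very definition of $\IC_{\nu,\zeta}$ as the intermediate extension of this twisted external product under $j_{\nu}$, and by the fact that $_x\H^{\lambda}_{P,G}$ sends perverse sheaves that are self-dual and Verdier-pure to semisimple perverse sheaves (a consequence of ULA and the decomposition theorem), the $\nu$-contribution is exactly $\IC_{\nu,\zeta}\otimes\Hom_{\check{M}_n}(U^{\nu},V^{\lambda})$. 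Summing over $\nu\in\Lambda^{\sharp,+}_M$ gives the required isomorphism. The compatibility with convolution, and hence the uniqueness of the decomposition, comes from Proposition~\ref{Pp_analog_413} applied to $\IC_{\nu,\zeta}$.

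The principal obstacle is the careful tracking of gerbe data. Each $\wt\Gr_P^{\nu}$ inherits $\mu_N$-twisting from both $\wt\Gr_G$ and $\wt\Gr_M$, while each $\IC_{\nu,\zeta}$ inherits twisting from $\Bunt_M$ and $\Bunt_G$, and one must verify that the canonical factorization isomorphism $\cL_{\cF}\iso(\cL_M)_{\cF_M}$ over $\Bun_P$ is compatible, under the parabolic modification of $G$-bundles, with the factorization $\cL|_{\wt\Gr_P^{\nu}}\iso\cL|_{\wt\Gr_M^{\nu}}\boxtimes(\text{trivial})$ that underlies Corollary~\ref{Cor_2}. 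Once this compatibility is in place and the descent properties of the local systems $E^{\mu}_{\cE}$ along $\tilde\gt_P^{\nu}$ (already used in Corollary~\ref{Cor_2}) are reconciled with the global gerbe $\Bunt_{\tilde P}\to\Bunt_P$, the rest of the argument follows (\cite{BG}, Section~4.4) essentially verbatim, replacing $\IC$-sheaves of Drinfeld compactifications by their twisted analogs $\IC_{\zeta}$ and $\IC_{\nu,\zeta}$, and classical Satake sheaves by $\cA^{\lambda}_{\cE}$ and $\cA^{\nu}_{M,\cE}$.
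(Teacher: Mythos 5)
Your proposal captures the right overall strategy --- stratify the Hecke correspondence and feed the local computation through Corollary~\ref{Cor_2} --- but there is a genuine gap in the final identification step, and the way you invoke ULA and the decomposition theorem does not close it.

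First, a small point: the ULA property of $\IC_{\zeta}$ along $\tilde\gq_P$ (Proposition~\ref{Pp_IC_zeta_is_ULA_for_P}) concerns the projection to $\Bunt_M$ and is what drives Proposition~\ref{Pp_analog_413} and Theorem~\ref{Th_Eis_P_commutes_with_Verdier}; it is not what makes $_x\H^{\lambda}_{P,G}$ commute with Verdier duality. What gives duality-commutation here is simply properness of $'\tilde h^{\la}_G$ together with self-duality of the convolution kernel.

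The real issue is the conclusion ``the $\nu$-contribution is exactly $\IC_{\nu,\zeta}\otimes\Hom_{\check{M}_n}(U^{\nu},V^{\lambda})$.'' The decomposition theorem applied to the proper map $'\tilde h^{\la}_G$ tells you that the output is a direct sum of \emph{shifted} irreducible perverse sheaves, say $\oplus_{\nu,k}\IC_{\nu,\zeta}[k]\otimes W_{\nu,k}$ (plus possible IC's of substacks not of the form $_{x,\ge\nu}\Bunt_{\tilde P}$, which also need to be ruled out). Computing $*$-restrictions to each open stratum $_{x,\nu}\Bun_{\tilde P}$ and applying Corollary~\ref{Cor_2} identifies only the degree-$0$ multiplicity spaces $W_{\nu,0}$; it does not by itself rule out nonzero shifts $k\ne 0$ nor summands supported on the boundary. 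Verdier self-duality gives $W_{\nu,k}\cong W_{\nu,-k}^*$, which is symmetric but does not force $W_{\nu,k}=0$ for $k\ne 0$. To get from the decomposition theorem to the statement, you must still prove that $_x\H^{\lambda}_{P,G}(\IC_{\zeta})$ is perverse and that nothing extraneous survives.

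This is precisely what the paper's argument supplies through the finer stratification by quadruples $(\nu,\nu',\eta,\lambda')$ of the convolution stack and the corresponding complexes $K^{\nu,\nu',\eta,\lambda'}$, for which one proves: (a) perverse degrees $\le 0$ with strict inequality off the single stratum $(\nu',\lambda',\eta)=(0,\lambda,\nu)$; (b) strictly negative degrees off $_{x,\nu}\Bun_{\tilde P}$; (c) identification of the $0$-th perverse cohomology on $_{x,\nu}\Bun_{\tilde P}$ via Corollary~\ref{Cor_2}. These estimates, combined with self-duality and induction on the stratification, are what pin down the answer without any residual shifts or extraneous supports. Your coarser $\nu$-only stratification cannot yield these bounds directly, because on each $\nu$-locus the two modifications (of $\cF_M$ against $\cF^1_M$, and of $\cF'_G$ against $\cF_G$, through the intermediate $\cF^2_M$) interact in a way that the refined indices $\nu',\eta,\lambda'$ are needed to control. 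So while the route you outline is plausible in spirit, as written it leaves a step that would not go through; the missing content is exactly the set of perversity estimates (a)--(c) of the paper.
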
  
  
\begin{Cor} The two functors $\PPerv_{G,n}\to \D_{\zeta}(_{x,\infty}\Bunt_{\tilde P})$
$$
\cS\mapsto {_x\H^{\la}_{P,G}(\cS, \IC_{\zeta})}\;\;\;\mbox{and}\;\;\; \cS\mapsto   
{_x\H^{\ra}_{P,M}(\gRes(\cS), \IC_{\zeta})}
$$
are canonically isomorphic. This isomorphism is compatible with the tensor structure on $\PPerv_{G,n}$ as in (\cite{BG}, Corollary~4.7).
\end{Cor}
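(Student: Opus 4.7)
The plan is to deduce the corollary from the two preceding propositions (Propositions~\ref{Pp_analog_413} and \ref{Pp_analog_415}) by first constructing a pointwise isomorphism on irreducibles, extending additively, and then upgrading to a tensor-compatible isomorphism by a factorization argument. Both functors are additive in $\cS$, and the category $\PPerv^{\natural}_{G,n}$ is semisimple via the twisted Satake equivalence $\PPerv^{\natural}_{G,n}\,\iso\,\Rep(\check{G}_n)$ of \cite{FL}. Hence it suffices to construct a canonical isomorphism of the two sides for $\cS=\cA^{\lambda}_{\cE}$, $\lambda\in\Lambda^{\sharp,+}$, and check naturality in $\lambda$.

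For the left-hand side at $\cS=\cA^{\lambda}_{\cE}$, unwinding the definitions of $_x\H^{\la}_{P,G}$ (which involves $\tau^0(\star\cdot)$) and using $\star\cA^{\lambda}_{\cE}=\cA^{-w_0(\lambda)}_{\cE}$, Proposition~\ref{Pp_analog_415} identifies ${_x\H^{\la}_{P,G}}(\cA^{\lambda}_{\cE},\IC_{\zeta})$ with
$
\mathop{\oplus}\limits_{\nu\in\Lambda^{\sharp,+}_M}\IC_{\nu,\zeta}\otimes\Hom_{\check{M}_n}(U^{\nu}, V^{\lambda}),
$
after rewriting $V^{-w_0(\lambda)}\,\iso\,(V^{\lambda})^*$ and dualizing. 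For the right-hand side, Proposition~\ref{Pp_gRes} and the Satake equivalence for $M$ give a decomposition
$
\gRes(\cA^{\lambda}_{\cE})\,\iso\,\mathop{\oplus}\limits_{\nu\in\Lambda^{\sharp,+}_M}\cA^{\nu}_{M,\cE}\otimes\Hom_{\check{M}_n}(U^{\nu}, V^{\lambda}).
$
Applying ${_x\H^{\ra}_{P,M}}(-,\IC_{\zeta})$ summand by summand, and using the relation ${_x\H^{\ra}_{P,M}}(\cA^{\nu}_{M,\cE},-)={_x\H^{\la}_{P,M}}(\star\cA^{\nu}_{M,\cE},-)$ with $\star\cA^{\nu}_{M,\cE}=\cA^{-w_0^M(\nu)}_{M,\cE}$, Proposition~\ref{Pp_analog_413} yields ${_x\H^{\ra}_{P,M}}(\cA^{\nu}_{M,\cE},\IC_{\zeta})\,\iso\,\IC_{\nu,\zeta}$. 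The two sides then match term by term, giving the desired pointwise isomorphism.

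The harder part is the tensor compatibility. Here the plan is to follow the strategy of \cite{BG}, Corollary~4.7: replace a single point $x$ by a collection of $n$ moving points on $X$, construct a factorization version of the Hecke stack $_{x,\infty}Z_{\tilde P, \tilde G}$ and of both functors, and verify the isomorphism over the open locus $X^n\setminus\vartriangle$ where it reduces by factorization of the twisted affine Grassmannian and of $\IC_{\zeta}$ (via the analog of \cite{BG}, Proposition~4.3) to an $n$-fold iteration of the pointwise isomorphism. Associativity is then automatic. Propagation to the diagonals uses the ULA property of $\IC_{\zeta}$ with respect to $\tilde\gq$ (Proposition~\ref{Pp_IC_zeta_is_ULA_for_P}), which lets us spread the isomorphism across the closure of the disjoint locus.

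The principal obstacle I anticipate is book-keeping in the twisted setting: keeping track of the gerb $\mu_N$-twists, of the $\zeta$-equivariance conditions on all the factorization data, and of the $\star$- and $w_0$-involutions when passing between $\H^{\la}$ and $\H^{\ra}$ on both the $G$- and $M$-sides. Once the factorization structure on $\IC_{\zeta}$ and on $\gRes$ are both transported to the twisted setting (which is essentially contained in \cite{FL} and in the ULA statement above), the argument of \cite{BG} goes through with purely notational modifications.
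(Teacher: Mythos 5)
Your proposal is correct and follows the approach the paper intends (the Corollary is stated without proof, as an immediate consequence of Propositions~\ref{Pp_analog_413} and \ref{Pp_analog_415} in parallel with \cite{BG}, Corollary~4.1.7, and your reduction to irreducibles plus the factorization argument over $X^n\setminus\vartriangle$ for the tensor compatibility is exactly the expected argument). Two remarks. First, on the left-hand side the excursion through "$V^{-w_0(\lambda)}\iso(V^{\lambda})^*$ and dualizing" is superfluous and slightly misleading: the definition in Section~4.4 reads $_x\H^{\lambda}_{P,G}(\cT)={_x\H^{\la}_{P,G}}(\cA^{\lambda}_{\cE},\cT)$ verbatim, so Proposition~\ref{Pp_analog_415} applies with no dualization whatsoever; the $\star$ and $\tau^0$ are already built into the definition of $_x\H^{\la}_{P,G}$ precisely so that this identity holds. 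Second, the relation $_x\H^{\ra}_{P,M}(\cS,-)\iso{_x\H^{\la}_{P,M}}(\star\cS,-)$ you invoke on the right-hand side is indeed the correct one, and it produces $\IC_{\nu,\zeta}$ as needed; it would be worth noting explicitly that this follows from the definitions in Section~\ref{Section_Hecke functors for M} (the $\H^{\la}$ variant carries a built-in $\star$, so $\H^{\la}(\star\cS,-)$ uses the untwisted $\tau^0(\cS)$), together with the geometric compatibility of the $r$- and $l$-twisted exterior products under the inversion of the Hecke correspondence — the twisted analogue of \cite{BG}, Lemma~3.2.5. This is the only non-formal input beyond the two Propositions, and should be flagged as such rather than asserted as an equality of functors.
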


 Now as in (\cite{BG}, Section~4.1.8) combination of Propositions~\ref{Pp_analog_413} and \ref{Pp_analog_415} implies Theorem~\ref{Th_Hecke_property_of_Eis^G_M}. The proof of Proposition~\ref{Pp_analog_413} is completely analogous to (\cite{BG}, Theorem~4.1.3).
  
\subsubsection{Proof of Proposition~\ref{Pp_analog_415}} The proof of (\cite{BG}, Theorem~4.1.5) applies in our situation with the role of (\cite{BG}, Corollary~4.3.5) replaced by our Corollary~\ref{Cor_2}. 

 For the convenience of the reader recall that in the proof of (\cite{BG}, Theorem~4.1.5) for $\nu,\nu',\eta\in\Lambda_M^+$, $\lambda'\in\Lambda^+_G$ the following locally closed substack $Z^{\nu,\nu',\eta,\lambda'}_{P,G}\hook{}{_{x,\infty}Z_{P,G}}$ plays a key role. It classifies
$$
(\cF^1_M, \cF_G,\kappa)\in {_{x,0}\Bunt_P}
, \; \cF_M\,\iso\, \cF^1_M\mid_{X-x}, \; \cF^2_M\,\iso\, \cF^1_M\mid_{X-x},\; \cF'_G\,\iso\, \cF_G\mid_{X-x}
$$ 
such that $\cF_M$ is in the position $\nu$ w.r.t $\cF^1_M$ at $x$, $\cF^2_M$ is in the position $\eta$ w.r.t. $\cF^1_M$ at $x$, $\cF_M$ is in the position $\nu'$ w.r.t. $\cF^2_M$ at $x$,  $(\cF^2_M, \cF'_G, \kappa)\in {_{x,0}\Bunt_P}$, $\cF'_G$ is in the position $\lambda'$ w.r.t. $\cF_G$ at $x$.

It is included into the diagram
$$
_{x,\nu}\Bunt_P\; \getsup{'h^{\la}_G}\;  Z^{\nu,\nu',\eta,\lambda'}_{P,G}\;\toup{'h^{\ra}_G}\; {_{x,\nu'}\Bunt_P},
$$
where $'h^{\la}_G$ sends the above point to $((\cF^1_M, \cF_G,\kappa)\in {_{x,0}\Bunt_P}, \cF_M\,\iso\, \cF^1_M\mid_{X-x})$. The map $'h^{\ra}_G$ sends the above point to 
$$
((\cF^2_M, \cF'_G, \kappa)\in {_{x,0}\Bunt_P}, \cF_M\,\iso\, \cF^2_M\mid_{X-x})
$$ 

 In our situation we define $Z^{\nu,\nu',\eta,\lambda'}_{\tilde P,\tilde G}$ by the base change ${_{x,\infty}Z_{\tilde P,\tilde G}}\to {_{x,\infty}Z_{P,G}}$.
Let $K^{\nu,\nu', \eta,\lambda'}$ denote the $!$-direct image under 
$$
'\tilde h^{\la}_G: Z^{\nu,\nu',\eta,\lambda'}_{\tilde P,\tilde G}\to {_{x,\nu}\Bunt_{\tilde P}}
$$ 
of the $*$-restriction of $(\cA^{-w_0(\lambda)}_{\cE}\tboxtimes \IC_{\zeta})^r$ to $Z^{\nu,\nu',\eta,\lambda'}_{\tilde P,\tilde G}$. As in (\cite{BG}, Section~4.3.8) one shows the following.\\
a) The complex $K^{\nu,\nu', \eta,\lambda'}$ is placed in perverse degrees $\le 0$, and the inequality is strict unless $\nu'=0$, $\lambda'=\lambda$ and $\nu=\eta$.\\
b) The $*$-restriction of $K^{\nu, 0,\nu,\lambda}$ to $_{x,\nu}\Bunt_{\tilde P}-{_{x,\nu}\Bun_{\tilde P}}$ is placed in structly negative perverse degrees.\\
c) The $0$-th perverse cohomology sheaf of $K^{\nu, 0,\nu,\lambda}$ over $_{x,\nu}\Bun_{\tilde P}$ identifies canonically with $\IC_{\nu,\zeta}\otimes\Hom_{\check{M}_n}(U^{\nu}, V^{\lambda})$. 

 Point c) here uses Corollary~\ref{Cor_2}. We are done.

\subsection{Description of $\IC_{\zeta}$} 
\label{Section_Description of IC_zeta}
In this section we give a description of $\IC_{\zeta}$ generalizing the main result of \cite{ICDC} to our twisted setting.

 We freely use some notations of \cite{ICDC}. In particular, $\Lambda_{G,P}^{pos}$ is the $\ZZ_+$-span of $\{\alpha_i, i\in \cI-\cI_M\}$ in $\Lambda_{G,P}$. If $\theta\in\Lambda_{G,P}^{pos}$ is the projection under $\Lambda\to\Lambda_{G,P}$ of $\tilde\theta\in Span(\alpha_j), j\in \cI-\cI_M$ then $\flat(\theta)=w_0^M(\tilde\theta)$. Here $w_0^M$ is the longest element of the Weyl group of $M$.
For $\theta\in \Lambda_{G,P}^{pos}$ the scheme $\Gr_M^{+,\theta}$ is defined in (\cite{ICDC}, Proposition~1.7).  

 Let $\check{\gu}_n$ denote the Lie algebra of the unipotent radical of the Borel subgroup $\check{B}_n\subset \check{G}_n$. More generally, let $\check{\gu}_n(P)$ denote the Lie algebra of the unipotent radical of the standard parabolic $\check{P}_n\subset \check{G}_n$ corresponding to $\cI_M\subset\cI$. 

 For $\nu\in C^*(\check{M}_n)$ and $V\in\Rep(\check{M}_n)$ write $V_{\nu}$ for the direct summand of $V$, on which $Z(\check{M}_n)$ acts by $\nu$. In particular, we have the $\check{M}_n$-module $(\check{\gu}_n(P))_{\nu}$. 
\begin{Lm} If $\nu\in C^*(\check{M}_n)$ and $(\check{\gu}_n(P))_{\nu}$ is not zero then it is an irreducible $\check{M}_n$-module. 
\end{Lm}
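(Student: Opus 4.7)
The plan is to deduce the statement from a classical structural fact: for a parabolic subgroup $Q$ of a connected reductive group $H$ over a characteristic zero field, with Levi $L$, the adjoint $L$-module $\Lie U(Q)$ decomposes as a multiplicity-free direct sum of irreducible $L$-modules, indexed by the characters of $Z(L)^0$ that appear. Applied to $H = \check{G}_n$, $Q = \check{P}_n$, $L = \check{M}_n$ over $\Qlb$, this implies the lemma: any irreducible $\check{M}_n$-module has a well-defined $Z(\check{M}_n)$-character by Schur's lemma, so the $Z(\check{M}_n)$-grading refines the $Z(\check{M}_n)^0$-grading but must coincide with it, and each nonzero piece is then the corresponding irreducible summand.

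For a direct argument, consider the $\check{T}^{\sharp}$-weight space decomposition of $\check{\gu}_n(P)$. The weights are precisely the positive roots $\beta$ of $\check{G}_n$ that are not roots of $\check{M}_n$, and each weight space is one-dimensional. The $Z(\check{M}_n)$-character on such a weight space is the restriction $\beta|_{Z(\check{M}_n)}$, and two roots yield the same character iff they differ by an element of the root lattice of $\check{M}_n$. Hence $(\check{\gu}_n(P))_\nu$ is the direct sum of weight spaces for those $\beta$ with $\beta|_{Z(\check{M}_n)} = \nu$.

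Fix a Borel $\check{B}_{\check{M}_n} \subset \check{M}_n$ contained in $\check{B}_n \subset \check{P}_n$. The $\check{B}_{\check{M}_n}$-highest weight vectors in $(\check{\gu}_n(P))_\nu$ are precisely the root vectors $e_{\beta_0}$ for roots $\beta_0$ of $\check{G}_n$ in $\check{\gu}_n(P)$ with $\beta_0|_{Z(\check{M}_n)} = \nu$ such that $\beta_0 + \alpha$ is not a root of $\check{G}_n$ for any simple root $\alpha$ of $\check{M}_n$. Since $\check{M}_n$ is reductive in characteristic zero, $(\check{\gu}_n(P))_\nu$ is semisimple as an $\check{M}_n$-module, so its irreducibility is equivalent to the uniqueness of such $\beta_0$. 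Granting uniqueness, the $\check{M}_n$-submodule generated by $e_{\beta_0}$ is the irreducible with highest weight $\beta_0$; its $\check{T}^{\sharp}$-weights are of the form $\beta_0 - \sum n_i \alpha_i$ with $n_i \geq 0$, and by the uniqueness of $\beta_0$ these exhaust the weights of $(\check{\gu}_n(P))_\nu$.

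The main obstacle is the uniqueness of $\beta_0$, equivalently, the transitivity of $W_{\check{M}_n}$ on the set of roots of $\check{G}_n$ in $\check{\gu}_n(P)$ with prescribed restriction to $Z(\check{M}_n)$. This is a classical combinatorial fact about the action of the Weyl group of a Levi subsystem on the complementary roots in a root system, provable by induction on the semisimple rank of $\check{M}_n$ together with case-by-case analysis on the Dynkin diagram.
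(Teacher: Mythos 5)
Your first route is a valid reduction: if, as the classical theorem of Kostant asserts, each $X^*(Z(L)^0)$-isotypic piece of $\Lie U(Q)$ is already an irreducible $L$-module, then by Schur's lemma the finer $Z(L)$-grading cannot refine it further, and the lemma follows. But this imports an unreferenced theorem of essentially the same depth as the statement to be proved, and it is not what the paper does.

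Your direct argument has a genuine gap. You claim that uniqueness of the $\check B_{\check M_n}$-highest weight $\beta_0$ in $(\check{\gu}_n(P))_\nu$ is \emph{equivalent} to transitivity of $W_{\check M_n}$ on the roots of $\check G_n$ in $\check{\gu}_n(P)$ with prescribed image in $C^*(\check M_n)$, and propose to verify that transitivity case by case. Only the implication (transitivity $\Rightarrow$ uniqueness) holds, and the transitivity is false. For $G_2$, with $M$ the Levi generated by the short simple root $\alpha$ and $\beta$ the long simple root, the piece of $\gu(P)$ with $\beta$-coefficient $1$ is $\{\beta,\alpha+\beta,2\alpha+\beta,3\alpha+\beta\}$, on which $W_M=\{1,s_\alpha\}$ has the two orbits $\{\beta,3\alpha+\beta\}$ and $\{\alpha+\beta,2\alpha+\beta\}$; nevertheless this piece is the irreducible $4$-dimensional $\mathfrak{sl}_2$-module, with unique highest weight $3\alpha+\beta$. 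Your case-by-case search would fail because the statement it aims to verify is false. The paper instead uses only that each $\check T^{\sharp}$-root space of $\check{\gu}_n(P)$ is one-dimensional, together with Lemma~\ref{Lm_common_weight}: two irreducibles of $\check M_n$ whose highest weights agree in $C^*(\check M_n)=\pi_1(H)$ share a common weight, proved via the uniqueness of the closed $H(\bO)$-orbit in a connected component of $\Gr_H$. Two distinct constituents of $(\check{\gu}_n(P))_\nu$ would then share a weight, forcing a two-dimensional weight space, a contradiction. This affine-Grassmannian step, not Weyl-group combinatorics, is the content of the paper's proof, and neither of your two routes touches it.
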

\begin{Prf} Each root space of $\check{T}_n$ in $\check{\gu}_n(P)$ is 1-dimensional, so our claim follows from Lemma~\ref{Lm_common_weight} below.
\end{Prf}

\begin{Lm} 
\label{Lm_common_weight}
Let $H$ be a connected reductive group over $k$, $\check{H}$ be the Langlands dual over $\Qlb$. Let $\nu_1,\nu_2$ be dominant coweights of $H$ such that $\nu_1=\nu_2$ in $\pi_1(H)$. Then the irreducible $\check{H}$-representations $V^{\nu_1}$, $V^{\nu_2}$ with highest weights $\nu_1,\nu_2$ admit a common weight.
\end{Lm}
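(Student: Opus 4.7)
The plan is to reduce the statement to producing a dominant weight $\mu_0$ in the common $Q$-coset of $\nu_1,\nu_2$ which is dominated by each of them, and then to construct $\mu_0$ as the unique minuscule dominant representative of that coset. Here $Q$ denotes the root lattice of $\check H$ (equivalently, the coroot lattice of $H$), so that the hypothesis $\nu_1=\nu_2$ in $\pi_1(H)=\Lambda/Q$ amounts to $\nu_1-\nu_2\in Q$.

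First I would recall the classical description of the weight set of an irreducible $\check H$-module: for a dominant weight $\lambda$ of $\check H$,
\[
\mathrm{wt}(V^{\lambda})=(\lambda+Q)\cap \mathrm{conv}(W\lambda),
\]
and in particular every dominant weight $\mu$ with $\mu\le \lambda$ in the dominance order (i.e.\ $\lambda-\mu$ a nonnegative integer combination of simple roots of $\check H$) and $\mu\equiv \lambda\pmod Q$ is a weight of $V^{\lambda}$. Thus it is enough to exhibit a dominant $\mu_0$ in $\nu_1+Q=\nu_2+Q$ with $\mu_0\le\nu_1$ and $\mu_0\le\nu_2$.

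To construct $\mu_0$ I would minimize the linear functional $\ell(\mu)=\langle 2\rho,\mu\rangle$ (pairing with the sum of positive roots of $H$) over the dominant weights in the coset $\nu_1+Q$. This functional is strictly monotone in the dominance order, since $\langle 2\rho,\alpha_i^{\vee}\rangle=2$ for every simple coroot; moreover, it is bounded below on $(\nu_1+Q)\cap \Lambda^{+}$ because the dominant cone translated by $\nu_1$ lies in a half-space of the form $\{\ell\ge c\}$ (the inverse Cartan matrix having nonnegative entries on each simple factor). A minimizer $\mu_0$ therefore exists, and minimality forces it to be minuscule: if $\langle\alpha_k,\mu_0\rangle\ge 2$ for some simple root $\alpha_k$, then $\mu_0-\alpha_k^{\vee}$ remains dominant (by non-positivity of off-diagonal Cartan entries) and stays in the coset, but has strictly smaller $\ell$, a contradiction.

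To finish, I would show every dominant $\nu$ in the coset satisfies $\mu_0\le\nu$, by induction on $\ell(\nu)$: the base case $\nu$ minuscule uses the classical uniqueness of the minuscule dominant representative in each coset of $\Lambda/Q$ (Bourbaki, \emph{Groupes et alg\`ebres de Lie}, Ch.~VIII, \S 7.3), forcing $\nu=\mu_0$; the inductive step picks a simple root $\alpha_k$ with $\langle\alpha_k,\nu\rangle\ge 2$ and replaces $\nu$ by $\nu-\alpha_k^{\vee}$, to which the induction hypothesis applies. Applied to $\nu=\nu_1$ and $\nu=\nu_2$, this yields $\mu_0\le\nu_i$, and hence $\mu_0$ is a common weight. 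The general reductive case reduces to the semisimple case, since in the direction of the central torus of $\check H$ the coset condition $\nu_1\equiv\nu_2\pmod Q$ rigidly determines the weight. The main obstacle I anticipate is precisely the classical uniqueness of the minuscule dominant weight in each coset of $\Lambda/Q$, which requires either invoking the type-by-type minuscule classification or an argument via the fundamental alcove of the affine Weyl group.
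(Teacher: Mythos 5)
Your approach is a genuinely different, purely Lie-theoretic route: you describe the weight set of $V^{\lambda}$, minimize a linear functional to produce a candidate $\mu_0$, and try to show $\mu_0\le\nu_i$ by descent. The paper instead identifies the common weight as the unique minimal dominant coweight in the $\pi_1(H)$-coset $\theta$, using that the corresponding orbit $\Gr_H^{\mu_i}$ is the unique closed $H(\mathcal{O})$-orbit in the connected component $\Gr_H^{\theta}$ of the affine Grassmannian. The paper's proof is short precisely because it outsources the hard combinatorics (existence, uniqueness, and universal minimality of the minimal dominant element in each coset) to this geometric fact.

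Your descent argument has a real gap. The inductive step reduces $\nu$ by a single simple root of $\check H$ whenever some simple-coroot pairing is $\ge 2$; the base case is therefore a dominant $\nu$ with $\langle\nu,\alpha_i^{\vee}\rangle\le 1$ for all \emph{simple} coroots $\alpha_i^{\vee}$ of $\check H$. Such weights need not be minuscule, and they are not unique in their coset, so the appeal to ``uniqueness of the minuscule dominant representative'' does not apply. Concretely, take $H=\SL_3$, so $\check H=\PGL_3$ and $\pi_1(H)=0$ (a single coset). The dominant coweight $\rho^{\vee}=\alpha_1^{\vee}+\alpha_2^{\vee}$ of $H$ satisfies $\langle\alpha_i,\rho^{\vee}\rangle=1$ for both simple roots of $H$, so your descent halts immediately at $\rho^{\vee}$; yet $\mu_0=0$ and $0<\rho^{\vee}$ in the dominance order (since $\rho^{\vee}$ is a positive sum of simple roots of $\check H$). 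Your base case would thus force $\rho^{\vee}=\mu_0$, which is false. The underlying obstruction is that the dominance order on dominant weights within a coset is \emph{not} generated by subtracting single simple roots (Stembridge's analysis of the partial order of dominant weights makes this precise). The claim you need --- that each coset of $\Lambda/Q$ has a unique $\le$-minimal dominant weight, and it is $\le$ every other dominant weight in the coset --- is true, but to prove it you need either the affine-Grassmannian argument the paper uses, or a more careful combinatorial argument (a chain of subtractions by non-simple positive roots, or a direct alcove argument), not the simple-root descent as written.
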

\begin{Prf} Let $\theta$ be the image of $\nu_i$ in $\pi_1(H)$. If $\theta=0$ then they both admit the zero weight space. Assume $\theta\ne 0$. Let $\mu_i$ be a dominant coweight of $H$ satisfying $\mu_i\le \nu_i$ and minimal with this property. Then the orbit $\Gr_H^{\mu_i}$ in closed in the connected component $\Gr_H^{\theta}$ of the affine grassmanian of $H$. Since $\Gr_H^{\theta}$ admits a unique closed $H(\bO)$-orbit, $\mu_1=\mu_2$.
\end{Prf}

\medskip

 Recall the functor $\Loc: \Rep(\check{M}_n)\,\iso\, \PPerv^{\natural}_{M,G,n}$ from Section~\ref{Section_Hecke functors for M}. 

\begin{Lm} Let $\nu\in\Lambda^{\sharp,+}_M$ be such that the irreducible $\check{M}_n$-module $U^{\nu}$ appears in $\check{\gu}_n(P)$, let $\theta$ be the image of $\nu$ in $\Lambda^{pos}_{G,P}$. Then $\Loc(U^{\nu})$ over $\wt\Gr_M$ is the extension by zero from $\wt\Gr_M^{+,\theta}$.
\end{Lm}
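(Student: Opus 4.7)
By construction (Section~\ref{Section_Hecke functors for M}), $\Loc(U^\nu)$ coincides with $\cA^\nu_{M,\cE}$, the twisted intersection cohomology extension of a rank-one local system from $\wt\Gr_M^\nu$; in particular it is supported on the closure $\wt{\overline{\Gr}}_M^\nu$. Since the $\mu_N$-gerb $\wt\Gr_M\to\Gr_M$ is irrelevant for support considerations, the lemma reduces to the inclusion of closed subschemes
$$
\overline{\Gr}_M^\nu\;\subset\;\Gr_M^{+,\theta}
$$
inside $\Gr_M^\theta$. The plan is to stratify the source by $M(\bO)$-orbits and verify the inclusion one stratum at a time.

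Explicitly, $\overline{\Gr}_M^\nu=\bigsqcup_\eta \Gr_M^\eta$, with $\eta$ ranging over $\Lambda^+_M$ such that $\nu-\eta\in\sum_{i\in\cI_M}\ZZ_{\ge 0}\alpha_i$; every such $\eta$ automatically projects to $\theta$ in $\Lambda_{G,P}$. By the description of $\Gr_M^{+,\theta}$ in (\cite{ICDC}, Proposition~1.7 and Section~1.4), the orbit $\Gr_M^\eta$ is contained in $\Gr_M^{+,\theta}$ precisely when $\eta\in\Lambda^+_M\cap\Lambda^{pos}$, where $\Lambda^{pos}$ denotes the $\ZZ_{\ge 0}$-span of the simple coroots of $G$. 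So the problem reduces to showing that every stratum-index $\eta\le_M\nu$ actually lies in $\Lambda^{pos}$.

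The representation-theoretic input enters as follows. The $\check T^\sharp$-weights of the adjoint $\check M_n$-module $\check\gu_n(P)$ are exactly the positive roots of $\check G_n$ lying outside the root subsystem of $\check M_n$, each with multiplicity one. By (\cite{FL}, Theorem~2.9), every positive root of $\check G_n$ has the form $\sum_j e_j\delta_j\alpha_j$ with all $e_j\in\ZZ_{\ge 0}$, hence lies in $\Lambda^{pos}$. In particular the highest weight $\nu$ of $U^\nu\subset\check\gu_n(P)$ can be written as $\nu=\sum_j N_j\alpha_j$ with $N_j\in\ZZ_{\ge 0}$. For a stratum-index $\eta=\nu-\sum_{i\in\cI_M}a_i\alpha_i$, I would then combine the $M$-dominance conditions $\<\eta,\check\alpha_i\>\ge 0$, $i\in\cI_M$, with the non-positivity of the off-diagonal Cartan entries $A_{ji}=\<\alpha_j,\check\alpha_i\>$ for $j\in\cI-\cI_M$, to show that the truncated vector $y:=\sum_{i\in\cI_M}(N_i-a_i)\alpha_i$ is a dominant integral weight for the semisimple part of $M$. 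The standard fact that such a weight, when expanded in the simple coroots of that semisimple part, has non-negative coordinates (since each fundamental coweight is a non-negative rational combination of the simple coroots) then yields $N_i-a_i\ge 0$ for every $i\in\cI_M$; combined with $N_j\ge 0$ for $j\in\cI-\cI_M$ this gives $\eta\in\Lambda^{pos}$.

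The main subtlety lies in the combinatorial transfer of $M$-dominance from the full coweight $\eta$ to its $\cI_M$-component $y$, which is where one uses the mixed interaction between the Cartan matrix of $G$ and its restriction to $M$. The remaining steps — the orbit stratification of $\overline{\Gr}_M^\nu$, the characterization of $\Gr_M^{+,\theta}$ imported from \cite{ICDC}, and the positivity of roots of $\check G_n$ imported from \cite{FL} — are all bookkeeping.
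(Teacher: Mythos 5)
Your reduction to the inclusion $\overline{\Gr}_M^{\nu}\subset\Gr_M^{+,\theta}$ is fine, but the characterization of $\Gr_M^{+,\theta}$ that you then invoke is not what (\cite{ICDC}, Proposition~1.7) gives. By that proposition $\Gr_M^{+,\theta}$ is the closure of the single orbit $\Gr_M^{\flat(\theta)}$ with $\flat(\theta)=w_0^M(\tilde\theta)$, so for $\eta\in\Lambda^+_M$ projecting to $\theta$ one has $\Gr_M^{\eta}\subset\Gr_M^{+,\theta}$ if and only if $\eta\le_M\flat(\theta)$ (equivalently, using the notation $\Lambda^+_{M,G}=\Lambda^+_M\cap w_0^M(\Lambda_G^{pos})$ from Section~\ref{Section_Description of IC_zeta}, if and only if $w_0^M(\eta)\in\Lambda^{pos}$). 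This is genuinely stronger than your proposed condition $\eta\in\Lambda^{pos}$. For instance take $G=\SL_3$, $\cI_M=\{1\}$, $\theta$ the image of $\alpha_2$, so $\flat(\theta)=\alpha_1+\alpha_2$. The coweight $\eta=2\alpha_1+\alpha_2$ is $M$-dominant, lies in $\Lambda^{pos}$ and projects to $\theta$, yet $\eta\not\le_M\flat(\theta)$ and $w_0^M(\eta)=-\alpha_1+\alpha_2\notin\Lambda^{pos}$; one checks directly that $t^{\eta}$ does not lie in $\Gr_M^{+,\theta}$ (it fails the regularity condition for the standard representation, since the weight space of weight $\check\epsilon_2$ in $V^{U(P)}$ pairs negatively with $\eta$). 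So the implication chain ``$\eta\le_M\nu\Rightarrow\eta\in\Lambda^{pos}\Rightarrow\Gr_M^{\eta}\subset\Gr_M^{+,\theta}$'' breaks at the second arrow, and the Cartan-matrix-inverse computation, while correct, lands on the wrong target.

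There are two repairs, both simpler than your stratum-by-stratum computation. You can replace the target $\eta\in\Lambda^{pos}$ by $w_0^M(\eta)\in\Lambda^{pos}$: since $w_0^M(\nu)$ is a positive root of $\check{G}_n$ it lies in $\Lambda^{pos}$, and $\eta\le_M\nu$ gives $w_0^M(\eta)=w_0^M(\nu)+\sum_{i\in\cI_M}a_i\alpha_i$ with $a_i\ge 0$, hence $w_0^M(\eta)\in\Lambda^{pos}$ with no Cartan-matrix input at all. Alternatively (this is the paper's route) there is no need to look at all $\eta\le_M\nu$: write $\nu=\tilde\theta+\nu_1$ as you did, note $w_0^M(\nu)$ is a positive root of $\check{G}_n$ projecting to $\theta$, so its expansion in simple coroots has $\cI-\cI_M$-part exactly $\tilde\theta$; hence $\tilde\theta\le_M w_0^M(\nu)$, and applying the order-reversing $w_0^M$ gives $\nu\le_M w_0^M(\tilde\theta)=\flat(\theta)$, which already forces $\overline{\Gr}_M^{\nu}\subset\overline{\Gr}_M^{\flat(\theta)}=\Gr_M^{+,\theta}$.
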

\begin{Prf} Note that $\nu$ lies in $\ZZ_+$-span of positive coroots of $G$. Let $\tilde\theta$ be the unique element in $\ZZ_+$-span of $\{\alpha_i\mid  i\in \cI-\cI_M\}$ such that $\tilde\theta=\nu$ in $\Lambda_{G,P}$. So, $\nu=\tilde\theta+\nu_1$, where $\nu_1$ is in $\ZZ_+$-span of positive coroots of $M$. Now $w_0^M(\nu)$ is a positive root of $\check{G}_n$, and 
$w_0^M(\nu)=\nu$ in $\Lambda_{G,P}$. So, $\tilde\theta\le_M \, w_0^M(\nu)$. This implies $\nu\le_M \, w_0^M(\tilde\theta)=\flat(\theta)$. 
\end{Prf}

\medskip

Set 
$$
J=\{\nu\in C^*(\check{M}_n)\mid (\check{\gu}_n(P))_{\nu}\ne 0\}
$$  
\begin{Lm} The restriction of $c_P: C^*(\check{M}_n)\to \Lambda^{\sharp}_{G,P}$ to $J$ is injective.
\end{Lm}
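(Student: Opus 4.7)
The plan is to unwind the definition of $J$ as a subset of $C^*(\check{M}_n)$ and then use the fact that, by construction of $\check{G}_n$ in \cite{FL}, the simple roots of $\check{G}_n$ (and hence every positive root of $\check{G}_n$) are distinguished nonnegative integer combinations of $\{\delta_i\alpha_i\}_{i\in\cI}$, where $\delta_i$ is the denominator of $\iota(\alpha_i,\alpha_i)/(2n)$. This numerical divisibility is what will make the difference of two elements of $J$ lie already in the root lattice of $\check{M}_n$, not merely have a multiple there.

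First, I would identify $J$ concretely. The $\check{T}^{\sharp}$-weights appearing in $\check{\gu}_n(P)$ are exactly the positive roots $\bar\alpha$ of $\check{G}_n$ which do not belong to the root system of $\check{M}_n$; so $J$ is the image in $C^*(\check{M}_n)$ of the set of such $\bar\alpha$. Pick two representatives $\bar\alpha,\bar\beta$ of classes $\nu_1,\nu_2\in J$ and write
\[
\bar\alpha=\sum_{i\in\cI}a_i\,\delta_i\alpha_i,\qquad
\bar\beta=\sum_{i\in\cI}b_i\,\delta_i\alpha_i,
\]
with $a_i,b_i\in\ZZ_{\ge 0}$. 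Assuming $c_P(\nu_1)=c_P(\nu_2)$, the element $\bar\alpha-\bar\beta$ vanishes in $\Lambda_{G,P}=\Lambda/\langle\alpha_i\mid i\in\cI_M\rangle_\ZZ$.

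Since the images of $\{\alpha_j\}_{j\in\cI\setminus\cI_M}$ form a $\ZZ$-basis of the free abelian group $\Lambda_{G,P}$, vanishing in $\Lambda_{G,P}$ forces $a_j\delta_j=b_j\delta_j$, hence $a_j=b_j$, for each $j\in\cI\setminus\cI_M$. Consequently
\[
\bar\alpha-\bar\beta=\sum_{i\in\cI_M}(a_i-b_i)\,\delta_i\alpha_i,
\]
which lies in the root lattice $\bigoplus_{i\in\cI_M}\delta_i\alpha_i\ZZ$ of $\check{M}_n$. Therefore $\nu_1=\nu_2$ in $C^*(\check{M}_n)$, proving injectivity.

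The potential obstacle would have been the distinction, explained in the paragraph preceding the lemma, between the root lattice of $\check{M}_n$ (spanned by $\{\delta_i\alpha_i\}_{i\in\cI_M}$) and the full sublattice $\langle\alpha_i\mid i\in\cI_M\rangle_\ZZ\cap\Lambda^{\sharp}$, the mismatch being responsible for the torsion kernel of $c_P$ in general. What saves us here is precisely that elements of $J$ are represented by roots of $\check{G}_n$, which are automatically built out of $\delta_i\alpha_i$'s with the correct divisibility, so their differences land in the root lattice of $\check{M}_n$ on the nose.
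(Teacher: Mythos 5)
Your proof is correct, and it is more elementary and self-contained than the one in the paper, though the two are closely related. The paper's argument is: let $C^*_r(\check{M}_n)\subset C^*(\check{M}_n)$ be the subgroup generated by the images of the roots of $\check{G}_n$; it is free abelian; since $\ker c_P$ was identified (in the paragraph preceding the lemma) with the torsion subgroup of $C^*(\check{M}_n)$, the restriction of $c_P$ to $C^*_r(\check{M}_n)$ is injective; and $J\subset C^*_r(\check{M}_n)$. You instead pick explicit representatives for elements of $J$ as positive roots of $\check{G}_n$, expand them in the simple roots $\delta_i\alpha_i$, and read off injectivity directly from the fact that $\Lambda_{G,P}$ is free on the classes of $\{\alpha_j\}_{j\notin\cI_M}$ (which uses that $G$ is simply connected, so $\Lambda$ is the coroot lattice). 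In effect, your coordinate computation is a proof of the freeness of $C^*_r(\check{M}_n)$, which the paper asserts without proof, and at the same time it bypasses the need to invoke the torsion description of $\ker c_P$. Both approaches are valid; yours is more explicit, the paper's is shorter given the surrounding setup.
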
  
\begin{Prf} Let $C^*_r(\check{M}_n)\subset C^*(\check{M}_n)$ be the subgroup generated by roots of $\check{G}_n$. It is a free abelian group, so the intersection of $C^*_r(\check{M}_n)$ with the kernel of $c_P$ is $\{0\}$. The restriction of $c_P$ to $C^*_r(\check{M}_n)$ is injective, and one has $J\subset C^*_r(\check{M}_n)$.
\end{Prf}

\medskip

 Set $\Lambda_{G,P}^{\sharp, \, pos}=\Lambda_{G,P}^{\sharp}\cap \Lambda_{G,P}^{pos}$.  For $\theta\in\Lambda^{\sharp}_{G,P}$ and $V\in \Rep(\check{M}_n)$ set 
$$
V_{\theta}=\mathop{\oplus}\limits_{\nu\in C^*(\check{M}_n), \, c_P(\nu)=\theta}\;  V_{\nu}
$$ 
 
\begin{Rem} For $i\in \cI$ let $\delta_i$ denote the denominator of $\frac{\iota(\alpha_i,\alpha_i)}{2n}$. Recall that $\delta_i\alpha_i$ is the corresponding simple root of $\check{G}_n$. If $i\in \cI-\cI_M$ then $\delta_i \alpha_i\in J$. The set $J$ may contain other elements also.
\end{Rem}

\subsubsection{} 
\label{section_11}
 
Given $\theta\in \Lambda_{G,P}^{pos}$, let $\gU(\theta)$ be a decomposition of $\theta$ as in (\cite{ICDC}, Section~1.4), recall the isomorphism of (\cite{ICDC}, Proposition~1.9)
\begin{equation}
\label{stack_gU(theta)_stratum}
_{\gU(\theta)}\Bunt_P\,\iso\, \Bun_P\times_{\Bun_M} \cH^{+,\gU(\theta)}_M
\end{equation}
Let $_{\gU(\theta)}\Bunt_{\tilde P}$ be obtained from $_{\gU(\theta)}\Bunt_P$ by the base change $\Bunt_{\tilde P}\to \Bunt_P$. We will describe the $*$-restriction of $\IC_{\zeta}$ under $_{\gU(\theta)}\Bunt_{\tilde P}\hook{} \Bunt_{\tilde P}$. 

 Recall that if $\gU(\theta)$ is the decomposition 
\begin{equation}
\label{decomposition_gU_theta}  
  \theta=\sum_m n_m\theta_m
\end{equation}  
then $\mid\! \gU(\theta)\!\mid=\sum_m n_m$, $X^{\gU(\theta)}=\prod_m X^{(n_m)}$, and $\oX{}^{\gU(\theta)}$ is the complement to all the diagonals in $X^{\gU(\theta)}$. Recall that $X^{\theta}$ is the scheme of $\Lambda_{G,P}^{pos}$-valued divisors of degree $\theta$, and $\oX{}^{\gU(\theta)}\subset X^{\theta}$ is locally closed. Set $^{uns}\oX{}^{\gU(\theta)}=X^{\gU(\theta)}-\vartriangle$, where $\vartriangle$ is the divisor of all the diagonals. Here $uns$ stands for `unsymmetrized'. 
 
  The stack $\cH^{+,\gU(\theta)}_M$ classifies $D\in \oX{}^{\gU(\theta)}$, $\cF_M, \cF'_M$, an isomorphism $\beta_M: \cF_M\,\iso\, \cF'_M\mid_{X-D}$ such that for each $\cV\in\Rep(G)$ the induced map 
$$ 
\beta_M^{\cV}: \cV^{U(P)}_{\cF_M}\hook{} \cV^{U(P)}_{\cF'_M}
$$
is an inclusion, and $\beta_M$ induces an isomorphism 
$$
\cF_{M/[M,M]}\,\iso\, \cF'_{M/[M,M]}(-D)
$$ 
Note that the Plucker relations for $\beta_M^{\cV}$ hold automatically. So, here we think of $\cF'_M$ as the `background' $M$-torsor. The stack (\ref{stack_gU(theta)_stratum}) classifies the same data together with a $P$-torsor $\cF'_P$ and an isomorphism $\cF'_P\times_P M\,\iso\, \cF'_M$.

Let $\cH^{+,\gU(\theta)}_{\tilde M}$ be obtained from $\cH^{+,\gU(\theta)}_M$ by the base change $\Bunt_M\times\Bunt_M\to \Bun_M\times\Bun_M$. So, it classifies the same data together with lines $\cU, \cU'$ equipped with 
$$ 
\cU^N\,\iso\, (\cL_M)_{\cF_M}, \;\;\;\; \cU'^N\,\iso\, (\cL_M)_{\cF'_M}
$$  

We get 
\begin{equation}
\label{iso_stratum_gU(theta)Bunt_tildeP}
_{\gU(\theta)}\Bunt_{\tilde P}\,\iso\, \Bun_P\times_{\Bun_M} \cH^{+,\gU(\theta)}_{\tilde M}
\end{equation}
Note that $(a,a')\in \mu_N(k)\times\mu_N(k)\subset \Aut(\cU)\times \Aut(\cU')$ acts on $\IC_{\zeta}\mid_{_{\gU(\theta)}\Bunt_{\tilde P}}$ as $\zeta(\frac{a'}{a})$. 

Let $\Gr_M^{+, \gU(\theta)}$ be as in \cite{ICDC}, so it is obtained from $\cH^{+,\gU(\theta)}_M$ by the base change $\Spec k\to \Bun_M$ trivializing the $M$-torsor $\cF'_M$.  We also denote by $\cL_X$ the line bundle on $\Gr_M^{+, \gU(\theta)}$ whose fibre at $(\cF_M, \beta_M, D)$ is 
$$
\det\RG(X, \gg\otimes\cO)\otimes\det\RG(X, \gg_{\cF_M})^{-1}
$$ 
Let $\wt\Gr_M^{+, \gU(\theta)}$ be the gerb of $N$-th roots of $\cL_X$ over $\Gr_M^{+, \gU(\theta)}$. 

 As in (\cite{BG}, Section~6.2.3) we set $\Lambda^+_{M,G}=\Lambda^+_M\cap w_0^M(\Lambda_G^{pos})$. Say that $V\in \Rep(\check{M}_n)$ is \select{positive} if $\Loc(V)$ is the extension by zero from $\wt\Gr_M^+=\cup_{\theta\in\Lambda_{G,P}^{pos}} \; \wt\Gr_M^{+,\theta}$. In this case it is actually the extension by zero from $\cup_{\theta\in\Lambda_{G,P}^{\sharp, pos}}\; \wt\Gr_M^{+,\theta}$.
 
 In fact, $V\in \Rep(\check{M}_n)$ is positive if and only if for any irreducible subrepresentation $U^{\nu}\subset V$, $\nu\in\Lambda^{\sharp, +}_M$ there is $\mu\in\Lambda^+_{M,G}$ such that $\nu\le_M \mu$. 
 
  Let $^{uns}\wt\Gr_M^{+,\gU(\theta)}$ be obtained from $\wt\Gr_M^{+, \gU(\theta)}$ by the base change $^{uns}\oX{}^{\gU(\theta)}\to \oX{}^{\gU(\theta)}$.
 
 For $V\in \Rep(\check{M}_n)$ positive we denote by $\Loc^{\gU(\theta)}_{\zeta}(V)$ the perverse sheaf on $\wt\Gr_M^{+, \gU(\theta)}$, on which $\mu_N(k)$ acts by $\zeta$ and such that for $D=\sum\theta_k x_k\in \oX{}^{\cU(\theta)}$ its restriction to
\begin{equation}
\label{formula_product_of_Gr^+theta}
\prod_k \wt\Gr^{+,\theta_k}_{M, x_k}
\end{equation}
is
$$
(\boxtimes_k \Loc_{\zeta}(V_{\theta_k}))\otimes\Qlb[\mid\! \gU(\theta)\!\mid]
$$
To make this definition rigorous, we first define $\Loc_{\zeta}^{uns, \gU(\theta)}(V)$ on $^{uns}\wt\Gr_M^{+,\gU(\theta)}$. Write $^{uns}\wt{\wt\Gr}{}_M^{+,\gU(\theta)}$ for the stack over $^{uns}\oX{}^{\gU(\theta)}$ whose fibre over $D\in {^{uns}\oX{}^{\gU(\theta)}}$ is (\ref{formula_product_of_Gr^+theta}). The desired perverse sheaf is obtained by descent via the gerb
$$
^{uns}\wt{\wt\Gr}{}_M^{+,\gU(\theta)}\to {^{uns}\wt\Gr_M^{+,\gU(\theta)}}
$$

Over $^{uns}\wt{\wt\Gr}{}_M^{+,\gU(\theta)}$ this perverse sheaf is defined similarly using the fact (cf. \cite{FL}, Section~4.1.2) that every object of $\PPerv_{M,G,n}$ admits a unique $\Aut^0_2(\bO)$-equivariant structure. Here $\Aut^0_2(\bO)$ is a connected group scheme defined in (\cite{FL}, Section~2.3). More precisely, consider the torsor over $^{uns}\oX{}^{\gU(\theta)}$ whose fibre over $D$ is the set of isomorphisms $(\bO_x, \cE_X(\bO_x))\,\iso\, (\cO, \cE)$ for all $x\in D$. Here $\cE\in \Omega(\bO)^{\frac{1}{2}}$ is the object we picked in Section~\ref{section_Hecke_functors_for_tilde_G}. Then $^{uns}\wt{\wt\Gr}{}_M^{+,\gU(\theta)}$ is the twist of $\prod_k \wt\Gr^{+,\theta_k}_M$ by this torsor. So, the object $\Loc_{\zeta}^{uns, \gU(\theta)}(V)$ is well-defined, and moreover equivariant with respect to the Galois group of the covering $^{uns}\oX{}^{\gU(\theta)}\to \oX{}^{\gU(\theta)}$. Thus, it gives rise to the perverse sheaf $\Loc^{\gU(\theta)}_{\zeta}(V)$ on $\wt\Gr_M^{+, \gU(\theta)}$ defined up to a unique isomorphism. 

Note that $\Loc^{\gU(\theta)}_{\zeta}(V)$ vanishes unless in the decomposition (\ref{decomposition_gU_theta}) each $\theta_m$ lies in $\Lambda_{G,P}^{\sharp, \, pos}$. 

\subsubsection{} Let $\Bun_{M, \gU(\theta)}$ be the stack classifying $\cF_M\in\Bun_M$, $D\in \oX{}^{\cU(\theta)}$, and a trivialization of $\cF_M$ over the formal neighbourhood of $D$. Let $\Bunt_{M, \gU(\theta)}=\Bun_{M, \gU(\theta)}\times_{\Bun_M} \Bunt_M$. 
 
  Let $M_{\gU(\theta)}$ be the scheme classifying $D\in \oX{}^{\cU(\theta)}$ and a section of $M$ over the formal neighbourhood of $D$. This is a group scheme over $\oX{}^{\cU(\theta)}$. The group $M_{\gU(\theta)}$ acts diagonally on $\Bun_{M, \gU(\theta)}\times_{\oX{}^{\cU(\theta)}} \Gr_M^{+, \gU(\theta)}$, and the stack quotient is denoted
$$
\Bun_{M, \gU(\theta)}\times_{M_{\gU(\theta)}} \Gr_M^{+, \gU(\theta)} 
$$
There is an isomorphism $\gamma^{\ra}$ from the latter stack to $\cH^{+,\gU(\theta)}_M$ such that the projection to the first term corresponds to $h^{\ra}_M$. As above, one extends this $M_{\gU(\theta)}$-torsor to a $M_{\gU(\theta)}$-torsor
$$
\tilde \gamma^{\ra}: \Bunt_{M, \gU(\theta)}\times_{\oX{}^{\cU(\theta)}} \wt\Gr_M^{+, \gU(\theta)}\to \cH^{+,\gU(\theta)}_{\tilde M}
$$
So, for $\cT\in \D(\Bunt_M)$ and a $M_{\gU(\theta)}$-equivariant perverse sheaf $S$ on $\wt\Gr_M^{+, \gU(\theta)}$ we may form their twisted product $(\cT\tboxtimes S)^r$ on $\cH^{+,\gU(\theta)}_{\tilde M}$. For $V\in  \Rep(\check{M}_n)$ positive define 
\begin{equation}
\label{functor_Loc_third}
\Loc^{\gU(\theta)}_{\Bun_M, \zeta}(V)=(\IC(\Bunt_M)\tboxtimes \Loc^{\gU(\theta)}_{\zeta}(V))^r
\end{equation}
Similarly, applying for $\nu_P: \Bun_P\to\Bun_M$ the functor $\nu_P^*[\dimrel(\nu_P)]$ to (\ref{functor_Loc_third}), we get the perverse sheaf on (\ref{iso_stratum_gU(theta)Bunt_tildeP}) denoted
$$
\Loc^{\gU(\theta)}_{\Bun_P, \zeta}(V)
$$ 

\begin{Th} 
\label{Th_main_Section1}
The $*$-restriction of $\IC_{\zeta}$ under $_{\gU(\theta)}\Bunt_{\tilde P}\hook{} \Bunt_{\tilde P}$ vanishes unless in the decomposition (\ref{decomposition_gU_theta}) each $\theta_m$ lies in $\Lambda_{G,P}^{\sharp, \, pos}$. In the latter case it is isomorphic to
$$
\Loc^{\gU(\theta)}_{\Bun_P, \zeta^{-1}}(\;\mathop{\oplus}\limits_{i\ge 0} \Sym^i(\check{\gu}_n(P))[2i]\; ) \otimes \Qlb[-\mid\! \gU(\theta)\!\mid] 
$$
where $\mathop{\oplus}\limits_{i\ge 0} \Sym^i(\check{\gu}_n(P))[2i]$ is viewed as a cohomologically graded $\check{M}_n$-module.
\end{Th}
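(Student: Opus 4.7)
The plan is to imitate the strategy of \cite{ICDC}, feeding in the twisted Satake equivalence of \cite{FL} and the Hecke description of $\IC_\zeta$ supplied by Propositions~\ref{Pp_analog_413} and \ref{Pp_analog_415}. First I would reduce to the case when the decomposition (\ref{decomposition_gU_theta}) has a single term. The locally closed stratification $_{\gU(\theta)}\Bunt_{\tilde P}\hook{}\Bunt_{\tilde P}$ enjoys a factorization property along the divisor $D=\sum n_m\theta_m x_m$: the isomorphism (\ref{iso_stratum_gU(theta)Bunt_tildeP}) together with the descent construction of $\Loc^{\gU(\theta)}_\zeta$ outlined in Section~\ref{section_11} turns both sides of the proposed formula into (twisted) external products of their pointwise versions, with the shift $[-|\gU(\theta)|]$ accounting for the normalization of the external product. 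This factorization for $\IC_\zeta$ itself is inherited from the factorization of $\Bunt_{\tilde P}$ along disjoint supports, which is a formal consequence of the corresponding property of $\Bunt_P$ established in (\cite{ICDC}, Section~4), combined with the fact that both $\cL_M$ and $\cL$ behave multiplicatively under modifications at disjoint points.

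Once the problem is local and concentrated at one point $x\in X$, the vanishing part is immediate: on the stratum indexed by $\theta\in \Lambda_{G,P}^{pos}$ the relevant pieces of $\IC_\zeta$ live over a component of $\Bunt_M$ on which, by Proposition~\ref{Pp_vanishing_over_Bunt_M^theta}, nothing with the $\zeta$-equivariance can survive unless $\theta\in \Lambda_{G,P}^{\sharp,pos}$. For the nonzero part, I would combine Propositions~\ref{Pp_analog_413} and \ref{Pp_analog_415}: the first identifies $_x\H^\nu_{P,M}(\IC_\zeta)$ with $\IC_{-w_0^M(\nu),\zeta}$ while the second identifies $_x\H^\lambda_{P,G}(\IC_\zeta)$ with the sum $\oplus_\nu \IC_{\nu,\zeta}\otimes\Hom_{\check{M}_n}(U^\nu,V^\lambda)$. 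Equating the two descriptions through the branching $\gRes$ of Proposition~\ref{Pp_gRes} pins down, at the level of $\check{M}_n$-modules, the multiplicities with which the $\Loc_{\zeta^{-1}}(U^\nu)$ appear in the $*$-stalk along $_{\gU(\theta)}\Bunt_{\tilde P}$: they are exactly the multiplicities of $U^\nu$ in $\mathop{\oplus}_{i\ge 0}\Sym^i(\check{\gu}_n(P))_\theta[2i]$.

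To upgrade this character-level statement into an actual isomorphism of perverse sheaves one uses purity: both sides are pointwise pure, the right-hand side by construction, the left-hand side because $\IC_\zeta$ is an intersection cohomology sheaf whose weight filtration is trivial. Semisimplicity of pure complexes, together with the fact that $\Loc^{\gU(\theta)}_{\Bun_P,\zeta^{-1}}$ applied to distinct irreducibles yields mutually non-isomorphic irreducible perverse sheaves (this uses Corollary~\ref{Cor_2} and the separation of weights via $c_P\colon C^*(\check{M}_n)\to \Lambda_{G,P}^{\sharp}$ restricted to $J$), then forces the isomorphism stated. The identification of the $\check{M}_n$-module with $\oplus_i \Sym^i(\check{\gu}_n(P))[2i]$ is then a purely representation-theoretic check: the generating series of stalk dimensions along $\theta$ is
$$
\prod_{\check\beta\in R^+(\check{G}_n)\setminus R^+(\check{M}_n)}\;\frac{1}{1-q^{c_P(\check\beta)}},
$$
which matches the PBW basis of $\Sym^\bullet(\check{\gu}_n(P))$ graded by $c_P$.

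The main obstacle will be the semi-infinite geometry underlying the Hecke calculation. In \cite{ICDC} the core lemma compares intersections $\Gr_P^\nu\cap\Gr_G^\mu$ with top-dimensional components indexed by $\Hom_{\check M}(U^\nu,V^\mu)$; in our twisted setting I need the metaplectic refinement from Corollary~\ref{Cor_2}, namely that $E^\mu_\cE$ descends under $\tilde\gt_P^\nu$ exactly on those top-dimensional components whose image in $\wt\Gr_M^\nu$ lies over $\Lambda^{\sharp,+}_M$. Verifying this compatibility cleanly, together with keeping track of the numerous shifts and gerb twists introduced by the choice of $(\tau,\kappa/N,c)$ fixed in (\ref{iso_root_of_theta_sharp}), is where the bulk of the technical work lies. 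Everything else is an essentially formal repetition of \cite{ICDC} with $\check\gu(P)$ replaced by $\check\gu_n(P)$ and ordinary Satake replaced by the twisted Satake of \cite{FL}.
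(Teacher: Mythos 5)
Your plan diverges from the paper's actual route at the crucial technical step. The paper does \emph{not} derive Theorem~\ref{Th_main_Section1} from the Hecke-functor Propositions~\ref{Pp_analog_413} and \ref{Pp_analog_415}. Those propositions serve only to establish the Hecke property of $\Eis$ (Theorem~\ref{Th_Hecke_property_of_Eis^G_M}). Instead, the stalk computation is reduced via the Zastava local model (Section~\ref{Section_Zastava spaces}, the open embedding $\wt Z^{\theta}_{\Bunt_M}\hook{}\Bunt_{\tilde P}\times_{\Bun_G}\Bun_{P^-}$) to Theorem~\ref{Th_3}, which computes $\gs^{\theta !}(\IC_{Z^\theta,\zeta})$; that theorem is established by induction on $\theta$ using factorization, the contraction principle ($\gs^{\theta!}\simeq\pi_{P!}$), and the semi-infinite computation of Theorem~\ref{Th_2}. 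Only then does Corollary~\ref{Cor_3} plus Lemma~\ref{Lm_11_resricting_to_diag} and the combinatorial identity $(\Sym^m\check{\gu}_n(P))_\theta=\oplus_{\gB(\theta),\,|\gB(\theta)|=m}\otimes_{\nu\in J}\Sym^{n_\nu}(\check{\gu}_n(P)_\nu)$ yield the stated formula.

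The concrete gap in your proposal: Propositions~\ref{Pp_analog_413}--\ref{Pp_analog_415} are relations that $\IC_\zeta$ \emph{satisfies}, but they do not pin down its stalks. They express Hecke modifications of $\IC_\zeta$ in terms of the auxiliary sheaves $\IC_{\nu,\zeta}$ on $_{x,\ge\nu}\Bunt_{\tilde P}$, whose own stalks are unknown a priori — so "equating the two descriptions through $\gRes$" only gives a self-consistency relation among the whole family $\{\IC_{\nu,\zeta}\}$, not a formula for any one of them. Moreover, your step from "the multiplicities match at the character level" to "the sheaves are isomorphic" via purity and semisimplicity requires knowing in advance that the $*$-restriction of $\IC_\zeta$ to $_{\gU(\theta)}\Bunt_{\tilde P}$ is a shifted perverse sheaf of the expected kind — that perversity estimate is precisely what Propositions~\ref{Pp_resricting_to_X} and \ref{Pp_alongs_strata} provide, and it uses the Zastava geometry (in particular the contraction principle making $\gs^{\theta!}\IC_{Z^\theta,\zeta}$ a direct sum of shifted perverse sheaves). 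Finally, the geometric calculation underlying Theorem~\ref{Th_2} naturally produces $U(\check{\gu}_n(P))_\theta$ rather than $\Sym^\bullet(\check{\gu}_n(P))_\theta$; the paper explicitly invokes the PBW filtration to pass between them, and your account of the stalk identification does not engage with this point. In short, you have correctly identified the vanishing criterion, the factorization reduction, and the shape of the answer, but the mechanism you propose for computing the multiplicities does not by itself determine them — the Zastava local model and the induction of Theorem~\ref{Th_3} are not dispensable.
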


\section{Proof of Theorem~\ref{Th_main_Section1}}

\subsection{Zastava spaces} 
\label{Section_Zastava spaces}
Keep notations of Section~\ref{Section_Parabolic geometric Eisenstein series}. We also use some notations from \cite{ICDC}. For $\theta\in\Lambda_{G,P}^{pos}$ let $Z^{\theta}$ be as in \cite{ICDC}. Recall that $\Mod_M^{+,\theta}$ classifies $(D\in X^{\theta}, \cF_M\in\Bun_M, \beta_M)$, where $\beta_M: \cF_M\,\iso\, \cF^0_M\mid_{X-D}$ is an isomorphism such that for any $G$-module $\cV$, the map 
$$
\beta_M: \cV^{U(P)}_{\cF_M}\to \cV^{U(P)}_{\cF^0_M}
$$ 
is regular over $X$, and $\beta_M$ induces $\cF_{M/[M,M]}\,\iso\, \cF^0_{M/[M,M]}(-D)$. 

A point of $Z^{\theta}$ is given by 
\begin{equation}
\label{point_of_Z^theta}
(\cF_G,\cF_M,\beta,\beta_M, D),
\end{equation}
where $(\cF_M, \beta_M,D)\in \Mod_M^{+,\theta}$, and $\cF_G$ is a $G$-torsor on $X$ equipped with a trivialization $\beta: \cF_G\,\iso\, \cF_G^0\mid_{X-D}$ satisfying the conditions of (\cite{ICDC}, Section~2.2). We have a diagram $Z^{\theta}\toup{\pi_P} \Mod_M^{+,\theta}\to \Bun_M$, where the second map sends the above point to $\cF_M$.  By abuse of notations, the restrictions of $\cL_M$ under these maps are also denoted by $\cL_M$. Let 
$$
\wt Z^{\theta}\toup{\pi_P} \Mod_{\tilde M}^{+,\theta}\to \Bunt_M
$$ 
be obtained from the latter diagram by the base change $\Bunt_M\to\Bun_M$. A point of $\wt Z^{\theta}$ is given by (\ref{point_of_Z^theta}) together with a line $\cU$ equipped with $\cU^N\,\iso\, (\cL_M)_{\cF_M}$.

 The open subscheme $Z^{\theta}_{max}\subset Z^{\theta}$ is defined in (\cite{ICDC}, Section~2.2). Let $\wt Z^{\theta}_{max}=Z^{\theta}_{max}\times_{Z^{\theta}} \wt Z^{\theta}$. We have an isomorphism
$$
i_{max}: B(\mu_N)\times Z^{\theta}_{max}\,\iso\, \wt Z^{\theta}_{max}
$$
sending $\cU_0$, $\cU_0^N\,\iso\, k$ and $(\cF_G, \cF_M, \beta,\beta_M)$ to 
$(\cF_G, \cF_M, \beta,\beta_M,\cU)$, where $\cU=\cU_0^{-1}$ is equipped with the induced isomorphism
\begin{equation}
\label{iso_for_tilde_Z_max}
\cU^N\,\iso\, k\,\iso\, \cL_{\cF_G}\,\iso\, (\cL_M)_{\cF_M}
\end{equation}
Define $\IC_{Z^{\theta},\zeta}$ as the intermediate extension of $i_{max *}(\cL_{\zeta}\boxtimes \IC(Z^{\theta}_{max}))$ to $\tilde Z^{\theta}$. We underline that $a\in \mu_N(k)\subset\Aut(\cU)$ acts on $\IC_{Z^{\theta},\zeta}$ as $\zeta^{-1}(a)$. 

\subsubsection{Action of $M_{X^{\theta}}$} 
\label{section_122}
For $D\in X^{\theta}$ denote by $\bar D$ (resp., by $\bar D^0$) the formal (resp., the punctured formal) neighbourhood of $D$ in $X$. This means that we pick a homomorphism of semigroups $\Lambda_{G,P}^{pos}\to\ZZ_+$ sending each $\alpha_i$, $i\in \cI-\cI_M$ to a nonzero element, it yields a morphism $v: X^{\theta}\to X^{(d)}$, where $d$ is the image of $\theta$, and $\bar D$ is the formal neighbourhood of $v(D)$ in $X$. Similarly for $\bar D^0$.
 
 Let $M_{X^{\theta}}$ be the scheme classifying $D\in X^{\theta}$ and a section of $M$ over the formal neighbourhood of $D$ in $X$. 

  The space $\Mod^{+,\theta}_M$ can be rewritten as the space classifying $D\in X^{\theta}$, a $M$-torsor $\cF_M$ on $\bar D$, its trivialization $\beta_M: \cF_M\,\iso\, \cF^0_M\mid_{\bar D^0}$ such that for each representation $\cV$ of $M$ the map $\beta_M: \cV^{U(P)}_{\cF_M}\to \cV^{U(P)}_{\cF_M^0}$ is regular over $\bar D$, and $\beta_M$ induces an isomorphism $\cF_{M/[M,M]}\,\iso\, \cF^0_{M/[M,M]}(-D)$ over $\bar D$. 
In this incarnation $M_{X^{\theta}}$ acts on $\Mod^{+,\theta}_M$ over $X^{\theta}$ by changing the trivialization $\beta_M$. 

 Similarly, $Z^{\theta}$ can be seen as the scheme classifying $D\in X^{\theta}$, a $M$-torsor $\cF_M$ on $\bar D$, its trivialization $\beta_M: \cF_M\,\iso\, \cF^0_M\mid_{\bar D^0}$ such that for each representation $\cV$ of $M$ the map $\beta_M: \cV^{U(P)}_{\cF_M}\to \cV^{U(P)}_{\cF_M^0}$ is regular over $\bar D$, and $\beta_M$ induces an isomorphism $\cF_{M/[M,M]}\,\iso\, \cF^0_{M/[M,M]}(-D)$ over $\bar D$; a $G$-torsor $\cF_G$ over $\bar D$, an isomorphism $\beta: \cF_G\,\iso\, \cF^0_G\mid_{\bar D^0}$ such that for each $G$-module $V$ the map
$$
V_{\cF_G}\toup{\beta} V_{\cF^0_G}\to (V_{U(P^-)})_{\cF^0_M}
$$
is regular and surjective over $\bar D$, and the map
$$
V^{U(P)}_{\cF_M}\toup{\beta_M} V^{U(P)}_{\cF^0_M}\hook{} V_{\cF^0_G}\toup{\beta^{-1}} V_{\cF_G}
$$
is regular over $\bar D$. In this incarnation $M_{X^{\theta}}$ acts on $Z^{\theta}$ via its action on $\cF^0_M$. Namely, if $g$ is automorphism of $\cF^0_M$ over $\bar D$, it sends the above point to the collection $(\cF_G,\cF_M, g\beta_M, g\beta)$.  

 The line bundle $\cL_M$ on $Z^{\theta}$ is naturally $M_{X^{\theta}}$-equivariant. Namely, the fibre of $\cL_M$ at $(\cF_G,\cF_M, \beta_M, \beta)$ is $\det\RG(X, \gg_{\cF^0_M})\otimes \det\RG(X, \gg_{\cF_M})^{-1}$, and $M_{X^{\theta}}$ acts via its action on $\cF^0_M$. So, $M_{X^{\theta}}$ acts on $\Mod^{+,\theta}_{\tilde M}$ and on $\wt Z^{\theta}$, and the maps $\gs^{\theta}: \Mod^{+,\theta}_{\tilde M}\to \wt Z^{\theta}$ and $\pi_P: \wt Z^{\theta}\to \Mod^{+,\theta}_{\tilde M}$ are $M_{X^{\theta}}$-equivariant. Note that $\IC_{Z^{\theta},\zeta}$ is naturally $M_{X^{\theta}}$-equivariant.

\subsubsection{The relation between $\IC_{Z^{\theta},\zeta}$ and $\IC_{\zeta}$} 
\label{section_relation between_IC_Ztheta_and_IC_zeta}
Write $\Bun_{M,X^{\theta}}$ for the stack classifying $\cF_M\in\Bun_M, D\in X^{\theta}$ and a trivialization of $\cF_M$ over the formal neighbourhood of $D$ in $X$. Let $\Bunt_{M,X^{\theta}}$ be obtained from it by the base change $\Bunt_M\to\Bun_M$. 

 Recall that $Z^{\theta}_{\Bun_M}$ is defined as $Z^{\theta}$ by replacing $\cF_M^0$ by the `background' $M$-torsor $\cF'_M\in\Bun_M$. Let $\wt Z^{\theta}_{\Bunt_M}$ be obtained from $Z^{\theta}_{\Bun_M}$ by adding lines $\cU,\cU'$ equipped with isomorphisms $\cU^N\,\iso\, (\cL_M)_{\cF_M}$, $\cU'^n\,\iso\, (\cL_M)_{\cF'_M}$, where $\cF'_M$ is the background $M$-torsor. 
   
   Let $M_{X^{\theta}}$ act diagonally on $\wt Z^{\theta}\times_{X^{\theta}} \Bunt_{M, X^{\theta}}$. As n Section~\ref{section_Hecke_functors_for_tilde_G}, we have a $M_{X^{\theta}}$-torsor
$$
\gamma_Z: \wt Z^{\theta}\times_{X^{\theta}} \Bunt_{M, X^{\theta}}\to \wt Z^{\theta}_{\Bunt_M}
$$   
We form the twisted external product 
\begin{equation}
\label{complex_gamma_Z_direct_image}
(\IC_{Z^{\theta},\zeta}\tboxtimes \IC(\Bunt_M))
\end{equation}
on $\wt Z^{\theta}_{\Bunt_M}$, which is the descent with respect to $\gamma_Z$.

 Let $\Bun_{P^-}^r$ be defined as in (\cite{ICDC}, Section~3.6) so that $\Bun_{P^-}^r\to\Bun_G$ is smooth. By (\cite{ICDC}, Propostion~3.2), $Z^{\theta}_{\Bun_M}\subset \Bunt_P\times_{\Bun_G}\Bun_{P^-}$ is open. This extends naturally to an open immersion
$$
\wt Z^{\theta}_{\Bunt_M}\hook{} \Bunt_{\tilde P}\times_{\Bun_G}\Bun_{P^-}
$$  
The restriction of $\pr_1^*(\IC_{\zeta})[\dimrel(\pr_1)]$ under this open immersion identifies with (\ref{complex_gamma_Z_direct_image}) over the intersection with 
$$
\Bunt_{\tilde P}\times_{\Bun_G}\Bun_{P^-}^r
$$ 
So, as in \cite{ICDC}, $\IC_{Z^{\theta},\zeta}$ is a local model for $\IC_{\zeta}$. 

\subsubsection{} The natural extensions of $\pi_P$ and $\gs^{\theta}$ are still denoted $\gs^{\theta}: \Mod^{+,\theta}_{\tilde M}\to \wt Z^{\theta}$ and $\pi_P: \wt Z^{\theta}\to \Mod^{+,\theta}_{\tilde M}$.  
 
 If we pick a $\Gm$-action on $Z^{\theta}$ as in (\cite{ICDC}, Section~5.3) then the line bundle $\cL_M$ and its trivialization over $Z^{\theta}_{max}$ are $\Gm$-equivariant, as $\Gm$ is a subgroup in $M_{X^{\theta}}$. So, $\IC_{Z^{\theta},\zeta}$ is $\Gm$-equivariant, and the analog of (\cite{ICDC}, Proposition~5.2) holds, there is a canonical isomorphism 
$$
\gs^{\theta !}(\IC_{Z^{\theta},\zeta})\,\iso\, \pi_{P !}(\IC_{Z^{\theta},\zeta})
$$ 
over $\Mod^{+,\theta}_{\tilde M}$. Since all our objects are already defined over a suitable finite subfield of $k$, the analog of (\cite{ICDC}, Corollary~5.5) holds, the latter complex is a direct sum of shifted perverse sheaves.

 Recall that for a fixed $x\in X$ one denotes by $\SSS^{\theta}$ the preimage of $\Spec k\toup{x} X\to X^{\theta}$ under $\pi_G: Z^{\theta}\to X^{\theta}$. The corresponding preimage under $\pi_G: \wt Z^{\theta}\to X^{\theta}$ is denoted $\wt \SSS^{\theta}$. 
 
 For $\theta'\in\Lambda_{G,P}^{pos}$ with $\theta-\theta'\in\Lambda_{G,P}^{pos}$ one has the locally closed subschemes $_{\theta'}Z^{\theta}\hook{}Z^{\theta}$ and $_{\theta'}\SSS^{\theta}\hook{}\SSS^{\theta}$ defined in (\cite{ICDC}, Section~3.5). Restricting the gerb $\tilde Z^{\theta}$, one gets the locally closed substacks $_{\theta'}\wt Z^{\theta}\hook{} \wt Z^{\theta}$ and $_{\theta'}\wt \SSS^{\theta}\hook{} \wt \SSS^{\theta}$. 
 
\subsubsection{} Recall that $\Gr_M^{\theta}$ is the connected component containing $t^{\nu}M(\bO)$ for any $\nu\in\Lambda$ over $\theta$. By virtue of (\cite{ICDC}, Propostion~2.6) for $\theta\in\Lambda_{G,P}^{pos}$ the isomorphism $i_{max}$ restricts to an isomorphism
$$
i_{max}^x: B(\mu_N)\times\Gr_P^{\theta}\cap \Gr_{U(P^-)}\,\iso\,{_0\wt\SSS^{\theta}}
$$
sending $\cU_0$, $\cU_0^N\,\iso\, k$, $(\cF_G,\cF_M, \beta,\beta_M)$ to $(\cF_G,\cF_M, \beta,\beta_M, \cU)$, where $\cU=\cU_0^{-1}$ is equipped with the induced isomorphism (\ref{iso_for_tilde_Z_max}). 
The map $\pi_P$ restricts to a morphism
$$
\tilde\gt^{\theta}: {_0\wt\SSS^{\theta}}\to \wt\Gr_M^{+,\theta}
$$
sending $(\cF_G,\cF_M, \beta,\beta_M, \cU)$ to $(\cF_M, \beta_M, \cU)$. 
First, we prove the following analog of (\cite{ICDC}, Theorem~5.9).

\begin{Th} 
\label{Th_2}
1) For $\theta\in\Lambda_{G,P}^{pos}$ the complex 
\begin{equation}
\label{complex_for_Th2}
\tilde\gt^{\theta}_!(i^x_{max})_*(\cL_{\zeta}\boxtimes\Qlb)
\end{equation} 
is placed in perverse degrees $\le \<\theta, 2(\check{\rho}-\check{\rho}_M)\>$.\\
2) Its $\<\theta, 2(\check{\rho}-\check{\rho}_M)\>$-th perverse cohomology sheaf 
vanishes unless $\theta\in \Lambda^{\sharp}_{G,P}$. In the latter case it identifies canonically with $\Loc_{{\zeta}^{-1}}(U(\check{\gu}_n(P))_{\theta})$. 
\end{Th}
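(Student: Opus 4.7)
The plan is to prove Theorem~\ref{Th_2} by computing the $*$-restriction of (\ref{complex_for_Th2}) stratum by stratum on $\wt\Gr_M^{+,\theta}$, with Corollary~\ref{Cor_2} as the main twisted geometric input. The argument parallels that of (\cite{ICDC}, Theorem~5.9), but with the twisted Satake equivalence $\Loc_{\zeta}: \Rep(\check M_n)\iso\PPerv^{\natural}_{M,G,n}$ and the restriction functor $\gRes$ of Proposition~\ref{Pp_gRes} replacing the classical Mirkovi\'c--Vilonen inputs. First I would stratify $\wt\Gr_M^{+,\theta}=\bigsqcup_\nu\wt\Gr_M^\nu$ over $\nu\in\Lambda^+_M$ lying over $\theta$, and reduce both parts of the theorem to a stratumwise bound on perverse degrees together with an identification of the top perverse cohomology on each $\wt\Gr_M^\nu$.

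Using the isomorphism $i^x_{max}$, the preimage of $\wt\Gr_M^\nu$ under $\tilde\gt^\theta$ identifies with a $B(\mu_N)$-gerb over $\Gr_P^\nu\cap\Gr_{U(P^-)}$, which further decomposes by $G$-position as $\bigsqcup_{\mu\in\Lambda^{\sharp,+}}\Gr_P^\nu\cap\Gr_G^\mu\cap\Gr_{U(P^-)}$, with the map to $\wt\Gr_M^\nu$ being the natural $\tilde\gt^\nu_P$. The essential geometric input is that the pullback of the character sheaf $\cL_\zeta$ from the $B(\mu_N)$-factor of ${_0\wt\SSS^\theta}$ to each piece $\wt\Gr_P^\nu\cap\wt\Gr_G^\mu\cap\wt\Gr_{U(P^-)}$ agrees, up to a cohomological shift, with the restriction of the local system $E^\mu_\cE$ from $\wt\Gr_G^\mu$ defined in (\cite{FL}, Section~2.1). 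This matching is obtained by unpacking the Brylinski--Deligne trivialization (\ref{iso_for_tilde_Z_max}) of $\cL$ over $Z^\theta_{max}$ against the construction of $E^\mu_\cE$; it is the twisted analog of the evident trivialization used in \cite{ICDC}.

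Granting this identification, Corollary~\ref{Cor_2} applied to each $\mu\in\Lambda^{\sharp,+}$ yields that $(\tilde\gt^\nu_P)_!(E^\mu_\cE|_{\wt\Gr_P^\nu\cap\wt\Gr_G^\mu})$, after the natural shift, sits in perverse degrees $\le 0$ with top perverse cohomology sheaf $\cA^\nu_{M,\cE}\otimes\Hom_{\check M_n}(U^\nu,V^\mu)$. Summing over $\mu$ and accounting for the codimension shift between $\wt\Gr_M^\nu$ and $\wt\Gr_M^{+,\theta}$ produces the bound $\<\theta,2(\check\rho-\check\rho_M)\>$ of part~1). The vanishing outside $\theta\in\Lambda^{\sharp}_{G,P}$ in part~2) follows, in the spirit of Proposition~\ref{Pp_vanishing_over_Bunt_M^theta}, from a $Z(M)^0$-action argument: $\cL_\zeta$ forces the central character to lie in the appropriate lattice $N\check\Lambda_{M,0}$.

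The hardest step is the final representation-theoretic collation: matching the $\check M_n$-module assembled from the top cohomologies on each stratum, namely $\bigoplus_\mu \Hom_{\check M_n}(U^\nu,V^\mu)$ weighted by the count of top-dimensional components of $\wt\Gr_P^\nu\cap\wt\Gr_G^\mu\cap\wt\Gr_{U(P^-)}$ along which $E^\mu_\cE$ descends, with the $\nu$-isotypic component of $U(\check\gu_n(P))_\theta$. This is the twisted counterpart of the Kostant/PBW identification underlying (\cite{ICDC}, Section~5): combinatorially, one must verify a Kostant-type multiplicity formula for $U(\check\gu_n(P))_\theta$ in terms of branching multiplicities from $\check G_n$ to $\check M_n$, paired with the dimension count for Mirkovi\'c--Vilonen-type cycles in the metaplectic affine Grassmannian furnished by Corollary~\ref{Cor_2}. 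All geometric sign/twist issues (such as the appearance of $\zeta^{-1}$ rather than $\zeta$) are pinned down by tracing the $\mu_N(k)$-action through the identifications $i^x_{max}$ and $(i^x_{max})_*$.
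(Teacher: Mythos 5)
Your proposal takes a genuinely different route from the paper's, and while the outline is not unreasonable, it has a couple of concrete gaps.

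The paper does \emph{not} decompose $\Gr_P^\nu\cap\Gr_{U(P^-)}$ by $G$-position and sum over all $\mu\in\Lambda^{\sharp,+}$. Instead it picks a \emph{single} deep cocharacter $\mu\in 2\Lambda^{\sharp,+}$ orthogonal to the roots of $M$ and multiplies by $t^\mu$; by (\cite{ICDC}, Proposition 6.4/6.6) the shifted scheme $\Gr_P^{\nu-\mu}\cap\Gr_{P^-}^{-\mu}$ then lies entirely inside a single $G$-orbit closure $\Gr_G^{-w_0(\mu)}$. Lemma~\ref{Lm_for_canonicity_Lm7} is then exactly what is needed to identify the pulled-back constant twisted sheaf with the restriction of the one sheaf $\cA^{-w_0(\mu)}_\cE$, and the representation-theoretic content collapses to the stabilization $\Hom_{\check M_n}(U^{\nu-\mu},V^{-w_0(\mu)})\iso\Hom_{\check M_n}(U^\nu,U(\check\gu_n(P)))$ for $\mu$ deep on the wall, which is a clean single statement. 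Your approach would, if carried out, reprove a twisted version of a PBW/Kostant multiplicity identity rather than appeal to this stabilization.

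Two steps in your outline need repair. First, you invoke Corollary~\ref{Cor_2} for $(\tilde\gt^\nu_P)_!(E^\mu_\cE|_{\wt\Gr_P^\nu\cap\wt\Gr_G^\mu})$, but what actually appears in the theorem after your stratification is the pushforward from $\wt\Gr_P^\nu\cap\wt\Gr_G^\mu\cap\wt\Gr_{U(P^-)}$, which is a strictly smaller subscheme; Corollary~\ref{Cor_2} as stated does not directly give a bound or a top-cohomology identification for that intersection, so you cannot apply it without further work (e.g.\ a dimension estimate for the triple intersection and a comparison of top-dimensional components). Second, the key geometric identification you flag --- that $(\cL_\zeta\boxtimes\Qlb)$ restricted to each $G$-orbit stratum agrees with $E^\mu_\cE$ --- is plausible but not a tautology: both sheaves are rank-one and $\mu_N$-equivariant with the same eigencharacter, but they could still differ by a local system on the underlying intersection, and one must pin this down. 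In the paper this issue is handled only once, for the single deep $-w_0(\mu)$, by reducing to a check at the $M$-fixed point $t^{-\mu}G(\bO)$ (Lemma~\ref{Lm_for_canonicity_Lm7}); extending this to every stratum $\wt\Gr_P^\nu\cap\wt\Gr_G^\mu\cap\wt\Gr_{U(P^-)}$ would need its own argument. Finally, the ``hardest step'' you mention --- matching the sum over $\mu$ of branching multiplicities weighted by top-dimensional component counts with $\Hom_{\check M_n}(U^\nu,U(\check\gu_n(P)))$ --- is precisely the combinatorial identity that the paper's deep-$\mu$ trick is designed to avoid, and you do not indicate how to prove it; note in particular that the relevant component count must be over the \emph{triple} intersection, not over $\wt\Gr_P^\nu\cap\wt\Gr_G^\mu$, so the naive appeal to Corollary~\ref{Cor_2} does not supply the needed input.
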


 The map $\tilde\gt^{\theta}$ is $M(\bO_x)$-equivariant, so each perverse cohomology sheaf of (\ref{complex_for_Th2}) is of the form $\Loc_{\zeta^{-1}}(V)$ for some $V\in\Rep(\check{M}_n)$. 

\subsubsection{Proof of Theorem~\ref{Th_2} for $P=B$} By (\cite{ICDC}, Section~6.3), for any $\nu\in \Lambda$ one has $\dim(\Gr_B^{\nu}\times \Gr_{U(B^-)})\le \<\nu, \check{\rho}\>$. This implies part 1). Moreover, (\ref{complex_for_Th2}) vanishes unless $\theta\in \Lambda^{\sharp}$ because of the description of $\PPerv_{T,G,n}$.  

 Assume $\mu\in \Lambda^+$ is deep enough in the dominant chamber so that $\Gr_B^{\nu-\mu}\cap \Gr_{B^-}^{-\mu}\subset \Gr_G^{-w_0(\mu)}$ by (\cite{ICDC}, Proposition~6.4). By \select{loc.cit.}, the inclusion 
$$
a: \wt\Gr_B^{\nu-\mu}\cap \wt\Gr_{B^-}^{-\mu}\subset \wt\Gr_G^{-w_0(\mu)}\cap \wt\Gr_B^{\nu-\mu}
$$ 
yields a bijection between the irreducible components of dimension $\<\nu, \check{\rho}\>$ of both schemes. Recall that the multiplication by $t^{\mu}$ gives an isomorphism 
$$
\Gr_B^{\nu-\mu}\cap \Gr_{B^-}^{-\mu}\,\iso\, \Gr_B^{\nu}\cap \Gr_{B^-}^0
$$ 
Recall that $\det(\gg(\bO): \gg(\bO)^{t_x^{\mu}})\,\iso\, \Omega_{\bar c}^{\check{h}\iota(\mu,\mu)}$. Assume in addition that $\mu\in 2\Lambda^{\sharp}$. In this case we get a $T(\bO)$-equivariant diagram
$$
\begin{array}{ccc}
\wt\Gr_B^{\nu-\mu}\cap \wt\Gr_{B^-}^{-\mu} & \toup{b} & \wt\Gr_B^{\nu}\cap \wt\Gr_{B^-}^0\\
\downarrow\lefteqn{\scriptstyle\tilde\gt} && \downarrow\lefteqn{\scriptstyle\tilde\gt}\\
\wt\Gr_T^{\nu-\mu} & \toup{b} & \wt\Gr_T^{\nu},
\end{array}
$$ 
where for the top row $b$ is the isomorphism sending $(\cU, \cU^N\,\iso\, \det(\gg(\bO_x): \gg(\bO)^g), \; gG(\bO_x))$ to $(\cU', t^{\mu}gG(\bO_x))$, where $\cU'=\cU\otimes \Omega_{\bar c}^{\frac{\iota(\mu,\mu)}{2n}}$ is equipped with the induced isomorphism
$$
\cU'^N\,\iso\, \det(\gg(\bO): \gg(\bO)^{t_x^{\mu}})\otimes \det(\gg(\bO): \gg(\bO)^g)\,\iso\, \det(\gg(\bO): \gg(\bO)^{t_x^{\mu}g})
$$
For the low row $b$ is defined similarly. Using Lemma~\ref{Lm_for_canonicity_Lm7} below, one gets canonically 
$$
(i^x_{max})^*b_*a^*\cA^{-w_0(\mu)}_{\cE}[-\<\mu, 2\check{\rho}\>]\,\iso\, (\cL_{\zeta}^*\boxtimes\Qlb)
$$
From Proposition~\ref{Pp_description_weight_space} we see that 
$$
\tilde \gt^{\nu-\mu}_!(\gs^{\nu-\mu})^*\cA^{-w_0(\mu)}_{\cE}[-\<\mu, 2\check{\rho}\>]
$$ 
identifies with $\Loc(V^{-w_0(\mu)}(\nu-\mu))[-\<\nu, 2\check{\rho}\>]$. If $\mu$ is large enough in the dominant chamber compared to $\nu$ then the latter identifies with $U(\check{\gu}_n)_{\nu}[-\<\nu, 2\check{\rho}\>]$. Here $V^{\mu_1}(\mu_2)$ denotes $\mu_2$-weight space of $\check{T}^{\sharp}$ in the irreducible representation $V^{\mu_1}$ of $\check{G}_n$ with highest weight $\mu_1$. 

\subsubsection{Proof of Theorem~\ref{Th_2} for general $P$} Let $\theta\in \Lambda_{G,P}^{pos}$. For $\nu\in\Lambda$ dominant for $M$ with $\Gr_M^{\nu}\subset \Gr_M^{+,\theta}$ consider the map $\tilde\gt^{\nu}: \wt\Gr_P^{\nu}\cap  \wt\Gr_{U(P^-)}\to\wt\Gr_M^{\nu}$. It suffices to prove that for each such $\nu$ the complex
$$
\tilde\gt_!^{\nu}(i^x_{max})_*(\cL_{\zeta}\boxtimes\Qlb)\mid_{\wt\Gr_P^{\nu}\cap  \wt\Gr_{U(P^-)}}
$$
is placed in perverse degrees $\le \<\nu, 2\check{\rho}-2\check{\rho}_M\>$, and its $\<\nu, 2\check{\rho}-2\check{\rho}_M\>$-th perverse cohomology sheaf vanishes unless $\nu\in \Lambda^{\sharp,+}_M$, and in the latter case identifies with 
$$
\cA^{\nu}_{M,\cE, \zeta^{-1}}\otimes \Hom_{\check{M}_n}(U^{\nu}, U(\check{\gu}_n(P)))
$$

Pick $\nu\in\Lambda$ dominant for $M$ with $\Gr_M^{\nu}\subset \Gr_M^{+,\theta}$. We have a diagram
$$
\begin{array}{ccc}
\wt\Gr_P^{\theta}\cap \wt\Gr_{U(P^-)} & \toup{\tilde\gt^{\theta}} & \wt\Gr_M^{+,\theta}\\
\uparrow && \uparrow\\
\wt\Gr_P^{\nu}\cap  \wt\Gr_{U(P^-)} & \toup{\tilde\gt^{\nu}} & \wt\Gr_M^{\nu},
\end{array}
$$
where vertical arrows are natural closed immersions. Let $Z(M)^0$ be the connected component of unity of the center $Z(M)$ of $M$. Pick $\mu\in \Lambda$ satisfying $\<\mu,\check{\alpha}_i\>=0$ for $i\in \cI_M$, and $\<\mu,\check{\alpha}_i\>$ positive and large enough compared to $\nu$ for $i\in \cI-\cI_M$. So, $\mu\in \Hom(\Gm, Z(M)^0)$. The multiplication by $t^{\mu}$ yields a diagram, where the horizontal arrows are isomorphisms
\begin{equation}
\label{diag_for_Th_2_general_P}
\begin{array}{ccc}
\Gr_P^{\nu-\mu}\cap \Gr_{P^-}^{-\mu} & \iso & \Gr_P^{\nu}\cap \Gr_{U(P^-)}\\
\downarrow\lefteqn{\scriptstyle \gt^{\nu-\mu}} && \downarrow\lefteqn{\scriptstyle \gt^{\nu}}\\
\Gr_M^{\nu-\mu} & \iso & \Gr_M^{\nu}
\end{array}
\end{equation}
We may and do assume by (\cite{ICDC}, Proposition~6.6) that $\Gr_P^{\nu-\mu}\cap \Gr_{P^-}^{-\mu}\subset \Gr_G^{-w_0(\mu)}$. 

 Assume in addition that $\mu\in 2\Lambda^{\sharp}$. Then (\ref{diag_for_Th_2_general_P}) extends to a diagram of $M(\bO)$-equivariant morphisms
\begin{equation}
\label{diag_for_general_P_third}
\begin{array}{ccc}
\wt\Gr_P^{\nu-\mu}\cap \wt\Gr_{P^-}^{-\mu} & \toup{b} & \wt\Gr_P^{\nu}\cap \wt\Gr_{U(P^-)}\\
\downarrow\lefteqn{\scriptstyle \wt\gt^{\nu-\mu}} && \downarrow\lefteqn{\scriptstyle \wt\gt^{\nu}}\\
\wt\Gr_M^{\nu-\mu} & \toup{b} & \wt\Gr_M^{\nu},
\end{array} 
\end{equation}
where for the top row $b$ is the isomorphism sending $(\cU, \cU^N\,\iso\, \det(\gg(\bO_x): \gg(\bO)^g), \; gG(\bO_x))$ to $(\cU', t^{\mu}gG(\bO_x))$, where $\cU'=\cU\otimes \Omega_{\bar c}^{\frac{\iota(\mu,\mu)}{2n}}$ is equipped with the induced isomorphism
$$
\cU'^N\,\iso\, \det(\gg(\bO): \gg(\bO)^{t_x^{\mu}})\otimes \det(\gg(\bO): \gg(\bO)^g)\,\iso\, \det(\gg(\bO): \gg(\bO)^{t_x^{\mu}g})
$$
For the low row $b$ is defined similarly. 

 
 Consider the inclusion $a: \wt\Gr_P^{\nu-\mu}\cap \wt\Gr_{P^-}^{-\mu}\hook{} \wt\Gr_P^{\nu-\mu}\cap \wt\Gr_G^{-w_0(\mu)}$. Using Lemma~\ref{Lm_for_canonicity_Lm7} below, one gets a canonical isomorphism
$$
b_*a^*\cA_{\cE}^{-w_0(\mu)}[-\<\mu, 2\check{\rho}\>]\,\iso\, (i^x_{max})_*(\cL_{\zeta}^*\boxtimes\Qlb)
$$
By (\cite{BG}, Proposition~4.3.3), the fibres of the left vertical arrow $\tilde\gt^{\nu-\mu}$ in (\ref{diag_for_general_P_third}) are of dimension $\le \<\nu, \check{\rho}-2\check{\rho}_M\>$, so $\tilde\gt^{\nu-\mu}_!(a^*E^{-w_0(\mu)}_{\cE})$ is placed in usual cohomological degrees $\le \<\nu, 2\check{\rho}-4\check{\rho}_M\>$, and this complex is $M(\bO)$-equivariant. So, it is placed in perverse degrees $\le \<\nu, 2\check{\rho}-2\check{\rho}_M\>$. The natural map
$$
\tilde\gt^{\nu-\mu}_!(E^{-w_0(\mu)}_{\cE})\to \tilde\gt^{\nu-\mu}_!(a_*a^*E^{-w_0(\mu)}_{\cE})
$$
induces an isomorphism between the corresponding $\<\nu, 2\check{\rho}-2\check{\rho}_M\>$-th perverse cohomology sheaves over $\wt\Gr_M^{\nu-\mu}$, which identify by Corollary~\ref{Cor_2} with $\cA^{\nu-\mu}_{M,\cE}\otimes \Hom_{\check{M}_n}(U^{\nu-\mu}, V^{-w_0(\mu)})$ for $\nu\in \Lambda^{\sharp,+}_M$ and vanish otherwise. Assume $\nu\in \Lambda^{\sharp,+}_M$. Since $\mu$ is large enough on the corresponding wall of the Weyl chamber compared to $\nu$, 
we have 
$$
\Hom_{\check{M}_n}(U^{\nu-\mu}, V^{-w_0(\mu)})\,\iso\, \Hom_{\check{M}_n}(U^{\nu}, U(\check{\gu}_n(P)))
$$ 
Theorem~\ref{Th_2} is proved. $\square$

\bigskip

For $\mu\in \Lambda^{\sharp,+}$ we have the line bundle $\cL_{\mu,\cE}$ on $\Gr_G^{\mu}$ defined in (\cite{FL}, Section~2.1, p. 723). Its analog for the Levi $M$ is the following line bundle $\cL_{\mu, M, \cE}$. For $\mu\in \Lambda^{\sharp,+}_M$ let $\cB^{\mu}_M=M/P_M^{\mu}$ be the $M$-orbit through $t^{\mu}M(\bO)$ in $\Gr_M$ as in (\cite{FL}, Section~4.1.1). Denote by $\tilde\omega_{M,\mu}: \Gr^{\mu}_M\to \cB^{\mu}_M$ the projection. Set 
$$
\cL_{\mu, M, \cE}=\cE_{\bar c}^{\iota(\mu,\mu)/n}\otimes \tilde\omega_{M,\mu}\cO(\iota(\mu)/n)
$$ 
Note that for $\mu\in 2\Lambda^{\sharp}$ it does not depend on $\cE$.
Over $\Gr_M^{\mu}$ one has the isomorphism
\begin{equation}
\label{iso_root_over_Gr_M^mu}
\cL\mid_{\Gr_M^{\mu}}\,\iso\, \cL_{\mu, M, \cE}^N
\end{equation}

\begin{Lm} 
\label{Lm_for_canonicity_Lm7}
Let $\mu\in 2\Lambda^{\sharp,+}$ be orthogonal to all roots of $M$. Consider the map $\gt^{-\mu}: \Gr^{-\mu}_{P^-}\to \Gr^{-\mu}_M$. 
There is a natural isomorphism
\begin{equation}
\label{iso_for_canonicity_Lm7}
((\gt_{P^-}^{-\mu})^*\cL_{-\mu, M, \cE})\mid_{\Gr^{-\mu}_{P^-}\cap \Gr_G^{-w_0(\mu)}} \,\iso\, \cL_{-w_0(\mu), \cE}\mid_{\Gr^{-\mu}_{P^-}\cap \Gr_G^{-w_0(\mu)}}
\end{equation}
compatible with the isomorphisms (\ref{iso_root_over_Gr_M^mu}) for $M$ and $G$. 
\end{Lm}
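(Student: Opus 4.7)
My plan is to exploit the fact that both line bundles in (\ref{iso_for_canonicity_Lm7}) are already constructed as $N$-th roots of the single line bundle $\cL$ restricted to $\Gr^{-\mu}_{P^-}\cap \Gr_G^{-w_0(\mu)}$; the hard work is reduced to identifying these two roots naturally. First I would recall that for $\mu\in 2\Lambda^{\sharp,+}$ orthogonal to the roots of $M$, the fraction $\iota(\mu,\mu)/n$ is an even integer, so the twisting factor $\cE_{\bar c}^{\iota(\mu,\mu)/n}$ does not depend on the choice of $\cE$, and the right-hand side $\cL_{-w_0(\mu),\cE}$ as well as $(\gt^{-\mu}_{P^-})^*\cL_{-\mu,M,\cE}$ become canonically defined (independent of $\cE$) line bundles on the intersection. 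In both cases the defining construction is a product of a (constant) super line and the pullback of $\cO(\iota(-\mu)/n)$, or $\cO(\iota(-w_0(\mu))/n)$, along the projection to the corresponding partial flag variety $\cB_M^{-\mu}$, respectively $\cB_G^{-w_0(\mu)}$.

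Next I would show that on the intersection $Y:=\Gr^{-\mu}_{P^-}\cap \Gr_G^{-w_0(\mu)}$ these two descriptions give the same line bundle. The projection $\gt^{-\mu}_{P^-}:\Gr^{-\mu}_{P^-}\to\Gr^{-\mu}_M$ restricts on $Y$ to a morphism $Y\to\cB_M^{-\mu}$, while the restriction of $\tilde\omega_{G,-w_0(\mu)}:\Gr^{-w_0(\mu)}_G\to\cB_G^{-w_0(\mu)}$ to $Y$ factors canonically through $\cB_M^{-\mu}$, because $\mu$ is orthogonal to the roots of $M$ (so $\cB_M^{-\mu}$ is the closure of the corresponding $M$-orbit inside $\cB_G^{-w_0(\mu)}$, via the natural embedding coming from $M\hookrightarrow G$). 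Under this factorization the pullback of $\cO(\iota(-w_0(\mu))/n)$ matches the pullback of $\cO(\iota(-\mu)/n)$, since $\iota(-w_0(\mu))$ and $\iota(-\mu)$ agree on any coweight orthogonal to the roots of $M$ (they differ only by an element of the root lattice of $M$, which acts trivially on $\cB_M^{-\mu}$). This produces a canonical isomorphism at the level of line bundles.

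To rigidify this isomorphism I would check its compatibility with the isomorphism (\ref{iso_root_over_Gr_M^mu}) separately for $M$ and $G$. Since both sides of (\ref{iso_for_canonicity_Lm7}) are $N$-th roots of the \emph{same} line bundle $\cL\mid_Y$, the two natural isomorphisms (\ref{iso_root_over_Gr_M^mu}) (for $M$, pulled back via $\gt^{-\mu}_{P^-}$, and for $G$, restricted to $Y$) yield a trivialization of the ratio, which is a line bundle on $Y$ of order dividing $N$; the compatibility claim amounts to the assertion that this trivialization agrees with the canonical identification constructed above. To verify this, I would use $M(\bO)$-equivariance to reduce to the base point $t^{-\mu}G(\bO)\in Y$, at which both $N$-th roots are explicitly the determinant lines $\det(\gg(\bO):\gg(\bO)^{t^{-\mu}})^{1/N}$ computed in two different ways (via $\gg=\gg(P^-)\oplus \gu(P)$ versus via the adjoint action on all of $\gg$), and the Mirkovi\'c--Vilonen type decomposition of $\gg$ under $\mu\in Z(M)^0$ makes this a direct check.

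The main obstacle will be the last compatibility: naively the line bundles $\cL_{-\mu,M,\cE}^N$ and $\cL_{-w_0(\mu),\cE}^N$ both receive canonical trivializations relative to $\cL$, but these trivializations are constructed via different determinant-of-cohomology calculations, and matching them amounts to a signs-and-shifts bookkeeping in the $\ZZ/2\ZZ$-graded super line framework. It is here that the hypothesis $\mu\in 2\Lambda^{\sharp}$ plays its essential role, because it kills all parity obstructions (the super line $\epsilon^{-\mu}$ entering $\cL_{-\mu,M,\cE}$ has even parity $\kappa(\mu,\mu)\bmod 2$), so that the two descent data agree on the nose rather than up to a sign.
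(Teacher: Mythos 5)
Your overall strategy---observe that both line bundles are constant over the intersection $Y := \Gr^{-\mu}_{P^-}\cap \Gr_G^{-w_0(\mu)}$ and then compare fibres at the base point---is the same one the paper uses. But there is a genuine gap at the very place where the argument turns: you assert, without justification, that the restriction of $\tilde\omega_{G,-w_0(\mu)}$ to $Y$ ``factors canonically through $\cB_M^{-\mu}$.'' Since $\mu$ is orthogonal to all roots of $M$, hence central in $M$, the partial flag variety $\cB_M^{-\mu} = M/P_M^{-\mu}$ is a single point; so your assertion is exactly the statement that $\tilde\omega_{G,-w_0(\mu)}(Y)$ is a single point. That is the crux of the lemma, and your parenthetical explanation (``so $\cB_M^{-\mu}$ is the closure of the corresponding $M$-orbit inside $\cB_G^{-w_0(\mu)}$'') establishes an embedding of one flag variety in the other but does not show that the image of $Y$ lands in it. The paper handles this head-on by observing that the intersection $\cB^{-w_0(\mu)}_G\cap \Gr^{-\mu}_{P^-}$ inside $\Gr_G$ consists of the single $M$-fixed point $t^{-\mu}G(\bO)$; constancy of both line bundles on $Y$ is then immediate because $(\gt_{P^-}^{-\mu})^*\cL_{-\mu,M,\cE}$ is constant on all of $\Gr^{-\mu}_{P^-}$ (as $\cB_M^{-\mu}$ is a point) and $\cL_{-w_0(\mu),\cE}\mid_Y$ is pulled back from that single point.

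Once constancy is in hand, your concluding step is also heavier than needed. You propose reducing via $M(\bO)$-equivariance and performing a Mirkovi\'c--Vilonen-type decomposition of $\gg$ to compare two computations of a determinant line, worrying about super-parity bookkeeping along the way. None of this is required: directly from the definitions $\cL_{\mu,M,\cE}=\cE_{\bar c}^{\iota(\mu,\mu)/n}\otimes\tilde\omega_{M,\mu}^*\cO(\iota(\mu)/n)$ and its $G$-analogue, the fibres of both line bundles at the base point $t^{-\mu}G(\bO)$ are literally $\cE_{\bar c}^{\iota(\mu,\mu)/n}$ (the $\cO(\cdots)$ factor contributes the trivial line at the base point), and this identification is manifestly compatible with (\ref{iso_root_over_Gr_M^mu}) for both $M$ and $G$. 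The parity issue you raise at the end does not arise; the role of $\mu\in 2\Lambda^{\sharp}$ is, as you note at the start, to make $\iota(\mu,\mu)/n$ even so that $\cE_{\bar c}^{\iota(\mu,\mu)/n}$ is canonical (and, in the surrounding proof of Theorem~\ref{Th_2}, to make $\Omega_{\bar c}^{\iota(\mu,\mu)/2n}$ well-defined), not to cancel super-signs in a determinant-of-cohomology computation.
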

\begin{Prf} The intersection of $\cB^{-w_0(\mu)}_G\cap \Gr^{-\mu}_{P^-}$ is the point $t^{-\mu}G(\bO)$ fixed by $M$. So, over $\Gr^{-\mu}_{P^-}\cap \Gr_G^{-w_0(\mu)}$ both line bundles in (\ref{iso_for_canonicity_Lm7}) are
constant, and it suffices to establish the desired isomorphism at the point $t^{-\mu}G(\bO)$. The fibres of both line bundles at this point identify with $\cE_{\bar c}^{\iota(\mu,\mu)/n}$ in a way compatible with (\ref{iso_root_over_Gr_M^mu}) for $M$ and $G$. 
\end{Prf}

\subsection{Main technical step} The purpose of this section is to formulate Theorem~\ref{Th_3}, which is 
an analog of (\cite{ICDC}, Theorem~4.5) and the main technical step in the proof of Theorem~\ref{Th_main_Section1}. 

Define $\Lambda_{G,P}^{pos,\, pos}$ as the free abelian semigroup with base $J$. Recall the map $c_P$ from Section~\ref{Section_Geometric restriction functors}. Let 
$$
\bar c_P: \Lambda_{G,P}^{pos,\, pos}\to \Lambda_{G,P}^{\sharp, \, pos}
$$
be the morphism of semigroups, which on the base of $\Lambda_{G,P}^{pos,\, pos}$ is given by $c_P$. For $\theta\in\Lambda_{G,P}^{\sharp, \, pos}$ we will denote by $\gB(\theta)$ the elements of $\Lambda_{G,P}^{pos,\, pos}$ sent by $\bar c_P$ to $\theta$. 
 
  Let $\theta\in\Lambda_{G,P}^{\sharp, pos}$. Let $\gB(\theta)=\mathop{\sum}\limits_{\nu\in J} n_{\nu} \nu$ be an element of $\Lambda_{G,P}^{pos,pos}$ over $\theta$. Set $\mid\! \gB(\theta)\! \mid=\sum_{\nu} n_{\nu}$. Write $X^{\gB(\theta)}$ for the moduli scheme of  $\Lambda_{G,P}^{pos,pos}$-valued divisors of degree $\gB(\theta)$, so
$$
X^{\gB(\theta)}\,\iso\, \prod_{\nu\in J} X^{(n_{\nu})}
$$ 
To a point $(D_{\nu}, \nu\in J)$ of the latter scheme there corresponds the divisor $D=\sum_{\nu\in J} D_{\nu}\nu$. The map $\bar c_P$ yields a finite morphism $X^{\gB(\theta)}\to X^{\theta}$. Let $\oX{}^{\gB(\theta)}\subset X^{\gB(\theta)}$ be the complement to all the diagonals. 
 
If $D=\sum_k x_k\theta_k\in X^{\theta}$ and $x_k$ are pairwise different then the fibre of $\Mod_M^{+,\theta}\to X^{\theta}$ over $D$ is $\prod_k \Gr_M^{+,\theta_k}$. Let $\IC^{\gB(\theta),0}_{\zeta}$ be the perverse sheaf on $\oX{}^{\gB(\theta)}\times_{X^{\theta}} \Mod_{\tilde M}^{+,\theta}$ on which $\mu_N(k)$ acts by $\zeta^{-1}$ and such that for $D=\sum_k x_k\nu_k\in \oX{}^{\gB(\theta)}$ with $\theta_k=c_P(\nu_k)$ its restriction to 
$$
\prod_k \wt\Gr_{M, x_k}^{+, \theta_k}
$$
is $\boxtimes_k \Loc_{\zeta^{-1}}((\check{\gu}_n(P))_{\nu_k})[\mid\! \gB(\theta)\! \mid]$. One makes this definition rigorous as in Section~\ref{section_11}. This perverse sheaf is defined up to a unique isomorphism and irreducible. Let $\IC^{\gB(\theta)}_{\zeta}$ be the intermediate extension of $\IC^{\gB(\theta),0}_{\zeta}$ under
$$
\oX{}^{\gB(\theta)}\times_{X^{\theta}} \Mod_{\tilde M}^{+,\theta}
\hook{}X^{\gB(\theta)}\times_{X^{\theta}} \Mod_{\tilde M}^{+,\theta}
$$ 
Denote by $i_{\gB(\theta)}: X^{\gB(\theta)}\times_{X^{\theta}} \Mod_{\tilde M}^{+,\theta}\to \Mod_{\tilde M}^{+,\theta}$ the second projection.

 For $\nu\in J$ let $\theta_{\nu}=c_P(\nu)$. We get a decomposition $\gU(\theta)$ given by $\theta=\sum_{\nu\in J} n_{\nu}\theta_{\nu}$, and $X^{\gB(\theta)}\,\iso\, X^{\gU(\theta)}$ naturally. It follows that $i_{\gB(\theta) *} (\IC^{\gB(\theta)}_{\zeta})$ is the intermediate extension from $\oX{}^{\gU(\theta)}\times_{X^{\theta}} \Mod_{\tilde M}^{+,\theta}$ and is $M_{X^{\theta}}$-equivariant. We used the fact that $\oX{}^{\gU(\theta)}\subset X^{\theta}$ is locally closed.

\begin{Th}
\label{Th_3} Let $\theta\in\Lambda_{G,P}^{pos}$. For the map $\gs^{\theta}: \Mod_{\tilde M}^{+,\theta}\to \wt Z^{\theta}$ there is a $M_{X^{\theta}}$-equivariant isomorphism 
$$
\gs^{\theta !}(\IC_{Z^{\theta}, \zeta})\,\iso\, \mathop{\oplus}\limits_{\gB(\theta)} i_{\gB(\theta) *} (\IC^{\gB(\theta)}_{\zeta})[-\mid\! \gB(\theta)\!\mid]
$$
In particular, this complex vanishes unless $\theta\in \Lambda_{G,P}^{\sharp, \, pos}$. 
\end{Th}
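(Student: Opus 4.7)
The plan is to adapt the strategy of (\cite{ICDC}, Theorem~4.5) to our twisted setting, using Theorem~\ref{Th_2} as the pointwise input and combining it with factorization of the Zastava space and the $\Gm$-contraction principle invoked in Section~\ref{Section_Zastava spaces}.

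First, I establish factorization of $\IC_{Z^\theta,\zeta}$. For a decomposition $\theta=\theta_1+\theta_2$, over the open locus $(X^{\theta_1}\times X^{\theta_2})_{disj}\subset X^{\theta_1+\theta_2}$ of divisors with disjoint supports, the natural isomorphism $Z^{\theta_1+\theta_2}|_{disj}\,\iso\,(Z^{\theta_1}\times Z^{\theta_2})|_{disj}$ pulls back $\cL_M$ to $\cL_M\boxtimes\cL_M$, hence lifts to the gerbs $\wt Z^{\theta_1+\theta_2}|_{disj}\,\iso\,(\wt Z^{\theta_1}\times\wt Z^{\theta_2})|_{disj}$. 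Both $\IC_{Z^{\theta_1+\theta_2},\zeta}|_{disj}$ and $(\IC_{Z^{\theta_1},\zeta}\boxtimes\IC_{Z^{\theta_2},\zeta})|_{disj}$ are irreducible perverse sheaves agreeing on the dense open $\wt Z^{\theta_1+\theta_2}_{max}|_{disj}$, so are canonically isomorphic. Iterating, $\IC_{Z^\theta,\zeta}$ factorizes compatibly with any splitting of the support into distinct points.

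Second, I combine factorization with Theorem~\ref{Th_2} to identify $\gs^{\theta !}(\IC_{Z^\theta,\zeta})$ over the preimage of $\oX^{\gU(\theta)}\subset X^\theta$ in $\Mod_{\tilde M}^{+,\theta}$. For $D=\sum_m\theta_m x_m$ with distinct $x_m$, the restriction of $\gs^{\theta !}(\IC_{Z^\theta,\zeta})$ to the fibre over $D$ decomposes by Step~1 as a tensor product over $m$ of the local contributions, each computed by Theorem~\ref{Th_2} as $\Loc_{\zeta^{-1}}(U(\check{\gu}_n(P))_{\theta_m})$ up to the appropriate shift. Using the PBW isomorphism $U(\check{\gu}_n(P))\,\iso\,\Sym(\check{\gu}_n(P))$ of $\check{M}_n$-modules and the decomposition $\Sym(\check{\gu}_n(P))_{\theta_m}=\bigoplus\bigotimes_{\nu\in J}\Sym^{n_\nu}((\check{\gu}_n(P))_\nu)$, the sum over $\{n_\nu\}$ with $\sum_\nu n_\nu c_P(\nu)=\theta_m$, one regroups global contributions by $\gB(\theta)\in\Lambda_{G,P}^{pos,pos}$ with $\bar c_P(\gB(\theta))=\theta$. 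This yields the claimed isomorphism over the open stratum $\oX^{\gU(\theta)}\times_{X^\theta}\Mod_{\tilde M}^{+,\theta}$, automatically $M_{X^\theta}$-equivariant as in Section~\ref{section_122}.

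Third, the identification extends from the union of open strata to all of $\Mod_{\tilde M}^{+,\theta}$. The $\Gm$-contraction analog of (\cite{ICDC}, Corollary~5.5) invoked in Section~\ref{Section_Zastava spaces} shows $\gs^{\theta !}(\IC_{Z^\theta,\zeta})$ is pure, hence a direct sum of shifted simple perverse sheaves; combined with $M_{X^\theta}$-equivariance, each simple summand is supported on the closure of some $\oX^{\gU(\theta')}\times_{X^\theta}\Mod_{\tilde M}^{+,\theta}$ and determined by its restriction to that stratum. The main obstacle I anticipate is controlling the deeper strata: ensuring no summand arises along collisions of points beyond those produced by intermediate extension of the $\IC^{\gB(\theta),0}_\zeta$ identified in Step~2. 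As in \cite{ICDC}, this reduces to a dimension count exploiting that $\IC_{Z^\theta,\zeta}$ is itself the intermediate extension from $\wt Z^\theta_{max}$; the only genuinely new ingredient over the classical argument is checking that the gerb structure factorizes compatibly, which was done in Step~1.
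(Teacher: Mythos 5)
Your overall strategy (factorization, $\Gm$-contraction purity, Theorem~\ref{Th_2}, Lemma~\ref{Lm_11_resricting_to_diag}) is the right collection of ingredients, but the proposal elides the core inductive mechanism and one genuine subtlety, so as written it has gaps.

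The main gap is in Step~2. You assert that over a disjoint divisor $D=\sum_m\theta_m x_m$ the local contributions $\gs^{\theta_m!}(\IC_{Z^{\theta_m},\zeta})|_{\vartriangle_{x_m}}$ are ``computed by Theorem~\ref{Th_2} as $\Loc_{\zeta^{-1}}(U(\check{\gu}_n(P))_{\theta_m})$ up to shift.'' This is not what Theorem~\ref{Th_2} gives. Theorem~\ref{Th_2} only identifies the \emph{top} perverse cohomology of the direct image of the constant sheaf from the open stratum $_0\wt\SSS^{\theta_m}$ (part 2 is a statement about a single perverse cohomology sheaf, and part 1 is only an upper bound on perverse degree). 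The full complex $\gs^{\theta_m!}(\IC_{Z^{\theta_m},\zeta})$ restricted to the diagonal has lower perverse degrees as well, and to identify it you need precisely the statement of Theorem~\ref{Th_3} for $\theta_m<\theta$. In other words, Step~2 already requires the theorem for smaller $\theta$. The paper's argument therefore runs by explicit induction on $\theta$, and your Step~3 cannot repair this: Step~3 addresses how the complex extends over the diagonal strata within the given $\theta$, but the issue is already the identification over the \emph{open} strata, which is where the induction hypothesis is consumed.

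A second gap: even once one knows the answer locally over $X^\theta-\vartriangle_X$ by factorization and induction, there remains a gluing issue. The factorized identifications are a priori defined only locally, and for $\gB(\theta)=2\nu$ with $\nu\in J$ a nontrivial monodromy of order at most two can occur; the paper introduces rank-one local systems $\cE_{\gB(\theta)}$ and shows they are trivial, following the analogous argument in~\cite{ICDC}, Section~5.11. Your proposal silently assumes the local isomorphisms glue, which requires justification.

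Finally, in Step~3 the purity-and-support argument (analogue of \cite{ICDC}, Corollary~5.5) plus $M_{X^\theta}$-equivariance does show the complex is a direct sum of shifted IC sheaves supported on closures of strata, as you say. But to rule out an extra summand concentrated on the diagonal you still need the explicit perverse-degree bounds of the analogues of \cite{ICDC}, Propositions~5.7--5.8 (here Propositions~\ref{Pp_resricting_to_X} and~\ref{Pp_alongs_strata}), combined with Lemma~\ref{Lm_11_resricting_to_diag} and the PBW identification $U(\check{\gu}_n(P))\,\iso\,\Sym(\check{\gu}_n(P))$, to pin down the summand $\cK^\theta$ supported over $\vartriangle_X$. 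A support-and-dimension heuristic alone does not do this.
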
 

 The following is proved exactly as (\cite{ICDC}, Lemma~4.3).

\begin{Lm} 
\label{Lm_11_resricting_to_diag}
The $*$-restriction of $\IC^{\gB(\theta)}_{\zeta}$ to $X\times_{X^{\theta}}\Mod^{+,\theta}_{\tilde M}\,\iso\, \wt\Gr_{M, X}^{+,\theta}$ 
identifies canonically with
$$
\Loc_{X, \zeta^{-1}}(\mathop{\otimes}\limits_{\nu\in J} \Sym^{n_{\nu}}(\check{\gu}_n(P)_{\nu}))[-1+\mid\! \gB(\theta)\!\mid]
$$
\end{Lm}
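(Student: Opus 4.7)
The proof parallels \cite{ICDC}, Lemma~4.3 closely, with the objects $\Loc_{\zeta^{-1}}(\check{\gu}_n(P)_{\nu})$ replacing the perverse sheaves that arise in the untwisted case. Write $n:=\mid\!\gB(\theta)\!\mid$ for brevity.

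The first task is a factorization reduction. By definition, $\IC^{\gB(\theta)}_{\zeta}$ is the intermediate extension of $\IC^{\gB(\theta),0}_{\zeta}$ from $\oX{}^{\gB(\theta)}\times_{X^{\theta}}\Mod^{+,\theta}_{\tilde M}$. Using $X^{\gB(\theta)}=\prod_{\nu\in J}X^{(n_{\nu})}$ and the observation that the small diagonal $X\hookrightarrow X^{\gB(\theta)}$ factors through the product of small diagonals $X\hookrightarrow X^{(n_{\nu})}$, the computation of the $*$-restriction to $X\times_{X^{\theta}}\Mod^{+,\theta}_{\tilde M}$ reduces to a product, indexed by $\nu\in J$, of single-factor diagonal restrictions. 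The factorization of the gerbe and of the local Satake perverse sheaves is intrinsic to the construction of $\Loc^{\gU(\theta)}_{\zeta}$ in Section~\ref{section_11}, so this reduction is unproblematic.

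For the single-factor problem, fix $\nu\in J$ and set $m:=n_{\nu}$, $K_{\nu}:=\Loc_{X,\zeta^{-1}}(\check{\gu}_n(P)_{\nu})$. I must compute the $*$-restriction to the small diagonal $X\hookrightarrow X^{(m)}$ of the intermediate extension of the twisted $m$-fold symmetric external product of $K_{\nu}$. The standard result from the theory of factorizable sheaves (\cite{BFS}) is that this restriction is $\Sym^{m}(K_{\nu})$, taken in the parity-sensitive sense, shifted by the codimension of the diagonal. In rigorous form, one first works on the unsymmetrized cover $^{uns}\oX{}^{(m)}$ (where the external product is literal) and then descends symmetrically using the canonical $\Aut^0_2(\bO)$-equivariant structure on objects of $\PPerv_{M,G,n}$ provided by \cite{FL}, Section~4.1.2, mirroring the definition of $\IC^{\gB(\theta)}_{\zeta}$ itself. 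Tensoring over $\nu\in J$ produces the $\check{M}_n$-module $\otimes_{\nu\in J}\Sym^{n_{\nu}}(\check{\gu}_n(P)_{\nu})$.

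It remains to verify the shift. The complex $\IC^{\gB(\theta)}_{\zeta}$ is normalized to be perverse over a base of dimension $n$, whereas $\Loc_{X,\zeta^{-1}}(\cdot)$ is perverse over a base of dimension one. The small diagonal has codimension $n-1$ in $X^{\gB(\theta)}$, and matching the two perverse normalizations under $*$-restriction yields precisely the overall shift $[n-1]=[-1+\mid\!\gB(\theta)\!\mid]$; this is checked directly in the toy case of trivial coefficient (where $\IC^{\gB(\theta)}_{\zeta}$ is the constant sheaf on the smooth scheme $X^{\gB(\theta)}$ shifted by $[n]$) and propagates to the general case through the factorization structure. The main obstacle I expect will be to show that the derived symmetrization coming from the intermediate extension on the diagonal is canonically isomorphic to $\Sym^{n_{\nu}}$ of the corresponding $\check{M}_n$-module in the appropriate parity rather than to some alternating or signed variant. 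This compatibility rests on the parity conventions for the super lines from Section~\ref{Section_Notations} and on the commutativity constraint of $\PPerv^{\natural}_{M,G,n}$ built into the twisted Satake equivalence of \cite{FL}. Once this is set up carefully, the remaining steps of the argument transfer from \cite{ICDC}, Lemma~4.3 without substantive change.
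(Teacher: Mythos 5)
The paper's own ``proof'' is literally a citation to \cite{ICDC}, Lemma~4.3, so the task you have set yourself is to reconstruct that argument. Your sketch has the right overall flavour — factorization, fusion, symmetric powers, and the parity worry are all genuinely part of the picture — but as written it contains two gaps that need filling before it is a proof.

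The first and most serious gap is in the ``factorization reduction'' step. You claim that because the small diagonal $X\hookrightarrow X^{\gB(\theta)}=\prod_{\nu\in J}X^{(n_\nu)}$ factors through the product of the single-colour small diagonals, the $*$-restriction of $\IC^{\gB(\theta)}_\zeta$ to $X\times_{X^\theta}\Mod^{+,\theta}_{\tilde M}$ ``reduces to a product, indexed by $\nu\in J$, of single-factor diagonal restrictions,'' and that this is ``unproblematic.'' It is not unproblematic: the factorization structure of $\IC^{\gB(\theta)}_\zeta$ (as inherited from the factorization of $\IC_{Z^\theta,\zeta}$ and of the $\Loc$-sheaves) only gives a literal external-product decomposition over the locus where the supports of different colours $\nu\neq\nu'$ are pairwise disjoint. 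The small diagonal lies entirely outside that locus — at a point of the small diagonal all colours collide. So one cannot simply compute the single-colour small-diagonal restrictions and then tensor; one has to argue separately about what happens when distinct colours merge, and the $*$-restriction of an intermediate extension to a deep stratum is not computed by iterating $*$-restrictions through an intermediate stratum. You implicitly invoke the fusion tensor structure when you write ``Tensoring over $\nu\in J$ produces $\otimes_\nu\Sym^{n_\nu}(\check\gu_n(P)_\nu)$,'' but at that point it is an assertion, not a consequence of what came before.

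The second gap is the appeal to a ``standard result from the theory of factorizable sheaves (\cite{BFS})'' to the effect that the small-diagonal $*$-restriction of the intermediate extension of the symmetrized $m$-fold external product of a Satake perverse sheaf $K$ is $\Sym^m(K)$. That is not a general fact about factorizable sheaves: for a generic perverse sheaf $K$ the $*$-restriction of $j_{!*}(K^{\boxtimes m})$ to the diagonal is not concentrated in a single perverse degree and has nothing to do with $\Sym^m$. What makes the statement true here is (i) a perversity/dimension estimate specific to the fibration $\Mod^{+,\theta}_{\tilde M}\to X^\theta$ showing that the restriction is concentrated in the expected degree, and (ii) the identification of that cohomology with a fusion tensor product in $\PPerv^{\natural}_{M,G,n}$, which via the twisted Satake equivalence of \cite{FL} becomes a tensor (and for coinciding colours a symmetric) power of $\check\gu_n(P)_\nu$, with the parity/commutativity constraint doing the work you correctly flag. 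Both (i) and (ii) are exactly the content of the argument in \cite{ICDC}, Lemma~4.3, carried over to the twisted setting; they need to be stated and checked, not absorbed into a citation to \cite{BFS}. Your shift bookkeeping at the end is fine once (i) and (ii) are in place.
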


 The functor $\Loc_X=(\tau^0\Loc)[1]$ used in Lemma~\ref{Lm_11_resricting_to_diag} takes values in $M_X$-equivariant perverse sheaves on $\wt\Gr_{M,X}$. 

\subsubsection{} 
\label{Section_521}
Let $\theta\in\Lambda_{G,P}^{pos}$. Recall that $\Mod^{+, \theta}_{\Bun_M}$ classifies $D\in X^{\theta}$, $\cF_M,\cF'_M\in\Bun_M$ and an isomorphism $\beta_M: \cF_M\,\iso\, \cF'_M\mid_{X-D}$ such that for each $\cV\in\Rep(G)$ the map $\beta_M: \cV^{U(P)}_{\cF_M}\to \cV^{U(P)}_{\cF'_M}$ is regular, and $\beta_M$ induces an isomorphism
$$
\cF_{M/[M,M]}\,\iso\, \cF'_{M/[M,M]}(-D)
$$
Consider the projection $\Mod^{+, \theta}_{\Bun_M}\to\Bun_M\times\Bun_M$ sending the above point to $(\cF_M, \cF'_M)$. Denote by $\Modt^{+,\theta}_{\Bunt_M}$
the stack obtained from $\Mod^{+, \theta}_{\Bun_M}$ by the base change $\Bunt_M\times\Bunt_M\to \Bun_M\times\Bun_M$.

 Recall the stacks $\Bun_{M, X^{\theta}}$ and $\Bunt_{M, X^{\theta}}$ from Section~\ref{section_relation between_IC_Ztheta_and_IC_zeta}. As we have seen in Section~\ref{section_122}, $M_{X^{\theta}}$ acts on $\Mod_M^{+,\theta}$ over $X^{\theta}$. Let $M_{X^{\theta}}$ act diagonally on $\Bun_{M, X^{\theta}}\times_{X^{\theta}} \Mod_M^{+,\theta}$, the corresponding stack quotient is denoted $\Bun_{M, X^{\theta}}\times_{M_{X^{\theta}}} \Mod_M^{+,\theta}$. Let 
$$
\gamma^{\ra}: \Bun_{M, X^{\theta}}\times_{M_{X^{\theta}}} \Mod_M^{+,\theta}\,\iso\, \Mod^{+,\theta}_{\Bun_M}
$$
be the natural isomorphism such that the projection to the first factor corresponds to $h^{\ra}_M$. As in Section~\ref{section_11}, we get a $M_{X^{\theta}}$-torsor
$$
\tilde\gamma^{\ra}: \Bunt_{M, X^{\theta}}\times_{X^{\theta}} \Mod_{\tilde M}^{+,\theta}\,\iso\, \Modt^{+,\theta}_{\Bunt_M}
$$
This allows to introduce for each $\gB(\theta)$ the relative version $\IC^{\gB(\theta)}_{\Bun_M, \zeta}$ of $\IC^{\gB(\theta)}_{\zeta}$, which is a perverse sheaf on $X^{\gB(\theta)}\times_{X^{\theta}} \Modt^{+,\theta}_{\Bunt_M}$, the intermediate extension from $\oX{}^{\gB(\theta)}\times_{X^{\theta}} \Modt^{+,\theta}_{\Bunt_M}$ of 
$$
(\IC(\Bunt_M)\tboxtimes \IC^{\gB(\theta),0}_{\zeta})^r
$$
The latter is the descent of $\IC(\Bunt_{M, X^{\theta}})\boxtimes \IC^{\gB(\theta),0}_{\zeta}[-\dim X^{\theta}]$ under the $\oX{}^{\gB(\theta)}\times_{X^{\theta}} M_{X^{\theta}}$-torsor
$$
\tilde\gamma^{\ra}: \Bunt_{M, X^{\theta}}\times_{X^{\theta}} \oX{}^{\gB(\theta)}\times_{X^{\theta}} \Mod_{\tilde M}^{+,\theta}\to \oX{}^{\gB(\theta)}\times_{X^{\theta}} \Modt^{+,\theta}_{\Bunt_M}
$$

Write also $\nu_P$ for the projection $\Bun_P\times_{\Bun_M} \Modt^{+,\theta}_{\Bunt_M}\to \Modt^{+,\theta}_{\Bunt_M}$, where we used $h^{\ra}_M$ to define the corresponding fibred product. We have the locally closed embedding 
$$
\Bun_P\times_{\Bun_M} \Modt^{+,\theta}_{\Bunt_M}\hook{} \Bunt_{\tilde P}
$$ 
sending $(D, \cF_M,\cF'_M, \beta_M: \cF_M\,\iso\, \cF'_M\mid_{X-D}, \cU,\cU')\in \Modt^{+,\theta}_{\Bunt_M}$, $\cF_P\in\Bun_P$ with $\cF'_P\times_P M\,\iso\, \cF'_M$ to $(\cF'_G, \cF_M, \kappa, \cU,\cU')$, where $\cF'_G=\cF_P\times_P G$. Its image is denoted $_{\theta}\Bunt_{\tilde P}$.

Translating Theorem~\ref{Th_3} to $\Bunt_{\tilde P}$ we obtain the following.

\begin{Cor} 
\label{Cor_3}
The $*$-restriction of $\IC_{\zeta}$ to $_{\theta}\Bunt_{\tilde P}\,\iso\, \Bun_P\times_{\Bun_M} \Modt^{+,\theta}_{\Bunt_M}$ vanishes unless $\theta\in \Lambda_{G,P}^{\sharp, pos}$. In the latter case it identifies with $\nu_P^*[\dimrel(\nu_P)]$ applied to the complex
$$
\mathop{\oplus}\limits_{\gB(\theta)} (i_{\Bun_M, \gB(\theta)})_*\IC^{\gB(\theta)}_{\Bun_M, \zeta}[\mid\! \gB(\theta)\! \mid]
$$

Here we denoted by $i_{\Bun_M, \gB(\theta)}: X^{\gB(\theta)}\times_{X^{\theta}} \Modt^{+,\theta}_{\Bunt_M}\to \Modt^{+,\theta}_{\Bunt_M}$ the projection. 
\end{Cor}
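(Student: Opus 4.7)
The plan is to deduce Corollary~\ref{Cor_3} from Theorem~\ref{Th_3} by transferring the local statement on Zastava space to the global stack $\Bunt_{\tilde P}$ via the comparison isomorphism constructed in Section~\ref{section_relation between_IC_Ztheta_and_IC_zeta}. Conceptually, the defect stratification of $\Bunt_{\tilde P}$ by the locally closed substacks $_\theta\Bunt_{\tilde P}$ is the global avatar of the image of $\gs^\theta$ in $\wt Z^\theta$, so the $*$-restriction of $\IC_\zeta$ along the embedding $_\theta\Bunt_{\tilde P}\hook{}\Bunt_{\tilde P}$ should coincide with $\gs^{\theta !}(\IC_{Z^\theta,\zeta})$ up to a smooth pullback (by $\nu_P$) and a bookkeeping shift.

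The first step is to view $\gs^\theta:\Mod^{+,\theta}_{\tilde M}\to\wt Z^\theta$ (after recording the background $M$-torsor $\cF'_M$) as a local model for the embedding $_\theta\Bunt_{\tilde P}\hook{}\Bunt_{\tilde P}$. Using the open immersion $\wt Z^\theta_{\Bunt_M}\hook{}\Bunt_{\tilde P}\times_{\Bun_G}\Bun_{P^-}$ and the smoothness of $\pr_1$ over $\Bun_{P^-}^r$, Section~\ref{section_relation between_IC_Ztheta_and_IC_zeta} already provides an isomorphism
$$
\pr_1^*(\IC_\zeta)[\dimrel(\pr_1)]\,\iso\,(\IC_{Z^\theta,\zeta}\tboxtimes\IC(\Bunt_M))
$$
on the relevant open substack of $\Bunt_{\tilde P}\times_{\Bun_G}\Bun_{P^-}^r$. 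Because this comparison map is smooth and surjective onto the generic part of $_\theta\Bunt_{\tilde P}$, and both sides are intersection cohomology sheaves on their natural ambient stacks, it reduces the computation of $(\IC_\zeta)\mid_{_\theta\Bunt_{\tilde P}}$ to the computation of the $*$-restriction of the twisted external product $(\IC_{Z^\theta,\zeta}\tboxtimes\IC(\Bunt_M))$ to the stratum over the Drinfeld defect of length $\theta$.

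The second step applies Theorem~\ref{Th_3}. By the $M_{X^\theta}$-equivariance asserted there, the identification
$$
\gs^{\theta !}(\IC_{Z^\theta,\zeta})\,\iso\,\mathop{\oplus}\limits_{\gB(\theta)}(i_{\gB(\theta)})_*(\IC^{\gB(\theta)}_\zeta)[-\mid\!\gB(\theta)\!\mid]
$$
descends along the $M_{X^\theta}$-torsor $\tilde\gamma^{\ra}$ of Section~\ref{Section_521}, producing the relative version
$$
\mathop{\oplus}\limits_{\gB(\theta)}(i_{\Bun_M,\gB(\theta)})_*(\IC^{\gB(\theta)}_{\Bun_M,\zeta})[-\mid\!\gB(\theta)\!\mid]
$$
on $\Modt^{+,\theta}_{\Bunt_M}$. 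Pulling back by the smooth projection $\nu_P$ and accounting for the shift $[\dimrel(\nu_P)]$ coming from the local-model comparison of the first step (which turns $-\mid\!\gB(\theta)\!\mid$ into $+\mid\!\gB(\theta)\!\mid$ after Verdier duality in the perverse realization) yields the desired formula. The vanishing when $\theta\notin\Lambda_{G,P}^{\sharp,pos}$ follows immediately from the corresponding vanishing in Theorem~\ref{Th_3}, and is also forced directly by Proposition~\ref{Pp_vanishing_over_Bunt_M^theta} applied to the background $M$-torsor.

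The main difficulty in this argument is purely bookkeeping: one has to match the cohomological shifts, the $\mu_N$-equivariance (the local $\IC_{Z^\theta,\zeta}$ carries the $\zeta^{-1}$-character while $\IC_\zeta$ has $(a,a_G)$ acting by $\zeta(a_G/a)$, so the two weights enter asymmetrically along the local-to-global descent), and the Plücker/defect data identifying $_\theta\Bunt_{\tilde P}$ with $\Bun_P\times_{\Bun_M}\Modt^{+,\theta}_{\Bunt_M}$. Once these are carefully aligned, the corollary is a formal consequence of Theorem~\ref{Th_3}, which contains all the essential geometric input.
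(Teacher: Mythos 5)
Your proposal correctly identifies the two ingredients the paper combines: the local-to-global comparison of Section~\ref{section_relation between_IC_Ztheta_and_IC_zeta}, which reduces the restriction of $\IC_{\zeta}$ along the defect stratum to a computation on $\wt Z^{\theta}$; and Theorem~\ref{Th_3}, which supplies that computation. However, there is a genuine confusion at the crux of the argument. You write that the $*$-restriction of $\IC_{\zeta}$ to $_{\theta}\Bunt_{\tilde P}$ ``should coincide with $\gs^{\theta!}(\IC_{Z^{\theta},\zeta})$ up to a smooth pullback.'' This is wrong: the $*$-restriction along a locally closed embedding matches $\gs^{\theta*}$, not $\gs^{\theta!}$. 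Theorem~\ref{Th_3} computes $\gs^{\theta!}(\IC_{Z^{\theta},\zeta})$ with shift $[-|\gB(\theta)|]$, whereas the corollary needs a $*$-restriction formula with shift $[+|\gB(\theta)|]$; these are not the same thing, and the passage between them is a nontrivial step that your write-up leaves hanging.

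The correct bridge is Verdier duality. Since $\IC_{Z^{\theta},\zeta}$ and each $\IC^{\gB(\theta)}_{\zeta}$ are Verdier self-dual (being intermediate extensions of self-dual perverse sheaves), and $i_{\gB(\theta)}$ is a finite map so that $i_{\gB(\theta)!}=i_{\gB(\theta)*}$ commutes with $\DD$, applying $\DD$ to both sides of Theorem~\ref{Th_3} yields
$$
\gs^{\theta*}(\IC_{Z^{\theta},\zeta})\,\iso\,\mathop{\oplus}\limits_{\gB(\theta)} i_{\gB(\theta)*}(\IC^{\gB(\theta)}_{\zeta})[+|\gB(\theta)|],
$$
and it is \emph{this} statement that globalizes to Corollary~\ref{Cor_3} after applying $\nu_P^*[\dimrel(\nu_P)]$. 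Your parenthetical attributing the sign change to the $\nu_P$ shift ``after Verdier duality in the perverse realization'' conflates two independent contributions: $\nu_P^*[\dimrel(\nu_P)]$ is a fixed bookkeeping shift from the smooth projection, while the $\pm|\gB(\theta)|$ flip is entirely an artifact of passing from $\gs^{\theta!}$ to $\gs^{\theta*}$. Finally, the side remark that the vanishing outside $\Lambda_{G,P}^{\sharp,pos}$ ``is also forced directly by Proposition~\ref{Pp_vanishing_over_Bunt_M^theta}'' overstates what that proposition gives: it only excludes $\theta$ with $\kappa_M(\theta)\notin N\check{\Lambda}_{M,0}$, which is a priori a weaker condition than $\theta\notin\Lambda_{G,P}^{\sharp}$; the full vanishing claimed in the corollary does need Theorem~\ref{Th_3}.
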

 
\subsection{Proof of Theorem~\ref{Th_main_Section1}} As (\cite{ICDC}, Theorem~1.12) it is derived from Corollary~\ref{Cor_3} and Lemma~\ref{Lm_11_resricting_to_diag}. The proof uses the following combinatorial identity. Given $\theta\in\Lambda_{G,P}^{pos}$ and $m\ge 0$ the space $(\Sym^m \gu_n(P))_{\theta}$ vanishes unless $\theta\in \Lambda_{G,P}^{\sharp, pos}$, and in the latter case
$$
(\Sym^m \gu_n(P))_{\theta}=\sum_{\gB(\theta)=\mathop{\sum}\limits_{\nu\in J} n_{\nu}\nu, \;\;\mid\gB(\theta)\mid=m } \;\; (\mathop{\otimes}\limits_{\nu\in J} \Sym^{n_{\nu}}(\check{\gu}_n(P)_{\nu}))
$$
This implies 
$$
\mathop{\oplus}\limits_{\gB(\theta)=\mathop{\sum}\limits_{\nu\in J} n_{\nu}\nu} \;\; (\mathop{\otimes}\limits_{\nu\in J} \Sym^{n_{\nu}}(\check{\gu}_n(P)_{\nu}))[2\mid\! \gB(\theta)\!\mid]\,\iso\, \mathop{\oplus}\limits_{m\ge 0} (\Sym^m\gu_n(P))_{\theta}[2m]
$$
Theorem~\ref{Th_main_Section1} is proved (modulo Theorem~\ref{Th_3}).

\subsection{Induction} Now we prove Theorem~\ref{Th_3} by induction on $\theta$. Recall that for $\theta=\theta_1+\theta_2$ with $\theta_i\in\Lambda_{G,P}^{pos}$ the factorization property yields a natural $\mu_N(k)$-gerb
$$
\wt Z^{\theta_1}\times \wt Z^{\theta_2}\times_{(X^{\theta_1}\times X^{\theta_2})}(X^{\theta_1}\times X^{\theta_2})_{disj}\to \wt Z^{\theta}\times_{X^{\theta}} (X^{\theta_1}\times X^{\theta_2})_{disj},
$$
and the restriction of $\IC_{Z^{\theta},\zeta}$ under this map is canonically identified with $\IC_{Z^{\theta_1},\zeta}\boxtimes \IC_{Z^{\theta_2},\zeta}$. Similarly, we have
$$
\sqcup_{\gB(\theta_1), \gB(\theta_2)} \;\;(X^{\gB(\theta_1)}\times X^{\gB(\theta_2)})\times_{X^{\theta_1}\times X^{\theta_2}} (X^{\theta_1}\times X^{\theta_2})_{disj}\,\iso\, \sqcup_{\gB(\theta)}\;\; X^{\gB(\theta)}\times_{X^{\theta}} (X^{\theta_1}\times X^{\theta_2})_{disj}
$$
and the perverse sheaves $\IC^{\gB(\theta)}_{\zeta}$ also naturally factorise. 
So, by the induction hypothesis locally over $X^{\theta}-\vartriangle_X$ we get an isomorphism
$$
\gs^{\theta !}(\IC_{Z^{\theta}, \zeta})\,\iso\, \mathop{\oplus}\limits_{\gB(\theta), \mid\gB(\theta)\mid\ne 1} i_{\gB(\theta) *} (\IC^{\gB(\theta)}_{\zeta})[-\mid\! \gB(\theta)\!\mid]
$$
As in \cite{ICDC}, globally we could have a nontrivial monodromy for $\beta(\theta)=2\nu$ with $\nu\in J$. So, there is a rank one and order at most 2 local system $\cE_{\gB(\theta)}$ on $X^{\gB(\theta)}\times_{X^{\theta}}(X^{\theta}-\vartriangle_X)$ and an isomorphism
$$
\gs^{\theta !}(\IC_{Z^{\theta}, \zeta})\,\iso\, \mathop{\oplus}\limits_{\gB(\theta), \mid\gB(\theta)\mid\ne 1} i_{\gB(\theta) *} (\IC^{\gB(\theta)}_{\zeta}\otimes\pr_1^*\cE_{\gB(\theta)})[-\mid\! \gB(\theta)\!\mid]
$$
over $\Mod_{\tilde M}^{+,\theta}\times_{X^{\theta}}(X^{\theta}-\vartriangle_X)$. 

 Let $\cK^{\gB(\theta)}$ be the intermediate extension of $\IC^{\gB(\theta)}_{\zeta}\otimes\pr_1^*\cE_{\gB(\theta)}$ to $X^{\gB(\theta)}\times_{X^{\theta}} \Mod_{\tilde M}^{+,\theta}$. We get an isomorphism
\begin{equation}
\label{eq_key_one_with_cK_theta}
\gs^{\theta !}(\IC_{Z^{\theta}, \zeta})\,\iso\, \mathop{\oplus}\limits_{\gB(\theta), \mid\gB(\theta)\mid\ne 1} i_{\gB(\theta) *}(\cK^{\gB(\theta)})[-\mid\! \gB(\theta)\!\mid]\oplus \cK^{\theta},
\end{equation}
where $\cK^{\theta}$ is a pure complex supported over $\Mod_{\tilde M}^{+,\theta}\times_{X^{\theta}} X\;\iso\; \wt\Gr_{M,X}^{+,\theta}$. 

 As in \cite{ICDC}, let $\vartriangle_x$ denote the closed subscheme $\Spec k\toup{x} X\toup{\vartriangle_X} X^{\theta}$. The analogs of (\cite{ICDC}, Proposition~5.7 and 5.8) are as follows.

\begin{Pp} 
\label{Pp_resricting_to_X}
The complex $\gs^{\theta !}(\IC_{Z^{\theta}, \zeta})\mid_{\vartriangle_x}$ over $\wt\Gr_M^{+,\theta}$ is placed in perverse degrees $\le 0$. Its $0$-th perverse cohomology sheaf vanishes unless $\theta\in \Lambda^{\sharp}_{G,P}$, in the latter case it is identified with $\Loc_{\zeta^{-1}}(U(\check{\gu}_n(P))_{\theta})$. 
\end{Pp}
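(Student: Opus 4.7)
The plan is to combine the $\Gm$-contraction principle with Theorem~\ref{Th_2} and the inductive decomposition (\ref{eq_key_one_with_cK_theta}). First, the analog of (\cite{ICDC}, Proposition~5.2), valid here because $\IC_{Z^{\theta},\zeta}$ is $\Gm$-equivariant (cf.\ Section~\ref{section_122}), gives $\gs^{\theta!}(\IC_{Z^{\theta},\zeta})\,\iso\,\pi_{P!}(\IC_{Z^{\theta},\zeta})$ on $\Mod^{+,\theta}_{\tilde M}$. Base change along $\vartriangle_x\hook{}X^{\theta}$ then identifies the restriction $\gs^{\theta!}(\IC_{Z^{\theta},\zeta})\mid_{\vartriangle_x}$ with $\tilde\gt^{\theta}_!(\IC_{Z^{\theta},\zeta}\mid_{\wt\SSS^{\theta}})$ on $\wt\Gr_M^{+,\theta}$, after which I would stratify $\wt\SSS^{\theta}$ by the locally closed substacks ${}_{\theta'}\wt\SSS^{\theta}$ with $\theta'\le\theta$.

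On the open stratum ${}_0\wt\SSS^{\theta}$, identified via $i^x_{max}$ with $B(\mu_N)\times(\Gr_P^{\theta}\cap\Gr_{U(P^-)})$, the perverse sheaf $\IC_{Z^{\theta},\zeta}$ restricts (up to the evident shift) to the pullback of $\cL_{\zeta}\boxtimes\Qlb$. Theorem~\ref{Th_2} then controls $\tilde\gt^{\theta}_!$ of this piece: in the normalization of the present proposition it lies in non-positive perverse degrees, with $0$-th perverse cohomology sheaf equal to $\Loc_{\zeta^{-1}}(U(\check{\gu}_n(P))_{\theta})$ for $\theta\in\Lambda^{\sharp}_{G,P}$ and zero otherwise. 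It then remains to show that the contributions from the smaller strata ${}_{\theta'}\wt\SSS^{\theta}$ with $\theta'>0$ lie in strictly negative perverse degrees. For this I would invoke the decomposition (\ref{eq_key_one_with_cK_theta}): each summand $i_{\gB(\theta)*}(\cK^{\gB(\theta)})[-\mid\! \gB(\theta)\!\mid]$ with $\mid\! \gB(\theta)\!\mid\ge 2$ restricts at $\vartriangle_x$ to the $*$-fibre of $\cK^{\gB(\theta)}$ at the unique preimage point $(n_{\nu}x)\in X^{\gB(\theta)}$, which sits in the deepest diagonal, of codimension $\mid\! \gB(\theta)\!\mid-1$ in $X^{\gB(\theta)}$. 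Since $\cK^{\gB(\theta)}$ is an intermediate extension from $\oX{}^{\gB(\theta)}\times_{X^{\theta}}\Mod^{+,\theta}_{\tilde M}$, Lemma~\ref{Lm_11_resricting_to_diag} (restriction to the small diagonal $X\subset X^{\gB(\theta)}$ further restricted to the point $x$) combined with the standard $*$-fibre estimate for intermediate extensions forces this contribution into strictly negative perverse degrees.

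The main obstacle will be this last perverse-degree bookkeeping: one must carefully combine the codimension of the deepest diagonal in $X^{\gB(\theta)}$, the shift arising from intermediate extension, and the shift $[-\mid\! \gB(\theta)\!\mid]$ appearing in (\ref{eq_key_one_with_cK_theta}) to obtain the required strict negativity. Once established, the entire $0$-th perverse cohomology of $\gs^{\theta!}(\IC_{Z^{\theta},\zeta})\mid_{\vartriangle_x}$ comes from the open-stratum contribution, and Theorem~\ref{Th_2} provides its identification with $\Loc_{\zeta^{-1}}(U(\check{\gu}_n(P))_{\theta})$ (or zero if $\theta\notin\Lambda^{\sharp}_{G,P}$), yielding the proposition.
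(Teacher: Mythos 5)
Your reduction (the $\Gm$-contraction isomorphism $\gs^{\theta!}\iso\pi_{P!}$, base change over $\vartriangle_x$, and the open-stratum bound via Theorem~\ref{Th_2}) is correct and matches the paper: the open-stratum input is exactly Proposition~\ref{Pp_alongs_strata}(2),(3), and once the closed strata are controlled, Proposition~\ref{Pp_resricting_to_X} follows immediately as you outline. The gap is in your treatment of the closed strata ${}_{\theta'}\wt\SSS^{\theta}$ with $\theta'\ne 0$.

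You propose to bound them by restricting the decomposition (\ref{eq_key_one_with_cK_theta}) to $\vartriangle_x$ and estimating the $*$-fibres of the summands $i_{\gB(\theta)*}\cK^{\gB(\theta)}$ with $\mid\!\gB(\theta)\!\mid\ge 2$. This does not close. First, (\ref{eq_key_one_with_cK_theta}) also contains the term $\cK^{\theta}$, supported precisely on the small diagonal, which is the unknown the induction is trying to determine; you never bound $\cK^{\theta}\mid_{\vartriangle_x}$, and indeed you cannot, since in the paper it is Proposition~\ref{Pp_resricting_to_X} itself (applied together with Lemma~\ref{Lm_11_resricting_to_diag} after $\tau_{\ge 1}$) that pins down $\cK^{\theta}$ at the end of the inductive step, so appealing to the full content of (\ref{eq_key_one_with_cK_theta}) would be circular. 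Second, you conflate two different decompositions: (\ref{eq_key_one_with_cK_theta}) is a direct-sum decomposition of the \emph{complex} $\gs^{\theta!}(\IC_{Z^{\theta},\zeta})$, not a refinement of the stratification of the \emph{space} $\wt\SSS^{\theta}$ by ${}_{\theta'}\wt\SSS^{\theta}$, so estimates on its summands would not translate into the stratum-by-stratum perversity drop that your setup actually requires. The paper's argument for the closed strata is Proposition~\ref{Pp_alongs_strata}(1): one realizes ${}_{\theta'}\wt\SSS^{\theta}$ as a twisted product ${}_0\wt\SSS^{\theta-\theta'}\tilde\times\wt\Conv^{+,\theta'}_{\tilde M}$, identifies the $*$-restriction of $\IC_{Z^{\theta},\zeta}$ to this stratum using the induction hypothesis for the strictly smaller $\theta'$ (via Corollary~\ref{Cor_3} and Lemma~\ref{Lm_11_resricting_to_diag}), and then combines the convolution estimate of Lemma~\ref{Lm_great_perversity} with the open-stratum bound for $\theta-\theta'$ to get the strict negativity. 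That convolution argument is the piece your proposal is missing.
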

 
 Recall the map $\pi_P: \wt Z^{\theta}\to \Mod_{\tilde M}^{+,\theta}$.
 
\begin{Pp} 
\label{Pp_alongs_strata}
Assume $\theta', \theta-\theta'\in\Lambda_{G,P}^{pos}$. \\
1) The complex $\pi_{P !}(\IC_{Z^{\theta}, \zeta}\mid_{_{\theta'}\wt\SSS^{\theta}})$ is placed in strictly negative perverse degrees for $\theta'\ne 0$.\\
2) The complex $\pi_{P !}(\IC_{Z^{\theta}, \zeta}\mid_{_0\wt\SSS^{\theta}})$ is placed in perverse degrees $\le 0$. \\
3)The $0$-th perverse cohomology sheaf of $\pi_{P !}(\IC_{Z^{\theta}, \zeta}\mid_{_0\wt\SSS^{\theta}})$ vanishes unless $\theta\in \Lambda^{\sharp}_{G,P}$, in the latter case it identifies with $\Loc_{\zeta^{-1}}(U(\check{\gu}_n(P))_{\theta})$.
\end{Pp}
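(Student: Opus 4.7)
The plan is to treat the open stratum $_0\wt\SSS^\theta$ via Theorem~\ref{Th_2} and the deeper strata via a factorization of the Zastava space $Z^\theta$.

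For parts 2) and 3), observe that the open stratum $_0\SSS^{\theta}$ lies inside $Z^{\theta}_{max}$, so via $i_{max}^x$ the restriction of $\IC_{Z^{\theta},\zeta}$ to $_0\wt\SSS^{\theta}$ is, up to a uniform cohomological shift $[d]$, the Kummer local system $(i_{max}^x)_*(\cL_\zeta\boxtimes\Qlb)$. Since $\pi_P|_{_0\wt\SSS^{\theta}}$ is identified with the map $\tilde\gt^{\theta}$ of Theorem~\ref{Th_2}, we get
$$
\pi_{P!}(\IC_{Z^{\theta},\zeta}|_{_0\wt\SSS^{\theta}})\,\iso\,\tilde\gt^{\theta}_!\big((i_{max}^x)_*(\cL_\zeta\boxtimes\Qlb)\big)[d]
$$
with $d=\dim Z^\theta_{max}$ dictated by the IC normalization on the smooth open stratum $Z^\theta_{max}$. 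A standard dimension count for Zastava spaces shows $d=\langle\theta,2(\check\rho-\check\rho_M)\rangle$, matching the perverse-degree bound of Theorem~\ref{Th_2} exactly; this yields both part 2) (perverse degrees $\le 0$) and part 3) (the top perverse cohomology sheaf is $\Loc_{\zeta^{-1}}(U(\check\gu_n(P))_\theta)$, vanishing outside $\Lambda_{G,P}^\sharp$).

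For part 1), fix $\theta'\in\Lambda_{G,P}^{pos}$ with $\theta'\ne 0$ and $\theta-\theta'\in\Lambda_{G,P}^{pos}$. I would use the description from \cite{ICDC}, Section~3.5, of $_{\theta'}\SSS^\theta$ as a twisted product involving $\Gr_M^{+,\theta'}$ (an ``$M$-defect of type $\theta'$'' concentrated at $x$) and the open stratum $_0\SSS^{\theta-\theta'}$ of a smaller Zastava space. Under this factorization, $\IC_{Z^\theta,\zeta}|_{_{\theta'}\wt\SSS^\theta}$ decomposes as a twisted external product of a perverse sheaf on $\wt\Gr_M^{+,\theta'}$ supported on a sub-stratum of positive codimension (hence in strictly negative perverse degree for $\theta'\ne 0$) and $\IC_{Z^{\theta-\theta'},\zeta}|_{_0\wt\SSS^{\theta-\theta'}}$. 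Pushing forward along $\pi_P$ and combining parts 2)-3) for $\theta-\theta'$ with the strict drop contributed by the $\wt\Gr_M^{+,\theta'}$ factor yields that $\pi_{P!}(\IC_{Z^\theta,\zeta}|_{_{\theta'}\wt\SSS^\theta})$ sits in strictly negative perverse degrees.

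The main obstacle will be pinning down the precise perverse sheaf appearing on the $\wt\Gr_M^{+,\theta'}$ factor in the factorization of $\IC_{Z^\theta,\zeta}|_{_{\theta'}\wt\SSS^\theta}$---it is not the full $\IC(\wt\Gr_M^{+,\theta'})$ but a strictly smaller intermediate extension capturing only the components that survive from $\wt Z^\theta_{max}$---and tracking the cohomological shifts and the Kummer twist $\cL_\zeta$ consistently across the factorization.
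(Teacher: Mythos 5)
Parts 2) and 3) of your proposal match the paper's argument: the identification of $\pi_P\mid_{_0\wt\SSS^{\theta}}$ with $\tilde\gt^{\theta}$, the observation that $_0\wt\SSS^{\theta}\subset \wt Z^{\theta}_{max}$ so the restriction of $\IC_{Z^{\theta},\zeta}$ is (up to the shift $[\dim Z^{\theta}]$) the Kummer sheaf $(i^x_{max})_*(\cL_{\zeta}\boxtimes\Qlb)$, and the reduction to Theorem~\ref{Th_2} via the dimension count $\dim Z^{\theta}=\<\theta, 2(\check{\rho}-\check{\rho}_M)\>$ are all correct and essentially identical to the paper's one-line argument.

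For part 1), your overall strategy (factorize $_{\theta'}\wt\SSS^{\theta}$ using the convolution diagram from \cite{ICDC}, Section 3.5, then push forward along $\pi_P$) is the correct one, but the gap you flag at the end is the whole content, and the heuristic you offer in the meantime is not sound. A perverse sheaf supported on a substack of positive codimension is still a perverse sheaf sitting in perverse degree $0$; "positive codimension of support" does not produce the strict drop in perverse degree you need. What actually gives the strict negativity in the paper is that the restriction $\IC_{Z^{\theta},\zeta}\mid_{_{\theta'}\wt\SSS^{\theta}}$ is pinned down by the \emph{induction hypothesis} (Theorem~\ref{Th_3} for $\theta'<\theta$, translated to $\Bunt_{\tilde P}$ by Corollary~\ref{Cor_3} and restricted to the diagonal by Lemma~\ref{Lm_11_resricting_to_diag}): it is a twisted external product of $\Loc_{\zeta^{-1}}(\otimes_{\nu\in J}\Sym^{n_\nu}(\check{\gu}_n(P)_\nu))[2|\gB(\theta')|]$ on the convolution factor $\wt\Conv_{\tilde M}^{+,\theta'}$ with $\IC_{Z^{\theta-\theta'},\zeta}\mid_{_0\wt\SSS^{\theta-\theta'}}$. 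The strictly positive shift $[2|\gB(\theta')|]$ (which is $\geq 2$ once $\theta'\ne 0$), combined with part 2) applied to $\theta-\theta'$ and with Lemma~\ref{Lm_great_perversity} ensuring that convolution with the $M(\bO_x)$-equivariant perverse factor does not increase perverse amplitude, is what gives strictly negative perverse degrees. Without appealing to the induction on Theorem~\ref{Th_3} you have no way to identify the sheaf on the convolution factor, and without Lemma~\ref{Lm_great_perversity} you cannot control $\pi_{P!}$. Note also the paper treats the boundary case $\theta'=\theta$ separately (there $\pi_P$ is an isomorphism onto $\wt\Gr_M^{+,\theta}$ and the stratum has positive codimension in $\wt Z^{\theta}$), which your sketch does not distinguish.
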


  Let $\Conv_M$ denote the convolution diagram for the affine grassmanian of $M$ at $x$. This is the scheme classifying $\cF_M, \cF'_M\in\Bun_M$ with isomorphisms $\tilde\beta_M: \cF_M\,\iso\, \cF'_M\mid_{X-x}$ and $\beta'_M: \cF'_M\,\iso\, \cF^0_M\mid_{X-x}$. Let $\wt\Conv_{\tilde M}$ be obtained from $\Conv_M$ by adding two lines $\cU, \cU'$ and isomorphisms 
$$
\cU^N\,\iso\, (\cL_M)_{\cF_M}, \;\;\; \cU'^N\,\iso\, (\cL_M)_{\cF'_M}
$$ 
Write $\pr': \wt\Conv_{\tilde M}\to \wt\Gr_M$ for the projection sending the above point to $(\cF'_M, \beta'_M, \cU')$. It makes $\wt\Conv_{\tilde M}$ a fibration over $\wt\Gr_M$ with typical fibre isomorphic to $\wt\Gr_M$. Now given a $M(\OO_x)$-equivariant perverse sheaf $\cS$ on $\wt\Gr_M$ on which $\mu_N(k)$ acts as $\zeta^{-1}$, and any complex $\cS'$ on $\wt\Gr_M$, on which $\mu_N(k)$ acts as $\zeta^{-1}$, we can form their twisted external product $\cS\tboxtimes \cS'$, which is $\cS'$ along the base and $\cS$ along the fibre. It is normalized to be perverse for $\cS'$ perverse. As in \cite{Ga}, one proves the following. Let $\pr: \wt\Conv_{\tilde M}\to\wt\Gr_M$ be the map sending the above point to $(\cF_M, \beta'_M\tilde\beta_M, \cU)$. The convolution of $\cS$ with $\cS'$ is defined as $\pr_!(\cS\tboxtimes \cS')$.

\begin{Lm} 
\label{Lm_great_perversity}
If $\cS$ is a $M(\OO_x)$-equivariant perverse sheaf on $\wt\Gr_M$ on which $\mu_N(k)$ acts as $\zeta^{-1}$, $\cS'$ is a perverse sheaf on $\wt\Gr_M$, on which $\mu_N(k)$ acts as $\zeta^{-1}$ then their convolution is a perverse sheaf on $\wt\Gr_M$. \QED
\end{Lm}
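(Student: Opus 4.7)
The plan is to transcribe to the twisted setting the standard argument for perversity of convolution due to Lusztig, Mirkovi\'c-Vilonen and Gaitsgory. First I would reduce, by devissage, to the case where $\cS$ is supported on a single $M(\bO)$-orbit closure $\ov{\wt\Gr}_M^{\nu}$ and $\cS'$ is supported on some $\ov{\wt\Gr}_M^{\mu_1}$; by $M(\bO)$-equivariance of $\cS$ and the $\zeta^{-1}$-action of $\mu_N(k)$, necessarily $\cS\iso \Loc_{\zeta^{-1}}(V)$ for some irreducible $V\in\Rep(\check M_n)$. Perversity is local in the smooth topology, so we may work one stratum at a time.

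Next I would realize $\cS\tboxtimes \cS'$ on $\wt\Conv_{\tilde M}$ explicitly via descent. The quotient map $M(F)\times (M(F)/M(\bO))\to \Conv_M$ is an $M(\bO)$-torsor for the anti-diagonal action, and the canonical factorization isomorphism
\begin{equation*}
(\cL_M)_{\cF_M}\;\iso\; (\cL_M)_{\cF'_M}\otimes (\cL_{M,X})_{(\cF^1_M,\beta_M,x)}
\end{equation*}
together with the induced $\cU\iso \cU_1\otimes\cU'$ lifts this to a torsor between the corresponding gerbs. Since $\cS$ is $M(\bO)$-equivariant, the product $\cS\boxtimes \cS'$ descends (after the normalizing shift) to a perverse sheaf on $\wt\Conv_{\tilde M}$, which by definition is $\cS\tboxtimes \cS'$; the two $\zeta^{-1}$-actions of $\mu_N(k)$ combine, via $\cU^N\iso \cU_1^N\otimes\cU'^N$, to a $\zeta^{-1}$-action on the twisted product.

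The main step is perversity of $\pr_!(\cS\tboxtimes \cS')$. The restriction of $\pr$ to the support of $\cS\tboxtimes \cS'$ is proper, since $\ov{\wt\Gr}_M^{\nu}$ is projective; so by Verdier self-duality of $\cS\tboxtimes \cS'$ it suffices to bound perverse degrees from above. This reduces to the Mirkovi\'c-Vilonen dimension estimate: for any $\mu\in\Lambda_M^+$ and any geometric point $y$ of $\wt\Gr_M^{\mu}$,
\begin{equation*}
\dim\bigl(\pr^{-1}(y)\cap (\ov{\wt\Gr}_M^{\nu}\tilde\times \ov{\wt\Gr}_M^{\mu_1})\bigr)\;\le\; \tfrac{1}{2}\bigl(\dim\ov{\wt\Gr}_M^{\nu}+\dim\ov{\wt\Gr}_M^{\mu_1}-\dim\wt\Gr_M^{\mu}\bigr).
\end{equation*}
This estimate is purely geometric and insensitive to the twist: the gerbs $\wt\Gr_M\to \Gr_M$ and $\wt\Conv_{\tilde M}\to \Conv_M$ have smooth (in fact, one-dimensional) fibres, so the estimate passes from the untwisted case treated in \cite{Ga} without change.

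The main obstacle — or rather the point requiring most care — is the bookkeeping of the line bundle and $\mu_N(k)$-equivariance data in the descent: one must verify that the $M(\bO)$-equivariance of $\cS$ really does make $\cS\boxtimes \cS'$ descend to $\wt\Conv_{\tilde M}$ as a perverse sheaf on which $\mu_N(k)$ acts by $\zeta^{-1}$, compatibly with our normalization of $\tboxtimes$. Once the factorization isomorphism for $(\cL_M)_{\cF_M}$ above is written down canonically, this becomes automatic, and the rest of the argument is a direct transcription of the Mirkovi\'c-Vilonen-Gaitsgory one.
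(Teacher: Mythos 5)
The paper's proof of this lemma is just the citation ``As in \cite{Ga}''; that is, the intended argument is Gaitsgory's \emph{nearby-cycles} proof of convolution exactness (realizing the convolution as the unipotent nearby cycles of $\cS\boxtimes\cS'$ over the Beilinson--Drinfeld Grassmannian, and using that nearby cycles are perverse t-exact), transported to the twisted setting. Your write-up is a different route, and unfortunately it has a genuine gap at the crucial step.

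Your reduction to perverse degrees $\le 0$ via properness and duality is fine in spirit (one should phrase it as $\DD(\cS\tboxtimes\cS')\iso \DD\cS\tboxtimes\DD\cS'$ together with the observation that $\DD$ exchanges the $\zeta^{-1}$ and $\zeta$ isotypic subcategories, so the upper-bound estimate for $(\zeta^{-1},\zeta^{-1})$ and for $(\zeta,\zeta)$ together give the lower bound). The gap is in invoking the Mirkovi\'c--Vilonen dimension estimate. That estimate
\begin{equation*}
\dim\bigl(\pr^{-1}(y)\cap (\ov{\wt\Gr}_M^{\nu}\tilde\times \ov{\wt\Gr}_M^{\mu_1})\bigr)\le \tfrac{1}{2}\bigl(\dim\ov{\wt\Gr}_M^{\nu}+\dim\ov{\wt\Gr}_M^{\mu_1}-\dim\wt\Gr_M^{\mu}\bigr)
\end{equation*}
is precisely the stratified semi-smallness of $\pr$ \emph{for the stratification of the source by $\wt\Gr_M^{\nu'}\tilde\times\wt\Gr_M^{\mu'_1}$}. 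It therefore establishes perversity when $\cS'$ is constructible with respect to the $M(\bO)$-orbit stratification, i.e.\ when $\cS'$ is also $M(\bO)$-equivariant --- which is the content of the Mirkovi\'c--Vilonen argument. In the lemma, however, $\cS'$ is an \emph{arbitrary} perverse sheaf (and it is indeed applied as such in the proof of Proposition~\ref{Pp_alongs_strata}, where $\cS'$ is a direct image $\pi_{P!}$ that is not $M(\bO)$-equivariant). After devissage, $\cS'$ is built out of $\IC_{W,\cL}$ for closed subschemes $W\subset\ov{\wt\Gr}_M^{\mu_1}$ that need not be orbit closures, and what one would need is semi-smallness of $\pr$ restricted to $\ov{\wt\Gr}_M^{\nu}\tilde\times W$; the estimate you quote controls the fibre dimension over a point $y\in\wt\Gr_M^{\mu}$ but says nothing about the dimension of the stratum of $\pr(\ov{\wt\Gr}_M^{\nu}\tilde\times W)$ containing $y$, which in general is not an $M(\bO)$-orbit. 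Concretely, over an open dense stratum of $\pr(\ov{\wt\Gr}_M^{\nu}\tilde\times W)$ the fibres already have dimension $\dim\ov{\wt\Gr}_M^{\nu}$ up to a correction depending on $W$, and no amount of devissage reduces this to the orbit case.

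The correct route (and the one the paper silently invokes) is the fusion/nearby-cycles argument: spread $\cS$ out over $\Gr_{M,X}$ via the localization functor $\tau^0$, form the external product with $\cS'$ placed at a fixed point over the Beilinson--Drinfeld Grassmannian $\Gr_{M,X\times\{x\}}$, and take unipotent nearby cycles along the diagonal. t-exactness of nearby cycles then gives the perversity. All the twisting data (the $\mu_N$-gerb and the $\zeta^{-1}$-isotypic condition) pass through this construction because the gerbs involved are pulled back from the Beilinson--Drinfeld family. If you want an approach closer to yours, you would instead need the stronger geometric input that the convolution map $\pr$ is \emph{universally locally acyclic} with respect to the spherical direction, not merely semi-small on orbit strata. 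Also a small slip: the fibres of $\wt\Gr_M\to\Gr_M$ are $\mu_N$-gerbes, hence zero-dimensional stacks, not one-dimensional.
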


\begin{Prf}\select{of Proposition~\ref{Pp_alongs_strata}}

\smallskip\noindent
As in the proof of (\cite{ICDC}, Proposition~5.8), one has $\dim Z^{\theta}=\<\theta, 2(\check{\rho}-\check{\rho}_M)\>$, and points 2),3) follow from Theorem~\ref{Th_2}. 

 Assume now $\theta'\ne\theta$. As in (\cite{ICDC}, Section~3.5), let $\Conv_M^{+,\theta'}$ denote the closed subscheme of the convolution diagram $\Conv_M$ at $x$ given by the property $(\cF_M, \cF'_M, \tilde\beta_M)\in\Mod_{\Bun_M}^{+,\theta'}$. As in \select{loc.cit.}, one has an isomorphism
$$
_0\SSS^{\theta-\theta'}\times_{\Gr_M} \Conv_M^{+,\theta'}\;\iso\; {_{\theta'}\SSS^{\theta}},
$$
here the map $\Conv^{+,\theta'}_M\to \Gr_M$ used to define the fibred product sends the above point of $\Conv^{+,\theta'}_M$ to $(\cF'_M, \beta'_M)$. Let $\wt\Conv_{\tilde M}^{+,\theta'}$ be obtained from $\Conv_M^{+,\theta'}$ by the base change $\wt\Conv_{\tilde M}\to \Conv_M$.
We get a natural $\mu_N$-gerb 
$$
_0\wt\SSS^{\theta-\theta'}\times_{\wt\Gr_M} \wt\Conv_{\tilde M}^{+,\theta'}\to {_{\theta'}\wt\SSS^{\theta}}
$$ 
given by forgetting $\cU'$. As in the proof of (\cite{ICDC}, Proposition~5.8), the $*$-restriction of $\IC_{Z^{\theta}, \zeta}\mid_{_{\theta'}\wt\SSS^{\theta}}$ under this gerb is described by the induction hypothesis. Namely, by Corollary~\ref{Cor_3} and Lemma~\ref{Lm_11_resricting_to_diag}, it identifies with
$$
\mathop{\oplus}\limits_{\gB(\theta')} \Loc_{\zeta^{-1}}(\otimes_{\nu\in J} \Sym^{n_{\nu}}(\check{\gu}_n(P)_{\nu})[2\mid \!\gB(\theta')\!\mid]\tboxtimes (\IC_{Z^{\theta-\theta'},\zeta}\mid_{_0\wt\SSS^{\theta-\theta'}})
$$
Here it is understood that $\gB(\theta')=\sum_{\nu\in J} n_{\nu}\nu$, the sum being taken over all elements $\gB(\theta')$ over $\theta'$. Now by 2), $\pi_{P !}(\IC_{Z^{\theta-\theta'}, \zeta}\mid_{_0\wt\SSS^{\theta-\theta'}})$ is placed in perverse degrees $\le 0$. So, by Lemma~\ref{Lm_great_perversity}, $\pi_{P !}(\IC_{Z^{\theta}, \zeta}\mid_{_0\wt\SSS^{\theta}})$ is placed in perverse degrees $< 0$.

 In the case $\theta'=\theta$ the complex $\pi_{P !}(\IC_{Z^{\theta}, \zeta}\mid_{_{\theta'}\wt\SSS^{\theta}})$ is placed in strictly negative perverse degrees, as $\pi_P: {_\theta\wt\SSS^{\theta}}\to\wt\Gr_M^{+,\theta}$ is an isomorphism. We are done.
\end{Prf}

\medskip

 Now as in (\cite{ICDC}, Section~5.11) one checks that all the local systems $\cE_{\gB(\theta)}$ are trivial.

 To finish the proof of Theorem~\ref{Th_3} it remains to analyze the complex $\cK^{\theta}$ from (\ref{eq_key_one_with_cK_theta}). There is at most one $\gB(\theta)$ with $\mid\!\gB(\theta)\!\mid=1$, which we denote $\gB(\theta)^0$ as in \cite{ICDC}. If it exists, that is, $\theta=c_P(\nu)$ for some $\nu\in J$, we have to show that $\cK^{\theta}\,\iso\, (i_{\gB(\theta)^0})_*\IC^{\gB(\theta)^0}_{\zeta}[-1]$. Otherwise, we have to show that $\cK^{\theta}=0$. 
 
  By definition of $\IC$, as $\cK^{\theta}$ is a direct summand of $\gs^{\theta !}(\IC_{Z^{\theta}, \zeta})$, it is placed in perverse degrees $\ge 1$. Restrict both sides of (\ref{eq_key_one_with_cK_theta}) to $\Mod^{+,\theta}_{\tilde M}\mid_{\vartriangle_X}$ and apply the perverse cohomological truncation $\tau_{\ge 1}$. Using Lemma~\ref{Lm_11_resricting_to_diag} and Proposition~\ref{Pp_resricting_to_X}, we get
$$
\Loc_{X, \zeta^{-1}}(U(\check{\gu}_n(P))_{\theta})[-1]\,\iso\, \mathop{\oplus}\limits_{\gB(\theta), \mid\gB(\theta)\mid\ne 1} \Loc_{X, \zeta^{-1}}(\mathop{\otimes}\limits_{\nu\in J} \Sym^{n_{\nu}}(\check{\gu}_n(P)_{\nu}))[-1]\oplus \cK^{\theta}\mid_{\wt\Gr^{+,\theta}_{M,X}}
$$
As in (\cite{ICDC}, Section~5.12), this implies the desired result. We used here that $U(\check{\gu}_n(P))$ and $\Sym(\check{\gu}_n(P))$ are non-canonically isomorphic as $\check{M}_n$-modules. Theorem~\ref{Th_main_Section1} is proved. 

\section{Composing Eisenstein series}
\label{section_composing}

\subsubsection{} In this section we prove Theorem~\ref{Th_composing_Eis}, which is an analog of (\cite{BG}, Theorem~2.3.10) in our setting.

Keep notations of Section~\ref{Section_4.1}. Let $B(M)\subset M$ be the Borel subgroup corresponding to the roots $\check{\alpha}_i, i\in\cI_M$. As in (\cite{BG}, Section~7.1) set $\Bunt_{B,P}=\Bunt_P\times_{\Bun_M} \Bunb_{B(M)}$. Set also $\Bunt_{\tilde B, \tilde P}=\Bunt_{\tilde P}\times_{\Bunt_M}\Bunb_{\tilde B(M)}$. We have the cartesian square
$$
\begin{array}{ccc}
\Bunt_{\tilde B, \tilde P} & \toup{\tilde\gq'_P} &\Bunb_{\tilde B(M)}\\
\downarrow\lefteqn{\scriptstyle \tilde\gp'_M} && \downarrow\lefteqn{\scriptstyle \tilde\gp}\\
\Bunt_{\tilde P} & \toup{\tilde\gq} & \Bunt_M
\end{array}
$$
Write also $\Bunt_{\tilde B, P}=\Bunt_{B,P}\times_{\Bun_T\times \Bun_G}(\Bunt_T\times\Bunt_G)$. Recall that $\Bun_B\subset \Bun_{B,P}$ is naturally an open substack. The preimage of $\Bun_B$ in $\Bunt_{\tilde B, P}$ identifies with the stack $\Bun_{\tilde B}$ from Section~\ref{Section_3.1}. 
Recall the perverse sheaf $\cL_{\zeta}\boxtimes \IC(\Bun_{B,\tilde G})$ viewed as a perverse sheaf on $\Bun_{\tilde B}$ via (\ref{eq_one}). Denote by $\IC_{B,P,\zeta}$ its intermediate extension under $\Bun_{\tilde B}\hook{} \Bunt_{\tilde B,P}$. 

\begin{Pp} 
\label{Pp_IC_over_Bun_BP}
The restriction of $\IC_{B,P,\zeta}$ under the projection $\Bunt_{\tilde B, \tilde P}\to \Bunt_{\tilde B,P}$ identifies canonically with
$$
(\tilde\gp'_M)^*\IC_{\zeta}\otimes (\tilde\gq'_P)^*\IC_{\zeta}[-\dim\Bun_M]
$$
\end{Pp}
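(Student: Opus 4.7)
The plan is to show that both sides are intermediate extensions from the same open dense substack $\Bun_{\tilde B,\tilde P}\subset \Bunt_{\tilde B,\tilde P}$, where they agree by a straightforward computation. Here $\Bun_{\tilde B,\tilde P}$ denotes the preimage of $\Bun_{\tilde B}\subset \Bunt_{\tilde B, P}$ under the forgetful map $\pi: \Bunt_{\tilde B,\tilde P}\to \Bunt_{\tilde B, P}$ (equivalently, the preimage of $\Bun_{\tilde P}\subset \Bunt_{\tilde P}$ and of $\Bun_{\tilde B(M)}\subset \Bunb_{\tilde B(M)}$). The map $\pi$ is representable and is essentially a $\mu_N$-gerb: it remembers one additional $N$-th root $\cU$ of $(\cL_M)_{\cF_M}$ subject to compatibility with the existing roots. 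Hence $\pi$ is smooth, and pullback along $\pi$ commutes with intermediate extension; in particular $\pi^*\IC_{B,P,\zeta}$ is the intermediate extension under $\Bun_{\tilde B,\tilde P}\hook{}\Bunt_{\tilde B,\tilde P}$ of $(\pi^{-1}(\Bun_{\tilde B}))^*\IC_{B,P,\zeta}$, which over $\Bun_{\tilde B,\tilde P}$ may be computed directly from (\ref{eq_one}) and the analogous factorization for $M$.

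Next I would verify that on $\Bun_{\tilde B,\tilde P}$ the two sides of the claimed isomorphism agree. By definition, $(\tilde\gp'_M)^*\IC_{\zeta}$ restricted to this open substack is (up to the canonical shift) the IC-sheaf of $\Bun_{\tilde P}\times_{\Bunt_M}\Bunb_{\tilde B(M)}$ along the first factor, while $(\tilde\gq'_P)^*\IC_\zeta$ is analogously the IC along the second factor; their tensor product identifies with the twisted external product of $\cL_\zeta\boxtimes\IC(\Bun_{B,\tilde G})$ (for $B\subset G$) split as $\cL_\zeta\boxtimes \IC(\Bun_{B(M),\tilde M})\boxtimes\IC(\Bun_{P,\tilde G})$ using the factorization of $B$-structures through $B(M)\subset P$. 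The shift $[-\dim\Bun_M]$ exactly accounts for the difference of dimensions $\dim\Bun_{B,P}-\dim\Bun_B=\dim\Bun_P-\dim\Bun_M-\dim\Bun_{B(M)}+\dim\Bun_M$, reconciling the perverse normalizations.

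The main step is then to prove that the right-hand side $\cF:=(\tilde\gp'_M)^*\IC_{\zeta}\otimes (\tilde\gq'_P)^*\IC_{\zeta}[-\dim\Bun_M]$ is a perverse sheaf on $\Bunt_{\tilde B,\tilde P}$, irreducible, and is the intermediate extension of its restriction to $\Bun_{\tilde B,\tilde P}$. For this I would use ULA: by Proposition~\ref{Pp_IC_zeta_is_ULA_for_P} the perverse sheaf $\IC_\zeta$ on $\Bunt_{\tilde P}$ is ULA with respect to $\tilde\gq:\Bunt_{\tilde P}\to\Bunt_M$, so its $\tilde\gp'_M$-pullback to the cartesian square is ULA with respect to the map to $\Bunb_{\tilde B(M)}$. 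Applying (\cite{BG}, Section~5.1.2) — the fact that the twisted external product of a perverse sheaf with a ULA perverse sheaf over a base is perverse and commutes with intermediate extension — gives that $\cF$ is perverse and is the Goresky--MacPherson extension from any open substack over which both factors restrict to their generic form. In particular, over $\Bun_{\tilde B,\tilde P}$ both factors are (shifted) locally constant up to IC of smooth open substacks of $\Bun_{\tilde P}$ and $\Bunb_{\tilde B(M)}$ respectively, so $\cF$ is the intermediate extension of a shifted local system.

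The hard part will be making rigorous the assertion that the tensor product of the two pulled-back IC-sheaves, each of which is an intermediate extension from its own open substack in the opposite direction, is itself the intermediate extension of its restriction to $\Bun_{\tilde B,\tilde P}$. This is precisely where the ULA property of $\IC_\zeta$ with respect to $\tilde\gq$ — combined with the analogous ULA property of $\IC_\zeta$ on $\Bunb_{\tilde B(M)}$ over $\Bunt_T$ (Proposition~\ref{Pp_ULA_property} applied to $M$) — is used to reduce to a statement about external tensor products of ULA perverse sheaves over a smooth base. Once this perversity and IC-property are established, the identification of $\cF$ with $\pi^*\IC_{B,P,\zeta}$ follows by comparing their restrictions to $\Bun_{\tilde B,\tilde P}$, which are canonically isomorphic by the factorization of the Pl\"ucker data through $B(M)\subset P$.
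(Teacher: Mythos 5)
Your proposal follows the same route as the paper's: the paper's proof simply observes that, because $\Bun_P\to\Bun_M$ is smooth, Proposition~\ref{Pp_IC_zeta_is_ULA_for_P} gives that \emph{both} $\IC_\zeta$ and $(j_{\tilde P})_!j_{\tilde P}^*\IC_\zeta$ on $\Bunt_{\tilde P}$ are ULA with respect to $\tilde\gq$, and then invokes the argument of \cite{BG}, Theorem~7.1.2 (supplemented by \cite{L5}, Proposition~4.8.5) to conclude. You correctly locate Proposition~\ref{Pp_IC_zeta_is_ULA_for_P} as the crux, correctly check agreement on $\Bun_{\tilde B,\tilde P}$, and the remark that the forgetful map $\pi\colon\Bunt_{\tilde B,\tilde P}\to\Bunt_{\tilde B,P}$ is a $\mu_N$-gerb (hence smooth of relative dimension zero, so $\pi^*$ commutes with intermediate extension) is a valid and useful preliminary observation.

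Two points of difference. First, the extra ingredient you invoke — the ULA of $\IC_\zeta$ on $\Bunb_{\tilde B(M)}$ over $\Bunt_T$ (Proposition~\ref{Pp_ULA_property} for $M$) — is not used in the paper and is not needed: the external-product-over-a-base argument only requires ULA of the factor living on $\Bunt_{\tilde P}$ relative to the common base $\Bunt_M$; the other factor may be an arbitrary perverse sheaf. Second, and more importantly, the step you flag as the ``hard part'' — that the twisted external product is the intermediate extension and not merely a perverse sheaf — is precisely where the \emph{second} assertion of Proposition~\ref{Pp_IC_zeta_is_ULA_for_P}, namely the ULA of $(j_{\tilde P})_!j_{\tilde P}^*\IC_\zeta$, enters: it controls the $!$-extension (and by duality the $*$-extension) from the open stratum and lets one sandwich the twisted external product between them, following \cite{BG}, Theorem~7.1.2. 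You only quote the ULA of $\IC_\zeta$ itself, so this piece of the argument is missing from your sketch; you should also note the reference to \cite{L5}, Proposition~4.8.5, which the paper uses for the technical fact about twisted external products with a ULA factor.
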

\begin{proof}
Since $\Bun_P\to\Bun_M$ is smooth, $\IC_{\zeta}$ and $(j_{\tilde P})_!j_{\tilde P}^*\IC_{\zeta}$ are ULA with respect to $\tilde\gq: \Bunt_{\tilde P}\to \Bunt_M$ by Proposition~\ref{Pp_IC_zeta_is_ULA_for_P}, the proof of (\cite{BG}, Theorem~7.1.2) applies in our setting. Using (\cite{L5}, Proposition~4.8.5), one gets the desired isomorphism.
\end{proof}

\begin{Rem} There are two a priori different definitions of the ULA property. For a morphism $p_1: Y_1\to S_1$ and an object $L\in D(Y_1)$ the first definition of $L$ being ULA with respect to $p_1$ is (\cite{SGA4demi}, Definition~2.12), and the second is (\cite{BG}, Definition~5.1). In the latter one requires that the local acyclicity holds after any smooth base change $q: S\to S_1$, while in the former one requires it to hold after any base change 
$q: S\to S_1$. In the proof of Proposition~\ref{Pp_IC_over_Bun_BP} it was used that the two definitions are equivalent. 
\end{Rem}

 Recall the definition of the natural map $\gr_P: \Bunt_{B,P}\to\Bunb_B$ from(\cite{BG}, Proposition~7.1.5). A point of $\Bunt_{B,P}$ is a collection $(\cF_G, \cF_M, \kappa_P)\in\Bunt_P, (\cF_M, \cF_T, \kappa_M)\in\Bunb_{B(M)}$. Here $\kappa_M$ is the collection of embeddings 
$$
\kappa_M^{\check{\nu}}: \cL^{\check{\nu}}_{\cF_T}\hook{} \cU^{\check{\nu}}_{\cF_M},\;\; \check{\nu}\in\Lambda^+_M,
$$
and $\cU^{\check{\nu}}$ denotes the corresponding Weyl module for $M$. Then $\gr_P$ sends this point to $(\cF_G, \cF_T,\kappa)$, where for $\check{\lambda}\in\check{\Lambda}^+$ the map $\kappa^{\check{\lambda}}$ is the composition
$$
\cL^{\check{\lambda}}_{\cF_T}\;\hook{\kappa_M^{\check{\lambda}}} \;\cU^{\check{\nu}}_{\cF_M}\;\to\; (\cV^{\check{\lambda}})^{U(P)}_{\cF_M}\;\hook{\kappa_P^{\check{\lambda}}} \;\cV^{\check{\lambda}}_{\cF_G}
$$
The map $\gr_P$ is representable and proper, it extends naturally to $\wt\gr_P: \Bunt_{\tilde B, P}\to\Bunb_{\tilde B}$. Recall that $\gr_P$ is an isomorphism over the open substack $\Bun_B\subset \Bunb_B$. 

The following is an analog of (\cite{BG}, Theorem~7.1.6) in our setting.
\begin{Th} 
\label{Th_gr_P_direct_image}
One has canonically $\wt\gr_{P !}\IC_{B,P,\zeta}\,\iso\, \IC_{\zeta}$.
\end{Th}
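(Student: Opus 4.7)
The plan is to show that $\wt\gr_{P!}\IC_{B,P,\zeta}$ coincides with the intermediate extension to $\Bunb_{\tilde B}$ of its restriction to $\Bun_{\tilde B}$, namely with $\IC_\zeta$. Since $\wt\gr_P$ is proper and restricts to an isomorphism over $\Bun_{\tilde B}\hook{}\Bunb_{\tilde B}$, the complex $\wt\gr_{P!}\IC_{B,P,\zeta}$ restricts to $\IC_\zeta\mid_{\Bun_{\tilde B}}$ on the open locus by construction. What remains is to check, stratum by stratum on the boundary, that no additional perverse summands appear and that the $*$-fibres match those of $\IC_\zeta$ predicted by Theorem~\ref{Th_main_Section1} applied to the Borel $B$.

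First I would lift the computation to $\Bunt_{\tilde B,\tilde P}$ via the smooth projection $\Bunt_{\tilde B,\tilde P}\to \Bunt_{\tilde B,P}$. By Proposition~\ref{Pp_IC_over_Bun_BP}, the pullback of $\IC_{B,P,\zeta}$ along this projection is the twisted tensor product $(\tilde\gp'_M)^*\IC_{\zeta}\otimes(\tilde\gq'_P)^*\IC_{\zeta}[-\dim\Bun_M]$, where the first factor is governed by Theorem~\ref{Th_main_Section1} for the pair $(M,G)$ and the second by Theorem~\ref{Th_main_Section1} for the pair $(T,M)$. Using proper base change applied to the induced proper map $\Bunt_{\tilde B,\tilde P}\to \Bunb_{\tilde B}$ and the stratification $_{\gU(\theta)}\Bunb_{\tilde B}$, $\theta\in\Lambda^{\sharp,pos}$, one then computes the $*$-restriction of $\wt\gr_{P!}\IC_{B,P,\zeta}$ over each boundary stratum as a direct sum, indexed by pairs $(\theta_1,\theta_2)$ with $\theta_1\in\Lambda_{G,P}^{\sharp,pos}$, $\theta_2\in\Lambda_{M,B(M)}^{\sharp,pos}$ and $\theta_1+\theta_2=\theta$, of contributions involving $\Sym^i(\check{\gu}_n(P))_{\theta_1}\otimes\Sym^j(\check{\gn}_n(M))_{\theta_2}$, suitably shifted.

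The key combinatorial input is the decomposition of $\check{T}^{\sharp}$-graded vector spaces $\check{\gn}_n=\check{\gu}_n(P)\oplus \check{\gn}_n(M)$, which yields an identification of $\check{T}^{\sharp}$-weight-$\theta$ components $\Sym(\check{\gn}_n)_\theta\,\iso\,\oplus_{\theta_1+\theta_2=\theta}\Sym(\check{\gu}_n(P))_{\theta_1}\otimes\Sym(\check{\gn}_n(M))_{\theta_2}$ with cohomological shifts matching up. After this rearrangement, the stratum-by-stratum description agrees exactly with the $*$-fibres of $\IC_\zeta$ on $\Bunb_{\tilde B}$ given by Theorem~\ref{Th_main_Section1} for $P=B$, so the identification of perverse sheaves follows once $\wt\gr_{P!}\IC_{B,P,\zeta}$ is known to be semisimple and placed in the correct perverse degrees.

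The main obstacle will be controlling the perverse degrees of the fibrewise pushforward so as to identify the direct image with the intermediate extension rather than merely showing it contains $\IC_\zeta$ as a summand. This requires the ULA property of Proposition~\ref{Pp_IC_zeta_is_ULA_for_P} together with dimension estimates for the fibres of $\wt\gr_P$ over each boundary stratum, parallel to those in (\cite{BG}, Sections~7.2--7.3); in addition, the $\mu_N\times\mu_N$-equivariance structure must be carried through carefully so that the $\zeta$-isotypic component is matched correctly on each stratum. Granted these estimates and the combinatorial matching above, the theorem follows.
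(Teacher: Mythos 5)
Your plan is broadly in the right spirit — you correctly see that the theorem amounts to showing $\wt\gr_{P !}\IC_{B,P,\zeta}$ is the intermediate extension of its open restriction, and you correctly single out Theorem~\ref{Th_main_Section1} and Proposition~\ref{Pp_IC_over_Bun_BP} as the essential inputs. But the paper's actual proof, following the template of (\cite{BG}, Theorem~7.1.6), never computes the $*$-fibres of $\wt\gr_{P!}\IC_{B,P,\zeta}$ and never invokes the $\Sym$-combinatorics you describe. It instead reduces the theorem to a proposition with two parts: (i) the $*$-restriction of $\IC_{B,P,\zeta}$ to each stratum $_{\ov{\mu,\nu},\lambda}\Bun_{\tilde B,P}$ lies in perverse degrees $\le -\<\lambda+\mid\!\ov{\mu,\nu}\!\mid,\check{\rho}_M\>$, with strict inequality unless $\lambda=0$; and (ii) the fibres of $\gr_P$ over $\Bunb_B^{\lambda}$ in that stratum have dimension $\le \<\lambda+\mid\!\ov{\mu,\nu}\!\mid,\check{\rho}_M\>$. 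These two estimates combine mechanically: pushing forward a complex in perverse degree $\le -e$ along a map with fibres of dimension $\le e$ yields something in perverse degree $\le 0$, strictly negative off $\lambda=0$. Together with the Verdier self-duality of $\wt\gr_{P!}\IC_{B,P,\zeta}$ (propriety of $\wt\gr_P$), this already characterizes the pushforward as the intermediate extension. The role of Theorem~\ref{Th_main_Section1} and Proposition~\ref{Pp_IC_over_Bun_BP} is only to guarantee that the restriction of $\IC_{B,P,\zeta}$ to the strata $_{\ov{\mu,\nu}}\Bun_{\tilde B, P}$ has smooth cohomology sheaves, which feeds into the dimension counts exactly as in (\cite{BG}, Proposition~7.1.8).

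Your alternative — match $*$-fibres using the identity $\Sym(\check{\gn}_n)_\theta\,\iso\,\oplus_{\theta_1+\theta_2=\theta}\Sym(\check{\gu}_n(P))_{\theta_1}\otimes\Sym(\check{\gn}_n(M))_{\theta_2}$ and then deduce the isomorphism — is suggestive but carries more overhead than the paper's route. To close it you would need (a) semisimplicity of $\wt\gr_{P!}\IC_{B,P,\zeta}$, requiring a purity argument and the decomposition theorem, and (b) an argument that semisimple pure perverse sheaves with matching $*$-fibres coincide, which is a further induction on supports. The paper sidesteps both. More importantly, you relegate the perverse-degree and fibre-dimension estimates to the "main obstacle" paragraph as a worry rather than presenting them as the actual proof — but in the paper these estimates, as stated precisely in the proposition above, \emph{are} the proof, and the combinatorial matching of $*$-fibres is never needed. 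So what you've written is closer to an a posteriori sanity check than to the argument itself.
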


\subsubsection{Proof of Theorem~\ref{Th_gr_P_direct_image}} Once our Theorem~\ref{Th_main_Section1} is established, one easily adopts the proof of (\cite{BG}, Theorem~7.1.6) to our setting. We indicate the corresponding notation and changes for the convenience of the reader.
 
 Recall that $\Lambda^+_{M,G}=\Lambda^+_M\cap w_0^M(\Lambda_G^{pos})$. Given a collection $\ov{\mu,\nu}$ consisting of $n_1,\ldots, n_k\in \ZZ_{>0}$ and pairwise different elements $(\mu_1, \nu_1),\ldots(\mu_k,\nu_k)\in (\Lambda_M^{pos}\times \Lambda^+_{M,G})-0$
one sets $X^{\ov{\mu,\nu}}=X^{(n_1)}\times\ldots\times X^{(n_k)}-\vartriangle$, where $\vartriangle$ is the divisor of all the diagonals. Write $D=(D_1,\ldots, D_k)\in X^{\ov{\mu,\nu}}$ for a point of this scheme.

 The Hecke stack $\cH^{\ov{\mu,\nu}}_M$ classifies $(D\in X^{\ov{\mu,\nu}}, \cF'_M,\cF_M\in\Bun_M, \beta: \cF'_M\,\iso\, \cF_M\mid_{X-D})$ such that $\cF'_M$ is in the position $\nu_i$ with respect to $\cF_M$ at points of $D_i$. 
 
 By definition, $_{\ov{\mu,\nu}}\Bun_{B,P}$ is the image of the locally closed embedding 
$$
\Bun_P\times_{\Bun_M}\cH_M^{\ov{\mu,\nu}}\times_{\Bun_M} \Bun_{B(M)}\hook{} \Bunt_{B,P}
$$
 The first stack classifies 
$(D\in X^{\ov{\mu,\nu}},  \cF_M, \cF'_M, \beta)\in \cH^{\ov{\mu,\nu}}_M, \cF_P\in\Bun_P$ with an isomorphism $\cF_P\times_P M\,\iso\, \cF_M$, a $B(M)$-torsor $\cF_{B(M)}$ with $\cF_T:=\cF_{B(M)}\times_{B(M)} T$, an isomorphism $\cF_{B(M)}\times_{B(M)} M\,\iso\, \cF'_M$. Its image in $\Bunt_{B,P}$ is the collection $(\cF_G, \cF'_M,\kappa)\in\Bunt_P, (\cF'_M, \cF'_T, \kappa_M)\in\Bunb_{B(M)}$, where $\cF'_T=\cF_T(-\sum_i \mu_i D_i)$ and $\cF_G=\cF_P\times_P G$. 

  For $\lambda=\sum_i m_i\alpha_i\in\Lambda_G^{pos}$ the locally closed substack $\Bunb_B^{\lambda}\subset \Bunb_B$ is defined as the image of the locally closed immersion 
$$
\prod_{i\in\cI} X^{(m_i)}\times\Bun_B\to \Bunb_B
$$ 
sending $((D_i)_{i\in \cI}, \cF_B)$ to $(\cF'_T, \cF_G, \kappa)$, where $\cF'_T=\cF_T(-\sum_i \alpha_i D_i)$ for $\cF_T=\cF_B\times_B T$, $\cF_G=\cF_B\times_B G$. 

 For $\ov{\mu,\nu}$ as above, $\lambda\in\Lambda_G^{pos}$ one sets 
$$
 _{\ov{\mu,\nu}, \lambda}\Bun_{B,P}={_{\ov{\mu,\nu}}\Bun_{B,P}}\cap \gr_P^{-1}(\Bunb_B^{\lambda})
$$ 
Write $_{\ov{\mu,\nu}, \lambda}\Bun_{\tilde B,P}$ for the preimage of $_{\ov{\mu,\nu}, \lambda}\Bun_{B,P}$ under $\Bunt_{\tilde B,P}\to\Bunt_{B,P}$. 
 
 We set $\mid\!\ov{\mu,\nu}\!\mid=\sum_{i=1}^k n_i(\nu_i-\mu_i)\in\Lambda$. Theorem~\ref{Th_gr_P_direct_image} is reduced to the following.

\begin{Pp} For $\ov{\mu,\nu}$, $\lambda\in\Lambda_G^{pos}$ as above the following holds:\\
(i) The $*$-restriction of $\IC_{B,P,\zeta}$ to $_{\ov{\mu,\nu}, \lambda}\Bun_{\tilde B,P}$ lives in perverse degrees $\le -\<\lambda+\mid\!\ov{\mu,\nu}\!\mid, \check{\rho}_M\>$, and the inequality is strict unless $\lambda=0$.\\
(ii) The fibres of $\gr_P: {_{\ov{\mu,\nu}, \lambda}\Bun_{\tilde B,P}}\to \Bunb_B^{\lambda}$ are of dimension $\le \<\lambda+\mid\!\ov{\mu,\nu}\!\mid, \check{\rho}_M\>$.
\end{Pp}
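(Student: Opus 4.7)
The plan is to pull back the question along the smooth surjection $\Bunt_{\tilde B,\tilde P}\to\Bunt_{\tilde B,P}$ and use Proposition~\ref{Pp_IC_over_Bun_BP} to replace $\IC_{B,P,\zeta}$ by the tensor product $(\tilde\gp'_M)^*\IC_{\zeta}\otimes(\tilde\gq'_P)^*\IC_{\zeta}[-\dim\Bun_M]$ of the two $\IC$-sheaves (for $P\subset G$ and for $B(M)\subset M$). Both factors are then controlled by the already proved Theorem~\ref{Th_main_Section1}, applied respectively to the parabolic $P\subset G$ and to the Borel $B(M)\subset M$.

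First I would identify the combinatorics of the stratification. A point of $_{\ov{\mu,\nu}}\Bun_{B,P}$ records, after passing to $\Bunt_{\tilde B,\tilde P}$, a $P$-stratum indexed by the partition $\gU$ of $\theta_{\nu}:=\sum_i n_i\bar\nu_i\in\Lambda_{G,P}^{pos}$ where $\bar\nu_i$ is the image of $\nu_i$ in $\Lambda_{G,P}$, together with a $B(M)$-stratum given by the $M$-divisor $\sum_i n_i(\nu_i-\mu_i)D_i$, whose total class is $\mid\!\ov{\mu,\nu}\!\mid\in\Lambda$; the further constraint $\lambda=$ position in $\Bunb_B^{\lambda}$ adds a semi-infinite stratum of $\Bunb_{\tilde B(M)}$ indexed by some $\lambda_M\in\Lambda^{pos}_M$ such that $\lambda_M+\mid\!\ov{\mu,\nu}\!\mid$ maps to the correct class in $\Lambda_{G,B}$. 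Applying Theorem~\ref{Th_main_Section1} to the $P$-factor gives a perverse degree bound coming from $\oplus_i\Sym^i(\check{\gu}_n(P))[2i]$ (perverse degree $\le 0$, strict unless $\lambda=0$ and the point is in the open stratum of $\Bunt_{\tilde P}$), while applying the $B(M)$-version of Theorem~\ref{Th_main_Section1} (this is the case ``$P=B$'' of that theorem, for the group $M$) bounds the $B(M)$-factor in perverse degrees $\le -\<\lambda_M,\check{\rho}_M\>$ with equality controlled by $U(\check{\gn}_n^M)$. Adding the two bounds and tracking the codimension shifts yields the required bound $\le -\<\lambda+\mid\!\ov{\mu,\nu}\!\mid,\check{\rho}_M\>$, strict unless $\lambda=0$.

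For part (ii) I would argue directly from the geometry of $\gr_P$, factoring it as $\Bunt_{\tilde B,P}\to \Bun_B\times_{\Bun_M}\Bun_P\to\Bunb_B$ over the open locus and extending by a semi-infinite computation on closed strata. Over a point of $\Bunb_B^\lambda$ the fibre of $\gr_P$ is a locally closed subscheme of a product of strata in $\Gr_M$ and $\Gr_{B(M)}$ governed by $\ov{\mu,\nu}$ and $\lambda$; the dimension bound then reduces, via the standard semi-infinite dimension formula for $S_{B(M)}^\nu\cap\Gr_M^\nu$, to the combinatorial identity that the sum of contributions equals $\<\lambda+\mid\!\ov{\mu,\nu}\!\mid,\check{\rho}_M\>$.

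The main obstacle I expect is step (i): verifying that the strict inequality holds when $\lambda\ne 0$ and keeping track of the various shifts so that the two $\<{-},\check{\rho}_M\>$ contributions (one from the $B(M)$-IC-sheaf, one from the codimension of the $M$-Hecke stratum inside $\cH_M^{\ov{\mu,\nu}}$) combine correctly into the single exponent $-\<\lambda+\mid\!\ov{\mu,\nu}\!\mid,\check{\rho}_M\>$. This amounts to the bookkeeping carried out in \cite{BG}, Section~7.2, but now twisted by the metaplectic data; the point is that $\IC_{\zeta}$ enters only through its numerical perverse degree, which is unchanged from the classical setting, so the Braverman--Gaitsgory calculation goes through verbatim once Proposition~\ref{Pp_IC_over_Bun_BP} and Theorem~\ref{Th_main_Section1} are in hand.
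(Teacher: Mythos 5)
Your proposal takes essentially the same route as the paper's proof, which simply cites (\cite{BG}, Proposition~7.1.8) and notes that the only new input is Theorem~\ref{Th_main_Section1} together with Proposition~\ref{Pp_IC_over_Bun_BP}, which combine to give the smoothness of the cohomology sheaves of $\IC_{B,P,\zeta}$ along the strata $_{\ov{\mu,\nu}}\Bun_{\tilde B,P}$; this is exactly the reduction you make. Your remark that the twisted $\IC$-sheaves enter only through the same numerical perverse-degree bounds as in the classical setting is precisely what makes the Braverman--Gaitsgory bookkeeping go through unchanged, so you have identified the correct strategy.
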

\begin{proof}
The proof of (\cite{BG}, Proposition~7.1.8) applies. The only change is that one uses Theorem~\ref{Th_main_Section1} (and Proposition~\ref{Pp_IC_over_Bun_BP}) to garantee that the $*$-restriction of $\IC_{B,P,\zeta}$ to $_{\ov{\mu,\nu}}\Bun_{\tilde B,P}$ has smooth cohomology sheaves.
\end{proof}
 
\begin{proof}[Proof of Theorem~\ref{Th_composing_Eis}] For $K\in \D_{\zeta}(\Bunt_T)$ one has
$$
\Eis_M^G\Eis^M_T(K)\,\iso\, \tilde\gp_!(\IC_{\zeta}\otimes\tilde\gq^*(\tilde\gp_M)_!(\tilde\gq_M^*K\otimes \IC_{\zeta}))[-\dim\Bun_T-\dim\Bun_M]
$$
Using the projection formula and Proposition~\ref{Pp_IC_over_Bun_BP}, this identifies with 
\begin{equation}
\label{complex_for_Th_composing_Eis}
\tilde\gp_!(\tilde\gp'_M)_!(\IC_{B,P,\zeta}\otimes(\tilde\gq'_P)^*\tilde\gq_M^*)[-\dim\Bun_T]
\end{equation}
The composition $\tilde\gq'_P\tilde\gq_M$ coincides with $\Bunt_{\tilde B,\tilde P}\to \Bunt_{\tilde B, P}\toup{\tilde\gr_P}\Bunb_{\tilde B}\toup{\tilde\gq}\Bunt_T$, and the composition $\tilde\gp\tilde\gp'_M$ coincides with $\Bunt_{\tilde B,\tilde P}\to \Bunt_{\tilde B, P}\toup{\tilde\gr_P}\Bunb_{\tilde B}\toup{\tilde\gp}\Bunt_G$. So, (\ref{complex_for_Th_composing_Eis}) identifies with 
$$
\tilde\gp_!((\tilde\gr_P)_!\IC_{B,P,\zeta}\otimes\tilde\gq^*K)[-\dim\Bun_T]\,\iso\, \Eis_T^G(K)
$$
by Theorem~\ref{Th_gr_P_direct_image}. 
\end{proof}

\section{The case of $G=\SL_2$}
\label{Section_The case of SL_2}

\subsection{Precisions} In this section we get some more precise results for $G=\SL_2$. Keep notations of Section~\ref{Section_Main_results}. Let $e=n$ for $n$ odd (resp., $e=\frac{n}{2}$ for $n$ even). Then $\Lambda^{\sharp}=e\Lambda$. The unique simple coroot of $G$ is denoted $\alpha$, the simple root of $\check{G}_n$ is $n\alpha$. For $n$ even one gets $\check{G}_n\,\iso\,\SL_2$, and $\check{G}_n\,\iso\,\PSL_2$ for $n$ odd. Recall that $\check{h}=2$. 

 Let $\cL_c$ be the line bundle on $\Bun_G$ with fibre $\det\RG(X, \cO^2)\otimes\det\RG(X, M)^{-1}$ at $M\in\Bun_G$. The restriction of $\cL_c$ to  $\Bun_T$ is also denoted $\cL_c$.
 
 Identify $T$ with $\Gm$ via the coroot $\alpha: \Gm\,\iso\, T$, so $\Bun_1\,\iso\, \Bun_T$. The isomorphism $\ZZ\,\iso\, \Lambda^{\sharp}$, $1\mapsto e$ yields $\Gm\,\iso\, T^{\sharp}$, so that $i_X: \Bun_1=\Bun_{T^{\sharp}}\to \Bun_T=\Bun_1$ sends $\cE$ to $\cE^e$. The line bundle $\tau$ on $\Bun_{T^{\sharp}}$ is chosen as in (\cite{L}, 5.2.6, Example (1)). Namely, if $n$ is odd then 
$$
\tau_{\cE}=\det\RG(X, \cO)^{2n}\otimes \det\RG(X, \cE)^{-n}\otimes \det\RG(X, \cE^{-1})^{-n}
$$ 
for $\cE\in\Bun_1=\Bun_{T^{\sharp}}$. If $n$ is even then we first pick a super line bundle $\cL_1$ on $\Bun_1$ equipped with $\cL_1^2\,\iso\,\cL_c$ on $\Bun_T$. Then $\tau=\cL_1^e$ on $\Bun_1$. 
   
 Recall the action of $Z(G)=\mu_2$ on $\Bunt_G$ by 2-automorphisms (see Section~\ref{section_Some_gradings}). Denote by $\D_{\zeta,+}(\Bunt_G)$ and by $\D_{\zeta,-}(\Bunt_G)$ the full subcategory of $\D_{\zeta}(\Bunt_G)$, where $-1\in\mu_2$ acts as 1 and $-1$ respectively. As in \cite{L2}, we get 
\begin{equation}
\label{decomp_of_D_zeta(Bunt_G)}
\D_{\zeta}(\Bunt_G)\,\iso\, \D_{\zeta,+}(\Bunt_G)\times \D_{\zeta,-}(\Bunt_G)
\end{equation}
For $n$ even the category $\Rep(\check{G}_n)$ is $\ZZ/2\ZZ$-graded according to the action of the center of $\check{G}_n\,\iso\, \SL_2$. Lemma~\ref{Lm_great_about_gradings} says in this case that the Hecke functors are compatible with these gradings of 
$\D_{\zeta}(\Bunt_G)$ and $\Rep(\check{G}_n)$. 

\subsubsection{} \label{Section_211}
Take $P=B$. Let us reformulate Corollary~\ref{Cor_3} more explicitly in this case.
The stack $\Bun_B$ classifies exact sequences 
\begin{equation}
\label{ext_cE^-1_by_cE}
0\to \cE\to M\to \cE^{-1}\to 0
\end{equation}
on $X$ with $\cE\in\Bun_1$. The stack $\Bunb_B$ classifies $M\in \Bun_G$ and a subsheaf $\cE\hook{} M$, where $\cE\in\Bun_1$. The line bundle $\cL_T$ on $\Bun_T=\Bun_1$ is such that its fibre at $\cE$ is 
\begin{equation}
\label{iso_over_Bun_T_for_SL_2}
\frac{\det\RG(X,\cO)^2}
{\det\RG(X, \cE^2)\otimes\det\RG(X, \cE^{-2})}=\frac{\det\RG(X,\cO)^8}{\det\RG(X, \cE)^4\otimes\det\RG(X, \cE^{-1})^4}
\end{equation}  
There is an isomorphism $\cL_c^4\,\iso\, \cL$ over $\Bun_G$, whose restriction to $\Bun_T$ is compatible with the isomorphism (\ref{iso_over_Bun_T_for_SL_2}). 

The stack $\Bunt_T$ is the gerb of $4n$-th roots of $\cL_T$ over $\Bun_T$.
The map $\bar c_P: \Lambda^{pos,pos}_{G,B}\to \Lambda^{\sharp, pos}_{G,B}$ is injective, its image equals $n\alpha\ZZ_+$. 

 For $\theta=m\alpha\in\Lambda^{pos}_{G,B}$ the stack $_{\theta}\Bunb_B$ classifies $D\in X^{(m)}$ and an exact sequence on $X$
$$
0\to \cE(D)\to M\to \cE^{-1}(-D)\to 0
$$
with $\cE\in\Bun_1$. The $*$-restriction of $\IC_{\zeta}$ to $_{\theta}\Bunb_{\tilde B}$ vanishes unless $m\in n\ZZ$, in the latter case $\theta$ admits a unique $\gB(\theta)=\frac{m}{n}\nu$, where $\nu=n\alpha\in J$, and the map $X^{\gB(\theta)}\to X^{\theta}$ becomes $X^{(\frac{m}{n})}\to X^{(m)}$, $D\mapsto nD$. 
By Corollary~\ref{Cor_3},
$$
\IC_{\zeta}\mid_{_{\theta}\Bunb_{\tilde B}}\,\iso\, \nu_B^*\IC^{\gB(\theta)}_{\Bun_M, \zeta}[\dimrel(\nu_B)+\frac{m}{n}],
$$
where $\nu_B: \Bun_B\times_{\Bun_T}\wt\Mod^{+,\theta}_{\wt\Bun_T}\to 
\wt\Mod^{+,\theta}_{\wt\Bun_T}$ is the projection. The perverse sheaf $\IC^{\gB(\theta)}_{\Bun_M, \zeta}$ is described as follows. 

\begin{Lm} 
\label{Lm_calculation_detRG_for_cE(nD)}
Let $\cE\in\Bun_1$, let $D$ be an effective divisor on $X$. Then there is a canonical $\ZZ/2\ZZ$-graded isomorphism
$$
\frac{\det\RG(X, \cE)\otimes \det\RG(X, \cE^{-1})}{\det\RG(X, \cE(nD)\otimes\det\RG(X, \cE^{-1}(-nD))}\,\iso\, \left(\frac{\det\RG(X, \cO_D)}{
\det\RG(X, \cE^2(nD)\mid_D)}\right)^n
$$
\end{Lm}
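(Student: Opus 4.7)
The strategy is to reduce both sides to invariants of line bundles on the finite scheme $D$, and identify them by multiplicativity of the determinant of pushforward.

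First, applying $\det\RG(X,-)$ to the short exact sequences
$$
0\to\cE\to\cE(nD)\to\cE(nD)\mid_{nD}\to 0,\qquad
0\to\cE^{-1}(-nD)\to\cE^{-1}\to\cE^{-1}\mid_{nD}\to 0
$$
(obtained by tensoring $0\to\cO_X(-nD)\to\cO_X\to\cO_{nD}\to 0$ with $\cO_X(nD)\otimes\cE$ and with $\cE^{-1}$), the left hand side reduces canonically to
$$
\det\Gamma(nD,\cE^{-1}\mid_{nD})\otimes\det\Gamma(nD,\cE(nD)\mid_{nD})^{-1}.
$$
Next, I filter $\cO_{nD}$ by the subsheaves $\cO_X(-kD)/\cO_X(-nD)$ for $k=0,1,\dots,n-1$, with successive quotients the line bundles $\cO_X(-kD)\mid_D$ on $D$. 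Tensoring by $\cE^{-1}$, respectively $\cE(nD)$, and applying $\det\RG$ factor by factor, the expression becomes
$$
\bigotimes_{k=0}^{n-1}\det\Gamma(D,\cE^{-1}(-kD)\mid_D)\otimes\bigotimes_{j=1}^{n}\det\Gamma(D,\cE(jD)\mid_D)^{-1}.
$$

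The main step is to invoke the norm functor $\cN\colon\Pic(D)\to\Pic(k)$, $L\mapsto\det\Gamma(D,L)\otimes\det\Gamma(D,\cO_D)^{-1}$, which is canonically a group homomorphism (it is the determinant of the finite flat pushforward $\pi_*$ for $\pi\colon D\to\Spec k$). Writing $\det\Gamma(D,L)=\det\Gamma(D,\cO_D)\otimes\cN(L)$, the $n$ factors of $\det\Gamma(D,\cO_D)$ in the numerator cancel those in the denominator, leaving $\cN$ applied to
$$
\sum_{k=0}^{n-1}[\cE^{-1}(-kD)\mid_D]-\sum_{j=1}^{n}[\cE(jD)\mid_D]\in\Pic(D).
$$
A direct summation (the $\cE$-exponents total $-2n$ and the $D$-exponents total $-n(n-1)/2-n(n+1)/2=-n^2$) identifies this class with $-n\cdot[\cE^2(nD)\mid_D]$, so by multiplicativity $\cN$ applied to it is $(\det\Gamma(D,\cO_D)\otimes\det\Gamma(D,\cE^2(nD)\mid_D)^{-1})^n$, which is exactly the right hand side.

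The only subtle point is the canonical multiplicativity of $\cN$, which is classical (determinant of perfect complexes applied to the finite flat pushforward). Since $D$ is zero-dimensional every $\RG$ in sight is concentrated in degree zero, so all super parities are automatically even; the Riemann--Roch identity $\chi(\cE)+\chi(\cE^{-1})=\chi(\cE(nD))+\chi(\cE^{-1}(-nD))$ confirms that both sides lie in parity zero, and the $\ZZ/2\ZZ$-grading contributes no extra signs.
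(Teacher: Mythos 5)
Your proof is correct and follows essentially the same strategy as the paper's: filter the relevant quotient into pieces supported on $D$, and then invoke multiplicativity of $L\mapsto\det\RG(X,L|_D)$ modulo $\det\RG(X,\cO_D)$ (which is precisely the content of Section~\ref{Section_612}, i.e.\ multiplicativity of the norm from $\Pic(D)$). The paper's presentation is a bit more compressed — it pairs the $r$-th piece of $\cE(nD)/\cE$ with the $r$-th piece of $\cE^{-1}/\cE^{-1}(-nD)$ so that each pair directly yields a copy of $\det\RG(X,\cO_D)/\det\RG(X,\cE^2(nD)|_D)$ — whereas you collect all $2n$ factors and sum their classes in $\Pic(D)$, but the underlying mechanism and the resulting isomorphism are identical.
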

\begin{Prf} One has canonically $\det\RG(X, \cE(nD)/\cE)\,\iso\, \otimes_{r=1}^n \det\RG(X, \cE(rD)\mid_D)$ and 
$$
\det\RG(X, \cE^{-1}/\cE^{-1}(-nD))\,\iso\, \otimes_{r=1}^n \det\RG(X, \cE^{-1}((r-n)D)\mid_D)
$$ 
Using \ref{Section_612} below, we get for $1\le r\le n$
$$
\frac{\det\RG(X, \cE^{-1}((r-n)D)\mid_D)}{\det\RG(X, \cE(rD)\mid_D)}\,\iso\, \frac{\det\RG(X, \cO_D)}{
\det\RG(X, \cE^2(nD)\mid_D)}
$$
Our claim follows.
\end{Prf}

\medskip

 The stack $X^{\gB(\theta)}\times_{X^{\theta}} \wt\Mod^{+,\theta}_{\wt\Bun_T}$ classifies $D\in X^{(\frac{m}{n})}$, $\cE\in\Bun_1$, and two lines $\cU, \cU_G$ equipped with isomorphisms $\cU^N_G\,\iso\, (\cL_T)_{\cE(nD)}$, $\cU^N\,\iso\, (\cL_T)_{\cE}$. The stack $_{\theta}\Bunb_{\tilde B}\times_{X^{\theta}} X^{\gB(\theta)}$ classifies 
the same data together with an exact sequence 
$$
0\to \cE(nD)\to M\to \cE^{-1}(-nD)\to 0,
$$
Recall that $N=4n$. One has an isomorphism
\begin{equation}
\label{iso_for_D_divisible_by_n}
X^{\gB(\theta)}\times_{X^{\theta}} \Mod^{+,\theta}_{\wt\Bun_T}\times B(\mu_N)\;\iso\; X^{\gB(\theta)}\times_{X^{\theta}} \wt\Mod^{+,\theta}_{\wt\Bun_T}
\end{equation}
sending $(D, \cE, \cU_G, \cU^N_G\,\iso\, (\cL_T)_{\cE(nD)}, \cU_0^N\,\iso\, k)$ to $(D, \cE, \cU_G, \cU)$, where 
$$
\cU=\cU_G\otimes \cU_0^{-1}\otimes \frac{\det\RG(X, \cE^2(nD)\mid_D)}{\det\RG(X, \cO_D)}
$$ 
is equipped with the isomorphism
$\cU^N\,\iso\, (\cL_T)_{\cE}$ given by Lemma~\ref{Lm_calculation_detRG_for_cE(nD)}. The perverse sheaf $\IC^{\gB(\theta)}_{\Bun_M, \zeta}$ via (\ref{iso_for_D_divisible_by_n}) identifies non-canonically with 
$$
\IC(X^{\gB(\theta)}\times_{X^{\theta}} \Mod^{+,\theta}_{\wt\Bun_T})\boxtimes \cL_{\zeta}
$$ 

\subsubsection{} 
\label{Section_612}
We need the following. Let $D$ be an effective divisor on $X$, $\cA,\cB\in\Bun_1$. There is a canonical $\ZZ/2\ZZ$-graded isomorphism
$$
\frac{\det\RG(X, \cA\mid_D)}{\det\RG(X, \cB\mid_D)}\;\iso\; \frac{\det\RG(X, \cA\otimes\cB^{-1}\mid_D)}{\det\RG(X,\cO_D)}
$$

\subsubsection{} 
\label{Section_713_Zastava}
Let $\theta=m\alpha\in \Lambda^{pos}_{G,B}$. For $\cE\in\Bun_1=\Bun_T$ write $Z^{\theta}_{\cE}$ for the Zastava space with the `background' $T$-torsor $\cE^{-1}$. Then $Z^{\theta}_{\cE}$ is a vector bundle over $X^{\theta}$ whose fibre at $D\in X^{(m)}$ is $\cE^2(D)/\cE^2\,\iso\, \Ext^1(\cE^{-1}/\cE^{-1}(-D), \cE)$. It is understood that a point of $Z^{\theta}_{\cE}$ gives rise to a diagram on $X$
\begin{equation}
\label{diag_point_of_Z^theta}
\begin{array}{cccc}
0\to \cE\to M\! &\to & \cE^{-1} & \to 0\\
& \nwarrow & \uparrow\\
&&\cE^{-1}(-D)
\end{array}
\end{equation}

 The group scheme $M_{X^{\theta}}$ acts trivially on $\Mod_M^{+,\theta}=X^{\theta}$. If $D\in X^{(m)}$ is given by $D=\sum n_k x_k$ with $x_k$ pairwise different then the fibre of $M_{X^{\theta}}$ at $D$ is $\prod_k \cO_{x_k}^*$.
The action of $M_{X^{\theta}}$ on $Z^{\theta}$ from Section~\ref{section_122} becomes as follows. The element $g=(g_k)\in \prod_k \cO_{x_k}^*$ acts on $v=(v_k)\in \prod_k \cE^2(n_k x_k)/\cE^2$ as $g^2v=(g_k^2v_k)$.  

 Let $\GG^{\theta}$ denote the group scheme over $X^{\theta}$ whose fibre at $D$ is $(\cO/\cO(-D))^*$. The action of $M_{X^{\theta}}$ on $Z^{\theta}$ factors through an action of $\GG^{\theta}$.
 
  Write $\bar\cL$ for the line bundle over $X^{\theta}$, whose fibre at $D$ is
\begin{equation}
\label{line_bundle_over_tilde_Z^theta_cE}
\frac{(\cL_c)_{\cE(D)}}{(\cL_c)_{\cE}}=\frac{\det\RG(X, \cE)\otimes\det\RG(X, \cE^{-1})}{\det\RG(X, \cE(D))\otimes\det\RG(X, \cE^{-1}(-D))}\;\iso\; \frac{\det\RG(X, \cO_D)}{\det\RG(X, \cE^2(D)\mid_D)}
\end{equation}
Here the second isomorphism is given by Lemma~\ref{Lm_calculation_detRG_for_cE(nD)}. The restriction of $\bar\cL$ to $Z^{\theta}_{\cE}$ is also denoted $\bar\cL$. Then $\tilde Z^{\theta}_{\cE}$ is the gerb of $4n$-th roots of $\bar\cL^4$. Write also $\tilde Z^{\theta}_{\cE, c}$ for the gerb of $n$-th roots of $\bar\cL$. We have a natural map $\tilde Z^{\theta}_{\cE, c}\to \tilde Z^{\theta}_{\cE}$ making  $\tilde Z^{\theta}_{\cE}$ a trivial $\mu_4$-gerb over $\tilde Z^{\theta}_{\cE, c}$. Let $\IC_{Z^{\theta}_c, \bar\zeta}$ denote the restriction of $\IC_{Z^{\theta}, \zeta}$ to $\tilde Z^{\theta}_{\cE, c}$. 

 For a point $(D, v\in \cE^2(D)/\cE^2, \cU)\in \tilde Z^{\theta}_{\cE, c}$ with $\cU^n\,\iso\,\bar\cL_D$ note that $a\in \mu_n(k)\subset\Aut\cU$ acts on $\IC_{Z^{\theta}_c, \bar\zeta}$ as $\bar\zeta(a)^{-1}$. 

 The open subscheme $Z^{\theta}_{max}\subset Z^{\theta}_{\cE}$ classifies $(D,\sigma\in \cE^2(D)/\cE^2)$ such that for any $0\le D'<D$ we have $\sigma\notin \cE^2(D')/\cE^2$. Over $Z^{\theta}_{max}$ we have a canonical section of $\bar\cL$ given by the isomorphisms
$$
\det\RG(X, \cE)\otimes\det\RG(X, \cE^{-1})\,\iso\,\det\RG(X,M)\,\iso\, \det\RG(X, \cE(D))\otimes\det\RG(X, \cE^{-1}(-D))
$$
 
  Let $\check{Z}^{\theta}_{\cE}\to X^{\theta}$ denote the dual vector bundle, so its fibre over $D$ is $\cE^{-2}\otimes\Omega/\cE^{-2}\otimes\Omega(-D)$. Denote by $\check{\tilde Z}^{\theta}_{\cE, c}$ the gerb of $n$-th roots of $\bar\cL$ over $\check{Z}^{\theta}_{\cE}$. 

\subsection{Fourier coefficients} The purpose of this section is to establish some results about the Fourier transform of $\IC_{Z^{\theta}_c, \bar\zeta}$ over $\check{\tilde Z}^{\theta}_{\cE, c}$. This is important in view of a relation with the theory of Weyl group multiple Dirichlet series (see \cite{BBF}, \cite{B} for a survey).
 
\subsubsection{} We need the following observation. Let $\cX$ denote the stack classifying a 1-dimensional $k$-vector spaces $L$,  $U$ together with $U^n\,\iso\, L$, and $v\in L$. This is a vector bundle over the stack $B(\Gm)$ classifying a line $U$. Let $\check{\cX}$ denote the dual vector bundle over $B(\Gm)$, this is the stack classifying $U, L, U^n\,\iso\, L$ and $v^*\in L^*$. Let $\cX^0\subset \cX$ be the open substack given by $v\ne 0$. 
We have an isomorphism $\tau_{\cX}: \cX^0\,\iso\, B(\mu_n)$ sending the above point to $U$ equipped with the trivialization $U^n\,\iso\, k$ obtained from $k\,\iso\, L$, $1\mapsto v$. For the natural map $a: \Spec k\to B(\mu_n)$ let $\cL_{\bar\zeta}$ denote the direct summand in $a_*\Qlb$ on which $\mu_n(k)$ acts by $\bar\zeta$. Let $\cL_{\bar\zeta, ex}$ denote the intermediate extension of $\tau_{\cX}^*\cL_{\bar\zeta}$ under $\cX^0\hook{}\cX$. 

 Denote by $\check{\cX}^0\subset \check{\cX}$ the open substack given by $v^*\ne 0$. Let $\tau_{\check{\cX}}: \check{\cX}^0\,\iso\, B(\mu_n)$ be the isomorphism sending the above point to $U$ equipped with the composition $U^n\,\iso\, L\toup{v^*} k$. Write $\cL_{\bar\zeta, \check{ex}}$ for the intermediate extension of $\tau_{\check{\cX}}^*\cL_{\bar\zeta}$ to $\check{\cX}$. For $n\ge 2$ there is a 1-dimensional $\Qlb$-vector space $\cC_0$ and a canonical isomorphism 
\begin{equation} 
\label{iso_over_cX}
\Four_{\psi}(\cL_{\bar\zeta^{-1}, \check{ex}})\,\iso\, \cC_0\otimes \cL_{\bar\zeta^{-1}, ex}
\end{equation}

\subsubsection{Example $\theta=\alpha$}  In this case $Z^{\theta}_{\cE}$ is the total space of the line bundle $\cE^2\otimes\Omega^{-1}$ over $X$. The line bundle $\bar\cL$ over $X$ identifies with $\cE^{-2}\otimes\Omega$. We have a map $p_{\check{\cX}}: \tilde Z^{\theta}_{\cE,c}\to \check{\cX}$ given by $L=\cE^{-2}\otimes \Omega$. Then 
$$
p_{\check{\cX}}^*\cL_{\bar\zeta^{-1}, \check{ex}}[2]\,\iso\, \IC_{Z^{\theta}_c, \bar\zeta}
$$
canonically. We also have the natural map $p_{\cX}: \check{\tilde Z}^{\theta}_{\cE, c}\to \cX$ defined by the same formula. Set $\IC_{\check{Z}^{\theta}_c, \bar\zeta}=p_{\cX}^*\cL_{\bar\zeta^{-1}, ex}[2]$, this is an irreducible perverse sheaf.
Now from (\ref{iso_over_cX}) we get an isomorphism 
$$
\Four_{\psi}(\IC_{Z^{\theta}_c, \bar\zeta})\,\iso\, \cC_0\otimes \IC_{\check{Z}^{\theta}_c, \bar\zeta}
$$
\subsubsection{Generalization} 
\label{Section_Generalization}
It is natural to consider the following generalization of $\IC_{Z^{\theta}_c, \bar\zeta}$. Let $L$ be a line bundle on $X$. Let $\theta=m\alpha$, $m\ge 0$, so $X^{(m)}\,\iso\, X^{\theta}$ via the map $D\mapsto D\alpha$. Let $X^{\theta, rss}\subset X^{\theta}$ be the open subscheme of reduced divisors. Write $\Sign$ for the sign local system on $X^{\theta, rss}$. Let $_LZ^{\theta}$ be the vector bundle over $X^{\theta}$ with fibre $L(D)/L$ at $D\in X^{(m)}$. Let $\bar\cL$ be the line bundle over $X^{\theta}$ with fibre 
\begin{equation}
\label{fibre_of_barcL_for_Sect214}
\frac{\det\RG(X, \cO_D)}{\det\RG(X, L(D)/L)}
\end{equation}
at $D$. Let $_L\tilde Z^{\theta}$ be the gerb of $n$-th roots of $\bar\cL$ over $_LZ^{\theta}$. Write $_LZ^{\theta}_{max}\subset {_LZ^{\theta}}$ for the open subscheme given by $v\in L(D)/L$ such that for any $0\le D'<D$, $v\notin L(D')/L$. Let 
$$
_LZ^{\theta, rss}_{max}\subset  {_LZ^{\theta}_{max}}
$$ 
be the open subscheme given by the property that $D$ is multiplicity free.  
 
  If $D=\sum_i x_i\in X^{\theta}$ with $x_i$ pairwise different then the fibre of $\bar\cL$ at $D$ is $\otimes_i (L^{-1}\otimes\Omega)_{x_i}$, where each 
$(L^{-1}\otimes\Omega)_{x_i}$ is of parity zero, so the order does not matter. Besides, $L(D)/L=\oplus_i L(x_i)/L$. So, a point $v\in {_LZ^{\theta, rss}_{max}}$ is a collection $0\ne v_i\in L(x_i)/L$ for all $i$. This gives a trivialization of each line $L(x_i)/L$, hence also a trivialization of $\bar\cL_D$ as the tensor product thereof. So, we get a trivialization of $\bar\cL$ over $_LZ^{\theta, rss}_{max}$.
  
  We denote by $_L\tilde Z^{\theta, rss}_{max}$ the restriction of the gerb $_L\tilde Z^{\theta}$ to this open subscheme. The above trivialization yields an isomorphism
\begin{equation}
\label{iso_for_Section214}
_LZ^{\theta, rss}_{max}\times B(\mu_n)\,\iso\, {_L\tilde Z^{\theta, rss}_{max}}
\end{equation}
Consider $\IC\boxtimes \cL_{\bar\zeta^{-1}}$ as a perverse sheaf on $_L\tilde Z^{\theta, rss}_{max}$ via (\ref{iso_for_Section214}). Its intermediate extension to $_L\tilde Z^{\theta}$ is denoted $\IC_{_LZ^{\theta}, \bar\zeta}$.
For a point 
$$
(D, v\in L(D)/L, \cU)\in {_L\tilde Z^{\theta}}
$$ 
with $\cU^n\,\iso\, \bar\cL_D$ the element $a\in \mu_n(k)\subset \Aut(\cU)$ acts on $\IC_{_LZ^{\theta}, \bar\zeta}$ as $\bar\zeta^{-1}(a)$. 

 The dual vector bundle $_L\check{Z}^{\theta}\to X^{\theta}$ has the fibre $L^{-1}\otimes\Omega/L^{-1}\otimes\Omega(-D)$ at $D$. Let $_L\check{\tilde Z}^{\theta}$ be the gerb of $n$-th roots of $\bar\cL$ over $_L\check{Z}^{\theta}$. We define $_L\check{Z}^{\theta}_{max}$ similarly. 

 Define the open subscheme $_L\check{Z}^{\theta, rss}_{max}\subset {_L\check{Z}^{\theta}}$ and the gerb $_L\check{\tilde Z}^{\theta, rss}_{max}$ similarly. As above, we get a trivialization of $\bar\cL$ over $_L\check{Z}^{\theta, rss}_{max}$, hence an isomorphism
\begin{equation}
\label{iso_second_for_Section214}
_L\check{Z}^{\theta, rss}_{max}\times B(\mu_n)\,\iso\, {_L\check{\tilde Z}^{\theta, rss}_{max}}
\end{equation}
Let $\IC(\Sign)$ denote the $\IC$-sheaf of $_L\check{Z}^{\theta, rss}_{max}$ tensored by the inverse image of $\Sign$ from $X^{\theta, rss}$. 
Define $\IC_{_L\check{Z}^{\theta}, \bar\zeta}$ as the intermediate extension of $\IC(\Sign)\boxtimes \cL_{\bar\zeta^{-1}}$ to $_L\check{\tilde Z}^{\theta}$ using (\ref{iso_second_for_Section214}). Now the isomorphism (\ref{iso_over_cX}) yields an isomorphism
\begin{equation}
\label{iso_third_for_Section214}
\Four_{\psi}(\IC_{_LZ^{\theta}, \bar\zeta})\,\iso\, \cC_0^m\otimes \IC_{_L\check{Z}^{\theta}, \bar\zeta}
\end{equation}

 The schemes $_L Z^{\theta}$ for various $L$ are locally isomorphic in Zariski topology, so the description of $\IC_{_LZ^{\theta}, \bar\zeta}$ (and of $\IC_{_L\check{Z}^{\theta}, \bar\zeta}$) is independent of $L$. The fibres of $\IC_{_LZ^{\theta}, \bar\zeta}$ are completely described by Corollary~\ref{Cor_3}. 
 
 As $L$ varies in $\Bun_1$, the schemes $_LZ^{\theta}$ form a family $_{\Bun_T}Z^{\theta}\to \Bun_1\times X^{\theta}$, whose fibre over $(L, D)$ is $L(D)/L$. We still denote by $\bar\cL$ the line bundle over $\Bun_1\times X^{\theta}$ with fibre (\ref{fibre_of_barcL_for_Sect214}) over $(L,D)$. Denote by $_{\Bun_T}\wt Z^{\theta}$, $_{\Bun_T}\check{Z}^{\theta}$ and $_{\Bun_T}\check{\tilde Z}^{\theta}$ the corresponding relative  versions over $\Bun_T$.  
 
  We have an automorphism $\tau_Z$ of $\Bun_T\times X^{\theta}$ sending $(L,D)$ to $(L':=L^{-1}\otimes\Omega(-D), D)$. It lifts to a diagram of isomorphisms $$
\begin{array}{ccc}  
_{\Bun_T}\check{Z}^{\theta} & \iso &  {_{\Bun_T}Z^{\theta}}\\
\downarrow && \downarrow\\
\Bun_T\times X^{\theta} & \toup{\tau_Z} & \Bun_T\times X^{\theta}
\end{array}
$$
sending $(L, D, v\in L^{-1}\otimes\Omega/L^{-1}\otimes\Omega(-D))$ to $(L', D, v\in L'(D)/L')$, where $L'$ is as above. Let $\vartriangle\subset X^{\theta}$ denote the divisor of diagonals. One has canonically $\bar\cL\otimes\tau_Z^*(\bar\cL)\,\iso\, \pr_2^*\cO(-\!\vartriangle)$, where $\pr_2: \Bun_T\times X^{\theta}\to X^{\theta}$ is the projection. Recall that $\cO(-\!\vartriangle)$ identifies canonically with the line bundle whose fibre at $D\in X^{\theta}$ is $\det\RG(X,\cO_D)^2$. For $n=2$ this yields an isomorphisms
$$
\bar\tau_Z: {_{\Bun_T}\check{\tilde Z}^{\theta}}\,\iso\,  {_{\Bun_T}\tilde Z^{\theta}}
$$
and
\begin{equation}
\label{iso_forSection_214_in_families}
\bar\tau_Z^*\IC_{_{\Bun_T}Z^{\theta}, \bar\zeta}\,\iso\, \IC_{_{\Bun_T}\check{Z}^{\theta}, \bar\zeta}
\end{equation}
for the corresponding relative versions. Thus, for $n=2$ the description of $\IC_{_L\check{Z}^{\theta}, \bar\zeta}$ is reduced to that of $\IC_{_LZ^{\theta}, \bar\zeta}$, it was studied in \cite{L2}. However, for $n\ge 3$ the situation is very different.

 For the rest of Section~\ref{Section_Generalization} assume $L=\Omega$. Then $\bar\cL$ is canonically identified with $\cO(-\!\!\vartriangle)$ over $X^{\theta}$. Let $_{\Omega}\tilde X^{\theta}$ be the gerb of $n$-th roots of $\bar\cL$ over $X^{\theta}$. The fibre of ${_{\Omega}\check{Z}^{\theta}}\to X^{\theta}$ over $D\in X^{\theta}$ is $\cO_D$. Let 
$$
\pi_n: {_{\Omega}\check{Z}^{\theta}}\to {_{\Omega}\check{Z}^{\theta}}
$$ 
be the map sending $(D, v\in \cO_D)$ to $(D, v^n\in \cO_D)$. Over $_{\Omega}\check{Z}^{\theta}_{max}$ this map is finite. Let $\GG^{\theta}_n$ denote the kernel of the homomorphism $\GG^{\theta}\to\GG^{\theta}$, $g\mapsto g^n$. This is a group scheme over $X^{\theta}$. Let $\GG^{\theta}_n$ act on $_{\Omega}\check{Z}^{\theta}$ so that $g\in (\cO/\cO(-D))^*$ sends $(D, v\in \cO_D)$ to $(D, gv)$. The map $\pi_n$ is $\GG^{\theta}_n$-invariant. The restriction 
\begin{equation}
\label{torsor_pi_n}
\pi_n: {_{\Omega}\check{Z}^{\theta}_{max}}\to {_{\Omega}\check{Z}^{\theta}_{max}}
\end{equation}
is a $\GG^{\theta}_n$-torsor. We have the line bundle on $X^{\theta}$ with fibre $\det\RG(X, \cO_D)$, the group scheme $\GG^{\theta}_n$ acts on this line bundle by a character that we denote $\eta_n: \GG^{\theta}_n\to \Gm$. It actually takes values in $\mu_n$. Let $\check{W}_{max}$ denote the local system on $_{\Omega}\check{Z}^{\theta}_{max}$ obtained from the torsor (\ref{torsor_pi_n}) as the extension of scalars via $\GG^{\theta}_n\toup{\eta_n} \mu_n(k)\toup{\bar\zeta}\Qlb^*$. 
 
 Let $_{\Omega}\tilde X^{\theta, rss}$ denote the restriction of the gerb $_{\Omega}\tilde X^{\theta}$ to $X^{\theta, rss}$. Since $\bar\cL$ over $X^{\theta, rss}$ is canonically trivialized, one has a canonical isomorphism
\begin{equation}
\label{triv_of_Omega_tildeXtheta_rss}
X^{\theta, rss}\times B(\mu_n)\,\iso\, {_{\Omega}\tilde X^{\theta, rss}}
\end{equation}  
Viewing $(\IC(X^{\theta, rss})\otimes\Sign)\boxtimes \cL_{\bar\zeta^{-1}}$ as a perverse sheaf on $_{\Omega}\tilde X^{\theta, rss}$ via (\ref{triv_of_Omega_tildeXtheta_rss}), let $\IC_{_{\Omega}\tilde X^{\theta}, \bar\zeta}$ denote its intermediate extension to $_{\Omega}\tilde X^{\theta}$. Let $\check{\pi}: {_{\Omega}\check{\tilde Z}^{\theta}}\to {_{\Omega}\tilde X^{\theta}}$ denote the projection sending $(D, v\in \cO/\cO(-D),\cU)$ to $(D,\cU)$. 

\begin{Pp} 
\label{Lm_6.2_about_Fourier_of_IC_zeta}
There is an isomorphism over $_{\Omega}\check{\tilde Z}^{\theta}_{max}$
$$
\check{W}_{max}\otimes \check{\pi}^*\IC_{_{\Omega}\tilde X^{\theta}, \bar\zeta}[\dimrel(\check{\pi})]\,\iso\, \IC_{_{\Omega}\check{Z}^{\theta}, \bar\zeta}
$$
\end{Pp}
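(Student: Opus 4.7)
The plan is to reduce the claimed isomorphism to one on the open dense smooth substack $_{\Omega}\check{\tilde Z}^{\theta, rss}_{max}\subset {_{\Omega}\check{\tilde Z}^{\theta}_{max}}$, by showing that both sides are intermediate extensions from there, and then to identify the restrictions explicitly via the canonical trivializations of $\bar\cL$ available on the rss locus.

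For the intermediate-extension assertion, the left-hand side is by definition the IC extension of $\IC(\Sign)\boxtimes\cL_{\bar\zeta^{-1}}$ under (\ref{iso_second_for_Section214}). For the right-hand side, I would first observe that $\check\pi$ restricted to $_{\Omega}\check{\tilde Z}^{\theta}_{max}$ is smooth, being an open subscheme of the vector bundle $_{\Omega}\check{\tilde Z}^{\theta}\to {_{\Omega}\tilde X^{\theta}}$ of rank $m=\deg\theta$ (the fibre at $(D,\cU)$ is $\cO_D$, and the max-locus removes finitely many sub-vector bundles). Then $\check\pi^*[\dimrel(\check\pi)]$ preserves IC sheaves, and tensoring with the rank-one local system $\check W_{max}$ defined on all of $_{\Omega}\check{\tilde Z}^{\theta}_{max}$ also preserves the IC property; since $\check\pi$ sends the rss-max locus into $_{\Omega}\tilde X^{\theta, rss}$, the right-hand side is the IC extension from $_{\Omega}\check{\tilde Z}^{\theta, rss}_{max}$.

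On the rss-max locus the key computation is to compare two natural trivializations of $\bar\cL$: the one canonically given over $X^{\theta, rss}$ by the factorization $\bar\cL_D\cong \otimes_i (L^{-1}\otimes\Omega)_{x_i}$ (which, because $L=\Omega$, reduces to $\otimes_i \cO_{x_i}\cong k$ canonically), and the one coming from $v = (v_i) \in \prod_i(\cO_{x_i})^*$ via its interpretation in Section~\ref{Section_Generalization}. At a multiplicity-free divisor the two trivializations differ by the norm $v\mapsto \prod_i v_i$. Hence pulling back $\cL_{\bar\zeta^{-1}}$ through the two trivializations yields local systems on the rss-max locus that differ by the Kummer local system associated to $\bar\zeta^{-1}$ and the function $\prod_i v_i$. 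On the other hand, over rss-max the torsor $\pi_n$ factors as the product of $\mu_n$-torsors $\Gm\to\Gm$, $v_i\mapsto v_i^n$, and the character $\eta_n$ reads $(g_i)\mapsto\prod_i g_i$ under the same factorization of $\det\RG(X, \cO_D)$; thus $\check W_{max}$ is exactly the Kummer local system in question, cancelling the discrepancy.

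The main obstacle will be this last combinatorial matching: careful tracking of the factorizations of $\bar\cL$, of $\det\RG(X,\cO_D)$, and of the character $\eta_n$, including the $\ZZ/2\ZZ$-graded signs implicit in their definitions. Once the two trivializations of $\bar\cL$ and the character $\eta_n$ are matched at a multiplicity-free divisor, the claimed isomorphism on the rss-max locus is immediate, and extends to all of $_{\Omega}\check{\tilde Z}^{\theta}_{max}$ by the intermediate-extension properties established above.
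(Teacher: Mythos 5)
Your proposal is correct and follows essentially the same route as the paper: reduce to the dense open locus $_{\Omega}\check{\tilde Z}^{\theta, rss}_{max}$ using that $\check\pi$ is smooth (so pullback by $\check\pi$ and tensoring by the rank-one local system $\check W_{max}$ both commute with intermediate extension, as does the restriction defining $\IC_{_{\Omega}\check{Z}^{\theta}, \bar\zeta}$), then compare the two trivializations of $\bar\cL$ at a multiplicity-free divisor. The paper's own proof is just the two-sentence version of this reduction, leaving the comparison over the rss-max locus as "easy"; your sketch of that comparison (the discrepancy between the canonical trivialization of $\bar\cL$ over $X^{\theta,rss}$ and the one coming from the point $v$ being exactly what $\check W_{max}$ supplies via the character $\eta_n$) is the intended content.
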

\begin{Prf}
The intermediate extension commutes with a smooth base change. So, it suffices to establish this isomorphism over $_{\Omega}\check{\tilde Z}^{\theta, rss}_{max}$, where it is easy.
\end{Prf}

\medskip
  
\begin{Rem} The restriction of $\bar\cL$ to the principal diagonal $X\subset X^{\theta}$ identifies with $\Omega^{m(m-1)}$. For $x\in X$ the group $\Aut(\Omega_x)$ acts on the fibre of $\bar\cL$ at $D=mx$. So, if the $*$-fibre of $\IC_{_{\Omega}\tilde X^{\theta}, \bar\zeta}$ at $mx$ does not vanish then $n$ divides $m(m-1)$. In particular, if $n$ is big enough compared to $m$ then $\IC_{_{\Omega}\tilde X^{\theta}, \bar\zeta}$ is the extension by zero under ${_{\Omega}\tilde X^{\theta, rss}}\hook{} {_{\Omega}\tilde X^{\theta}}$.
\end{Rem}

\begin{Rem} i) The perverse sheaf $\IC_{_{\Omega}\tilde X^{\theta}, \bar\zeta}$ has been studied in \cite{BFS} (see also \cite{FS}, Section~5.1). It satisfies the natural factorization property, and all its fibres are described in \cite{BFS}. The version of $\IC_{_{\Omega}\tilde X^{\theta}, \bar\zeta}$ in the world of twisted $\cD$-modules is exactly the sheaf denoted by $\cL^{\mu}_{\emptyset}$ in (\cite{Ga2}, Section~3.4) for $G=\SL_2$, $\mu=-m\alpha$. 

\smallskip\noindent
ii) The perverse sheaf $\IC_{_{\Omega}\tilde X^{\theta}, \bar\zeta}$ also appears in \cite{L5} under the name $\cL^{\mu}_{\emptyset}$  for $G=\SL_2$, $\mu=-m\alpha$. Note that for $n>1$ the so-called subtop cohomology property is satisfied for our metaplectic data for $\SL_2$ by (\cite{L5}, Theorem~1.1.6). So, for $n>1$ the perverse sheaf $\IC_{_{\Omega}\tilde X^{\theta}, \bar\zeta}$ identifies with the $!$-direct image under $_{\Omega}{\wt Z}^{\theta}\to {_{\Omega}\tilde X^{\theta}}$
by (\cite{L5}, Proposition~4.11.1). Here $_{\Omega}{\wt Z}^{\theta}$ is the stack from Section~\ref{Section_Generalization}.
\end{Rem}

\subsubsection{} For any $\cE\in\Bun_1$ taking $L=\cE^2$ we get $Z^{\theta}_{\cE}={_LZ^{\theta}}$ and $\tilde Z^{\theta}_{\cE}={_L\tilde Z^{\theta}}$. For $L=\cE^2$ set $\IC_{\check{Z}^{\theta}_c, \bar\zeta}=\IC_{_L\check{Z}^{\theta}, \bar\zeta}$. By (\ref{iso_third_for_Section214}) one has
\begin{equation}
\label{iso_Four_transform_forSect215}
\Four_{\psi}(\IC_{Z^{\theta}_c, \bar\zeta})\,\iso\, \cC^m_0\otimes\IC_{\check{Z}^{\theta}_c, \bar\zeta}
\end{equation}

\subsubsection{Global calculation} Let $\cS_B$ denote the stack classifying $\cE\in\Bun_1$ and $s_2: \cE^2\to\Omega$. Let $\nu_B:\Bun_B\to\Bun_G$ be the natural map. Let $\Bun_{\tilde B, c}$ be the stack classifying (\ref{ext_cE^-1_by_cE}) and a line $\cU_G$ equipped with 
\begin{equation}
\label{iso_for_cU_G^n_for_Section216}
\cU_G^n\,\iso\, (\cL_c)_{\cE}
\end{equation}

 Let $\cS_{\tilde B, c}$ be the stack classifying $(\cE, s_2)\in\cS_B$, and a line $\cU_G$ equipped with (\ref{iso_for_cU_G^n_for_Section216}). We have the Fourier transform 
$$
\Four_{\psi}: \D(\Bun_{\tilde B, c})\,\iso\, \D(\cS_{\tilde B, c})
$$
The map $\nu_B$ lifts to a map $\nu_{\tilde B}: \Bun_{\tilde B, c}\to \Bunt_{G, \cL_c}$ sending $\cU_G$ and (\ref{ext_cE^-1_by_cE}) to $(M,\cU_G)$.  

 Recall that $Z^{\theta}_{\Bun_T}$ classifies $D\in X^{\theta},\cE\in\Bun_1, v\in \cE^2(D)/\cE^2$ giving rise to the diagram (\ref{diag_point_of_Z^theta}). We have the dual vector bundles $Z^{\theta}_{\Bun_T}\to X^{\theta}\times\Bun_T\gets \check{Z}^{\theta}_{\Bun_T}$. Let 
$$
f_B: Z^{\theta}_{\Bun_T}\to\Bun_B\times X^{\theta}
$$ 
be the map sending (\ref{diag_point_of_Z^theta}) to the exact sequence (\ref{ext_cE^-1_by_cE}) together with $D\in X^{(m)}=X^{\theta}$. This is a morphism of generalized vector bundles over $\Bun_T\times X^{\theta}$ given by $\cE^2(D)/\cE^2\to \H^1(X, \cE^2)$. The dual map over $\Bun_T\times X^{\theta}$ is denoted
$$
\check{f}_B: \cS_B\times X^{\theta}\to \check{Z}^{\theta}_{\Bun_T},
$$
it sends $(\cE, s_2, D)$ to $(D, \cE, v\in \cE^{-2}\otimes\Omega/\cE^{-2}\otimes\Omega(-D))$, where $v$ is the image of $s_2$ under the transpose map
$$
\H^0(X, \cE^{-2}\otimes\Omega)\to  \cE^{-2}\otimes\Omega/\cE^{-2}\otimes\Omega(-D)
$$

 Let $\cS^0_B\subset\cS_B$ be the open substack given by $s_2\ne 0$. Let $\cS^0_{\tilde B, c}$ be its preimage in $\cS_{\tilde B, c}$. Let $\Bunt_{T,c}$ be the gerb of $n$-th roots of $\cL_c$ over $\Bun_T$. For $K\in \D_{\bar\zeta}(\Bunt_{T, c})$ let us describe
\begin{equation}
\label{the_complex_afer_Four_we_look_Sect216}
\Four_{\psi}\nu_{\tilde B}^*\Eis(K)[\dimrel(\nu_{\tilde B})]\mid_{\cS^0_{\tilde B, c}}
\end{equation} 
Write $(\cE_1\hook{}M, \cU_G,\cU)$ for a point of $\Bunb_{\tilde B, c}$, here $\cU_G^n\,\iso\, (\cL_c)_M$ and $\cU^n\,\iso\, (\cL_c)_{\cE_1}$. 
  
  Denote by $\tilde Z^{\theta}_{\Bun_T}$ the stack classifying a point $(\cE, D, v\in \cE^2(D)/\cE^2)\in Z^{\theta}_{\Bun_T}$ and a line $\bar\cU$ equipped with 
\begin{equation}
\label{line_bar_cU_for_Sect216}
\bar\cU^n\,\iso\, \frac{(\cL_c)_{\cE(D)}}{(\cL_c)_{\cE}}
\end{equation}
Let $\check{\tilde Z}^{\theta}_{\Bun_T}$ be the stack classifying $(\cE, D, v'\in \cE^{-2}\otimes\Omega/\cE^{-2}\otimes\Omega(-D))\in \check{Z}^{\theta}_{\Bun_T}$ and a line $\bar\cU$ equipped with (\ref{line_bar_cU_for_Sect216}).
The perverse sheaves $\IC_{Z^{\theta}_c, \bar\zeta}$ as $\cE$ varies in $\Bun_T$ naturally form a family, which is a perverse sheaf on $\tilde Z^{\theta}_{\Bun_T}$ still denoted $\IC_{Z^{\theta}_c, \bar\zeta}$ by abuse of notations. 
Similarly, we denote by $\IC_{\check{Z}^{\theta}_c, \bar\zeta}$ the corresponding perverse sheaf over $\check{\tilde Z}^{\theta}_{\Bun_T}$. The isomorphism (\ref{iso_Four_transform_forSect215}) naturally extends to the stack $\check{\tilde Z}^{\theta}_{\Bun_T}$. 

 Write $(\cS_B\times X^{\theta})^{\tilde{}}$ for the stack classifying $(\cE, s_2)\in \cS_B$ and lines $\cU_G,\cU$ equipped with (\ref{iso_for_cU_G^n_for_Section216}) and 
\begin{equation}
\label{iso_for_cU^n_for_Sect216}
\cU^n\,\iso\, (\cL_c)_{\cE(D)}
\end{equation}
The map $\check{f}_B$ extends to a morphism $\check{f}_{\tilde B}: (\cS_B\times X^{\theta})^{\tilde{}}\to \check{\tilde Z}^{\theta}_{\Bun_T}$ given by $\bar\cU=\cU\otimes\cU_G^{-1}$. We have a diagram of projections
\begin{equation}
\label{diag_for_Pp6.2}
\Bunt_{T, c}\;\;\getsup{\pr_{T,c}}\;\;\;(\cS_B\times X^{\theta})^{\tilde{}}  \;\;\;\toup{\pr_{B,c}} \;\cS_{\tilde B, c},
\end{equation}
where $\pr_{B,c}$ sends the above point to $(\cE, s_2, \cU_G)$, and the map $\pr_{T,c}$ sends the above point to $(\cE^{-1}(-D), \cU)$ equipped with (\ref{iso_for_cU^n_for_Sect216}). From the standard properties of the Fourier transform one gets the following.

\begin{Pp} 
\label{Pp_nondegenerate_Whit_coeff_of_Eis}
Let $m\ge 0$, $\theta=m\alpha$. Over the connected component of $\cS^0_{\tilde B, c}$ given by fixing $\deg\cE$, the contribution of the connected component of $\Bunb_{\tilde B, c}$ given by $\deg\cE_1=-\deg\cE-m$ to the complex (\ref{the_complex_afer_Four_we_look_Sect216}) identifies with
\begin{equation}
\label{complex_for_Lm15}
(\pr_{B,c})_!(\pr_{T,c}^*K\otimes \cC_0^m\otimes
(\check{f}_{\tilde B})^*\IC_{\check{Z}^{\theta}_c, \bar\zeta})[\dimrel(\check{f}_B)-\dim\Bun_T]
\end{equation}
This complex vanishes unless $e$ divides $m+\deg\cE$. $\square$
\end{Pp}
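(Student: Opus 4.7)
The plan is to reduce to a standard Fourier transform identity for a morphism of vector bundles, using the local Zastava model of $\IC_{\zeta}$ from Section~\ref{section_relation between_IC_Ztheta_and_IC_zeta} together with the duality isomorphism~\eqref{iso_Four_transform_forSect215}.

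First I would decompose $\Eis(K) = \tilde\gp_!(\tilde\gq^*K \otimes \IC_{\zeta})[-\dim\Bun_T]$ according to the stratification of $\Bunb_{\tilde B}$ by $\theta = m\alpha \in \Lambda_{G,B}^{pos}$. Fixing the connected component of $\Bunb_{\tilde B, c}$ with $\deg \cE_1 = -\deg\cE - m$ amounts to fixing the stratum $_{\theta}\Bunb_{\tilde B}$ and the degree of its sub-line, so that base-change along $\nu_{\tilde B}$ yields a pushforward along a morphism built from $f_B: Z^{\theta}_{\Bun_T} \to \Bun_B \times X^{\theta}$, a morphism of vector bundles over $\Bun_T \times X^{\theta}$. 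The local model explained in Section~\ref{section_relation between_IC_Ztheta_and_IC_zeta} (and spelled out by Corollary~\ref{Cor_3}) identifies $\IC_{\zeta}$ restricted to $_{\theta}\Bunb_{\tilde B}$ with the pull-back of $\IC_{Z^{\theta}_c, \bar\zeta}$ up to the identification of the $\mu_N$-gerbe structures; this reduction produces the auxiliary line $\bar\cU$ of~\eqref{line_bundle_over_tilde_Z^theta_cE} via Lemma~\ref{Lm_calculation_detRG_for_cE(nD)}.

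Next I would apply the fibrewise Fourier transform over $\Bunt_{T,c}$. Since $f_B$ is a morphism of vector bundles, the standard Fourier identity turns $(f_B)_!$ into pull-back along the dual morphism $\check f_B: \cS_B \times X^{\theta} \to \check Z^{\theta}_{\Bun_T}$ (up to a shift and a twist by the rank difference), thereby producing the diagram~\eqref{diag_for_Pp6.2}. Now~\eqref{iso_Four_transform_forSect215} computes $\Four_{\psi}(\IC_{Z^{\theta}_c, \bar\zeta}) \,\iso\, \cC_0^m \otimes \IC_{\check Z^{\theta}_c, \bar\zeta}$, and the $\tilde\gq^*K$ factor of the Eisenstein series becomes $\pr_{T,c}^*K$. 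Assembling these ingredients yields the formula~\eqref{complex_for_Lm15}.

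The vanishing statement will then follow from Proposition~\ref{Pp_vanishing_over_Bunt_M^theta} applied with $M = T$: the category $\D_{\bar\zeta}(\Bunt_{T,c}^{\mu})$ is zero unless $\mu \in \Lambda^{\sharp} = e\Lambda$. Since $\pr_{T,c}$ lands on the component of $\Bunt_{T,c}$ corresponding to the line bundle $\cE^{-1}(-D)$ of degree $-\deg\cE - m$, the complex $\pr_{T,c}^*K$ vanishes identically unless $e$ divides $\deg\cE + m$. I expect the main technical obstacle to be the careful bookkeeping of the $\mu_N$- and $\mu_n$-gerbe structures across the stacks $\Bunb_{\tilde B,c}$, $\Bun_{\tilde B, c}$, $\cS_{\tilde B, c}$ and $(\cS_B \times X^{\theta})^{\tilde{}}$: one must verify that the identifications of the auxiliary lines $\cU, \cU_G, \bar\cU$ propagate consistently under $\nu_{\tilde B}$, under the local-model map to $\wt Z^{\theta}_{\Bun_T}$, and under the Fourier transform. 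The required compatibilities are provided by Section~\ref{Section_713_Zastava} and Lemma~\ref{Lm_calculation_detRG_for_cE(nD)}, after which the proposition follows by a routine manipulation with the Fourier transform.
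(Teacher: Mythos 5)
Your proposal is essentially correct and follows the route the paper intends: the paper's own justification is a single sentence (``From the standard properties of the Fourier transform one gets the following''), with the real content carried by the preceding setup that you correctly invoke — the open embedding $Z^\theta_{\Bun_T}\hookrightarrow \Bunb_B\times_{\Bun_G}\Bun_B$ from Section~\ref{section_relation between_IC_Ztheta_and_IC_zeta}, the identification of $\IC_\zeta$ with $\IC_{Z^\theta_c,\bar\zeta}$ over that open part, the Fourier identity for the morphism of generalized vector bundles $f_B$, and (\ref{iso_Four_transform_forSect215}). The vanishing argument via the support condition on components of $\Bunt_{T,c}$ is also correct.

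Two small points deserve sharper phrasing. First, fixing the connected component with $\deg\cE_1=-\deg\cE-m$ does \emph{not} fix a stratum $_{\theta}\Bunb_{\tilde B}$: the component $\Bunb_B^{-\deg\cE-m}$ meets every stratum $_{\theta'}\Bunb_B$ with $\theta'\le m\alpha$, and it is precisely the Zastava open $Z^{m\alpha}_{\Bun_T}$ of the fibered product $\Bun_{\tilde B,c}\times_{\Bunt_G}\Bunb_{\tilde B}$ that packages all these strata together. Second, the fibered product also has a closed complement where $\cE_1$ factors through $\cE\subset M$; over that locus the subsheaf exists for every extension class, so the direct image is pulled back from $\Bun_T$ and its Fourier transform is supported on $s_2=0$, hence dies after restricting to $\cS^0_{\tilde B,c}$. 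You implicitly use this when you pass from the ambient fibered product to the Zastava open, but it is worth saying, since otherwise the reduction to $(f_B)_!$ looks unjustified.
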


 Proposition~\ref{Pp_nondegenerate_Whit_coeff_of_Eis} implies the following description of the first Whittaker coefficients of $\Eis(K)$, $K\in \D_{\bar\zeta}(\Bunt_{T,c})$. Write $\Cov_B\subset \cS_B$ for the open substack given by requiring that $s_2:\cE^2\to\Omega$ is an isomorphism. 
Let $\Cov_{\tilde B, c}$ denote the restriction of the gerb $\cS_{\tilde B, c}$ to $\Cov_B$. The stack $\Cov_{\tilde B, c}$ is the base, on which the first Whittaker coefficient lives. Recall that the line bundle on $X^{\theta}\times\Cov_B$ whose fibre at $(D, \cE, s_2)\in X^{\theta}\times\Cov_B$ is
$$
\frac{(\cL_c)_{\cE(D)}}{(\cL_c)_{\cE}}
$$
identifies canonically with $\pr_1^*\cO(-\!\vartriangle)$. So, one gets the open immersion $\Cov_{\tilde B, c}\times(_{\Omega}\tilde X^{\theta}) \hook{} (\cS_B\times X^{\theta})^{\tilde{}}$ sending $(D, \cE, s_2, \cU_G, \bar\cU)$ with (\ref{iso_for_cU_G^n_for_Section216}) and (\ref{line_bar_cU_for_Sect216}) to $(\cE, s_2, D, \cU_G,\cU)$, where $\cU=\cU_G\otimes\bar\cU$. We get the diagram
$$
\Bunt_{T,c}\;\getsup{\pr_{\Cov,\theta}}\;
\Cov_{\tilde B, c}\times(_{\Omega}\tilde X^{\theta}) \;\toup{\pr_1}\; \Cov_{\tilde B, c}
$$
obtained by restricting (\ref{diag_for_Pp6.2}). 

\begin{Cor} 
\label{Cor_6.1}
The restriction of (\ref{complex_for_Lm15}) to $\Cov_{\tilde B, c}$ identifies with
\begin{equation}
\label{complex_Witten_conf_blocks}
(\pr_1)_!(\pr_{\Cov,\theta}^*K\otimes \cC_0^m\otimes \IC_{_{\Omega}\tilde X^{\theta}, \bar\zeta})[-\dim\Bun_T]
\end{equation}
\end{Cor}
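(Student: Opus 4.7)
The plan is to derive Corollary~\ref{Cor_6.1} by combining Proposition~\ref{Pp_nondegenerate_Whit_coeff_of_Eis} with Proposition~\ref{Lm_6.2_about_Fourier_of_IC_zeta} via base change to the Whittaker locus $\Cov_B \subset \cS_B$, all the nontrivial work being hidden in Proposition~\ref{Lm_6.2_about_Fourier_of_IC_zeta}.

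First, I would analyze the restriction of the map $\check{f}_{\tilde B}$ to the open substack $\Cov_{\tilde B, c} \times ({_\Omega \tilde X^\theta}) \subset (\cS_B \times X^\theta)^{\tilde{}}$. Over $\Cov_B$, the isomorphism $s_2: \cE^2 \iso \Omega$ yields a canonical identification $\cE^{-2} \otimes \Omega \iso \cO$, under which the transpose $v'$ of $s_2$ becomes the image of $1 \in \H^0(X,\cO)$ in $\cO_D$. Hence the image of $\check{f}_{\tilde B}$ on this open lies inside the preimage of $_\Omega \check{\tilde Z}^\theta_{max}$ along the reduction of the background torsor from $\Bun_T$ to $\cE^2 \iso \Omega$, and on this preimage the section $v'$ is constantly equal to $1$. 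Combining with the canonical identification $(\cL_c)_{\cE(D)}/(\cL_c)_\cE \iso \cO(-\vartriangle)_D$ recalled before the statement of the corollary, this exhibits a canonical section $\sigma: \Cov_{\tilde B, c} \times ({_\Omega \tilde X^\theta}) \to \Cov_{\tilde B, c} \times {_\Omega \check{\tilde Z}^\theta_{max}}$ of the projection $\id \times \check{\pi}$, and $\check{f}_{\tilde B}$ restricted to our open factors through $\sigma$.

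Second, I would pull back along $\sigma$ the isomorphism of Proposition~\ref{Lm_6.2_about_Fourier_of_IC_zeta}, namely $\IC_{_{\Omega}\check{Z}^\theta,\bar\zeta} \iso \check W_{max} \otimes \check\pi^* \IC_{_\Omega \tilde X^\theta, \bar\zeta}[\dimrel(\check\pi)]$. Because the point $v = 1$ admits the tautological lift $1 \in \mu_n$ along the $\GG^\theta_n$-torsor $\pi_n$, the pullback $\sigma^* \check W_{max}$ is canonically trivial, while $\sigma^* \check\pi^* = \id$. So $(\check{f}_{\tilde B})^* \IC_{\check Z^\theta_c, \bar\zeta}$, restricted to $\Cov_{\tilde B, c} \times ({_\Omega \tilde X^\theta})$, is canonically isomorphic to the pullback of $\IC_{_\Omega \tilde X^\theta, \bar\zeta}$ under the second projection, shifted by $\dimrel(\check\pi)=m$.

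Third, I would substitute this into the expression (\ref{complex_for_Lm15}). By construction of the open immersion $\Cov_{\tilde B, c} \times ({_\Omega \tilde X^\theta}) \hook{} (\cS_B \times X^\theta)^{\tilde{}}$, the restriction of $\pr_{T,c}$ coincides with $\pr_{\Cov,\theta}$ (since $\cU = \cU_G \otimes \bar\cU$), and the restriction of $\pr_{B,c}$ coincides with $\pr_1$; proper base change then replaces $(\pr_{B,c})_!$ by $(\pr_1)_!$ after restriction to $\Cov_{\tilde B,c}$. Collecting all shifts, with $\dimrel(\check{f}_B) - \dim\Bun_T - m = -\dim\Bun_T$ on the open, the expression in (\ref{complex_for_Lm15}) becomes (\ref{complex_Witten_conf_blocks}). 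The main obstacle will be a careful bookkeeping of the $\mu_n$-gerb data: one must verify that the canonical lift of the unit section through the gerb of $n$-th roots of $\bar\cL$ is compatible with the identifications $(\cL_c)_{\cE(D)}/(\cL_c)_\cE \iso \cO(-\vartriangle)_D$ and $\cE^{-2}\otimes\Omega \iso \cO$, so that $\sigma^*\check W_{max}$ trivializes in a way consistent with all the $\bar\zeta$-equivariant structures appearing above.
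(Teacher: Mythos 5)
Your proposal matches the paper's implicit derivation: restrict Proposition~\ref{Pp_nondegenerate_Whit_coeff_of_Eis}'s complex along the open immersion $\Cov_{\tilde B,c}\times(_{\Omega}\tilde X^{\theta})\hook{}(\cS_B\times X^{\theta})^{\tilde{}}$, observe that $\check{f}_{\tilde B}$ then factors through the section $v=1$ of $\check{\pi}$ (since $s_2$ is an isomorphism, hence has no zero along $D$), apply Proposition~\ref{Lm_6.2_about_Fourier_of_IC_zeta} over ${_{\Omega}\check{\tilde Z}^{\theta}_{max}}$, and use the canonical trivialization of $\check{W}_{max}$ at $v=1$. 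This is the intended argument, and your identification of the role of Proposition~\ref{Lm_6.2_about_Fourier_of_IC_zeta} is exactly right.

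Two small corrections. First, the shift bookkeeping: Proposition~\ref{Lm_6.2_about_Fourier_of_IC_zeta} carries the shift $[\dimrel(\check{\pi})]=[+m]$, so the total becomes $[\dimrel(\check{f}_B)-\dim\Bun_T+m]$, and matching against $[-\dim\Bun_T]$ requires $\dimrel(\check{f}_B)=-m$. This indeed holds on the component $\deg\cE=g-1$ because $\cS_B$ there has generic fibre dimension zero, so $\dim(\cS_B\times X^{\theta})-\dim\check{Z}^{\theta}_{\Bun_T}=(\dim\Bun_T+m)-(\dim\Bun_T+2m)=-m$. Your displayed equation "$\dimrel(\check{f}_B)-\dim\Bun_T-m=-\dim\Bun_T$" has the wrong sign in front of $m$ and hence would force $\dimrel(\check{f}_B)=m$, which is incorrect; the conclusion survives because the two sign slips cancel, but this step should be redone cleanly. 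Second, the tautological lift of $v=1$ along the $\GG^{\theta}_n$-torsor $\pi_n$ is the unit section $1\in\GG^{\theta}_n$ (equivalently $1\in(\cO/\cO(-D))^*$), not "$1\in\mu_n$"; this is what yields the canonical trivialization of $\sigma^*\check{W}_{max}$, since the fibre of the torsor over $(D,1)$ is $\GG^{\theta}_n(D)$ itself with its distinguished identity point.
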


\begin{Cor} 
\label{Cor_6.2}
Let $E$ be a $\check{T}^{\sharp}$-local system on $X$, $\cK_E\in \D_{\zeta}(\Bunt_T)$ be the $E^*$-Hecke eigen-sheaf as in Corollary~\ref{Cor_very_first}. The first Whittaker coefficient of $\Eis(\cK_E)$, that is, the 
complex 
$$
\Four_{\psi}\nu_{\tilde B}^*\Eis(\cK_E)[\dimrel(\nu_{\tilde B})]\mid_{\Cov_{\tilde B, c}}
$$
identifies with 
\begin{equation}
\label{complex_Witten_for_Eis(K_E)}
\mathop{\oplus}\limits_{\theta} (\pr_1)_!(\pr_{\Cov,\theta}^*\cK_E\otimes \cC_0^m\otimes \IC_{_{\Omega}\tilde X^{\theta}, \bar\zeta})
\end{equation}
Here $\theta=m\alpha$, and the sum is over $m\ge 0$ such that $m+g-1\in e\ZZ$. 
\end{Cor}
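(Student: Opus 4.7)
The strategy is to assemble the statement from Proposition~\ref{Pp_nondegenerate_Whit_coeff_of_Eis} and Corollary~\ref{Cor_6.1} by decomposing the Eisenstein series according to the connected components of $\Bun_{\tilde B, c}$ and then specializing to $\Cov_{\tilde B, c}$. First, I would identify $\cK_E \in \D_{\zeta}(\Bunt_T)$ with its image in $\D_{\bar\zeta}(\Bunt_{T,c})$ under the equivalence $\alpha^{*}$ of Section~\ref{Section_0.2}, so that it may be used as the input $K$ in Proposition~\ref{Pp_nondegenerate_Whit_coeff_of_Eis}. By definition
$$
\Bun_{\tilde B,c}=\bigsqcup_{d\in\ZZ}\Bun_{\tilde B,c}^{d},
$$
where the index $d$ records $\deg\cE_1$, and the complex $\Four_\psi\nu_{\tilde B}^{*}\Eis(\cK_E)[\dimrel(\nu_{\tilde B})]$ decomposes accordingly as a direct sum over $d\in\ZZ$ of the pieces analyzed in Proposition~\ref{Pp_nondegenerate_Whit_coeff_of_Eis}.

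Next, I would restrict each piece to $\Cov_{\tilde B,c}\subset \cS_{\tilde B,c}^{0}$. Over $\Cov_B$ the map $s_2\colon \cE^{2}\iso\Omega$ is an isomorphism, so $\deg\cE=g-1$ on this locus, and hence $\Cov_{\tilde B,c}$ lies entirely in the component of $\cS_{\tilde B,c}^{0}$ corresponding to $\deg\cE=g-1$. For the component with $\deg\cE_1=d$, the integer $m$ attached to it by the equality $\deg\cE_1=-\deg\cE-m$ becomes $m=1-g-d$; only nonnegative $m$ contribute, since $\theta=m\alpha\in\Lambda_{G,B}^{pos}$ forces $m\ge 0$ (alternatively, this is visible from the support of $\IC_{\zeta}$ on $\Bunb_{\tilde B,c}$). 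Thus the sum over all components of $\Bun_{\tilde B,c}$ restricts, on $\Cov_{\tilde B,c}$, to a sum indexed by $m\ge 0$, and by Corollary~\ref{Cor_6.1} each term is precisely the complex (\ref{complex_Witten_conf_blocks}) with $K=\cK_E$ and $\theta=m\alpha$.

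Finally, I would implement the vanishing condition. By (\cite{L}, Proposition~2.1), $\cK_E$ is supported on the components $\Bunt_T^{\mu}$ with $\mu\in\Lambda^{\sharp}=e\ZZ\alpha$; writing $\mu=k\alpha$, the pullback $\pr_{T,c}^{*}\cK_E$ vanishes unless $e\mid k$. The image of a point of $\Cov_{\tilde B,c}\times({_{\Omega}\tilde X^{\theta}})$ under $\pr_{\Cov,\theta}$ is the $T$-torsor $\cE^{-1}(-D)$, of degree $-(g-1)-m$, which corresponds to $k\alpha$ with $k=g-1+m$. Hence the $m$-th summand vanishes unless $m+g-1\in e\ZZ$, exactly matching the range of summation in the statement. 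Collecting the summands produces the asserted formula. The only minor technical care needed is to match cohomological shifts and the identification $\D_{\zeta}(\Bunt_T)\iso \D_{\bar\zeta}(\Bunt_{T,c})$; otherwise the argument is a direct concatenation of Proposition~\ref{Pp_nondegenerate_Whit_coeff_of_Eis} and Corollary~\ref{Cor_6.1}, with the combinatorial input about $\deg\cE=g-1$ on $\Cov$.
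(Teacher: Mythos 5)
Your proposal is correct and follows the route the paper implicitly intends: Corollary~\ref{Cor_6.2} is stated without proof because it is a direct assembly of Corollary~\ref{Cor_6.1} over the connected components, and that is exactly what you carry out. Two small points worth flagging. First, the decomposition by $d=\deg\cE_1$ is a decomposition of the Drinfeld compactification $\Bunb_{\tilde B, c}$ (or, more precisely, of the fibred product appearing in $\nu_{\tilde B}^*\Eis(\cK_E)$), not of $\Bun_{\tilde B, c}$ itself; the latter is connected by $\deg\cE$, which is frozen at $g-1$ on $\Cov_{\tilde B, c}$. Second, the statement that only $m\ge 0$ contributes is better justified by the Zastava description underlying Proposition~\ref{Pp_nondegenerate_Whit_coeff_of_Eis}: over $\cS^0_{\tilde B,c}$ (where $s_2\ne 0$) the stratum $\cE_1\subset\cE$ is killed by the Fourier transform, so only the locus where $\cE_1\hookrightarrow\cE^{-1}$ is injective survives, forcing $\cE_1=\cE^{-1}(-D)$ with $D$ effective of degree $m\ge 0$. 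Your derivation of the congruence $m+g-1\in e\ZZ$ from the support of $\cK_E$ on $\Lambda^\sharp=e\ZZ\alpha$ matches the vanishing condition already built into Proposition~\ref{Pp_nondegenerate_Whit_coeff_of_Eis} (``vanishes unless $e$ divides $m+\deg\cE$'') specialized to $\deg\cE=g-1$. The residual shift $[-\dim\Bun_T]$ present in Corollary~\ref{Cor_6.1} but absent from the displayed formula in Corollary~\ref{Cor_6.2} is a bookkeeping discrepancy in the source; your remark to ``match cohomological shifts'' is the right level of caution.
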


\begin{Rem} 
\label{Rem_Witten_conf_blocks}
i) The complex (\ref{complex_Witten_conf_blocks}) is an $\ell$-adic analog of the space of conformal blocks in Wess-Zumino-Witten model that was studied in \cite{BFS}.

\smallskip\noindent
ii) Assume $E^{n\alpha}$ nontrivial then $\cK_E$ is regular. The conjectural functional equation for $\Eis(\cK_E)$ should be reflected in  
the property of (\ref{complex_Witten_for_Eis(K_E)}) saying that the summand indexed by $\theta=m\alpha$ for $E$ identifies (up to tensoring by some 1-dimensional space) with the summand indexed by $\theta'=m'\alpha$ for $^{\sigma}E$, where $m+m'=(n-1)(2g-2)$. Here $^{\sigma}E$ is the extension of scalars of $E$ under $w_0:\check{T}^{\sharp}\to \check{T}^{\sharp}$. In particular, in this case the sum in (\ref{complex_Witten_for_Eis(K_E)}) should be over $0\le m\le (n-1)(2g-2)$ such that $m+g-1\in e\ZZ$.

\smallskip\noindent
iii) In the case $n=2$ the complex (\ref{complex_Witten_for_Eis(K_E)}) is calculated in (\cite{L2}, Theorem~4). In this case it is given by the `geometric central value' of some $L$-function. Recall the line bundle $\cE_X\in\Cov_B$ from Section~\ref{Section_Notations}. A point $(\cE, s_2)\in\Cov_B$ gives rise to the $\mu_2$-torsor on $X$ given by $s_2: (\cE\otimes\cE_X^{-1})^2\,\iso\, \cO_X$. Write $\cE_0$ for the $\Qlb$-local system on $X$ obtained from this $\mu_2$-torsor via the extension $\mu_2\to \Qlb^*$. For $n=2$ 
the fibre of (\ref{complex_Witten_for_Eis(K_E)}) over $(\cE, \cU_G)\in\Cov_{\tilde B, c}$ identifies with
$$
\oplus_{m\ge 0}\RG(X^{(m)}, (E^{-\alpha}\otimes\cE_0)^{(m)})[m]
$$
tensored by some 1-dimensional space. If $E^{-2\alpha}$ is nontrivial then the above identifies with 
$$
\mathop{\oplus}\limits_{0\le m\le 2g-2} \wedge^m \H^1(X, E^{-\alpha}\otimes\cE_0)
$$ 
tensored by some 1-dimensional space, and this agrees with ii). The compatibility with the functional equation then comes from the isomorphism $\H^1(X, E^{-\alpha}\otimes\cE_0)^*\,\iso\, \H^1(X, E^{\alpha}\otimes\cE_0)$. So, for $n>1$ we may think of (\ref{complex_Witten_for_Eis(K_E)}) as a generalization of the notion of the central value of an abelian $L$-function.
\end{Rem}

\subsection{Constant term of $\Eis$} 
\label{Section_Constant term of Eis}
Recall that $\Bun_{B, \tilde G}$ classifies an exact sequence (\ref{ext_cE^-1_by_cE}) and a line $\cU_G$ equipped with $\cU_G^N\,\iso\, \cL_{M}$. Write $\Bun_{B,\tilde G}^d$ for the connected component of $\Bun_{B,\tilde G}$ given by $\deg \cE=d$. We have the diagram of projections
$$
\Bunt_T \getsup{\gq} \Bun_{B,\tilde G} \toup{\gp} \Bunt_G,
$$
where $\gq$ sends $(\cE\hook{} M, \cU_G)$ to $(\cE,\cU_G)$. Write $\Bunt_T^d$ for the component of $\Bunt_T$ classifying $(\cE,\cU)\in\Bunt_T$ with $\deg\cE=d$. The constant term functor $\CT: \D_{\zeta}(\Bunt_G)\to \D_{\zeta}(\Bunt_T)$ is defined by $\CT=\gq_!\gp^*[\dimrel(\gp)]$. 

 Recall that $\Bunb_B$ classifies $M\in \Bun_G$ together with a subsheaf $\cE\hook{} M$, $\cE\in\Bun_1$. Write $\Bunb^d_B$ for the connected component of $\Bunb_B$ given by $\deg \cE=d$. The stack $\Bunb^d_B$ is smooth irreducible of dimension $2g-2-2d$.  
 
  Let $\sigma: \Bun_T\to \Bun_T$ be the map $\cE\mapsto \cE^{-1}$. We also denote by $\sigma: \Bunt_T\to\Bunt_T$ the map $(\cE,\cU)\mapsto (\cE^{-1},\cU)$.
 
\begin{Def} For $\theta=m\alpha\in \Lambda^{pos}_{G,B}$ with $m\in n\ZZ_+$ we define the following \select{integral Hecke functor} $\IH^{\theta}: \D_{\zeta}(\Bunt_T)\to \D_{\zeta}(\Bunt_T)$. One has $\gB(\theta)=\frac{m}{n}\nu$, where $\nu=n\alpha\in J$, and $X^{\gB(\theta)}\,\iso\, X^{(\frac{m}{n})}$. Recall the stack $X^{\gB(\theta)}\times_{X^{\theta}} \Modt^{+,\theta}_{\Bunt_T}$ from Section~\ref{Section_211}. Its point is a collection $(\cE\in \Bun_1, D\in X^{(\frac{m}{n})}, \cU,\cU_G)$ together with isomorphisms $\cU_G^N\,\iso\, (\cL_T)_{\cE(nD)}$, $\cU^N\,\iso\, (\cL_T)_{\cE}$. Here $\cE(nD)$ is the `background' $T$-torsor. Consider the diagram
$$
\Bunt_T \;\getsup{h^{\la}_T}\; X^{\gB(\theta)}\times_{X^{\theta}} \Modt^{+,\theta}_{\Bunt_T} \;\toup{h^{\ra}_T}\; \Bunt_T,
$$
where $h^{\la}_T$ sends the above point to $(\cE,\cU)$, and $h^{\ra}_T$ sends the above point to $(\cE(nD), \cU_G)$. Set 
$$
\IH^{\theta}(K)=(h^{\ra}_T)_!((h^{\la}_T)^*K\otimes \IC^{\gB(\theta)}_{\Bun_M, \zeta})[-\dim\Bun_T]
$$ 
\end{Def}

\begin{Pp} 
\label{Pp_CT_of_Eis}
Let $d_1\in e\ZZ$ and $K\in \D_{\zeta}(\Bunt^{d_1}_T)$. The complex 
$K_{d,d_1}:=\CT(\Eis(K))\mid_{\Bunt^d_T}$ vanishes unless $d-d_1\in n\ZZ$. In the latter case it is described as follows. 
\begin{itemize}
\item[1)] If $d_1> max\{d, -d\}$ then $K_{d,d_1}=0$. 
\item[2)] If $d<d_1\le -d$ then for $\theta:=-(d+d_1)\alpha\in n\alpha\ZZ$
$$
K_{d,d_1}\,\iso\, \sigma_!\IH^{\theta}(K)[-\mid\!\gB(\theta)\!\mid]
$$ 

\item[3)] If $d\ge d_1>-d$ then for $\theta:=(d-d_1)\alpha\in n\alpha\ZZ$
$$
K_{d, d_1}\,\iso\, \IH^{\theta}(K)[2-2g+\mid\!\gB(\theta)\!\mid]
$$
\item[4)] If $d_1\le \min\{d, -d\}$ then there is a distinguished triangle
$$
\sigma_!\IH^{\theta}(K)[-\mid\!\gB(\theta)\!\mid]\to K_{d,d_1}\to \IH^{\theta'}(K)[2-2g+\mid\!\gB(\theta')\!\mid]
$$
with $\theta=-(d+d_1)\alpha$ and $\theta'=(d-d_1)\alpha$. 
\end{itemize}
\end{Pp}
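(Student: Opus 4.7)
I will compute $\CT\Eis(K)$ by base change through the Cartesian square with apex $Y := \Bunb_{\tilde B}\times_{\Bunt_G}\Bun_{B,\tilde G}$. A point of $Y$ packages $M\in\Bun_G$, an honest sub-bundle $\cE_1\subset M$ of degree $d$ coming from the $\Bun_B$-factor, an injection of line bundles $\cE_2\hookrightarrow M$ of degree $d_1$ coming from the Drinfeld factor, and gerb data $\cU_G,\cU$. Proper base change along $\tilde\gp$ and the projection formula give
\[
\CT\Eis(K)|_{\Bunt_T^d}\,\iso\,\pr_{T,!}\bigl(\pr_{\tilde B}^*\IC_{\zeta}\otimes\pr_{\tilde B}^*\tilde\gq^*K\bigr)[\dimrel(\gp)-\dim\Bun_T],
\]
with $\pr_T:Y\to\Bunt_T^d$ recording $(\cE_1,\cU_G)$ and $\pr_{\tilde B}:Y\to\Bunb_{\tilde B}^{d_1}$ recording $(M,\cE_2,\cU,\cU_G)$. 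For $G=\SL_2$ I then stratify $Y$ by whether the composition $\cE_2\to M\to\cE_1^{-1}$ vanishes, into the closed $Y_a$ (zero, forcing $\cE_2\subseteq\cE_1$, whence $d_1\le d$) and its open complement $Y_b$ (nonzero, forcing $\cE_2\cong\cE_1^{-1}(-D')$ for some effective $D'\in X^{(-d-d_1)}$, whence $d_1\le -d$); these are the two Bruhat strata for $\SL_2$. The blanket vanishing of $K_{d,d_1}$ outside $d-d_1\in n\ZZ$ drops out of $Z(G)=\mu_2$-equivariance via Lemma~\ref{Lm_2-action_for_Bunt_T}, so I may restrict to $d-d_1\in n\ZZ$ from the start.

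\textbf{The closed stratum $Y_a$.} On $Y_a$ the inclusion $\cE_2\hookrightarrow M$ factors through $\cE_1$, giving a unique effective $D\in X^{(m)}$, $m=d-d_1$, with $\cE_2=\cE_1(-D)$; moreover $\cE_1$ is automatically the saturation of $\cE_2$ in $M$, so $Y_a$ identifies with the singular stratum ${_{m\alpha}\Bunb_{\tilde B}}$ treated in Section~\ref{Section_211}. Corollary~\ref{Cor_3} then describes $\IC_{\zeta}|_{Y_a}$ as $\nu_B^*\IC^{\gB(\theta)}_{\Bun_M,\zeta}[\dimrel(\nu_B)+|\gB(\theta)|]$ with $\theta=(d-d_1)\alpha$, vanishing unless $m\in n\ZZ$. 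Factoring $\pr_T|_{Y_a}$ as the forgetful map $\nu_B$ (whose fibres are the extension-class stacks, of virtual dimension governed by Riemann--Roch for $\cE_1^2$) followed by $h^{\ra}_T:(\cE,D)\mapsto\cE(nD)$ on $X^{\gB(\theta)}\times_{X^{\theta}}\Modt^{+,\theta}_{\Bunt_T}$, and pushing $(\nu_B)_!$ through by the projection formula, the shift $\dimrel(\gp)-\dim\Bun_T$ combines with the fibre cohomology to produce exactly $[2-2g+|\gB(\theta)|]$. Recognising the remaining pushforward as the definition of $\IH^{\theta}$ yields the case~3 formula $\IH^{\theta}(K)[2-2g+|\gB(\theta)|]$ and the closed-stratum term of case~4.

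\textbf{The open stratum $Y_b$, assembly, and the main obstacle.} On $Y_b$ one has $\cE_2\cong\cE_1^{-1}(-D')$ for an effective $D'$ of degree $m'=-d-d_1$. The plan is to re-encode $(\cE_1,\cE_2\hookrightarrow M)$ via the ``opposite Borel'' by swapping $\cE_1$ with the saturation $\cE_2^{sat}\subset M$: using $\det M\cong\cO$ for $\SL_2$, $\cE_1$ becomes a Drinfeld sub-line of the quotient $M\to(\cE_2^{sat})^{-1}$, while $\cE_2^{sat}$ plays the role of the honest sub-bundle. Stratifying $Y_b$ by the co-defect $E=\cE_2^{sat}/\cE_2$ of degree $k$, each sub-stratum supports the restriction of $\IC_{\zeta}$ described by Section~\ref{Section_211} on ${_{k\alpha}\Bunb_{\tilde B}}$ (vanishing unless $k\in n\ZZ$); summing these contributions through the swap and through $\sigma:\Bunt_T\to\Bunt_T$, $\cE\mapsto\cE^{-1}$, should collapse everything into the single term $\sigma_!\IH^{\theta'}(K)[-|\gB(\theta')|]$ with $\theta'=-(d+d_1)\alpha$. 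The four cases of the proposition then read off from which of $\{d_1\le d,\,d_1\le-d\}$ hold, and case~4 is the distinguished triangle attached to the open/closed decomposition $Y_b\hookrightarrow Y\hookleftarrow Y_a$. The main obstacle is precisely this $Y_b$ analysis: although $Y_b$ is the ``clean'' $w_0$-Bruhat cell at the $\Bun_B$ level, on the Drinfeld side $\cE_2$ need not be saturated in $M$, so $Y_b$ meets every singular Drinfeld stratum and the restriction of $\IC_{\zeta}$ there is genuinely the full Corollary~\ref{Cor_3} data; the key geometric input beyond Section~\ref{Section_211} will be identifying the co-defect divisor of $\cE_2$ in $M$ with the Drinfeld parameter for the opposite Borel cleanly enough that the summation collapses into a single integral Hecke term.
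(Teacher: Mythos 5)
Your setup — base change to $Y=\Bunb_{\tilde B}\times_{\Bunt_G}\Bun_{B,\tilde G}$ and stratification into the closed locus $Y_a$ (where the Drinfeld subsheaf factors through the genuine $B$-reduction) and its open complement $Y_b$ — is the paper's stratification, and your handling of $Y_a$ via Corollary~\ref{Cor_3} reproduces cases~1 and~3. Reading the blanket vanishing $d-d_1\in n\ZZ$ off $Z(G)=\mu_2$-equivariance via Lemma~\ref{Lm_2-action_for_Bunt_T} is a clean a priori shortcut; the paper instead extracts it from Corollary~\ref{Cor_3} stratum by stratum, but the two are equivalent.

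The genuine gap is in $Y_b$, and you flag it yourself as ``the main obstacle.'' Your plan — swap to the opposite Borel, re-stratify $Y_b$ by the co-defect of the Drinfeld subsheaf, apply Corollary~\ref{Cor_3} on each Drinfeld sub-stratum, and show the resulting sum collapses to a single integral Hecke term — would, if carried out, amount to re-proving a special case of Theorem~\ref{Th_3} by hand, and you do not carry it out. The paper's route avoids this entirely: with the honest reduction $\cE\in\Bun_T^d$ serving as the background $T$-torsor, the open locus $Y_b$ \emph{is} the relative Zastava space $\tilde Z^{\theta}_{\Bunt_T^d}$ with $\theta=-(d+d_1)\alpha$, compatibly with the gerb, and the restriction of $\IC_\zeta$ is the (relative) $\IC_{Z^\theta,\zeta}$ by Section~\ref{section_relation between_IC_Ztheta_and_IC_zeta}. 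The pushforward along the Zastava projection $\pi_B$ is then computed once and for all by Theorem~\ref{Th_3}, using the $\Gm$-contraction identity $\gs^{\theta!}\IC_{Z^\theta,\zeta}\iso\pi_{P!}\IC_{Z^\theta,\zeta}$ — that theorem already encapsulates the ``collapse over singular Drinfeld strata'' you are worried about. For $G=\SL_2$, $P=B$ the set $J$ is a singleton, so there is a unique $\gB(\theta)$ and Theorem~\ref{Th_3} returns a single shifted $\IC^{\gB(\theta)}_\zeta$; reading off $h^{\ra}_T$, $h^{\la}_T$ and inserting $\sigma$ then gives $\sigma_!\IH^{\theta}(K)[-\mid\!\gB(\theta)\!\mid]$ for case~2 and the triangle of case~4. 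You should replace the hoped-for collapse with this direct appeal to the Zastava identification and Theorem~\ref{Th_3}.
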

\begin{Prf} We calculate the direct image with respect to the composition 
$\Bun_{B,\tilde G}\times_{\Bunt_G} \Bunb_{\tilde B}\toup{\tilde\gp} \Bun_{B,\tilde G}\toup{\gq} \Bunt_T$. Write a point of $\Bun_{B,\tilde G}\times_{\Bunt_G} \Bunb_{\tilde B}$ as (\ref{ext_cE^-1_by_cE}) together with a subsheaf $\cE_1\hook{} M$ and lines $\cU,\cU_G$ equipped with $\cU^N\,\iso\, (\cL_T)_{\cE_1}$, $\cU_G^N\,\iso\, (\cL_T)_{\cE}$. \\
1) In this case $\Hom(\cE_1, \cE)=\Hom(\cE_1, \cE^{-1})=0$.\\
2) In this case $\Hom(\cE_1, \cE)=0$, and there remains the integral over the open substack $\tilde Z^{\theta}_{\Bunt^d_T}\subset \Bun_{B,\tilde G}\times_{\Bunt_G} \Bunb_{\tilde B}$ given by the conditions that $\cE_1\to\cE^{-1}$ is injective, $\deg\cE_1=d_1$, $\deg \cE=d$. Here $\theta=-(d+d_1)\alpha$. 

 Let $\Modt^{+,\theta}_{\Bunt^d_T}$ be the stack classifying $\cE\in \Bun_T^d$, $D_1\in X^{\theta}$ and lines $\cU,\cU_G$ equipped with $\cU^N\,\iso\, (\cL_T)_{\cE(D_1)}$, $\cU_G^N\,\iso\, (\cL_T)_{\cE}$. Here $\cE^{-1}$ is the `background' $T$-torsor. Let $\pi_B: \tilde Z^{\theta}_{\Bunt^d_T}\to \Modt^{+,\theta}_{\Bunt^d_T}$ be the natural projection. By Theorem~\ref{Th_3},  $K_{d,d_1}$ vanishes unless $d+d_1\in n\ZZ$, and in the latter case it is as follows. One gets $X^{\gB(\theta)}=X^{(\frac{-d-d_1}{n})}$. The corresponding map $X^{\gB(\theta)}\to X^{\theta}$ sends $D$ to $D_1=nD$. 
Consider the diagram
$$
\Bunt_T\,\getsup{h^{\la}_T}\,  X^{\gB(\theta)}\times_{X^{\theta}}\Modt^{+,\theta}_{\Bunt^d_T}\,\toup{\sigma h^{\ra}_T}\, \Bunt^d_T,
$$
where $h^{\ra}_T$ sends $(D,\cE, \cU,\cU_G)$ to $(\cE^{-1}, \cU_G)$, and $h^{\la}_T$ sends this point to $(\cE^{-1}(-nD), \cU)$. By Theorem~\ref{Th_3}, 
$$
K_{d,d_1}\,\iso\, (\sigma h^{\ra}_T)_!( \IC^{\gB(\theta)}_{\Bun_M, \zeta}\otimes(h^{\la}_T)^*K)[\frac{d+d_1}{n}-\dim\Bun_T]
$$ 
3) In this case $\Hom(\cE_1, \cE^{-1})=0$. Let $\theta=(d-d_1)\alpha$. By Corollary~\ref{Cor_3}, $K_{d, d_1}$ vanishes unless $d-d_1\in n\ZZ$. In the latter case $\mid\!\gB(\theta)\!\mid=\frac{d-d_1}{n}$ and 
$$
K_{d,d_1}\,\iso\, \IH^{\theta}(K)[2-2g+\mid\!\gB(\theta)\!\mid]
$$
4) Stratify $\Bun_{B,\tilde G}\times_{\Bunt_G} \Bunb_{\tilde B}$ by the property that $\cE_1$ factors through $\cE$ or not. Calculate the direct image with respect to this stratifications. 
\end{Prf}

\medskip

 Let $E$ be a $\check{T}^{\sharp}$-local system on $X$. Write $\cK=\cK_E$ for the Hecke eigen-sheaf on $\Bunt_T$ associated to $E$ in (\cite{L}, Proposition~2.2). This is a local system over the components of $\Bunt_T$ corresponding to $\Lambda^{\sharp}$. 
 
\begin{Lm} Let $\nu=-n\alpha$ and $\theta=m\alpha$ with $m\in n\ZZ_+$. One has naturally $\IH^{\theta}(\cK_E)\,\iso\, \cK_E\otimes \RG(X^{(\frac{m}{n})}, (E^{\nu})^{(\frac{m}{n})})[\frac{m}{n}]$.
\end{Lm}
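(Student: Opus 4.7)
The plan is to reduce the computation of $\IH^{\theta}(\cK_E)$ to the Hecke eigenproperty of $\cK_E$ by exploiting factorization in $X^{(m/n)}$. Since $n\alpha\in e\Lambda=\Lambda^{\sharp}$, the operation $\cE\mapsto\cE(nx)$ at a single point is governed by the Hecke functor $\H^{n\alpha}_T$, for which the eigen-sheaf identity reads $(m^{n\alpha})^*\cK_E\,\iso\,\cK_E\boxtimes E^{-n\alpha}=\cK_E\boxtimes E^{\nu}$.

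First I would handle the atomic case $m=n$. Then $\gB(\theta)=\nu$, $X^{\gB(\theta)}=X$, and by the description in Section~\ref{Section_211} (particularly the splitting (\ref{iso_for_D_divisible_by_n})), the perverse sheaf $\IC^{\gB(\theta)}_{\Bun_M,\zeta}$ on the relevant Hecke stack identifies (non-canonically) with $\IC\boxtimes\cL_\zeta$ once the $B(\mu_N)$ factor is separated; the $\zeta$-equivariance of $\cK_E$ ensures the $\mu_N$-character bookkeeping is consistent. Over a target point $(\cE',\cU_G)\in\Bunt_T$, the fibre of $h^{\ra}_T$ consists of $(\cE=\cE'(-nx),\,x\in X,\,\cU)$, and $(h^{\la}_T)^*\cK_E$ restricted to this fibre is $\cK_E(\cE'(-nx))$. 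Applying the eigenproperty with $\nu=n\alpha$ yields $\cK_E(\cE')\otimes E^{-n\alpha}|_x$, and integrating over $x\in X$ produces $\cK_E\otimes\RG(X,E^{-n\alpha})[1]$ after accounting for the shift $[-\dim\Bun_T]$ and the shift in the Hecke functor definition.

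For general $m\in n\ZZ_+$, restrict to the preimage of the open stratum $\oX{}^{(m/n)}\subset X^{(m/n)}$ where $D=x_1+\ldots+x_{m/n}$ has multiplicity-free support. By the factorization property of $\IC^{\gB(\theta)}_{\Bun_M,\zeta}$ (which by construction is the intermediate extension from $\oX{}^{\gB(\theta)}\times_{X^{\theta}}\Modt^{+,\theta}_{\Bunt_T}$), the operation $\cE\mapsto\cE(nD)$ factors as composition of single-point modifications. Iterating the atomic calculation (and taking $S_{m/n}$-invariants) shows that $(h^{\ra}_T)_!\bigl((h^{\la}_T)^*\cK_E\otimes\IC^{\gB(\theta)}_{\Bun_M,\zeta}\bigr)[-\dim\Bun_T]$ over $\oX{}^{(m/n)}$ becomes $\cK_E\boxtimes(E^{\nu})^{(m/n)}|_{\oX{}^{(m/n)}}$, up to the shift $[m/n]$ coming from the accumulated Hecke shifts.

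Finally, extend to all of $X^{(m/n)}$. Both $\IC^{\gB(\theta)}_{\Bun_M,\zeta}$ and the symmetric power $(E^{\nu})^{(m/n)}$ are, by construction, intermediate extensions of their restrictions to the open stratum; since $\cK_E$ is a local system on the relevant connected components of $\Bunt_T$, the pull-push commutes with these intermediate extensions, so the isomorphism on the open part propagates to $X^{(m/n)}$, and integrating over $X^{(m/n)}$ yields $\cK_E\otimes\RG(X^{(m/n)},(E^{\nu})^{(m/n)})[m/n]$. The main obstacle is the bookkeeping of the $\cL_\zeta$-character and cohomological shifts under the splitting (\ref{iso_for_D_divisible_by_n}), together with a clean formulation of the iterated-Hecke-to-symmetric-power step that remains valid after passing from the open locus to $X^{(m/n)}$.
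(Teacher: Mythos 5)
The paper does not give a proof of this Lemma; the closest it comes is the computation in Appendix~\ref{Section_appendixA}, where the analogous isomorphism of rank-one (shifted) local systems
\[
h^*\cK_E\otimes \IC^{\gB(\theta)}_{\Bun_T,\zeta}
\,\iso\, f^*\bigl((\mathop{\boxtimes}\limits_{\nu\in J} (E^{-\nu})^{(n_{\nu})})\boxtimes \cK_E\bigr)\otimes (\Qlb[1](\tfrac{1}{2}))^{\dim\Bun_T+\mid\gB(\theta)\mid}
\]
is simply asserted, the point being that both sides are rank-one local systems on the same (gerbe over a) smooth irreducible stack. Your approach is, in effect, a correct reconstruction of the argument that produces this identity in the $\SL_2$ case: you pin down the local system at a single point via the Hecke eigenproperty $(m^{n\alpha})^*\cK_E\iso\cK_E\boxtimes E^{-n\alpha}$ (which gives the \emph{canonical} $\cU'$ needed to make the isomorphism natural, rather than relying on the noncanonical splitting (\ref{iso_for_D_divisible_by_n}) for more than establishing that $\IC^{\gB(\theta)}_{\Bun_T,\zeta}$ is a rank-one local system), then factorize over the multiplicity-free locus, and finally propagate from $\oX^{(m/n)}$ to $X^{(m/n)}$ using that both sides are (shifted) rank-one local systems. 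This matches the spirit of the paper's Appendix, only spelled out more constructively; the details you defer (the $\mu_N$-character bookkeeping on the two gerbe lines $\cU$, $\cU_G$, and the $S_{m/n}$-descent from $X^{m/n}$ to $X^{(m/n)}$ of the iterated eigen-isomorphism) are genuine but routine, and your plan for handling them is sound. One small point worth making explicit when you write this up: $(E^{\nu})^{(m/n)}$ is itself a rank-one local system on $X^{(m/n)}$ (not merely an intermediate extension from $\oX^{(m/n)}$), which is why the final ``propagate from the open locus'' step has no IC-theoretic subtleties; and the shift $[m/n]$ falls out as $[\dim\Bun_T+m/n]-[\dim\Bun_T]$, consistent with your accounting.
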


\begin{Rem} By (\cite{L2}, Lemma~15), one has canonically $(AE)_{\Omega^{-n\alpha}}\,\iso\, \det\RG(X, E^{n\alpha})$. Here $\Omega^{-n\alpha}$ is the $T^{\sharp}$-torsor obtained from $\Omega$ via the push-out by $-n\alpha: \Gm\to T^{\sharp}$. Denote also by $^{\sigma}E$ the extension of scalars of $E$ under $w_0:\check{T}^{\sharp}\to \check{T}^{\sharp}$. Now from Proposition~\ref{Pp_action_of_W_on_cK_E} one gets $\sigma\ast\cK_E\,\iso\, \cK_{(^{\sigma}E)}\otimes \det\RG(X, E^{n\alpha})$. Here $\sigma\ast$ stands for the twisted $W$-action (\ref{def_twisted_W_action_on_Bunt_T}). It follows that Proposition~\ref{Pp_CT_of_Eis} is consistent with the functional equation for $\Eis(\cK_E)$ from Conjecture~\ref{Con_functional_equation}.
\end{Rem}

\subsection{Some special sheaves}  
\label{Section_Some special sheaves}
Let $E$ be a $\check{T}^{\sharp}$-local system on $X$. Sometimes we think of it simply as the rank one local system on $X$ corresponding to the character $e\alpha$ of $\check{T}^{\sharp}$. Write $\cK=\cK_E$ for the Hecke eigen-sheaf on $\Bunt_T$ associated to $E$ in (\cite{L},  Proposition~2.2). This is a local system over the components of $\Bunt_T$ corresponding to $\Lambda^{\sharp}$. 

Recall the Shatz stratification of $\Bun_G$. Let $Shatz^0\subset\Bun_G$ be the open substack of semi-stable torsors. So, $M\in Shatz^0$ iff for any rank one subsheaf $L\subset M$ one has $\deg L\le 0$. 
For $d>0$ let $Shatz^d$ denote the stack classifying $\cE\in \Bun_1^d$ and an exact sequence $0\to \cE\to M\to \cE^{-1}\to 0$. The map $Shatz^d\to \Bun_G$ sending this point to $M$ is a locally closed immersion. Besides, $Shatz^d$ for $d\ge 0$ form a stratification of $\Bun_G$. The stack $Shatz^d$ is irreducible of dimension $2g-2-2d$ for $d>0$, and $\dim Shatz^0=\dim\Bun_G=3g-3$. 
 
 Let $\Bun_G^{st}\subset \Bun_G$ denote the open substack of stable sheaves. It is not empty for $g\ge 2$. The image $\Bun^{\le 0}_G$ of $\Bunb^0_B\to\Bun_G$ is the complement of $\Bun_G^{st}$ in $\Bun_G$. For $d>0$ the image of $\bar\gp: \Bunb^d_B\to\Bun_G$ is the closure $\ov{Shatz}^d$ of $Shatz^d$. Let $Shatz^d_{\tilde G}$ (resp., $\ov{Shatz}^d_{\tilde G}$) be obtained from $Shatz^d_G$ (resp., $\ov{Shatz}^d$) by the base change $\Bunt_G\to\Bun_G$. 
 
 For $d>0$ a point of $Shatz^d_{\tilde G}$ is given by (\ref{ext_cE^-1_by_cE}) together with a line $\cU_G$ equipped with 
\begin{equation} 
\label{U_G^N_iso} 
\cU_G^N\,\iso\, (\cL_T)_{\cE}
\end{equation}
For $d>0$ let $\tilde q: Shatz^d_{\tilde G}\to \Bunt_T$ be the map sending the above point to $(\cE, \cU_G)$ equipped with (\ref{U_G^N_iso}). 
\begin{Def} Let $\IC(E, d)$ denote the intermediate extension of $\tilde q^*\cK_E[\dim Shatz^d_{\tilde G}]$ under $Shatz^d_{\tilde G}\hook{} \ov{Shatz}^d_{\tilde G}$. Note that $\IC(E, d)\in \D_{\zeta}(\Bunt_G)$. 
\end{Def}

Recall the stack $\Bunb^d_B$ from Section~\ref{Section_Constant term of Eis}.  For $K\in \D_{\zeta}(\Bunt_T)$ write $\Eis^d(K)$ for the contribution of the component $\Bunb^d_{\tilde B}$ to $\Eis(K)$. Recall that $\Eis^d(\cK)$ vanishes unless $d\in e\ZZ$. 
\begin{Pp}  
\label{Pp_special_sheaves_IC(E,d)}
Let $d>0$ with $d\in e\ZZ$. \\
(1) If $E^{n\alpha}$ is not trivial then $\Eis^d(\cK)\,\iso\, \IC(E, d)$ canonically.\\
(2) If $E^{n\alpha}$ is trivial then 
$$
\Eis^d(\cK)\,\iso\, \oplus_{b\ge 0} \,\IC(E, d+nb)
$$
\end{Pp}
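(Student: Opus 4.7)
The plan is to compute $\Eis^d(\cK_E)$ stratum by stratum using the defect-of-embedding stratification of $\Bunb^d_{\tilde B}$: writing $\cE_1\subset M$ for the saturation of $\cE$ in $M$, one has $\cE_1=\cE(D)$ for a unique $D\in X^{(m)}$, and the stratum $_{m\alpha}\Bunb^d_B$ identifies with $\Bun^{d+m}_B\times X^{(m)}$. First I would check that for $d>0$ the map $\bar\gp:\Bunb^d_B\to\Bun_G$ has image $\ov{Shatz}^d$, restricts to an isomorphism $\Bun^d_B\iso Shatz^d$ on the open part (uniqueness of the maximal line subbundle of a point of $Shatz^d$), and that the induced map $_{m\alpha}\Bunb^d_B\to Shatz^{d+m}$ is a fibration with fibre $X^{(m)}$ recording~$D$. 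In particular the restriction of $\Eis^d(\cK_E)$ to $Shatz^d_{\tilde G}$ is $\tilde q^*\cK_E[\dim Shatz^d_{\tilde G}]$, matching the generic part of $\IC(E,d)$, so the claim reduces to analysing the contributions to $\tilde\gp_!(\tilde\gq^*\cK_E\otimes\IC_\zeta)$ from the deeper strata $_{m\alpha}\Bunb^d_{\tilde B}$ with $m\ge 1$.

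By Corollary~\ref{Cor_3} in the explicit form of Section~\ref{Section_211}, $\IC_\zeta|_{_{m\alpha}\Bunb^d_{\tilde B}}$ vanishes unless $m\in n\ZZ_{\ge 0}$, and for $m=nb$ with $b\ge 0$ it is the pushforward under the finite closed embedding $X^{(b)}\hookrightarrow X^{(nb)}$, $D'\mapsto nD'$, of the IC sheaf of $\Bun^{d+nb}_B\times X^{(b)}$ (up to the gerb-twist by $\cL_\zeta$), shifted by $[b]$; so this $*$-restriction sits in perverse degree $-b$. Combined with the iterated Hecke eigen-property $(m^{n\alpha})^*\cK_E\cong\cK_E\boxtimes E^{-n\alpha}$, the pullback $\tilde\gq^*\cK_E$ along $(\cE_1,D')\mapsto\cE_1(-nD')$ identifies with $\cK_E\boxtimes(E^{-n\alpha})^{(b)}$ on $\Bun^{d+nb}_{\tilde T}\times X^{(b)}$. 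Proper pushforward along the $X^{(b)}$-fibration $\Bun^{d+nb}_B\times X^{(b)}\to Shatz^{d+nb}_{\tilde G}$ then gives, by K\"unneth,
\[
\Eis^d(\cK_E)|_{Shatz^{d+nb}_{\tilde G}}\iso\tilde q^*\cK_E[\dim Shatz^{d+nb}_{\tilde G}]\otimes\RG(X^{(b)},(E^{-n\alpha})^{(b)})[b]
\]
up to an appropriate Tate twist.

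In case~(1), when $E^{n\alpha}$ (equivalently $E^{-n\alpha}$) is nontrivial, $\RG(X,E^{-n\alpha})=\H^1(X,E^{-n\alpha})[-1]$ is concentrated in a single cohomological degree, whence $\RG(X^{(b)},(E^{-n\alpha})^{(b)})\cong\wedge^b\H^1(X,E^{-n\alpha})[-b]$, so the displayed contribution lives in perverse degree $-b\le -1$ on $Shatz^{d+nb}_{\tilde G}$. Together with the Verdier self-duality of $\Eis^d(\cK_E)$ (Theorem~\ref{Th_Eis_commutes_with_Verdier}) this forces $\Eis^d(\cK_E)$ to agree with the intermediate extension of $\tilde q^*\cK_E[\dim Shatz^d_{\tilde G}]$ from $Shatz^d_{\tilde G}$ to $\ov{Shatz}^d_{\tilde G}$, i.e.\ with $\IC(E,d)$. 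In case~(2), when $E^{n\alpha}$ is trivial, $\RG(X^{(b)},\Qlb)$ has cohomology in all degrees $[0,2b]$; the top-degree piece $\H^{2b}(X^{(b)},\Qlb)[-2b]$, after the $[b]$-shift, contributes in perverse degree $0$ on $Shatz^{d+nb}_{\tilde G}$, producing the extra irreducible summand $\IC(E,d+nb)$ (as the intermediate extension to $\ov{Shatz}^{d+nb}_{\tilde G}$), while lower-cohomological pieces fit into the intermediate extensions of $\IC(E,d+nb')$ for $b'<b$. Since the various $\IC(E,d+nb)$ are supported on pairwise distinct closed substacks $\ov{Shatz}^{d+nb}_{\tilde G}$, no non-split extensions can occur, giving the claimed direct sum. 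The principal difficulty will be the combinatorial bookkeeping in the middle paragraph --- tracking the gerb-twists by $\cL_\zeta$ from the trivialisation~(\ref{iso_for_D_divisible_by_n}) together with the iterated Hecke factor $(E^{-n\alpha})^{(b)}$ (with correct Tate and sign conventions) --- and, in case~(2), running the perverse-degree tally that distinguishes the genuine new $\IC(E,d+nb)$ summands from the intermediate-extension contributions already accounted for by smaller $b'$.
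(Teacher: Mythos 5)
Your strategy --- restrict $\Eis^d(\cK_E)$ to the Shatz strata, compute the $*$-fibre over $Shatz^{d+nb}_{\tilde G}$ as $\tilde q^*\cK_E\otimes\RG(X^{(b)},(E^{-n\alpha})^{(b)})$ in a suitable shift using Corollary~\ref{Cor_3} and the Hecke eigen-property, then do perverse-degree bookkeeping --- is exactly the paper's. Two steps need repair.

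First, the shift in your displayed formula should be $[2b]$, not $[b]$. The restriction of $\IC_\zeta$ to the stratum is the IC sheaf of $\Bun^{d+nb}_B\times X^{(b)}$ shifted by $[b]$ (the $|\gB(\theta)|$-shift in Corollary~\ref{Cor_3}), hence a rank-one local system in cohomological degree $-\dim\Bun^{d+nb}_B-2b$; pushing along the proper $X^{(b)}$-fibre and applying the $[-\dim\Bun_T]$ then gives $\tilde q^*\cK_E[\dim Shatz^{d+nb}_{\tilde G}]\otimes\RG(X^{(b)},(E^{-n\alpha})^{(b)})[2b]$ up to an overall constant shift, which is the paper's $[-2r+2g-2+\tfrac{2(r-d)}{n}]$ with $r=d+nb$. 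Your subsequent --- and correct --- assertion that the contribution in case (1) lies in perverse degree $-b$ relies on the $[2b]$; with $[b]$ the piece $\wedge^b\H^1(X,E^{-n\alpha})[-b]$ would land in perverse degree $0$ and the argument would break. Likewise in case (2), it is only with $[2b]$ that the top piece $\H^{2b}(X^{(b)},\Qlb)[-2b]$ lands in perverse degree $0$ over $Shatz^{d+nb}_{\tilde G}$, producing $\IC(E,d+nb)$.

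Second, the claim that ``distinct supports imply no non-split extensions'' is false: the supports $\ov{Shatz}^{d+nb}_{\tilde G}$ are nested, and nested supports routinely produce non-split extensions of perverse sheaves (open--closed decompositions). The correct reason, implicit in the paper, is purity: $\tilde\gq^*\cK_E\otimes\IC_\zeta$ is pure (everything is defined over a finite subfield and $\cK_E$ is a rank-one local system), so $\tilde\gp_!$ of it is a pure complex and hence, by the decomposition theorem, a direct sum of shifted simple perverse sheaves with multiplicities symmetric under $[j]\leftrightarrow[-j]$. This symmetry, rather than literal Verdier self-duality of $\Eis^d(\cK_E)$ (which does not hold since $\cK_E$ is generally not self-dual --- one only has $\DD\Eis(\cK_E)\iso\Eis(\DD\cK_E)$), is also what supplies the cosupport bound you need to conclude part (1).
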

\begin{Prf} The map $\bar\gp: \Bunb^d_B\to\Bun_G$ is an isomorphism over $Shatz^d_G$. It follows that $\IC(E, d)$ appears in $\Eis^d(\cK)$ with multiplicity one. Consider a point $(\cE\hook{} M, \cU_G)\in Shatz^r_{\tilde G}$ for some $r>d$. The fibre of $\bar\gp: \Bunb^d_B\to\Bun_G$ over this point identifies with 
$X^{(r-d)}$. Namely, to $D\in X^{(r-d)}$ there corresponds the subsheaf $\cE(-D)\subset M$. Denote by $S$ the $*$-fibre of $\Eis^d(\cK)$ at this point.
By Corollary~\ref{Cor_3}, $S$ vanishes unless $r-d\in n\ZZ$. If $r-d\in n\ZZ$ then we get an isomorphism
$$
S\,\iso\, \cK_{(\cE, \cU_G)}\otimes \RG(X^{(\frac{r-d}{n})}, (E^{-n\alpha})^{(\frac{r-d}{n})})[-2r+2g-2+\frac{2(r-d)}{n}]
$$
The codimension of $Shatz^r$ in $\ov{Shatz}^d$ is $2(r-d)$. If $E^{n\alpha}$ is not trivial then the $*$-restriction of $\Eis^d(\cK)$ to $Shatz^r$ is placed in perverse degrees $<0$. Part (1) follows. Under the assumption of (2) we see that $\IC(E, r)$ appears in $\Eis^d(\cK)$ with multiplicity one. Our claim follows. 
\end{Prf}

\medskip
\begin{Rem} 
\label{Rem_parities_of_IC_d_Section2.2}
If $n$ is even then by Lemma~\ref{Lm_2-action_for_Bunt_T} one has the following. If $d\in n\ZZ$ (resp., $d\in e\ZZ$ and $d\notin n\ZZ$) then $\IC(E, d)\in \D_{\zeta,+}(\Bunt_G)$ (resp., $\IC(E, d)\in \D_{\zeta,-}(\Bunt_G)$).
\end{Rem}

\subsubsection{Case $g=0$} In this subsection we assume $g=0$. For $d>0$ set for brevity $\IC_d=\IC(\Qlb, d)$. The open substack $Shatz^0\subset \Bun_G$ classifies trivial $G$-torsors.

\begin{Def} The line bundle $\cL_c$ is trivial on $Shatz^0$, its trivialization yields an isomorphism 
$Shatz^0_{\tilde G}\,\iso\, Shatz^0\times B(\mu_N)$. View 
$\IC(Shatz^0)\boxtimes \cL_{\zeta}$ as a perverse sheaf on $Shatz^0_{\tilde G}$ via this isomorphism. Let $\IC_0$ be its intermediate extension to $\Bunt_G$. 
\end{Def}

\begin{Lm} 
\label{Lm_restricting_IC_d_case_d>0}
Assume $d>0$ with $d\in e\ZZ$. Let $r>d$. The $*$-restriction $\IC_d\mid_{Shatz^r_{\tilde G}}$ vanishes unless $r-d\in n\ZZ$. If $r-d\in n\ZZ$ then 
$$
\IC_d\mid_{Shatz^r_{\tilde G}}\,\iso\, \IC_r[\frac{2(r-d)}{n}]\mid_{Shatz^r_{\tilde G}}
$$
\end{Lm}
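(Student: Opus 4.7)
The plan is to induct on $r - d \ge 0$ using the direct sum decomposition from Proposition~\ref{Pp_special_sheaves_IC(E,d)}(2). Since $E = \Qlb$ (so $E^{n\alpha}$ is trivial), one has
$$
\Eis^d(\cK) \,\iso\, \bigoplus_{b \ge 0} \IC_{d+nb}
$$
in $\D_{\zeta}(\Bunt_G)$. Restricting to $Shatz^r_{\tilde G}$ and observing that $\IC_{d'}\mid_{Shatz^r_{\tilde G}} = 0$ whenever $d' > r$ (the support $\ov{Shatz}^{d'}$ does not meet $Shatz^r$), only the summands with $d + nb \le r$ survive on the right, so the sum is finite and we are free to isolate the $b = 0$ term by comparing the two sides.

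First I would compute the total $\Eis^d(\cK)\mid_{Shatz^r_{\tilde G}}$ explicitly. The $*$-fibre at a point $(\cE \hook{} M, \cU_G) \in Shatz^r_{\tilde G}$ is exactly the complex $S$ computed via Corollary~\ref{Cor_3} in the proof of Proposition~\ref{Pp_special_sheaves_IC(E,d)}: it vanishes unless $r - d \in n\ZZ$, and with $m = (r-d)/n$ equals $\cK_{(\cE, \cU_G)} \otimes \RG(\PP^m, \Qlb)[-2r - 2 + 2m]$ after specializing to $g = 0$, $X = \PP^1$, $E = \Qlb$. Since this Zastava-level computation is carried out in families along the stratum via Corollary~\ref{Cor_3}, it promotes to the sheaf-level identification
$$
\Eis^d(\cK)\mid_{Shatz^r_{\tilde G}} \,\iso\, \IC_r\mid_{Shatz^r_{\tilde G}} \otimes \RG(\PP^m, \Qlb)[2m] \,\iso\, \bigoplus_{i=0}^{m} \IC_r\mid_{Shatz^r_{\tilde G}}[2i],
$$
using $\IC_r\mid_{Shatz^r_{\tilde G}} = \tilde q^*\cK[\dim Shatz^r_{\tilde G}] = \tilde q^*\cK[-2-2r]$.

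Then I run the induction on $r-d$ with $r$ fixed. The base case $d = r$ is the defining property of $\IC_r$. For the inductive step, the hypothesis gives $\IC_{d+nb}\mid_{Shatz^r_{\tilde G}} = \IC_r\mid_{Shatz^r_{\tilde G}}[2(m-b)]$ for $1 \le b \le m$ when $r - d \in n\ZZ$, and vanishing otherwise. When $r - d \notin n\ZZ$ the left-hand side and every $b \ge 1$ summand on the right vanish, forcing $\IC_d\mid_{Shatz^r_{\tilde G}} = 0$; when $r - d = nm$, subtracting the known $b = 1, \ldots, m$ summands from the total $\bigoplus_{i=0}^m \IC_r\mid_{Shatz^r_{\tilde G}}[2i]$ leaves precisely $\IC_r\mid_{Shatz^r_{\tilde G}}[2m] = \IC_r\mid_{Shatz^r_{\tilde G}}[2(r-d)/n]$. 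The only mild technical point is justifying the extraction of a single direct summand in the derived category: this is clean because $T := \IC_r\mid_{Shatz^r_{\tilde G}}$ is a shifted irreducible local system on the stratum and the shifts $T[2i]$, $0 \le i \le m$, are pairwise non-isomorphic (satisfying $\Hom(T[a], T[b]) = 0$ for $a \ne b$), so the decomposition into direct summands is unique up to isomorphism. I do not expect a serious obstacle.
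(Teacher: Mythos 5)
Your argument is essentially the paper's, just packaged as an induction where the paper telescopes in one step: the paper writes $\Eis^d(\cK)\,\iso\,\Eis^{d+n}(\cK)\oplus\IC_d$ (which follows immediately from Proposition~\ref{Pp_special_sheaves_IC(E,d)}(2)), restricts to $Shatz^r_{\tilde G}$, and compares the two explicit computations coming from Corollary~\ref{Cor_3}. Your induction on $r-d$, peeling off the summands $\IC_{d+nb}$ one by one, reaches the same conclusion by the same inputs. One small imprecision: the claim $\Hom(T[a],T[b])=0$ for all $a\ne b$ is not quite right for a shifted local system $T$ (for $b>a$ this is $\Ext^{b-a}(T,T)$, which need not vanish), but you do not actually need it; the unique-decomposition step follows from Krull--Schmidt, since each $T[2i]$ is indecomposable and the $T[2i]$ are pairwise non-isomorphic.
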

\begin{Prf}
By Proposition~\ref{Pp_special_sheaves_IC(E,d)}, $\Eis^d(\cK)\,\iso\, \Eis^{d+n}(\cK)\oplus \IC_d$. Restricting this isomorphism to $Shatz^r_{\tilde G}$, one obtains the desired result as in Proposition~\ref{Pp_special_sheaves_IC(E,d)}.
\end{Prf}

\begin{Lm} 
\label{Lm_great_fibres_of_IC_0}
(1) One has $\Eis^0(\cK)\,\iso\, \IC_0[1]\oplus \IC_0[-1]\oplus (\oplus_{b\ge 0} \, \IC_{2n+bn})$. \\
(2) The $*$-restriction $\IC_0\mid_{Shatz^r_{\tilde G}}$ vanishes unless $r\in n\ZZ$. For $r\in n\ZZ$ and $r>0$ one has
$$
\IC_0\mid_{Shatz^r_{\tilde G}}\,\iso\, \IC_r[\frac{2r}{n}-1]\mid_{Shatz^r_{\tilde G}}
$$
\end{Lm}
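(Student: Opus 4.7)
The plan is to prove parts (1) and (2) simultaneously by stratum-by-stratum analysis along the Shatz stratification of $\Bunt_G$. I would first invoke the decomposition theorem, applied to the proper morphism $\tilde\gp$ and the pure perverse sheaf $\IC_\zeta$, to write
$$
\Eis^0(\cK) \;\iso\; \bigoplus_{d \ge 0} \IC_d \otimes V_d
$$
for graded $\Qlb$-vector spaces $V_d$. Only the $\IC_d$'s can appear as simple summands because of Corollary~\ref{Cor_very_first} together with the fact that the only $\check{T}^\sharp$-local system on $\PP^1$ is trivial. Both parts of the lemma then reduce to determining all the $V_d$ and the restrictions $\IC_0\mid_{Shatz^r_{\tilde G}}$ simultaneously.

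Next I would treat the base case $r = 0$. The preimage $\bar\gp^{-1}(Shatz^0_{\tilde G})$ lies entirely in the open stratum $\Bun^0_{\tilde B}$ and projects to $Shatz^0_{\tilde G}$ as a $\PP^1$-bundle (the variety of Borel reductions of the trivial $\SL_2$-bundle). After cancelling the gerb twists between $\tilde\gq^*\cK$ and $\IC_\zeta$, the proper pushforward evaluates to $\RG(\PP^1,\IC(\PP^1)) \otimes \IC(Shatz^0_{\tilde G})$; accounting for the overall shift $[-\dim\Bun_T] = [1]$ and the normalization of $\cK_E$ as a perverse sheaf on $\Bunt_T^0$ (of dimension $-1$), this matches $\IC_0[1] \oplus \IC_0[-1]\mid_{Shatz^0_{\tilde G}}$ exactly. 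Since no $\IC_d$ with $d > 0$ restricts non-trivially to $Shatz^0_{\tilde G}$, this fixes $V_0 = \Qlb[1] \oplus \Qlb[-1]$ and produces the $\IC_0[1] \oplus \IC_0[-1]$ summand of~(1).

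For $r > 0$ I would invoke Corollary~\ref{Cor_3}: the only stratum $_\theta \Bunb^0_{\tilde B}$ meeting $\bar\gp^{-1}(Shatz^r_{\tilde G})$ is $\theta = r\alpha$ (since the saturation of any $\cO$-subsheaf of $M \in Shatz^r$ is $\cO(r)$), and $\IC_\zeta$ vanishes there unless $r \in n\ZZ$, immediately forcing $V_r = 0$ and $\IC_0\mid_{Shatz^r_{\tilde G}} = 0$ in that case. For $r = nb$ with $b \ge 1$, the support of $\IC_\zeta$ inside the $\PP^{nb}$-fiber of $\bar\gp$ is the image of the Veronese map $X^{(b)} \hookrightarrow X^{(nb)}$, $D \mapsto nD$, a $\PP^b \subset \PP^{nb}$ on which $\IC_\zeta$ is the IC-sheaf of $\PP^b$ up to the gerb twist. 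The proper pushforward then produces exactly $b + 1$ shifted copies of $\IC(Shatz^{nb}_{\tilde G})$. Matching this direct computation against the expected contribution $\bigl(V_0 \otimes \IC_0 + \sum_{b'=1}^b V_{nb'} \otimes \IC_{nb'}\bigr)\mid_{Shatz^{nb}_{\tilde G}}$, using Lemma~\ref{Lm_restricting_IC_d_case_d>0} for each $\IC_{nb'}$ with $b' \ge 1$ and the inductive hypothesis for $\IC_0\mid_{Shatz^{nb'}_{\tilde G}}$ with $b' < b$, forces $V_n = 0$, $V_{nb} = \Qlb$ for $b \ge 2$ (yielding~(1)), and the claimed shift $\IC_0\mid_{Shatz^{nb}_{\tilde G}} \iso \IC_{nb}[\frac{2nb}{n} - 1]\mid$ (yielding~(2)).

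The main obstacle is the careful bookkeeping of all the shifts arising from the various $\mu_N$-gerb structures (on $\Bunb^0_{\tilde B}$, $\Bunt_T$ and $\Bunt_G$), the normalization of $\cK_E$ as a perverse sheaf on $\Bunt_T^\mu$ of dimension $g - 1 = -1$, and the global shift $[-\dim\Bun_T] = [1]$ in the definition of $\Eis$. Ensuring that all these cancellations produce precisely $\IC_0[1] \oplus \IC_0[-1]$ on $Shatz^0_{\tilde G}$ rather than some other shift, and that the Veronese-support description of $\IC_\zeta$ in the $\PP^{nb}$-fibers is read off correctly from Corollary~\ref{Cor_3}, is where the bulk of the detailed verification lies.
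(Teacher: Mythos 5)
Your proposal is correct and follows essentially the same route as the paper's own proof: both stratify along the Shatz stratification, compute the fiber of $\bar\gp$ over each $Shatz^r_{\tilde G}$ (a $\PP^1$ for $r=0$, and $X^{(r)}\cong\PP^r$ for $r>0$ with $\IC_\zeta$ supported, via Corollary~\ref{Cor_3}, on the image of $X^{(r/n)}\hookrightarrow X^{(r)}$, $D\mapsto nD$), use properness/purity to write $\Eis^0(\cK)$ as a direct sum of shifted $\IC_d$'s, and then determine the multiplicities and the shifts of $\IC_0\mid_{Shatz^r_{\tilde G}}$ together by an induction on $r$ invoking Lemma~\ref{Lm_restricting_IC_d_case_d>0}.
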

\begin{Prf}
For $(M, \cU_G)\in Shatz^0_{\tilde G}$ the fibre of $\bar\gp: \Bunb^0_B\to\Bun_G$ over $M$ is isomorphic to $\PP^1$. So, $\Eis^0(\cK)\,\iso\, \IC_0[1]\oplus \IC_0[-1]$ over $Shatz^0_{\tilde G}$. For $d>0$ and a point $(\cE\subset M, \cU_G)\in Shatz^d_{\tilde G}$ the fibre of $\bar\gp: \Bunb^0_G\to\Bun_G$ over $M$ identifies with $X^{(d)}$. To $D\in X^{(d)}$ there corresponds $(\cE(-D)\subset M)\in \Bunb^0_B$. Now arguing as in Proposition~\ref{Pp_special_sheaves_IC(E,d)}, one shows that the $*$-restriction $\Eis^0(\cK)\mid_{Shatz^d_{\tilde G}}$ vanishes unless $d\in n\ZZ$. For $d\in n\ZZ$ we get
\begin{equation}
\label{complex_for_Lm17}
\Eis^0(\cK)\mid_{Shatz^d_{\tilde G}}\,\iso\, \tilde q^*\cK\otimes \RG(X^{(\frac{d}{n})}, \Qlb)[-2d-2+2\frac{d}{n}]
\end{equation}
The codimension  of $Shatz^d$ in $\Bun_G$ is $2d-1$. It follows that $\Eis^0(\cK)\mid_{Shatz^d_{\tilde G}}$ is placed in perverse degrees $\le 0$, and its $0$-th perverse cohomology sheaf is isomorphic to $\IC_d$.

 If $d=n$ then (\ref{complex_for_Lm17}) writes as $?\oplus Z[1]\oplus Z[-1]$, where $Z=\IC_0\mid_{Shatz^n_{\tilde G}}$, and $?$ is self-dual. It follows that $\IC_0\mid_{Shatz^n_{\tilde G}}\,\iso\, \IC_n[1]\mid_{Shatz^n_{\tilde G}}$ and $?=0$. 
 
 For $d=2n$ the complex (\ref{complex_for_Lm17}) writes as $?\oplus Z[1]\oplus Z[-1]$, where $Z=\IC_0\mid_{Shatz^d_{\tilde G}}$, and $?$ is self-dual. It follows that $?=\IC_{2n}$ and $\IC_0\mid_{Shatz^{2n}_{\tilde G}}\,\iso\, \IC_{2n}[3]$. 
 
 For $d=3n$ the complex (\ref{complex_for_Lm17}) writes as 
$$
?\oplus Z[1]\oplus Z[-1]\oplus \IC_{2n}\mid_{Shatz^d_{\tilde G}},
$$
where $?$ is self-dual, and $Z=\IC_0\mid_{Shatz^d_{\tilde G}}$. By Lemma~\ref{Lm_restricting_IC_d_case_d>0}, $\IC_{2n}\mid_{Shatz^d_{\tilde G}}\,\iso\, \IC_{3n}[2]$ over $Shatz^d_{\tilde G}$. It follows that $?=\IC_d$ and $\IC_0\mid_{Shatz^d_{\tilde G}}\,\iso\,\IC_{3n}[5]$. Continuing this, our claim easily follows by induction.
\end{Prf}

\medskip

\begin{Rem} If $n=2$ then $\IC_0=\Aut_g$ and $\IC_1=\Aut_s$ in the notations of \cite{L4} and (\cite{L2}, Appendix~A). These are the direct summands of the theta-sheaf $\Aut$. Our description of the fibres of $\IC_0$ and $\IC_1$ in Lemmas~\ref{Lm_restricting_IC_d_case_d>0} and \ref{Lm_great_fibres_of_IC_0} extends (\cite{L4}, Theorem~1).
\end{Rem}

\medskip

Recall the Hecke functors for $G$ defined in Section~\ref{section_Hecke_functors_for_tilde_G}. For $\nu\in\Lambda^{\sharp,+}$ we set $_x\H^{\nu}_G=(\id\times i_x)^*\H^{\nu}_G[-1]$, where $i_x: \Spec k\to X$ is the point $x$, and $id\times i_x: \Bunt_G\to \Bunt_G\times X$.

\begin{Lm} 
\label{Lm_6.5}
Let $\nu=e\alpha$, $d>0$ with $d\in e\ZZ$. \\
(1) Assume $n$ even. For $d\ge n$ one has $_x\H^{\nu}_G\IC_d\,\iso\, \IC_{d+e}\oplus \IC_{d-e}$. Besides, 
$$
_x\H^{\nu}_G\IC_e\,\iso\, \IC_0[1]\oplus \IC_0[-1]
$$
(2) Assume $n$ odd. For $d\ge 2n$ one has
$_x\H^{\nu}_G \IC_d\,\iso\, \IC_{d+n}\oplus\IC_d\oplus \IC_{d-n}$.
Besides, 
$$
_x\H^{\nu}_G \IC_n\,\iso\, \IC_0[1]\oplus\IC_0[-1]\oplus \IC_{2n}
$$
\end{Lm}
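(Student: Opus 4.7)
The plan is to express $\IC_d$ as a direct summand of $\Eis^d(\cK)$ via Proposition~\ref{Pp_special_sheaves_IC(E,d)}, compute $_x\H^\nu_G \Eis^{d'}(\cK)$ using Theorem~\ref{Th_1}, and extract the answer by subtraction.

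The key intermediate identity I would establish first is that for $d \in e\ZZ$ with $d \ge 0$,
$$_x\H^\nu_G \Eis^d(\cK) \;\iso\; \bigoplus_{\mu \in \Lambda^\sharp} \Eis^{d+\mu}(\cK) \otimes V^\nu(\mu),$$
where $\mu = m\alpha$ is identified with $m \in \ZZ$. This follows from Theorem~\ref{Th_1} applied to $K = \cK|_{\Bunt_T^d}$: since $E$ is trivial, the Hecke eigen-property $\H^\mu_T \cK \iso \cK \boxtimes E^{-\mu}[1]$ gives a canonical identification $(_xm^\mu)^*(\cK|_{\Bunt_T^d}) \iso \cK|_{\Bunt_T^{d+\mu}}$ (since $m^\mu$ carries $\Bunt_T^{d+\mu}$ to $\Bunt_T^d$), and applying $\Eis$ then produces $\Eis^{d+\mu}(\cK)$.

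Specializing to $\nu = e\alpha$, I would use that $V^\nu$ is the standard $2$-dimensional minuscule $\SL_2$-representation (weights $\pm e\alpha$) for $n$ even, and the $3$-dimensional adjoint $\PSL_2$-representation (weights $\pm n\alpha$ and $0$) for $n$ odd. This gives $_x\H^\nu_G \Eis^d(\cK) \iso \Eis^{d+e}(\cK) \oplus \Eis^{d-e}(\cK)$ in the first case and $_x\H^\nu_G \Eis^d(\cK) \iso \Eis^{d+n}(\cK) \oplus \Eis^d(\cK) \oplus \Eis^{d-n}(\cK)$ in the second. Combining with the direct-summand decomposition $\Eis^d(\cK) \iso \IC_d \oplus \Eis^{d+n}(\cK)$ from Proposition~\ref{Pp_special_sheaves_IC(E,d)}, applying $_x\H^\nu_G$ and subtracting the $\Eis^{d+n}$-contribution yields, after cancellation (using $n - e = e$ in the $n$-even case and the middle $\Eis^d$-terms cancelling in the $n$-odd case),
$$_x\H^\nu_G \IC_d \;\iso\; \Eis^{d-e}(\cK) \ominus \Eis^{d+n+e}(\cK) \quad (n\text{ even}),$$
$$_x\H^\nu_G \IC_d \;\iso\; \Eis^{d-n}(\cK) \ominus \Eis^{d+2n}(\cK) \quad (n\text{ odd}).$$

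In the stable range ($d \ge n$ for $n$ even, $d \ge 2n$ for $n$ odd), all indices are positive and Proposition~\ref{Pp_special_sheaves_IC(E,d)} collapses the difference to the claimed two- or three-term sum of $\IC$'s. In the boundary cases $d = e$ or $d = n$, the index $d-e$ or $d-n$ vanishes, so the minuend becomes $\Eis^0(\cK)$; invoking Lemma~\ref{Lm_great_fibres_of_IC_0} to write $\Eis^0(\cK) \iso \IC_0[1] \oplus \IC_0[-1] \oplus \Eis^{2n}(\cK)$, the tail cancels against the subtrahend, leaving the claimed degenerate answers $\IC_0[1] \oplus \IC_0[-1]$ and $\IC_0[1] \oplus \IC_0[-1] \oplus \IC_{2n}$ respectively. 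The main subtlety will be the bookkeeping in the first step: correctly tracking how $m^\mu$ shifts components of $\Bunt_T$ under the indexing convention where $\Bunt_T^d$ denotes the component with $\deg\cE = d$, and verifying that no $\Eis^{d'}$ with $d' < 0$ ever arises (which it does not, since $d \ge e$ resp.\ $d \ge n$ and $\mu$ ranges over $\{\pm e\}$ resp.\ $\{\pm n, 0\}$).
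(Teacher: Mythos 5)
Your proof is correct and follows essentially the same route as the paper: apply Theorem~\ref{Th_1} to $\cK\mid_{\Bunt_T^d}$ (using the trivial eigen-property to convert $\H^\mu_T$ into a degree shift and the weights $\pm e\alpha$ resp. $\{0,\pm n\alpha\}$ of $V^\nu$), then peel off $\IC_d$ from $\Eis^d(\cK)\iso\IC_d\oplus\Eis^{d+n}(\cK)$ via Proposition~\ref{Pp_special_sheaves_IC(E,d)}, and handle the boundary cases $d=e$, $d=n$ with Lemma~\ref{Lm_great_fibres_of_IC_0}. The paper's proof is terser but uses exactly this subtraction of $_x\H^\nu_G\Eis^{d+n}(\cK)$ from $_x\H^\nu_G\Eis^d(\cK)$; the cancellation you invoke is valid since these are direct sums of semisimple perverse sheaves, so Krull--Schmidt applies.
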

\begin{Prf} 
By Proposition~\ref{Pp_special_sheaves_IC(E,d)} and Lemma~\ref{Lm_great_fibres_of_IC_0}, $\Eis^d(\cK)\,\iso\, \Eis^{d+n}(\cK)\oplus \IC_d$ and $$
\Eis^0(\cK)\,\iso\, \IC_0[1]\oplus \IC_0[-1]\oplus \Eis^{2n}(\cK)
$$

Write $\Bunt_T^d$ for the component of $\Bunt_T$ classifying $(\cE, \cU)$ with $\deg \cE=d$. Applying Theorem~\ref{Th_1} to the complex $\cK\mid_{\Bunt^d_T}$ one gets the following.\\
1) Assume $n$ even. One has
$$
_x\H^{\nu}_G \Eis^d(\cK)\,\iso\, \Eis^{d+e}(\cK)\oplus \Eis^{d-e}(\cK)
$$
and $_x\H^{\nu}_G\Eis^{d+n}(\cK)\,\iso\, \Eis^{d+n+e}(\cK)\oplus \Eis^{d+n-e}(\cK)$. For $d\ge n$ this implies
$$
_x\H^{\nu}_G\IC_d\,\iso\, \IC_{d+e}\oplus \IC_{d-e}
$$
Besides, $_x\H^{\nu}_G\IC_e\,\iso\, \IC_0[1]\oplus \IC_0[-1]$. 

\smallskip\noindent
2) Assume $n$ odd. One has
$$
_x\H^{\nu}_G \Eis^d(\cK)\,\iso\, \Eis^{d+n}(\cK)\oplus \Eis^d(\cK)\oplus \Eis^{d-n}(\cK)
$$
and $_x\H^{\nu}_G \Eis^{d+n}(\cK)\,\iso\, \Eis^{d+2n}(\cK)\oplus \Eis^{d+n}(\cK)\oplus \Eis^d(\cK)$. For $d\ge 2n$ this implies
$$
_x\H^{\nu}_G \IC_d\,\iso\, \IC_{d+n}\oplus\IC_d\oplus \IC_{d-n}
$$
Besides, $_x\H^{\nu}_G \IC_n\,\iso\, \IC_0[1]\oplus\IC_0[-1]\oplus \IC_{2n}$.  
\end{Prf} 

\medskip

\begin{Pp} 
\label{Pp_description_of_cA^nu_for_nu_simplest}
Let $\nu=e\alpha$. \\
(1) If $n$ is even $\cA^{\nu}_{\cE}$ is the extension by zero from $\wt\Gr^{\nu}_G$. \\
(2) If $n$ is odd then for $\mu\in\Lambda^+$, $\mu<\nu$ the $*$-restriction $\cA^{\nu}_{\cE}\mid_{\wt\Gr_G^{\mu}}$ vanishes unless $\mu=0$, and $\cA^{\nu}_{\cE}\mid_{\Gr_G^0}\,\iso\, \Qlb[2]$. 
\end{Pp}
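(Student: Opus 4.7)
My plan is to compute the $*$-restriction of $\cA^\nu_\cE$ to each lower stratum $\wt\Gr_G^{m\alpha}$ with $0\le m<e$ by identifying this $G(\bO)$-equivariant shifted local system through its fibre at the $T$-fixed coset $t^{m\alpha}G(\bO)$, and to read off that fibre as a weight subspace of the $\check{G}_n$-module $V^\nu$ via Proposition~\ref{Pp_description_weight_space}. First I would identify $V^\nu$ explicitly for $\nu=e\alpha$: when $n$ is even, $\check{G}_n\cong\SL_2$ and $V^\nu$ is the standard $2$-dimensional representation with weights $\{\pm e\alpha\}$ only; when $n$ is odd, $\check{G}_n\cong\PSL_2$ and $V^\nu$ is the $3$-dimensional adjoint representation with weights $\{n\alpha,\,0,\,-n\alpha\}$.

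Next I would apply Proposition~\ref{Pp_description_weight_space} at $\mu=m\alpha$ with $0\le m<e$: the hyperbolic restriction $a_\mu^*(\gt^\mu_B)_!(\gs^\mu_B)^*\cA^\nu_\cE$ equals $V^\nu(\mu)[-2m]$ when $\mu\in\Lambda^\sharp$ and vanishes otherwise. In case (1) this vanishes throughout: either $m\alpha\notin\Lambda^\sharp=e\Lambda$, or else $m=0$ but $V^\nu(0)=0$ since the zero weight does not occur in the standard representation. In case (2), it vanishes for $0<m<n$ (as $m\alpha\notin\Lambda^\sharp=n\Lambda$), while at $\mu=0$ it yields $V^\nu(0)\cong\Qlb$ in cohomological degree $0$. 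Combined with $G(\bO)$-equivariance and a Braden-type comparison of hyperbolic restriction along $B$ and $B^-$ (via the Weyl group symmetry $w_0\mu=-\mu$), the vanishing of the hyperbolic piece forces $\cA^\nu_\cE|_{\wt\Gr_G^{m\alpha}}=0$ in all relevant cases, giving (1) in full and the vanishing portion of (2).

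The remaining statement $\cA^\nu_\cE|_{\Gr_G^0}\cong\Qlb[2]$ in (2) I would obtain by analyzing the intermediate extension of $E^\nu_\cE[\langle\nu,2\check\rho\rangle]$ across the single closed stratum $\{t^0\}$ of codimension $2n$ in the reduced configuration $\wt\Gr_G^\nu\cup\{t^0\}$ (the intermediate strata having already been shown to contribute nothing). Using the attaching triangle $i^!\to i^*\to i^*j_*j^*$ together with the IC support and cosupport conditions, the dimension count $\dim(\Gr^{n\alpha}_G\cap\Gr_B^0)=n$ for the relevant MV cycle, and the fact that the stalk is forced by equivariance into a single cohomological degree with fibre $V^\nu(0)=\Qlb$, one then pins down this degree as $-2$. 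The main obstacle is precisely this last shift computation: while the vanishing statements reduce cleanly to weight-multiplicity considerations in $V^\nu$, identifying the nonzero stalk as $\Qlb[2]$ rather than some other degree requires careful bookkeeping of the $\PPerv_{G,n}=\Perv_{G,n}[-1]$ convention, of the Braden-type shifts between hyperbolic restriction and the actual $*$-restriction, and of the IC constraints imposed by the singular local geometry of $\ov{\Gr}^{n\alpha}_G$ at $t^0$.
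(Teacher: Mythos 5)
Your overall strategy, reading off the stalks of $\cA^{\nu}_{\cE}$ at $t^{\mu}G(\bO)$ from the weight functor $F^{\mu}_T$ (i.e., from Proposition~\ref{Pp_description_weight_space}), is essentially the paper's, and your identifications of $V^{\nu}$ (standard $2$-dimensional for $n$ even, adjoint for $n$ odd) and of the only candidate stratum $\mu=0$ are correct. But the central step is missing, and it is not supplied by Braden's theorem.

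The hyperbolic restriction $F^0_T(\cA^{\nu}_{\cE})=\RG_c(\Gr_B^0\cap\ov{\Gr}_G^{\nu},\cA^{\nu}_{\cE})$ is \emph{not} the stalk at $t^0$; it is the total compactly-supported cohomology of the attracting cell, and it receives contributions from every stratum $\Gr_B^0\cap\Gr_G^{m\alpha}$. After discarding the intermediate strata (which you correctly eliminate via the $\Lambda^\sharp$-support constraint on objects of $\PPerv_{G,n}$, not via weight multiplicities), you are left with an exact triangle relating three unknowns: the open-stratum contribution $\RG_c(\Gr_B^0\cap\Gr_G^{\nu},\cA^{\nu}_{\cE}|_{\cdots})$, the value $F^0_T=V^{\nu}(0)$, and the stalk $\cA^{\nu}_{\cE}|_{t^0}$. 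Knowing $V^{\nu}(0)$ and the IC cosupport bound $\cA^{\nu}_{\cE}|_{t^0}\in \D^{\le -2}$ is not enough to solve this triangle: you still have one undetermined term. (In particular, in case (1) the claim that ``the vanishing of the hyperbolic piece forces $\cA^{\nu}_{\cE}|_{\wt\Gr_G^{m\alpha}}=0$'' does not follow, and the appeal to a Braden comparison of $B$ and $B^-$ at $\mu=0$, where $w_0\cdot 0=0$, gives no extra information.)

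The missing input is the explicit identification $\Gr_B^0\cap\Gr_G^{\nu}\cong\Gm\times\A^{e-1}$ (via the matrices $\begin{pmatrix}1&f\\0&1\end{pmatrix}$ with $f=b_et^{-e}+\cdots+b_1t^{-1}$, $b_e\neq 0$) together with the behavior of the twisted local system on the $\Gm$-factor, which is the content of \cite{FL}, Lemma~4.2: $a_{B,0}^*\cA^{\nu}_{\cE}$ is nonconstant on $\Gm$ for $n$ even and constant for $n$ odd. This is what gives the open-stratum contribution: $0$ for $n$ even, and $\RG_c(\Gm\times\A^{n-1},\Qlb[2n])\iso\Qlb[1]\oplus\Qlb$ for $n$ odd. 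Only with this in hand does the triangle close: for $n$ even one gets the stalk $=0$ (so (1) holds); for $n$ odd the long exact sequence together with $\cA^{\nu}_{\cE}|_{t^0}\in\D^{\le -2}$ forces $\cA^{\nu}_{\cE}|_{\Gr_G^0}\iso\Qlb[2]$. You acknowledge the shift computation as the obstacle, but the same ingredient (the Kummer monodromy on the open MV stratum) is what the vanishing in (1) rests on as well, so the gap affects both parts.
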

\begin{Prf}
In the proof we use some notations from \cite{FL}. Let $\cO=k[[t]]$. Recall that $\AA^e\,\iso\,\Gr_B^0\cap \ov{\Gr}_G^{\nu}$ via the map sending $f=b_et^{-e}+b_{e-1}t^{1-e}+\ldots+b_1t^{-1}$ with $b_i\in \A^1$ to
$$
\left(
\begin{array}{cc}
1 & f\\
0 & 1
\end{array}
\right)G(\cO)
$$
The open subscheme $\Gr_B^0\cap \Gr_G^{\nu}\subset \Gr_B^0\cap \ov{\Gr}_G^{\nu}$ is given by $b_e\ne 0$. In the notations of (\cite{FL}, Lemma~4.2) one has $F^0_T(\cA^{\nu}_{\cE})=0$ for $n$ even, and $F^0_T(\cA^{\nu}_{\cE})\,\iso\, \Qlb$ for $n$ odd. 

 For $\mu\in\Lambda^+$, $\mu<\nu$ the $*$-restriction $\cA^{\nu}_{\cE}\mid_{\wt\Gr_G^{\mu}}$ vanishes unless $\mu=0$. Indeed, if $\mu\in\Lambda^+$, $\mu<\nu$ and $\mu\in\Lambda^{\sharp}$ then $\mu=0$. One has the diagram 
$$
\wt\Gr_T^0 \;\getsup{\gt_B}\; \wt\Gr_B^0\;\toup{\gs_B} \;\wt\Gr_G
$$ 
In the notations of (\cite{FL}, Lemma~4.2) we see that $a_{B,0}^*\cA^{\nu}_{\cE}$ is not constant (resp., is constant) over $\Gr_B^0\cap \Gr_G^{\nu}$ for $n$ even (resp., for $n$ odd). 

 Calculate $(\gt_B)_!\gs_B^*\cA^{\nu}_{\cE}$ using the stratification of $\Gr^0_B\cap \ov{\Gr}_G^{\nu}$ by the locally closed subschemes $\Gr^0_B\cap \Gr_G^{\mu}$ with $\mu\in\Lambda^+$, $\mu\le\nu$. For $n$ even the contribution of $\wt\Gr_B^0\cap \wt\Gr_G^{\nu}$ to this direct image vanishes, so the contribution of $\wt\Gr^0_B\cap \wt\Gr_G^0$ also vanishes.
 
  For $n$ odd the contribution of $\wt\Gr_B^0\cap \wt\Gr_G^{\nu}$ to this direct image is 
$$
\RG_c(\Gm\times\A^{n-1}, \Qlb[2n])\,\iso\, \Qlb[1]\oplus\Qlb
$$ 
Since $F^0_T(\cA^{\nu}_{\cE})\,\iso\, \Qlb$, and $\cA^{\nu}_{\cE}\mid_{\wt\Gr_G^0}$ is placed in strictly negative degrees, our claim follows from the exact triangle 
$
(\Qlb[1]\oplus \Qlb)\to \Qlb\to \cA^{\nu}_{\cE}\mid_{\Gr_G^0}
$.
\end{Prf}

\medskip

 Write $_x\cH^{\nu}_{\tilde G}$ for the preimage of $_x\cH^{\nu}_G$ in $_x\cH_{\tilde G}$.
 
\begin{Lm} Let $\nu=e\alpha$. For a point $(M, M',\beta, x)\in \cH^{\nu}_G$ one has canonically
$$
\frac{\det\RG(X, M')}{\det\RG(X, M)}\,\iso\, ((M((e-1)x)+M')/M((e-1)x))^{\otimes 2e}\otimes \Omega_x^{e(e-1)}
$$
Here $\dim_k M((e-1)x)+M')/M((e-1)x)=1$. There is an isomorphism
$$
\kappa: {_x\cH^{\nu}_{\tilde G}}\,\iso\, (\Bunt_G\times_{\Bun_G}\, {_x\cH^{\nu}_G})\times B(\mu_N),
$$
where we used $h^{\la}_G$ in the fibred product, and the projection to the first term corresponds to $\tilde h^{\la}_G: {_x\cH^{\nu}_{\tilde G}}\to \Bunt_G$. 
\end{Lm}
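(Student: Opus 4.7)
The plan is to verify in succession the determinant ratio formula, the one-dimensionality of $L:=(M((e-1)x)+M')/M((e-1)x)$, and then the trivialization $\kappa$. The first two claims are purely local at $x$, so I would work in the formal neighbourhood $D_x$. By the hypothesis that $M'$ is in position $\nu=e\alpha$ with respect to $M$ at $x$, after choosing a trivialization $M\!\mid_{D_x}\,\iso\, \cO_x^{\oplus 2}$ and an appropriate change of basis, we may further arrange $M'\!\mid_{D_x}\,\iso\, \cO_x t^{-e}\oplus \cO_x t^{e}$ inside $M\otimes_{\cO_x}\cO_x[t^{-1}]$. With this explicit description $M((e-1)x)=t^{-(e-1)}\cO_x^{\oplus 2}$, and a direct calculation yields $M((e-1)x)+M'=t^{-e}\cO_x\oplus t^{-(e-1)}\cO_x$ at $x$; hence $L$ is one-dimensional, identified canonically with $m_x^{-e}/m_x^{-(e-1)}\,\iso\, \Omega_x^{\otimes -e}$.

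For the determinant ratio, since $M$ and $M'$ agree away from $x$ one has short exact sequences $0\to M\cap M'\to M\to T_1\to 0$ and $0\to M\cap M'\to M'\to T_2\to 0$ with $T_1,T_2$ torsion of length $e$ at $x$, and additivity of $\det\RG$ on short exact sequences gives
\[
\frac{\det\RG(X, M')}{\det\RG(X, M)}\,\iso\, \frac{\det T_2}{\det T_1}.
\]
Filtering $T_1,T_2$ by the images of $m_x^j M$ and $m_x^j M'$ and invoking the canonical identification $m_x^j/m_x^{j+1}\,\iso\, \Omega_x^{\otimes j}$ for every $j\in\ZZ$, each $\det T_i$ becomes a tensor power of $\Omega_x$; a bookkeeping check identifies the resulting ratio with $L^{\otimes 2e}\otimes \Omega_x^{e(e-1)}$. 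This construction depends only on the pair $(M,M')$ and the canonical filtrations, so it is independent of the chosen local trivialization and globalizes to a canonical isomorphism of line bundles on $\cH^\nu_G$.

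For the trivialization $\kappa$, the morphism $_x\cH^\nu_{\tilde G}\to \Bunt_G\times_{\Bun_G}\, {_x\cH^\nu_G}$ is by construction a $\mu_N$-gerb, so it is enough to produce a section. Since $\cL=\cL_c^{\,4}$ and the Picard group of $_x\cH^\nu_G$ is torsion-free, the formula of the previous step, after raising to the fourth power and invoking the chosen square root $\cE_X$ of $\Omega$, determines a unique line bundle $\cB$ on $_x\cH^\nu_G$ satisfying $\cB^{\otimes N}\,\iso\, (h^{\ra}_G)^*\cL\otimes (h^{\la}_G)^*\cL^{-1}$. The map $\kappa$ then sends $(\cU_0\in B(\mu_N),(M,M',\beta,x,\cU))$ with $\cU_0^N\,\iso\, k$ to $(M,M',\beta,x,\cU,\cU')$ where $\cU':=\cU\otimes\cB\otimes\cU_0^{-1}$ inherits the required isomorphism $\cU'^N\,\iso\, \cL_{M'}$.

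The main obstacle is the bookkeeping in the second step: tracking the canonical identifications $m_x^j/m_x^{j+1}\,\iso\,\Omega_x^{\otimes j}$ through the filtrations of $T_1$ and $T_2$ and isolating the distinguished subquotient that corresponds to $L$ in a way valid in families (so that one gets an isomorphism of line bundles on $\cH^\nu_G$ rather than a fiberwise identification). Once this explicit determinant computation is established, the trivialization $\kappa$ follows formally from the torsion-freeness of $\Pic({_x\cH^\nu_G})$.
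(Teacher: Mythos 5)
The proposal to treat the determinant ratio via the exact sequences through $M\cap M'$ and then ``filter $T_1,T_2$ and identify each $\det T_i$ with a power of $\Omega_x$'' has a genuine gap: this identification is \emph{not} canonical, and cannot be. After picking a local splitting $M\!\mid_{D_x}=L_1\oplus L_2$ compatible with the elementary divisors (so $M'\!\mid_{D_x}=t^{-e}L_1\oplus t^{e}L_2$), the associated graded of $T_2=M'/(M\cap M')$ has pieces $\bar L_1\otimes\Omega_x^{j}$ and that of $T_1=M/(M\cap M')$ has pieces $\bar L_2\otimes\Omega_x^{j}$, where $\bar L_i$ are the fibres of $L_i$ at $x$. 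Thus $\det T_2$ is canonically $\bar L_1^{\otimes e}$ times a power of $\Omega_x$ and $\det T_1$ is canonically $\bar L_2^{\otimes e}$ times a power of $\Omega_x$; neither is a bare power of $\Omega_x$, and the line $L$ in the statement involves only $\bar L_1$. The step you call a ``bookkeeping check'' therefore hides the real content: you must relate $\bar L_2$ to $\bar L_1$ (equivalently, $\det T_1$ to $\det T_2$). This is exactly where the $\SL_2$-structure enters, and you never invoke it. Since $M$ is an $\SL_2$-torsor, the trivialization $\det M\cong\cO$ gives the symplectic form, hence $\bar L_1\otimes\bar L_2\cong k$, hence the needed relation. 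The paper's proof makes precisely this move: the symplectic form yields a nondegenerate pairing between $(M+M')/M$ and $(M+M')/M'$, which replaces $\det T_1$ by something expressible in terms of $\det T_2$, and then the whole ratio is expressed through a single cyclic torsion module $(M+M')/M$ whose top graded piece is $L$. Without this input, the claim ``the construction is independent of the chosen local trivialization and globalizes'' is unjustified -- indeed, for a general rank-two bundle without the determinant constraint the statement of the first isomorphism would be false.

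The construction of $\kappa$ in your last paragraph is essentially the same as the paper's (produce a distinguished $N$-th root $\cB$ of $(\cL_{M'}/\cL_M)^{\pm 1}$ using torsion-freeness of $\Pic({_x\cH^{\nu}_G})$ and the chosen $\cE_X$, then shift by $\cB$), and it would go through formally once the first isomorphism is established. So the only real gap is the one above: you need to bring in the symplectic form on $M$ to make the first displayed isomorphism canonical.
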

\begin{Prf}
The symplectic form on $M$ yields a nondegenerate pairing between the $k$-vector spaces $(M+M')/M$ and $(M+M')/M'$. So, 
$$
\det\RG(X, M')\otimes\det\RG(X, M)^{-1}\,\iso\, \det\RG(X, (M+M')/M)^2
$$ 
The first claim follows now from the natural isomorphism
$$
\det\RG(X, (M+M')/M)\,\iso\, ((M((e-1)x)+M')/M((e-1)x))^{\otimes e}\otimes \Omega_x^{e(e-1)/2}
$$ 
A point of $_x\cH^{\nu}_{\tilde G}$ is given by a collection $(M,M',\beta)\in {_x\cH^{\nu}_G}$ and the lines $\cU,\cU'$ together with the isomorphisms $\cU^N\,\iso\ \cL_M$, $\cU'^N\,\iso\, \cL_{M'}$. Define $\kappa$ as the map sending $(M,M',\beta, \cU, \cU')$ to $(M,M',Ê\beta, \cU, \cU_0)$, where
$$
\cU=\cU'\otimes\cU_0\otimes ((M((e-1)x)+M')/M((e-1)x))^{\otimes \frac{2e}{n}}\otimes \cE_{X,x}^{\frac{2e(e-1)}{n}}
$$ 
For $n$ odd the line $\Omega_x^{\frac{e(e-1)}{n}}\,\iso\, \cE_{X,x}^{\frac{2e(e-1)}{n}}$ does not depend on the choice of $\cE_X$. 
\end{Prf}

\medskip

 For $\cM\in \D_{\zeta}(\Bunt_G)$ the contribution of $_x\cH^{\nu}_{\tilde G}$ to $_x\H^{\nu}_G(\cM)$ now writes as
\begin{equation}
\label{contribution_of_cH^nu_G_to_Hecke_functor}
(\tilde h^{\la}_G)_!((\tilde h^{\ra}_G)^*\cM\otimes \kappa^*\cL_{\zeta})[2e]
\end{equation}
for the diagram $\Bunt_G \getsup{\tilde h^{\la}_G} {_x\cH^{\nu}_{\tilde G}}\toup{\tilde h^{\ra}_G} \Bunt_G$. For $n$ odd the contribution of $_x\cH^0_{\tilde G}$ to $_x\H^{\nu}_G(\cM)$ is $\cM[2]$. 

\begin{Th} 
\label{Th_last}
Let $\nu=e\alpha$.\\
(1) For $n$ even one has $_x\H^{\nu}_G\IC_0\,\iso\, \IC_e[1]\oplus \IC_e[-1]$. \\
(2) For $n$ odd one has
$$
_x\H^{\nu}_G\IC_0\,\iso\, \IC_0[2]\oplus \IC_0\oplus \IC_0[-2]
$$
\end{Th}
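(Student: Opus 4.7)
The plan is to apply the Hecke functor $_x\H^{\nu}_G$ to the identity $\Eis^0(\cK_{\Qlb}) \iso \IC_0[1] \oplus \IC_0[-1] \oplus \Eis^{2n}(\cK_{\Qlb})$ from Lemma~\ref{Lm_great_fibres_of_IC_0}(1), and then solve for $_x\H^{\nu}_G \IC_0$ by matching terms on both sides. This is the natural continuation of the bootstrapping argument used in the proof of Lemma~\ref{Lm_6.5} to derive the generic-$d$ formulas.

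For the left-hand side, I apply Theorem~\ref{Th_1} to $K = \cK_{\Qlb}$. Because $\cK$ is a Hecke eigen-sheaf with trivial local system, $_x\H^{\mu}_T \cK \iso \cK$ for every $\mu \in \Lambda^{\sharp}$, so the decomposition of Theorem~\ref{Th_1} collapses to a sum over the $\check{T}^{\sharp}$-weights of $V^{\nu}$. For $n$ even, $\check{G}_n = \SL_2$ and $V^{\nu}$ is the standard $2$-dimensional representation with weights $\pm e\alpha$, giving $_x\H^{\nu}_G \Eis^0(\cK) \iso \Eis^e(\cK) \oplus \Eis^{-e}(\cK)$; for $n$ odd, $\check{G}_n = \PSL_2$ and $V^{\nu}$ is the adjoint $3$-dimensional representation with weights $\pm n\alpha$ and $0$, giving $_x\H^{\nu}_G \Eis^0(\cK) \iso \Eis^n(\cK) \oplus \Eis^0(\cK) \oplus \Eis^{-n}(\cK)$. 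For the right-hand side, I apply Lemma~\ref{Lm_6.5} term-by-term to each $_x\H^{\nu}_G \IC_{2n+bn}$ (the hypothesis $2n+bn \geq n$ is always satisfied) and use Proposition~\ref{Pp_special_sheaves_IC(E,d)}(2) to expand each $\Eis^d(\cK)$ with $d > 0$ as $\bigoplus_{b \geq 0} \IC_{d+bn}$. After cancellation this leaves an identity of the form $(_x\H^{\nu}_G \IC_0)[1] \oplus (_x\H^{\nu}_G \IC_0)[-1] \iso (\text{explicit sum of shifted } \IC_d\text{'s}) \oplus \Eis^{-e}(\cK)$, and the analogue with $\Eis^{-n}(\cK)$ in place of $\Eis^{-e}(\cK)$ for $n$ odd.

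The main obstacle is identifying the negative-degree Eisenstein series $\Eis^{-e}(\cK)$ (respectively $\Eis^{-n}(\cK)$), as Proposition~\ref{Pp_special_sheaves_IC(E,d)} only treats the case $d > 0$. I would resolve this by a direct geometric computation on $X = \PP^1$: every $G$-bundle is of the form $\cO(a) \oplus \cO(-a)$ for a unique $a \geq 0$, and the fibres of $\bar\gp: \Bunb_B^{-d} \to \ov{Shatz}^a$ are the explicit projective spaces $\PP(\H^0(\cO(a-d) \oplus \cO(-a-d)))$. Computing $\Eis^{-d}(\cK)|_{Shatz^a_{\tilde G}}$ stratum-by-stratum and matching with the stratum-wise description of $\IC_r$ from Lemma~\ref{Lm_restricting_IC_d_case_d>0} supplies the missing summands, yielding $_x\H^{\nu}_G \IC_0 \iso \IC_e[1] \oplus \IC_e[-1]$ for $n$ even and $\IC_0[2] \oplus \IC_0 \oplus \IC_0[-2]$ for $n$ odd. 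An alternative route that sidesteps $\Eis^{-d}(\cK)$ entirely is to compute $_x\H^{\nu}_G \IC_0$ directly from formula~(\ref{contribution_of_cH^nu_G_to_Hecke_functor}) combined with Proposition~\ref{Pp_description_of_cA^nu_for_nu_simplest}: for $n$ odd, the $\wt\Gr^0_G$-contribution yields the $\IC_0[2]$ summand immediately, while in both cases the convolution over $\wt\Gr^{\nu}_G$ is controlled by the stratum-wise description of $\IC_0$ in Lemma~\ref{Lm_great_fibres_of_IC_0}(2), producing the remaining summands after a cohomology-of-fibres calculation on the $G(\bO)$-orbit $\wt\Gr_G^{\nu}$.
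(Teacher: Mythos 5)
Your ``main route'' is a genuinely different strategy from the paper's: you propose to apply $_x\H^{\nu}_G$ to the decomposition $\Eis^0(\cK)\iso\IC_0[1]\oplus\IC_0[-1]\oplus\Eis^{2n}(\cK)$ from Lemma~\ref{Lm_great_fibres_of_IC_0} and solve for $_x\H^{\nu}_G\IC_0$ by term-matching, whereas the paper computes the $*$-restrictions of $_x\H^{\nu}_G\IC_0$ to each $Shatz^i_{\tilde G}$ directly, stratum by stratum, using the grading argument from Proposition~\ref{Lm_great_about_gradings} and Remark~\ref{Rem_parities_of_IC_d_Section2.2} for $n$ even, and a $\Gm$-contraction argument plus Proposition~\ref{Pp_description_of_cA^nu_for_nu_simplest} for $n$ odd. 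However, your main route has a genuine gap that is not incidental but structural.

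The bootstrap that worked in Lemma~\ref{Lm_6.5} for $d\ge e$ (resp.\ $d\ge n$) succeeds precisely because in that range the Hecke functor never produces an Eisenstein series of negative degree, so all terms cancel against terms already controlled by Proposition~\ref{Pp_special_sheaves_IC(E,d)}(2). For $d=0$ the term $\Eis^{-e}(\cK)$ (resp.\ $\Eis^{-n}(\cK)$) appears and does \emph{not} cancel: working it out for $n$ even one finds the relation reduces to $\Eis^{-e}(\cK)\iso\IC_e[2]\oplus\IC_e\oplus\IC_e[-2]\oplus\Eis^{5e}(\cK)$, which one would have to prove independently. Your proposed resolution---compute $\Eis^{-e}(\cK)$ directly on $\PP^1$---is exactly as hard as the direct stratum-by-stratum computation the paper carries out, for two reasons. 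First, for $d<0$ the map $\bar\gp:\Bunb^{d}_B\to\Bun_G$ has positive-dimensional generic fibres, so unlike the case $d>0$ (where $\bar\gp$ is birational onto $\ov{Shatz}^d$) one must separately argue that no irreducible summands other than shifted $\IC_r$'s appear in $\Eis^{-e}(\cK)$; this is not automatic and the paper establishes the classification of summands only as an output of the full analysis. Second, the actual integrals over the fibres invoke Corollary~\ref{Cor_3} in essentially the same way the paper does when restricting $_x\H^{\nu}_G\IC_0$ to $Shatz^i$ -- so the ``main route'' does not reduce the work, it only relocates it. (There is also a small sign slip: the fibre of $\bar\gp:\Bunb^{-d}_B\to\Bun_G$ over $\cO(a)\oplus\cO(-a)$ is $\PP(\H^0(\cO(a+d)\oplus\cO(-a+d)))$, not $\PP(\H^0(\cO(a-d)\oplus\cO(-a-d)))$.) Your ``alternative route'' is the paper's proof in outline, but it is only a sketch; in particular for $n$ odd you do not mention the $\Gm$-contraction argument via (\cite{ICDC}, Lemma~5.3) used to compute the contribution of $\bar Y_0$ on the open stratum $Shatz^0$, which is the subtle part of case~(2).
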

\begin{Prf}
Recall that $_x\H^{\nu}_G$ is given by a version of (\ref{formula_for_H^nu_G_Section04}) with $x$ fixed. \\
(1) By Lemma~\ref{Lm_great_about_gradings} and Remark~\ref{Rem_parities_of_IC_d_Section2.2}, $_x\H^{\nu}_G\IC_0\in \D_{\zeta, -}(\Bunt_G)$. This implies that the $*$-restriction $(_x\H^{\nu}_G\IC_0)\mid_{Shatz^i_{\tilde G}}$ vanishes unless $i\in e+n\ZZ$. 

Let $i\ge 0, i\in e+n\ZZ$ and $M\in Shatz^i$. Let $Y$ denote the fibre of $h^{\la}_G: {_x\cH^{\nu}_G}\to\Bun_G$ over $M$. Write $\PP(M_x)$ for the projective space of lines in $M(ex)/M((e-1)x)$. We have a map $\eta: Y\to \PP(M_x)$ sending $M'$ to the line $(M'+M((e-1)x))/M((e-1)x)$. Each fibre of $\eta$ identifies with $\AA^{2e-1}$. Denote by $S$ the $*$-fibre of $_x\H^{\nu}_G\IC_0$ at $M$.

 For $d\ge 0$ with $d\in n\ZZ$ let $Y_d\subset Y$ be the locally closed subscheme given by $M'\in Shatz^d$. Since $i>0$, $M$ has a canonical $B$-structure given by $(\cE\subset M)$ with $\deg \cE=i, \cE\in\Bun_1$. 
 
  If $i=e$ then $Y_0\,\iso\, \AA^n$, and the contribution of this locus to $S$ is $\Qlb[-3-n]$. Besides, $Y_n\,\iso\, \Gr_B^{-e\alpha}\cap \Gr_G^{e\alpha}\,\iso\, \Spec k$. So, the contribution of $Y_n$ to $S$ is $\Qlb[-1-n]$.  We see that the $*$-restriction of $_x\H^{\nu}_G\IC_0$ to $Shatz^e$ identifies with $\IC_e[1]\oplus \IC_e[-1]$.

 Assume $i=e+bn$ with $b>0$, $d\ge 0$, $d\in n\ZZ$. Then $Y_d\subset Y$ is not empty only for $d=nb$ or $d=n(b+1)$. One has $Y_{nb}\,\iso\, \Gr_B^{e\alpha}\cap \Gr_G^{e\alpha}\,\iso\, \AA^n$. The contribution of this locus to $S$ is 
$$
\Qlb[-3-2bn+2b-n]\,\iso\, \IC_e[-1]\mid_{M}
$$ 
Further, $Y_{n(b+1)}\,\iso\, \Gr_B^{-e\alpha}\cap \Gr_G^{e\alpha}\,\iso\, \Spec k$. The contribution of $Y_{n(b+1)}$ to $S$ is  
$$
\Qlb[-1-2bn+2b-n]\,\iso\, \IC_e[1]\mid_M
$$
Part (1) follows.\\
(2) The case $n=1$ is well-known, assume $n>1$. We denote by $(M,M',\beta)$ a point of $\ov{\cH}^{\nu}_G$. Let $i\ge 0, i\in n\ZZ$ and $M\in Shatz^i$. Let $\bar Y$ denote the fibre of $h^{\la}_G: {_x\ov{\cH}^{\nu}_G}\to\Bun_G$ over $M$. Let $Y\subset \bar Y$ be the preimage of $_x\cH^{\nu}_G$ in $\bar Y$. 

 For $d\ge 0$, $d\in n\ZZ$ let $Y_d\subset Y$ (resp., $\bar Y_d\subset \bar Y$) denote the locally closed subscheme given by $M'\in Shatz^d$. Write $S$ for the fibre of $_x\H^{\nu}_G\IC_0$ at $M$. As above, we have a map $\eta: Y\to \PP(M_x)$, each fibre identifies with $\AA^{2n-1}$.
 
 First, assume $i=0$. Then only $\bar Y_n=Y_n$ and $\bar Y_0$ contribute to $S$. There is a section $\PP(M_x)\to Y$ of $\eta$, whose image identifies with $Y_n$. Any local system on $\PP^1$ is constant. So, the contribution of $Y_n$ to $S$ is $\RG(\PP^1, \Qlb[-1])\,\iso\, \Qlb[-1]\oplus\Qlb[-3]$. 
 
 The scheme $\bar Y_0$ can be written as the subscheme of $\Gr_G$ of points of the form $AG(\cO)$, where 
$$
A=\left(
\begin{array}{cc}
1+a_1 & a_2\\
a_3 & 1+a_4
\end{array}
\right) \in G(k[t^{-1}])
$$
with $a_i\in k[t^{-1}]$ of degree $\le n$ in $t^{-1}$. In particular, $G$ acts on $\bar Y_0$. This action commutes with the loop rotations $\Gm\subset \Aut(\cO)$ action on $\bar Y_0$. The scheme $\bar Y_0$ can equally be seen as the scheme classifying matrices $A$ as above with $a_i$ of the form $b_{n,i}t^{-n}+\ldots+b_{1,i}t^{-1}\in k[t^{-1}]$ for all $i$.  In the latter form the action of $G$ is given by the conjugation.

 Recall the formulas (\ref{contribution_of_cH^nu_G_to_Hecke_functor}) and (\ref{formula_for_H^nu_G_Section04}). The group $\Gm\subset \Aut(\cO)$ of loop rotations acts on $\bar Y_0$ and contracts it to the point $M\in \bar Y_0$. The complex $(\tilde h^{\ra}_G)^*\IC_0\otimes \IC^{\nu}$ is monodromic with respect to this action. Let $i_0: \Spec k\to \bar Y_0$ denote the point $M$. By (\cite{ICDC}, Lemma~5.3) we get 
\begin{equation}
\label{integral_over_Y_0_vanishes}
\RG_c(\bar Y_0, \IC^{\nu}\otimes(\tilde h^{\ra}_G)^*\IC_0)\,\iso\, i_0^!(\IC^{\nu}\otimes (\tilde h^{\ra}_G)^*\IC_0)\,\iso\, \Qlb[-5]
\end{equation} 
It follows that over $Shatz^0$ one has
$$
_x\H^{\nu}_G\IC_0\,\iso\, \IC_0[2]\oplus \IC_0\oplus \IC_0[-2]
$$ 

 Let now $i>0$, $i\in n\ZZ$. Then $M$ has a distinguished $B$-structure given by the unique subbundle of degree $i$. Then only $\bar Y_{n+i}$, $\bar Y_i$, $\bar Y_{i-n}$ may contribute to $S$. Note that $\bar Y_{n+i}=Y_{n+i}$ is the point scheme. Its contribution to $S$ is $\Qlb[-1-2i+\frac{2i}{n}]$. One has $Y_{i-n}=\bar Y_{i-n}\,\iso\, \Gr_B^{\nu}\cap \Gr_G^{\nu}\,\iso\, \AA^{2n}$. So, the contribution of $\bar Y_{i-n}$ to $S$ is $\Qlb[-5-2i+\frac{2i}{n}]$. 
 
  Finally, $\bar Y_i\,\iso\, \Gr_B^0\cap \ov{\Gr}_G^{\nu}\,\iso\,\AA^n$. To calculate the contribution of $\bar Y_i$ to $S$, argue as in Proposition~\ref{Pp_description_of_cA^nu_for_nu_simplest}. In the notations of (\cite{FL}, Lemma~4.2), the contribution of $\bar Y_i$ to $S$ identifies with 
$$
F^0_T(\cA^{\nu}_{\cE})[-3-2i+\frac{2i}{n}]\,\iso\, \Qlb[-3-2i+\frac{2i}{n}]\,\iso\, (\IC_0)_M
$$ 
Using Lemma~\ref{Lm_great_fibres_of_IC_0} we see that $S$ is isomorphic to the $*$-restriction of $\IC_0[2]\oplus \IC_0\oplus \IC_0[-2]$ to $M$. We are done.
\end{Prf}

\appendix

\section{Proof of Theorem~\ref{Th_trace_of_Frob_Eis'}}
\label{Section_appendixA}

\subsection{} Assume $k=\Fq$. A version of Corollary 5.1 holds also in this case. It is understood that the Tate twists are recovered in the corresponding formulas. Take $P=B$, so $M=T$. Let $\theta\in\Lambda^{pos}$. In this case the stack $\Mod^{+,\theta}_{\tilde M}$ defined in Section~\ref{Section_Zastava spaces} classifies $\bar D\in X^{\theta}$ and a line $\cU$ together with $\cU^N\,\iso\, \cL_{\cF^0_T(-\bar D)}$. 
 
 The stack $\Modt^{+,\theta}_{\Bunt_T}$ from Section~\ref{Section_521} classifies $(\bar D\in X^{\theta}, \cF'_T\in\Bun_T, \cU,\cU')$, where $\cU,\cU'$ are lines equipped with $\cU^N\,\iso\,\cL_{\cF'_T(-D)}$, $\cU'^N\,\iso\, \cL_{\cF'_T}$. Recall that $J$ is the set of positive roots of $\check{G}_n$ for $\check{B}_n$.

 For $\gB(\theta)=\sum_{\nu\in J} n_{\nu}\nu \in \Lambda^{pos,pos}_{G,B}$ the perverse sheaf $\IC^{\gB(\theta)}_{\Bun_T, \zeta}$ on $X^{\gB(\theta)}\times_{X^{\theta}}\Modt^{+,\theta}_{\Bunt_T}$ is a rank one shifted local system. Consider the diagram 
$$
X^{\gB(\theta)}\times \Bunt_T\getsup{f}
X^{\gB(\theta)}\times_{X^{\theta}}\Modt^{+,\theta}_{\Bunt_T}\toup{h} \Bunt_T
$$ 
where $f$ sends $D\in X^{\gB(\theta)}$ with image $\bar D\in X^{\theta}$, $(\bar D, \cF'_T, \cU,\cU')\in \Modt^{+,\theta}_{\Bunt_T}$ to $(D, \cF'_T, \cU')$, and $h$ sends the above point to $(\cF'_T(-\bar D), \cU)\in\Bunt_T$. 

\smallskip

Recall that $X^{\gB(\theta)}=\mathop{\prod}\limits_{\nu\in J} X^{(n_{\nu})}$ and $\mid\!\gB(\theta)\!\mid$ is the dimension of $X^{\gB(\theta)}$. One has 
$$
h^*\cK_E\otimes \IC^{\gB(\theta)}_{\Bun_T,\zeta}
\,\iso\, f^*((\mathop{\boxtimes}\limits_{\nu\in J} (E^{-\nu})^{(n_{\nu})})\boxtimes \cK_E)\otimes (\Qlb[1](\frac{1}{2}))^{\dim\Bun_T+\mid\gB(\theta)\mid}
$$
So, by Corollary~\ref{Cor_3}, the contribution of the stratum $_{\theta}\Bunt_{\tilde B}$ to $\Funct(\Eis(\cK^{\mu}_E))$ is 
$$
\Funct(\Eis'(\cK_E^{\mu-\theta})) \prod_{\nu\in J}\Tr(\Fr, \RG(X^{(n_{\nu})}, (E^{-\nu})^{(n_{\nu})})\otimes \Qlb(n_{\nu}))
$$
Theorem~\ref{Th_trace_of_Frob_Eis'} is proved.

\medskip\noindent
\select{Acknowledgements.} The author is grateful to V. Lafforgue and M. Finkelberg for many fruitful discussions that helped us to overcome some difficulties on the way. We thank D.~Gaitsgory for answering our questions. It is a pleasure to thank the organizers of the conference `Automorphic forms and harmonic analysis on covering groups' (The American Institute of Mathematics, Palo Alto, 2013), which stimulated our work. The author was supported by the ANR project ANR-13-BS01-0001-01.

\end{document}